\numberwithin{equation}{section}
\newtheorem{theo}{Theorem}[section]
\newtheorem{prop}[theo]{Proposition}
\newtheorem{lemme}[theo]{Lemma}
\newtheorem{corol}[theo]{Corollary}
\newtheorem{claim}[theo]{Claim}
\theoremstyle{remark}
\newcommand{\be}{\begin{equation*}}
\newcommand{\ee}{\end{equation*}}
\newcommand{\ben}{\begin{equation}}
\newcommand{\een}{\end{equation}}
\newcommand{\begincal}{\begin{eqnarray*}}
\newcommand{\fincal}{\end{eqnarray*}}
\newcommand{\bal}{\begin{aligned}}
\newcommand{\eal}{\end{aligned}}
\newcommand{\pui}{\frac{n-2}{2}}
\newcommand{\tvk}{\tilde{w}_k}
\newcommand{\Lg}{\mathcal{L}_g}
\newcommand{\Lx}{\mathcal{L}_\xi}
\newcommand{\Dg}{\overrightarrow{\triangle}_g }
\newcommand{\Lax}{\Lambda_{\xi}}
\newcommand{\lki}{\lambda_{k}^i}
\newcommand{\lkja}{\lambda_{k,j}^1}
\newcommand{\lkjb}{\lambda_{k,j}^2}
\newcommand {\dk}{\delta_k}
\newcommand {\tdk}{\tilde{\delta}_k}
\newcommand{\tk}{\theta_k}
\newcommand{\tpk}{\tilde{\phi}_k}
\newcommand{\tpz}{\tilde{\phi}_0}
\newcommand{\mk}{\mu_k}
\newcommand{\xk}{\xi_k}
\newcommand{\exk}{\textrm{exp}_{\xi_k}^{g_{\xi_k}}}
\newcommand{\vek}{\varepsilon_k}
\newcommand{\RR}{\mathbb{R}}
\newcommand{\ve}{\varepsilon}
\newcommand{\vp}{\varphi}
\numberwithin{equation}{section}
\begin{document}

\title[]{A pointwise finite-dimensional reduction method for a fully coupled system of Einstein-Lichnerowicz type.}

\author{Bruno Premoselli}
 
\address{Bruno Premoselli, Universit\'e Libre de Bruxelles, Service de G\'eom\'etrie Diff\'erentielle,
CP 218,
Boulevard du Triomphe,
B-1050 Bruxelles,
Belgique.}
\email{bruno.premoselli@ulb.ac.be}

\begin{abstract} 
We construct non-compactness examples for the fully coupled Einstein-Lichnerowicz constraint system in the focusing case. The construction is obtained by combining pointwise a priori asymptotic analysis techniques, finite-dimensional reductions and a fixed-point argument.

\medskip

\noindent 
More precisely, we perform a fixed-point procedure on the remainders of the expected blow-up decomposition. The argument consists of an involved finite-dimensional reduction coupled with a ping-pong method. To overcome the non-variational structure of the system, we work with remainders which belong to strong function spaces and not merely to energy spaces. Performing both the ping-pong argument and the finite-dimensional reduction therefore heavily relies on the \emph{a priori} pointwise asymptotic techniques of the $C^0$ theory, as developed in Druet-Hebey-Robert \cite{DruetHebeyRobert}.
\end{abstract}

\maketitle  
\section{Introduction}

\subsection{Statement of the results}

\noindent Let $(M,g)$ be a closed Riemannian manifold of dimension $n \ge 6$, where closed means here compact without boundary. We investigate non-compactness issues in strong spaces for the set of positive solutions of the Einstein-Lichnerowicz system of equations in $M$:
\ben \label{intro1}
\left \{ \bal
 \triangle_g u + h u & = f u^{2^*-1} + \frac{\left| \Lg T + \sigma \right|_g^2 + \pi^2}{u^{2^*+1}} \\
 \Dg T & = u^{2^*} X + Y.
 \eal \right.
 \een 
The unknowns of \eqref{intro1} are $u$, a smooth positive function in $M$, and $T$, a smooth field of $1$-forms in $M$. In \eqref{intro1} we denoted by
  $\Lg T $ the conformal Killing derivative of $T$, whose expression in coordinates is:
\ben \label{kilkil}
 \Lg T_{ij} = \nabla_i T_j + \nabla_j T_i - \frac{2}{n} \big(\textrm{div}_g T \big) g_{ij},
 \een 
and we denoted by  $\Dg$ the Lam\'e operator acting on $1$-forms as $\Dg T = - \textrm{div}_g  \big( \Lg T \big)$. In \eqref{kilkil}, $\nabla$ stands for the Levi-Civita connection of the metric. The second equation of \eqref{intro1} is in particular a $1$-form equation. Also, in \eqref{intro1}, $\triangle_g = - \textrm{div}_g (\nabla \cdot) $ is the Laplace-Beltrami operator, $h,f,\pi$ are smooth functions in $M$, $\sigma$ is a smooth field of $2$-forms with $\textrm{tr}_g \sigma = 0$ and $\textrm{div}_g \sigma = 0$ and $X$ and $Y$ are smooth fields of $1$-forms in $M$. The notation $2^* = \frac{2n}{n-2}$ denotes the critical exponent for the embedding of the Sobolev space $H^1(M)$ into Lebesgue spaces. We also assume that:
\ben \label{focusingcase}
f > 0 \textrm{ in } M,
\een
and that $\triangle_g + h $ is coercive (which is necessary in view of \eqref{focusingcase}).

\medskip

\noindent The Einstein-Lichnerowicz system \eqref{intro1} arises in the initial-value problem in Mathematical General Relativity, as a conformal formulation of the constraint equations; see Bartnik-Isenberg \cite{BarIse} for the physical derivation of \eqref{intro1}. In the relativistic physical case, the coefficients of \eqref{intro1} express in terms of the background physics data of the problem: for instance, in a scalar-field setting, they write as:
\begin{equation} \label{coefficients}
\begin{aligned}
& h  = c_n \left( S_g - |\nabla \psi|_g^2 \right) ~~,~~f  =  c_n \left( 2 V(\psi) - \frac{n-1}{n} \tau^2 \right)  ~~,~~X = - \frac{n-1}{n} \nabla \tau ~~,~~ Y = - \pi \nabla \psi ,  \\
\end{aligned} \end{equation}
where $c_n = \frac{n-2}{4(n-1)}$. In \eqref{coefficients}, $V : \RR \to \RR$ is a potential, $\psi, \pi: M \to \RR$ are scalar-field components, $\tau: M \to \RR$ is the mean extrinsic curvature and $S_g$ is the scalar curvature of $g$. In view of \eqref{coefficients}, the focusing assumption \eqref{focusingcase}  naturally arises as one considers non-trivial non-gravitational physics data. 

\medskip

\noindent In the focusing case \eqref{focusingcase}, existence results for \eqref{intro1} were first obtained in Hebey-Pacard-Pollack \cite{HePaPo}, under the assumption $X \equiv 0$ that decouples \eqref{intro1}; multiplicity results were then obtained in 
Ma-Wei \cite{MaWei}, Premoselli \cite{Premoselli2} and, for specific physical cases, in Holst-Meier \cite{HolstMeier}, Chru\'sciel-Gicquaud \cite{ChruscielGicquaud} and Bizo\'n-Pletka-Simon \cite{BizonPletkaSimon}. For the fully coupled system -- when $X \not \equiv 0$ -- existence results are in Premoselli \cite{Premoselli1} and  Gicquaud-Nguyen \cite{GicquaudNguyen}. 

\medskip

\noindent A more satisfactory picture of system \eqref{intro1} is obtained through the analysis of its stability issues. Following the general definition given in Druet \cite{DruetENSAIOS} (see also Hebey \cite{HebeyZLAM}), we say that system \eqref{intro1} is \emph{stable} if, for any sequence $(h_k, f_k, \pi_k, \sigma_k, X_k, Y_k)_k$ of coefficients converging towards $(h,f,\pi, \sigma, X,Y)$ as $k \to + \infty$ in some strong topology (to be precised),
and for any sequence $(u_k, T_k)_k$ of solutions of:
\ben \label{eqperturbe}
\left \{ \bal 
 \triangle_g u_k + h_k u_k & = f_k {u_k}^{2^*-1} + \frac{\left| \Lg T_k + \sigma_k \right|_g^2 + \pi_k^2}{u_k^{2^*+1}} \\
 \Dg T_k & = u_k^{2^*} X_k + Y_k,
\eal \right.
\een
with $u_k > 0$, there holds, up to a subsequence and up to elements in the kernel of $\Lg$, that $(u_k, T_k)_k$ converges to some positive solution $(u_0,T_0)$ of \eqref{intro1} in $C^{1,\eta}(M)$ for all $0 < \eta < 1$. One defines the \emph{compactness} of \eqref{intro1} analogously, by taking a constant sequence of coefficients $(h_k, f_k, \pi_k, \sigma_k, X_k, Y_k)_k = (h,f,\pi, \sigma, X,Y)$.  In the focusing case \eqref{focusingcase} and for the decoupled system (when $X \equiv 0$), stability results were first obtained in Druet-Hebey \cite{DruetHebeyEL}, and then in Hebey-Veronelli \cite{HebeyVeronelli} and Premoselli \cite{Premoselli2}. For the fully coupled case $X \not \equiv 0$, the stability of \eqref{intro1} has been investigated in Druet-Premoselli \cite{DruetPremoselli} and Premoselli \cite{Premoselli4} on locally conformally flat manifolds. It is shown in \cite{Premoselli4} that for any $n \ge 6$, equation \eqref{intro1} is stable in the $C^2$ topology as soon as $\pi \not \equiv 0$ and:
\ben \label{condstabilite}
\textrm{either } \quad \bal X \textrm{ and } \nabla f \textrm{ have no common zero in } M \quad \textrm{ or } \quad h - c_n S_g + \frac{(n-2)(n-4)}{8(n-1)} \frac{\triangle_g f}{f} < 0,
\eal
\een
where $S_g$ denotes the scalar curvature of $g$. Stability actually holds for a slightly weaker topology, see \cite{DruetPremoselli} and \cite{Premoselli4}. The stability of the decoupled version of \eqref{intro1} was used in \cite{Premoselli2} to describe multiplicity issues in small dimensions. In the physical case, where the coefficients are given by \eqref{coefficients}, the stability and the instability of \eqref{intro1} relate to the relevance of the conformal method from which it arises. We refer to the discussion in \cite{Premoselli4}. In the opposite direction, the only known instability result for \eqref{intro1} has been obtained by Premoselli-Wei \cite{PremoselliWei} in the decoupled case $X \equiv 0$, and for the physical choice of coefficients given by \eqref{coefficients}. The motivation in \cite{PremoselliWei} was to obtain explicit physical counter-examples to the stability of the conformal method. 

\medskip

\noindent In this work, we address the instability behavior of system \eqref{intro1} from a different perspective: we leave the physical case treated in \cite{PremoselliWei} aside but consider instead the fully coupled case $X \not \equiv 0$ of \eqref{intro1}. As a direct counterpart of the results in \cite{DruetPremoselli} and \cite{Premoselli4}, we obtain non-compactness results for system \eqref{intro1} in any dimension $n \ge 6$ as soon as \eqref{condstabilite} is not satisfied. Our main result states as follows: 
\begin{theo} \label{thprincipal}
Let $(M,g)$ be a closed Riemannian manifold of dimension $n \ge 6$ of positive Yamabe type and possessing no non-trivial conformal Killing fields. There exist regular coefficients $(h,f,\pi,\sigma,X,Y)$, with $\triangle_g + h$ coercive, $f > 0$, $\pi \not \equiv 0$ and $X \not \equiv 0$ such that the associated system of equations \eqref{intro1}:
\[
\left \{ \bal
 \triangle_g u + h u & = f u^{2^*-1} + \frac{\left| \Lg T + \sigma \right|_g^2 + \pi^2}{u^{2^*+1}} \\
 \Dg T & = u^{2^*} X + Y
 \eal \right.
\]
possesses a blowing-up sequence of solutions $(u_k, T_k)_k$, that is satisfying $\Vert u_k \Vert_{L^\infty(M)} \to + \infty$ and $ \Vert  \Lg T_k \Vert_{L^\infty(M)}  \to + \infty$ as $k \to +\infty$. Also, the $u_k$ are positive, possess a single blow-up point and blow-up with a non-zero limit profile.
\end{theo}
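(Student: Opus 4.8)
The plan is to construct the blowing-up sequence explicitly by prescribing the expected blow-up decomposition and then solving for the remainders via a fixed-point argument. Concretely, one writes the ansatz $u_k = W_k + \phi_k$, where $W_k = \Lambda_k B_{\mu_k, \xi_k}$ is a (properly rescaled and truncated) standard bubble $B_{\mu,\xi}(x) = \big(1 + \frac{|x-\xi|^2}{n(n-2)\mu^2}\big)^{-\frac{n-2}{2}}$ centered at a point $\xi_k \to \xi_0$ with concentration parameter $\mu_k \to 0$, plus a nonzero background contribution $u_0$ coming from the limit profile; the remainder $\phi_k$ is required to be small not merely in $H^1$ but in a strong weighted $C^0$-type space adapted to the pointwise behavior of the bubble. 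For the $1$-form, one writes $T_k = T_{W_k} + \psi_k$, where $T_{W_k}$ solves $\Dg T_{W_k} = W_k^{2^*}X_k + Y_k$ and $\psi_k$ is a remainder. Choosing the coefficients $(h,f,\pi,\sigma,X,Y)$ appropriately — in particular violating \eqref{condstabilite}, so that $X$ and $\nabla f$ share a common zero at $\xi_0$ and the sign condition fails there — produces, at the level of the finite-dimensional reduction, a nondegenerate critical point of the reduced energy, which is the mechanism forcing genuine blow-up.

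The key steps, in order, are as follows. First, set up the functional framework: fix the strong function spaces for $(\phi_k, \psi_k)$, record the pointwise estimates on the bubble $W_k$ and on $T_{W_k}$, and compute $\Dg T_{W_k}$ so that the coupling term $\frac{|\Lg T_k + \sigma_k|_g^2 + \pi_k^2}{u_k^{2^*+1}}$ in the first equation can be expanded. Second, linearize the first equation around $W_k + u_0$: invert the linearized operator on the orthogonal complement of the approximate kernel (spanned by $\partial_{\mu} B_{\mu_k,\xi_k}$ and $\partial_{\xi^j} B_{\mu_k,\xi_k}$), using coercivity of $\triangle_g + h$ and the $C^0$ a priori analysis of \cite{DruetHebeyRobert} to control error terms in the strong norm; this yields, for each choice of $(\mu_k, \xi_k)$, a remainder $\phi_k = \phi_k(\mu_k, \xi_k)$ solving the projected equation. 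Third, run the ping-pong: plug this $\phi_k$ back into the $1$-form equation to solve for $\psi_k = \psi_k(\mu_k, \xi_k)$, then feed $\psi_k$ back into the scalar equation, and iterate; a contraction-mapping estimate in the product of strong spaces closes this loop because the coupling between the two equations is, in the relevant norms, of lower order in $\mu_k$. Fourth, carry out the finite-dimensional reduction: the full system is solved once the $2$-dimensional (here, $(n+1)$-parameter but effectively reduced) Lagrange-multiplier vector vanishes; expand this reduced map in $\mu_k$ and show its leading term is governed by a function of $(\mu_k, \xi_k)$ whose critical points correspond exactly to the configuration $\xi_0$ where \eqref{condstabilite} fails, with the Hessian nondegenerate there. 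Finally, apply a degree-theoretic or implicit-function argument to extract $(\mu_k, \xi_k)$ with $\mu_k \to 0$, producing the genuine blowing-up sequence; the condition $\pi \not\equiv 0$ guarantees $u_k > 0$ stays bounded away from zero and $\Vert \Lg T_k\Vert_\infty \to +\infty$ follows from $X \not\equiv 0$ and $\Vert W_k^{2^*}\Vert$ concentrating.

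The main obstacle is the non-variational structure of the fully coupled system: unlike the decoupled case, \eqref{intro1} does not arise as the Euler-Lagrange system of a single functional, so the standard Lyapunov-Schmidt reduction (which would identify the reduced map with the gradient of a finite-dimensional energy) is unavailable, and the reduced finite-dimensional map must instead be produced and analyzed by hand. This is precisely why one must work in strong spaces rather than energy spaces — the pointwise control from \cite{DruetHebeyRobert} is what allows the two projected equations to be decoupled asymptotically and the ping-pong contraction to converge — and it is also why the sharp expansion of the reduced map, step four, is delicate: one must track the interaction between the bubble, the nonzero limit profile $u_0$, the coupling term $T_{W_k}$, and the geometry of $M$ near $\xi_0$ to sufficiently high order to identify the leading term and verify its nondegeneracy. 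The competition between dimensions is absorbed into the hypothesis $n \ge 6$, which ensures the bubble's interaction terms dominate the curvature corrections in the expansion.
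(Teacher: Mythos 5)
Your outline matches the paper's overall architecture: an ansatz $u_k = W_k + u + \varphi_k$ with nonzero limit profile, strong (pointwise weighted) spaces rather than $H^1$, a fixed-point on the remainder map using the Druet--Hebey--Robert $C^0$ machinery, an $(n+1)$-parameter finite-dimensional reduction, and a degree-theoretic conclusion. But several specifics are either off or would lead to trouble.

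\emph{The reduced-energy language is wrong and self-contradictory.} You first say that the chosen coefficients ``produce, at the level of the finite-dimensional reduction, a nondegenerate critical point of the reduced energy,'' and later refer to the ``Hessian nondegenerate there.'' Then in your last paragraph you correctly observe that the system has no variational structure, so there is no reduced energy whose gradient the reduced map could equal. The paper's whole point in Section \ref{DL} is precisely this: the Lagrange multipliers $\lambda_{k,j}(t,p)$ cannot be written as derivatives of a reduced functional, so they are estimated directly by integrating the projected equation \eqref{systnoyaupres} against the kernel elements $Z_{j,k,t,p}$, using the second-order pointwise bounds of Propositions \ref{C02pres}, \ref{proplocalmieux} and \ref{proploinmieux}. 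The nondegeneracy actually used at the end is that of the differential of a limiting map $F(t,p)$ at $(t_0,0)$; it comes from the nondegenerate critical point $0$ of the bump $H$ built into $h$ via \eqref{defhintro}/\eqref{defhintro6} and from a balancing of the bubble against the positive background $u$ (the integrals $I_{1,i}$ and $I_{3,i}$), not from the pointwise failure of \eqref{condstabilite}. Your phrase ``$2$-dimensional (here, $(n+1)$-parameter but effectively reduced)'' is also not correct: all $n+1$ parameters $(t,p)$ are genuinely needed to match the $n+1$ multipliers.

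\emph{Structural and omitted technical points.} Your decomposition $T_k = T_{W_k} + \psi_k$ introduces a superfluous $1$-form remainder. Since $\Dg$ is invertible, the $1$-form is solved \emph{exactly} given the scalar ansatz, so the iteration only runs on the scalar remainder $\varphi \mapsto \psi$ (Sections \ref{theorieH1}--\ref{pointfixage}). Your claim that the ping-pong closes because the coupling ``is, in the relevant norms, of lower order in $\mu_k$'' understates the issue: $|\Lg T_k|^2$ blows up too fast to be treated as a coefficient in the linearized operator (this is noted explicitly after Proposition \ref{propptfixe1}). The paper has to semi-decouple by discarding $|\Lg T_k + \sigma|^2 - |\Lg T + \sigma|^2$ into a source term, and the contraction hinges on the hard-won global pointwise estimate $|\phi_k| \le C(\delta_k + \eta\varepsilon_k)(u + W_k)$ of Proposition \ref{propestglob}, which itself requires the kernel-orthogonality/Bianchi--Egnell rescaling arguments of Section \ref{theorieC0ordre2}. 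Two further details you should not skip: the use of Banach--Picard (rather than Schauder) is essential because only the contraction produces a remainder $\phi_k(t,\xi)$ \emph{continuous} in $(t,\xi)$, and continuity is what makes the final degree argument possible; and in dimension $n=6$ the leading-order expansion of $\lambda^0_k(t,p)$ picks up a contribution $\kappa$ (see \eqref{J55} and \eqref{defkappa}) coming from a rescaling of $X$ at the concentration point, so the $n\ge 7$ and $n=6$ expansions cannot be treated uniformly.
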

\noindent Theorem \ref{thprincipal} is a \emph{non-compactness} result, and therefore also an instability result. The explicit expression of the coefficients $(h,f,\pi,\sigma,X,Y)$, their regularity as well as the generality of the construction are discussed in Section \ref{notations} below. For $n \ge 7$, these coefficients do not satisfy \eqref{condstabilite}, thus establishing its sharpness in the coupled case. A conformal Killing field is a field of $1$-forms $X$ in $M$ satisfying $\Lg X = 0$. The assumption that $(M,g)$ possesses no non-trivial conformal Killing fields is generic as shown in Beig-Chru\'sciel-Schoen \cite{BeChSc} and implies, since $M$ is closed, that $\Dg$ has no kernel. 

\medskip
\noindent A striking consequence of Theorem \ref{thprincipal} is the existence of an infinite number of solutions of \eqref{intro1}:
\begin{corol} \label{corolmulti}
Let $(M,g)$ be a closed Riemannian manifold of dimension $n \ge 6$ of positive Yamabe type and possessing no non-trivial conformal Killing fields. There exist regular coefficients $(h,f,\pi,\sigma,X,Y)$, with $\triangle_g + h$ coercive, $f > 0$, $\pi \not \equiv 0$ and $X \not \equiv 0$ 
 such that the system \eqref{intro1} possesses an infinite number of solutions.
\end{corol}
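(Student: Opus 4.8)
The plan is to derive Corollary~\ref{corolmulti} from Theorem~\ref{thprincipal} by a standard pigeonhole argument on the blow-up points, exploiting the fact that a blowing-up sequence cannot contain any constant subsequence. Concretely, fix the coefficients $(h,f,\pi,\sigma,X,Y)$ provided by Theorem~\ref{thprincipal} and let $(u_k,T_k)_k$ be the associated blowing-up sequence of solutions of \eqref{intro1}, so that $\Vert u_k\Vert_{L^\infty(M)}\to+\infty$. I would first observe that the set $\mathcal S=\{(u_k,T_k):k\in\mathbb N\}$ must be infinite: if it were finite, some pair $(u,T)$ would occur for infinitely many indices $k$, and then $\Vert u_k\Vert_{L^\infty(M)}$ would be bounded along that subsequence, contradicting $\Vert u_k\Vert_{L^\infty(M)}\to+\infty$. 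Hence \eqref{intro1} possesses infinitely many (pairs of) solutions $(u,T)$ with $u>0$, which is exactly the assertion of the corollary.

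One subtlety to address is the role of $T$: the elements $T_k$ are only determined up to the kernel of $\Lg$, but under the standing hypothesis that $(M,g)$ has no non-trivial conformal Killing fields this kernel is trivial (as noted just after the statement of Theorem~\ref{thprincipal}, since $M$ is closed $\Dg$ has no kernel), so given $u_k$ the second equation $\Dg T_k=u_k^{2^*}X+Y$ determines $T_k$ uniquely. Therefore the multiplicity is genuinely carried by the functions $u_k$: I would note that the map $u_k\mapsto T_k$ is well-defined, so it suffices to check that the $u_k$ themselves take infinitely many values, which again follows from $\Vert u_k\Vert_{L^\infty(M)}\to+\infty$ since a finite set of functions has bounded sup-norms.

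I do not expect any real obstacle here: once Theorem~\ref{thprincipal} is granted, the corollary is a soft consequence and the only thing to be careful about is phrasing the uniqueness of $T_k$ given $u_k$ so that ``infinitely many solutions $(u,T)$'' is unambiguous. One could alternatively present the argument contrapositively: if \eqref{intro1} had only finitely many solutions, then in particular only finitely many values of $\Vert u\Vert_{L^\infty(M)}$ would be attained over the solution set, so every sequence of solutions would have bounded sup-norm, contradicting the existence of the blowing-up sequence furnished by Theorem~\ref{thprincipal}. Either formulation yields the claim in a couple of lines.

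\begin{proof}[Proof of Corollary~\ref{corolmulti}]
Let $(h,f,\pi,\sigma,X,Y)$ and $(u_k,T_k)_k$ be given by Theorem~\ref{thprincipal}. Since $(M,g)$ has no non-trivial conformal Killing field and $M$ is closed, $\Dg$ has trivial kernel, so for each $k$ the field $T_k$ is the unique solution of $\Dg T_k = u_k^{2^*}X + Y$; thus each solution $(u_k,T_k)$ of \eqref{intro1} is entirely determined by $u_k$. Assume by contradiction that \eqref{intro1} possesses only finitely many solutions. Then $\{u_k : k \in \mathbb{N}\}$ is a finite set of smooth functions, hence $\sup_k \Vert u_k \Vert_{L^\infty(M)} < +\infty$, which contradicts $\Vert u_k \Vert_{L^\infty(M)} \to +\infty$ as $k \to +\infty$. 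Therefore \eqref{intro1} admits an infinite number of solutions.
\end{proof}
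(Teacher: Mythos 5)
Your proof is correct and is exactly the soft pigeonhole argument the paper implicitly intends (the paper offers no separate proof, just the remark that the corollary is "a striking consequence" of Theorem~\ref{thprincipal}). The observation that $\Dg$ is invertible, so $T_k$ is uniquely determined by $u_k$ and the multiplicity is carried by the functions $u_k$ alone, is a useful clarification and matches the setup in Section~\ref{notations}.
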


\medskip

\noindent In the fully coupled case case $X \not \equiv 0$ that we investigate here, system \eqref{intro1} is of a different nature than in the physical decoupled case of \cite{PremoselliWei}. In particular, \eqref{intro1}  possesses no variational structure, exhibits supercritical nonlinearities and is not well-posed for $(u,W)$ in the energy space $H^1(M)$. To prove Theorem \ref{thprincipal}  we thus develop a pointwise finite-dimensional reduction method, which combines a priori pointwise asymptotic analysis techniques with a variational Lyapunov-Schmidt-type approach in order to perform an involved ping-pong method in strong spaces. We explain in detail the method and the strategy of proof in the next sub-section.

\subsection{Strategy of the proof of Theorem \ref{thprincipal}.}

\medskip

\noindent   In the fully coupled case $X \not \equiv 0$, system \eqref{intro1} exhibits a strong nonlinear coupling via the $(\left| \Lg T + \sigma \right|_g^2 + \pi^2) u^{-2^*-1}$ term, as well as a super-critical nonlinearity in the right-hand side of the $1$-form equation. Hence \eqref{intro1} does not possess a well-posed variational formulation in $H^1(M)$. If one only assumes that $u \in H^1(M)$, the right-hand side of the $1$-form equation is merely bounded in $L^1(M)$ which yields no integral control on $(\left| \Lg T + \sigma \right|_g^2 + \pi^2) u^{-2^*-1}$ whatsoever. This is a serious obstacle to combining a ping-pong approach with a finite-dimensional reduction method for the scalar equation of \eqref{intro1}. 

\medskip

\noindent To prove Theorem \ref{thprincipal} we therefore work in strong topologies. We construct a blowing-up sequence of solutions $(u_k,T_k)_k$ of \eqref{intro1} whose scalar component writes as: 
\ben \label{blowtete}
 u_k = u_{k,t,p} = W_{k,t,p} + u + \vp_{k,t,p},
 \een
but where this decomposition holds in $C^0(M)$ and not in $H^1(M)$. Here, as usual, $W_{k,t,p}$ denotes a bubbling profile depending on some parameters $(t,p) \in \RR^{n+1}$ whose expression is given in Sections \ref{notations} and \ref{DL} below. The positive function $u$ is the scalar component of a solution $(u,T)$ of \eqref{intro1} satisfying a non-degeneracy assumption (detailed in \eqref{uetastableX} below). And the remainder term, $\vp_{k,t,p}$, is chosen to be small with respect to $W_{k,t,p}$ and $u$ in the following pointwise sense: 
\ben \label{apriorireste}
|\vp_{k,t,p}| \le \vek \big( W_{k,t,p} + u \big) \quad \textrm{ pointwise in } M,
\een
for some suitably chosen sequence $(\vek)_k$ of positive numbers converging to zero. 

\medskip
\noindent Decomposition \eqref{blowtete} is of course reminiscent of Struwe's $H^1$ a priori result \cite{Struwe}. 
But the motivation for the choice of \eqref{blowtete} comes from the work of  Druet-Hebey-Robert \cite{DruetHebeyRobert}, where Struwe's decomposition was shown to actually hold true in $C^0(M)$ for $H^1$-bounded solutions of critical stationary Schr\"odinger equations. 
 The blow-up analysis of system \eqref{intro1} performed in \cite{DruetPremoselli} and \cite{Premoselli4} confirms the choice of \eqref{blowtete} \emph{pointwise}, at least at a local scale.

 \medskip
 
 \noindent The construction of a sequence $(u_k, T_k)_k$ satisfying \eqref{blowtete} goes through an involved ping-pong procedure which relies upon a finite-dimensional reduction made possible by asymptotic analysis techniques. Section \ref{notations} introduces the definition of the coefficients $(h,f,\pi,\sigma,X,Y)$ and of the bubbling profiles considered and Section \ref{technicalresults} gathers several technical results used throughout the article. The structure of the proof and the organization of the remaining sections of the article are as follows:

\medskip

\noindent \large{\textbf{Section \ref{theorieH1}: Semi-decoupling and $H^1$ reduction.}} \normalsize
Let $\vp$ be a remainder satisfying \eqref{apriorireste}. Since $\Dg$ has no Kernel by assumption, there exists a unique $1$-form $T_{k,t,p}$ in $M$ satisfying $\Dg T_{k,t,p} = \left( W_{k,t,p} + u + \vp \right)^{2^*} X + Y$.  The $C^0$ bound \eqref{apriorireste} on $\vp$ yields explicit pointwise bounds on $\Lg T_{k,t,p}$, which happens to blow-up too fast for standard $H^1$ finite-dimensional reduction procedures to apply to the scalar equation of \eqref{intro1} with $\Lg T_{k,t,p}$ seen as a coefficient. We therefore artificially discard the $|\Lg T_k + \sigma|_g^2$ term into a source term and consider instead the following equation:
\ben \label{schemastep11} 
\bal
 \triangle_g u + h u  = f u^{2^*-1}  + \frac{|\Lg T + \sigma|_g^2 + \pi^2}{u^{2^*+1}}  \underbrace{+ \left( \frac{ |\Lg T_k + \sigma|_g^2 - |\Lg T + \sigma|_g^2 }{\left( W_{k,t,p} + u + \vp \right)^{2^*+1}} \right)}_{\textrm{ Source term }} ,
  \eal \een
 where $T$ satisfies $\Dg T = u^{2^*}X + Y$. We perform a standard finite-dimensional reduction procedure and construct a solution of \eqref{schemastep11} -- up to Kernel elements -- having the form 
 \ben \label{H1soluk}
 u_{k,t,p} = W_{k,t,p} + u + \psi,
 \een for some new remainder $\psi \in H^1(M)$. 

\medskip
\noindent \large \textbf{Sections \ref{partie4}, \ref{theorieC0ordre2} and \ref{pointfixage}: Pointwise asymptotic analysis of the remainder $\psi$ and fixed-point argument.} \normalsize
In view of the ping-pong argument, the main part of the proof is an involved application of Banach-Picard's fixed-point theorem to the remainders' mapping $\vp \mapsto \psi$\footnote{we use the notations of the previous paragraph} in the strong space of functions satisfying \eqref{apriorireste}. The finite-dimensional reduction provides an $H^1$ bound on $\psi$ but says nothing about a  $C^0$ control. In this part we show that $\psi$ still satisfies the pointwise estimate \eqref{apriorireste}. A first step consists in showing that 
\ben \label{petitopsi}
 \psi = o \big(W_{k,t,p} + u \big) \textrm{ in } C^0(M) 
 \een
as $k \to + \infty$. This step is achieved in Section \ref{partie4} through an asymptotic blow-up analysis of the solution $u_{k,t,p}$ in \eqref{H1soluk} and relies on the machinery of the $C^0$ theory, developed in Druet-Hebey-Robert \cite{DruetHebeyRobert} (see also Hebey \cite{HebeyZLAM}), adapted to take into account the source term in \eqref{schemastep11}. 

\smallskip

\noindent A second step consists in quantifying the $o(1)$ in \eqref{petitopsi}. 
This is the purpose of Section \ref{theorieC0ordre2}. Proposition \ref{C02pres} provides a local improvement of \eqref{petitopsi} in the region where the bubbling profile is dominant in the $C^0$ decomposition of $u_{k,t,p}$. It is obtained by a second-order blow-up analysis on and relies on the fact that, by construction, $\psi$ is orthogonal to the Kernel elements of the finite-dimensional reduction. The global improved version of \eqref{petitopsi} is Proposition \ref{propestglob}, which yields the following control on $\psi$: for some positive constant $C$ independent of $k,t,p$,
\[ |\psi| \le C ( \Vert X \Vert_{L^\infty} \vek + \mk ) \Big( W_{k,t,p} + u \Big). \] 
The latter estimate implies in particular that, provided $\Vert X \Vert_{L^\infty(M)}$ is sufficiently small, $\psi$ satisfies again \eqref{apriorireste} for a suitable choice of $(\vek)_k$.
\smallskip

\noindent Section \ref{pointfixage} contains the last step in the proof of the fixed-point argument. The application of Banach-Picard's fixed-point theorem is performed in Proposition \ref{propsystreduit} using again asymptotic analysis techniques in the spirit of those appearing in the proof of Proposition \ref{C02pres}. In Proposition \ref{proploinmieux} we also obtain an improvement of the global estimate on the remainder, in the region where the weak limit becomes dominant in the $C^0$ decomposition of $u_{k,t,p}$. 

\smallskip
\noindent Sections \ref{partie4}, \ref{theorieC0ordre2} and \ref{pointfixage} are the core of the analysis of the paper. The degree of precision of the local and global pointwise estimates on $\psi$ is also the key tool in the final step of the proof of Theorem \ref{thprincipal}.

\medskip

\noindent \large \textbf{Sections \ref{DL} and \ref{argumentconclusif}: Annihilation of the Kernel components.} \normalsize

\noindent Section \ref{pointfixage} provides us with a solution $(u_{k,t,p}, W_{k,t,p})_k$, $ u_{k,t,p} = W_{k,t,p} + u + \vp_k(t,p),$ of \eqref{intro1} up to kernel elements of the scalar equation:
\begin{equation} \label{contconfker}  
\left \{
\begin{aligned}
& \triangle_g u_{k,t,p} + h u_{k,t,p}  = f u_{k,t,p}^{2^*-1} + \frac{ \pi^2 +  |\sigma + \mathcal{L}_g W_{k,t,p} |_g^2 }{ u_{k,t,p}^{2^*+1}} + \sum_{j=0}^n \lambda_{k,j}(t, p) Z_{j,k,t, p}~, \\  
&  \overrightarrow{\triangle}_{g} W_{k,t,p}  = u_{k,t,p}^{2^*}X +Y, ~ 
\end{aligned}
\right.
\end{equation}
where the $Z_{j,k,t,p}$ are defined in \eqref{defZk} (see also Section \ref{DL}). The proof is concluded by showing that one can annihilate all the $\lambda_{k,j}(t,p)$, $0 \le j \le n$, for a suitable value $(t_k,p_k)_k$ of the parameters. 

\smallskip

\noindent In Section \ref{DL} we compute an asymptotic expansion of the $\lambda_{k,j}(t,p)$ as $k \to + \infty$. 
The lack of a variational structure forces us to again proceed differently than in the variational setting. Since the remainder $\vp_k(t,p)$ of the solution $u_{k,t,p}$ was not obtained by variational means we only possess a rough estimate on $\Vert \vp_{k}(t,p) \Vert_{H^1(M)}$. It is given by \eqref{eqsystreduit} below, and is intimately related to the choice of $\vek$ -- and hence to the (limited) precision of \eqref{ponctuelphik} below. In particular, no matter the precision on the choice of the ansatz $W_{k,t,p} + u$, we cannot hope to get a better estimate on $\Vert \vp_k(t,p) \Vert_{H^1(M)}$. Therefore, the usual characterization of the $\lambda_{k,j}(t,p)$ in terms of a reduced energy does not hold here. We overcome this by estimating the $\lambda_{k,j}(t,p)$ directly, using the second-order pointwise estimates on $\vp_k(t,p)$ and $\nabla \vp_k(t,p)$ obtained in Sections \ref{theorieC0ordre2} and \ref{pointfixage}. The conclusion of the proof of Theorem \ref{thprincipal} is then given in Section \ref{argumentconclusif}, using a degree-theoretic argument. 

\bigskip

\noindent Let us conclude this introduction with a few remarks:
\begin{itemize}
\item We prove Theorem \ref{thprincipal} assuming that the $L^\infty$ norm of the coupling field $X$  is small, depending on $n,g, h,f, \pi, \sigma$ (see Section \ref{notations}).  
 This assumption is harmless, since smallness conditions on $X$ are necessary for solutions of \eqref{intro1} to exist: see \cite{HePaPo, Premoselli1, Premoselli2}. 
\item Our choice of the coefficients $(h,f,\pi,\sigma,X,Y)$ is driven by the a priori stability analysis of \cite{Premoselli4}. This is explicit in the choice of the order of smallness of $X$ at bumps, and in the localisation of the concentration point constructed. As shown in \cite{Premoselli4}, the latter point is a common zero of $X$ and $\nabla f$ and a zero of $h - \frac{n-2}{4(n-1)} S_g$ (except in dimension $6$, see below). 
\item We chose to construct here \emph{non-compactness} examples, which forces us to work with bubbling profiles supported in shrinking balls (see Section \ref{notations}). Up to obvious modifications of the coefficients $(h,f,\pi,\sigma,X,Y)$, our proof works for bubbling profiles supported on balls of fixed positive radius thus yielding instability examples for \eqref{intro1}.
\item The scalar component of our blowing-up solutions is constructed to have a positive weak limit $u$. This is necessary, as shown by Lemma \ref{minor} below.
\item The non-degeneracy assumption on $u$ is crucially used twice in our proof: to achieve the finite-dimensional reduction procedure on equation \eqref{schemastep11} and to prove Proposition \ref{propestglob}. It does not restrain the choice of the coefficients $(h,f,\pi,\sigma,Y)$: as shown in Section \ref{notations}, for small $\Vert X \Vert_\infty$, \eqref{intro1} always possesses such a non-degenerate solution. 
 \item As already noticed in \cite{PremoselliWei}, the $6$-dimensional case requires a more careful analysis. Roughly speaking, we are forced to perform an expansion of higher order than in dimensions $n \ge 7$, see Section \ref{DL} below. In particular, a rescaling of $X$ at a concentration point appears in the limiting expansion, see \eqref{J55} and \eqref{defkappa} below.  
\end{itemize}

\medskip

\noindent Possible references for the finite-dimensional reduction method alluded to above, without pretending to be exhaustive, are Ambrosetti-Malchiodi \cite{AmbrosettiMalchiodi}, Berti-Malchiodi \cite{BertiMalchiodi}, Brendle \cite{Brendle}, Brendle-Marques \cite{BreMa}, Del Pino-Felmer-Musso \cite{DelPinoFelmerMusso} Del Pino-Musso-Pacard \cite{DelPinoMussoPacard}, Del Pino-Musso-Pacard-Pistoia \cite{DelPinoMussoPacardPistoia1}, 
Rey \cite{Rey}, Rey-Wei \cite{ReyWei}, Robert-V\'etois \cite{RobertVetois2, RobertVetois4}, Wei \cite{Wei0, Wei, Wei2, WeiGM}, and the references therein.

\noindent The a priori analysis techniques used in our proof have been developed in the context of the $C^0$ theory in Druet-Hebey-Robert \cite{DruetHebeyRobert}, see also Druet \cite{DruetJDG}, Druet-Hebey \cite{DruetHebey2}, Druet-Hebey-Vetois \cite{DruetHebeyVetois2} and Hebey-Robert \cite{HebeyRobert2}. Related techniques have independently been developed in the investigation of compactness phenomena for the Yamabe problem (see Li-Zhu \cite{LiZhu}, Druet \cite{DruetYlowdim}, Marques \cite{Marques}, Li-Zhang \cite{LiZhang}, Khuri-Marques-Schoen \cite{KhuMaSc}) and in the investigation of stability issues for nonlinear elliptic equations (Hebey-Wei \cite{HebeyWei}, Druet-Hebey-V\'etois \cite{DruetHebeyVetois}).

\medskip

\noindent \textbf{Acknowledgements:} The author wants to thank O. Druet for fruitful discussions from which this paper originated.

\section{Setting of the problem and notations} \label{notations}

\noindent In what follows we let $(M,g)$ be a closed $n$-dimensional Riemannian manifold, $n \ge 6$, of positive Yamabe type. We will always assume that $(M,g)$ possesses no non-trivial conformal killing $1$-forms or, equivalently, that the operator $\Dg$ is invertible in $M$. Let $\xi_0 \in M$ and assume that $|W(\xi_0)|_g > 0$ if $(M,g)$ is not locally conformally flat, where $W(\xi_0)$ denotes the Weyl tensor of $g$. The standard conformal normal coordinates result of Lee-Parker \cite{LeeParker} asserts that there exists $\Lambda \in C^\infty(M\times M)$ such that for any point $\xi \in M$ there holds, for some arbitrarily large integer $N$:
\ben \label{confnorm}
\left| \left( \textrm{exp}_\xi^{g_\xi} \right)^* g_{\xi} \right|(y) = 1 + O(|y|^N),
\een
$C^1$-uniformly in $\xi \in M$ and in $y \in T_\xi M$ in a small geodesic ball for the metric $g_\xi$. In \eqref{confnorm} we have let 
\ben \label{metconforme}
g_\xi = \Lambda_\xi^{\frac{4}{n-2}}g,
\een 
where the conformal factor  $\Lambda_\xi = \Lambda(\xi, \cdot)$ can in addition be chosen to satisfy:
\ben \label{propLambda}
\Lax(\xi) = 1, \quad \nabla \Lax (\xi) = 0 .
\een
In \eqref{confnorm}, the notation $\textrm{exp}_\xi^{g_\xi}$ denotes the exponential map for the metric $g_\xi$ at point $\xi$ with the identification of $T_\xi M$ to $\RR^n$ via a smooth orthonormal basis $(e_1, \cdots, e_n)$ of $T_\xi M$ defined in a neighbourhood of $\xi_0$. The Lee-Parker \cite{LeeParker} result also assert that there holds, for any $\xi \in M$:
\ben \label{propLambdaSg}
S_{g_{\xi}}(\xi) = 0, \quad \nabla S_{g_\xi}(\xi) = 0, \quad \triangle_{g_\xi} S_{g_\xi} (\xi) = \frac{1}{6} |W_g(\xi)|_g^2 ,
\een
where $S_{g_\xi}$ denotes the scalar curvature of the conformal metric $g_\xi$.

\medskip

\noindent Let $(\tau_k)_k$ be a sequence of positive real numbers  such that $\sum_k \tau_k < +\infty$. We define a sequence $(\mk)_k$ as follows:
\ben \label{defmk}
\mk = \left \{
\bal
& \tau_k & \textrm{ if } n = 6, \\
& \tau_k^{\frac{2}{n-6}} &\textrm{ if } (M,g) \textrm{ is l.c.f. or if } 7 \le n \le 10, \\
& \tau_k^{\frac{1}{2}}  &\textrm{ if } n \ge 11 \textrm{ and } (M,g) \textrm{ is not l.c.f.}.\\
\eal
\right.
\een
Let $(\xk)_k$ be a sequence of points of $M$ converging towards $\xi_0$ and satisfying $d_g(\xk, \xi_{k+1}) << \frac{1}{k^2}$ as $k \to +\infty$. Let $(\beta_k)_k$ be a sequence of positive numbers converging to zero as $k \to + \infty$ and satisfying:
\ben \label{propbetak}
\bal
& \beta_k  >> \mk \textrm{ if } n \ge 7 \\
 \mu_k^{\frac12} >> & \beta_k >> \mk  \textrm{ if } n = 6 . \\
\eal
\een
Let $f $ be a smooth positive function, let $\sigma$ be a smooth traceless and divergence-free $(2,0)$-tensor in $M$ and let $\pi$ be a smooth function in $M$ with $\pi \not \equiv 0$. Let $Y$ be a smooth field of $1$-forms and denote by $\tilde{Y}$ the only solution of $\Dg \tilde{Y} = Y$ in $M$. We let also $H$ be a smooth nonnegative function in $\RR^n$, compactly supported in $B_0(1)$ with $H(0) = 1$, and for which $0$ is a \emph{non-degenerate critical point}. 

\bigskip

\noindent \textbf{The $n \ge 7$ case.}  We define:
\ben \label{defhintro}
\bal
h & = c_n S_g + \sum_{k \ge 0}  \tau_k H \left( \frac{1}{\beta_k} \big( \exk \big)^{-1}(x) \right),
\eal
\een
where we have let $c_n = \frac{n-2}{4(n-1)}$ for all $n$. With this definition, $h$ is equal to $c_n S_g + \tau_k H \left( \frac{1}{\beta_k} \big( \exk \big)^{-1}(\cdot) \right)$ on every $B_{\xi_k}(\beta_k)$ -- where the ball is taken for the metric $g_{\xi_k}$ -- and equals $c_n S_g$ outside of their reunion.  Note, with \eqref{defmk} and \eqref{propbetak} and since we are considering the $n \ge 7$ case here, that for any $r \in \mathbb{N}^*$, one can always choose $\beta_k$ as in \eqref{propbetak} so that $h \in C^r(M)$. Let $u_0$ be a smooth, positive solution of the following Einstein-Lichnerowicz equation:
\ben \label{defu0}
\triangle_g u_0 + h u_0 = f u_0^{2^*-1} + \frac{|\Lg \tilde{Y} + \sigma|_g^2 + \pi^2}{{u_0}^{2^*+1}},
\een
The coefficients $f, \pi, \sigma$ and $Y$ can always be chosen so that \eqref{defu0} possesses a smooth positive solution, see 
 Hebey-Pacard-Pollack \cite{HePaPo}. Up to a slight modification of $f, \pi, \sigma, Y$ we can always assume $u_0$ to be \emph{strictly stable}, that is satisfying, for any $ \psi \in H^1(M)$:
\ben  \label{uetastable}
\int_M |\nabla \psi |_g^2 + \left [c_n S_g - (2^*-1)f u_0^{2^*-2} + (2^*+1) \frac{|\Lg \tilde{Y} + \sigma|_g^2 + \pi^2}{u_0^{2^*+2}} \right] \psi^2 dv_g \ge \frac{1}{C_0} \| \psi \|_{H^1}^2, 
\een
for some positive constant $C_0$. This is the case if $u_0$ is chosen to be the smallest solution of \eqref{defu0}, see Premoselli \cite{Premoselli2}, Section $7$. 
Such a minimal solution is then the only strictly stable one of \eqref{defu0} (see Dupaigne \cite{Dupaigne}, proposition $1.3.1$). 

\medskip

\noindent \textbf{The $n = 6$ case.} Up to assuming that the $(\tau_k)_k$ are small enough, and since $(M,g)$ is of positive Yamabe type, standard sub- and super-solution arguments show that there exists a unique positive solution $u_0$ of:
\[  \triangle_g u_0 + \left( \frac15 S_g - \sum_{k \ge 0} \tau_k H \left( \frac{1}{\beta_k} \big( \exk \big)^{-1}(x) \right)\right) u_0 = - f u_0^{2} + \frac{|\Lg \tilde{Y} + \sigma|_g^2 + \pi^2}{{u_0}^{4}}.\]
With \eqref{propbetak}, for any $0 <\alpha<1$ such a $u_0$ can be chosen to belong to $C^{3,\alpha}(M)$ and it is then a positive and strictly stable solution (in the sense of \eqref{uetastable}) of the following Einstein-Lichnerowicz equation:
\[ \triangle_g u_0 + h u_0 = f u_0^{2} + \frac{|\Lg \tilde{Y} + \sigma|_g^2 + \pi^2}{{u_0}^{4}}, \]
where we have let in this case:
\ben \label{defhintro6}
h = \frac15 S_g + 2 f u_0- \sum_{k \ge 0} \tau_k H \left( \frac{1}{\beta_k} \big( \exk \big)^{-1}(x) \right).
\een
Again, \eqref{defmk} and \eqref{propbetak} ensure that this $h$ can be chosen to belong to $C^{1,\alpha}(M)$ for fixed $0 < \alpha < 1$, but not to $C^2(M)$, contrary to the $n \ge 7$ case.

\medskip

\noindent Now, in every dimension $n \ge 6$, the implicit function theorem shows that there exists a constant $\eta_0 = \eta_0(n,g,h,f,\pi,\sigma,Y)$ such that, for any field $X$ of $1$-forms in $M$ satisfying 
\ben \label{introeta}
  \| X \|_{L_\infty(M)} = \eta \le \eta_0,
 \een 
the Einstein-Lichnerowicz system of equations:
\ben \label{syseta}
\left \{
\bal
\triangle_g u + hu & = f u^{2^*-1} + \frac{|\Lg T + \sigma|^2 + \pi^2}{u^{2^*+1}} \\
\Dg T & = u^{2^*} X + Y,
\eal
\right.
\een
possesses a solution $(u(X), T(X))$ such that $u(X) \to u_0$ in $C^2(M)$ as $\eta$, defined in \eqref{introeta}, goes to $0$. In \eqref{syseta}, $h$ is given by \eqref{defhintro} or \eqref{defhintro6}, depending on the dimension. Up to choosing $\eta$ small enough, it is easily seen that $u(X)$ is again a strictly stable solution of the scalar equation of \eqref{syseta}, that is:
\ben \label{uetastableX}
\int_M |\nabla \psi |_g^2 + \left [h - (2^*-1)f u(X)^{2^*-2} + (2^*+1) \frac{|\Lg T(X) + \sigma|_g^2 + \pi^2}{u(X)^{2^*+2}} \right] \psi^2 dv_g \ge \frac{1}{C} \| \psi \|_{H^1}^2, 
\een
for any $\psi \in H^1(M)$ and for some positive constant $C$ independent of $\eta$. In the following, for the sake of clarity and since $X$ will be fixed, the dependence on $X$ of the solutions $(u(X), T(X))$ will be omitted, and they will just be denoted by $(u,T)$. 

\medskip
\noindent The space $H^1(M)$ will denote the standard Sobolev space of functions in $L^2$ possessing $L^2$ distributional derivatives, and we endow $H^1(M)$ with the following scalar product: for any $u,v \in H^1(M)$,
\ben \label{psH1}
\langle u, v \rangle_h = \int_M \left(  \langle \nabla u,  \nabla v \rangle_g + h uv \right) dv_g,
\een
where $h$ is given by \eqref{defhintro} or \eqref{defhintro6}. For any $J \in H^1(M)'$ we will denote by $(\triangle_g +h)^{-1}(J)$ the unique element  of $H^1(M)$ such that for any $v \in H^1(M)$ there holds:
\[ J(v) = \langle (\triangle_g +h)^{-1}(J), v \rangle_h. \]
Since no ambiguity will occur, the notation $H^1(M)$ will also be used to denote the space of Sobolev $1$-forms over $M$.

\medskip

\noindent The blowing-up solutions of Theorem \ref{thprincipal} are obtained by glueing degenerating peaks over the solution $(u,T)$. Here we define such peaks. Let $(r_k)_k$ be a sequence of positive real numbers which converges towards zero as $k \to \infty$ and satisfy:
\ben \label{relationrkmk}
\bal
\beta_k << r_k  << d_g(\xk, \xi_{k+1}) \quad \textrm{ and } \quad r_k^N >> \mk, \\
\eal
\een
where $\beta_k$ is given by \eqref{propbetak}, and for some large enough integer $N$, as $k \to + \infty$. The lowest value of $N$ required can be made explicit (only depending on $n$), as is easily seen in the proof of Theorem \ref{thprincipal}.  
 Examples of sequences $(\tau_k)_k$, $(\beta_k)_k$ and $(r_k)_k$ that satisfy \eqref{propbetak}, \eqref{relationrkmk} and $\sum_k \tau_k < + \infty$ are easily found, for instance as different powers of $(1/k)_k$. For $t >0$ we define the sequence $(\delta_k(t))_k$ by: 
 \ben \label{defdkyk}
\delta_k(t)  = \mk t,
\een
where $\mk$ is as in \eqref{defmk}. We let $r_0 >0$ be such that $r_0 < i_{g_\xi}(M)$ for all $\xi \in M$,  where $i_{g_\xi}$ denotes the injectivity radius of the metric $g_\xi$ given by \eqref{metconforme}. By definition of $r_k$ there holds $2r_k < r_0$ for any $k$.  We let $\chi \in C^\infty(\RR)$ be a nonnegative, smooth compactly supported function such that $\chi \equiv 1$ in $[-1,1]$ and $\chi \equiv 0$ outside of $[-2,2]$. The blow-up profiles we investigate in this work are given by the following expression: for $t >0$ and $\xi \in M$, and for any $x \in M$:
\ben \label{bulle}
W_{k,t,\xi}(x) = \Lambda_{\xi}(x) \chi \left( \frac{d_{g_{\xi}}(\xi,x)}{r_k}\right) \delta_k^{\frac{n-2}{2}} \left( \delta_k^2 + \frac{f(\xi)}{n(n-2)}  d_{g_{\xi}}(\xi,x)^2 \right)^{1 - \frac{n}{2}},
\een
where $\Lambda_{\xi}$ is as in \eqref{propLambda} and $\delta_k$ is given by \eqref{defdkyk}. These profiles are localized in $B_{\xi}(2r_k)$, which denotes here the geodesic ball of radius $2 r_k$ with respect to the metric $g_{\xi}$. 
For a given $\xi \in M$ we let $V_{0,\xi}, \cdots, V_{n,\xi}: \RR^n \to \RR$ be given by:
\ben \label{defVki}
\bal
V_{0,\xi}(x) & =  \left( \frac{f(\xi)}{n(n-2)}|x|^2 - 1 \right) \left( 1 + \frac{f(\xi)}{n(n-2)}|x|^2\right)^{-\frac{n}{2}} \\
V_{i,\xi}(y) & = f(\xi) x_i \left( 1 + \frac{f(\xi)}{n(n-2)}|x|^2 \right)^{-\frac{n}{2}} .\\
\eal
\een
For any $0 \le i \le n$, the $V_{i,\xi}$ span the set of solutions in $H^1(\RR^n)$ of the linearized equation (Bianchi-Egnell \cite{BianchiEgnell}):
\ben \label{eqlin}
\triangle_{\textrm{eucl}} V_{i,\xi} = (2^*-1) f(\xi) {U_\xi}^{2^*-2} V_{i,\xi},
\een
where we have let: 
\ben \label{eqlindefU}
U_{\xi}(y) = \left( 1 + \frac{f(\xi)}{n(n-2)} |y|^2\right)^{1 - \frac{n}{2}}, \textrm{ for any } y \in \RR^n.
\een 
We also define, for any $x \in M$, any $1 \le i \le n $ and any $\xi \in M$:
\ben \label{defZk}
\bal
Z_{0,k,t,\xi}(x) & = \Lambda_{\xi}(x)  \chi \left( \frac{d_{g_{\xi}}(\xi,x)}{r_k}\right) \delta_k^{\frac{n-2}{2}}  \left(  \delta_k^2 + \frac{f(\xi)}{n(n-2)}  d_{g_{\xi}}(\xi,x)^2 \right)^{- \frac{n}{2}} \\
& \times \left( \frac{f(\xi)}{n(n-2)}d_{g_{\xi}}(\xi,x)^2 - \delta_k^2 \right) \\
Z_{i,k,t,\xi}(x) &=  \Lambda_{\xi}(x)  \chi \left( \frac{d_{g_{\xi}}(\xi,x)}{r_k}\right)  \delta_k^{\frac{n}{2}} \left(  \delta_k^2 + \frac{f(\xi)}{n(n-2)}  d_{g_{\xi}}(\xi,x)^2 \right)^{- \frac{n}{2}} \\
& \times f(\xi)\left \langle \left( \textrm{exp}_{\xi}^{g_{\xi}}\right)^{-1}(x), e_i(\xi) \right\rangle_{g_{\xi}(\xi)}.
\eal
\een
In \eqref{defZk}, the $(e_i)_i$ denote the field of orthonormal basis centered at $\xi_0$ introduced in the beginning of this Section. Finally, we let 
\ben \label{noyau}
K_{k,t,\xi} = \textrm{Span} \left \{ Z_{i,k,t,\xi}, i=0 \dots n \right \}.
\een
Since $(V_{0,\xi}, \cdots, V_{n,\xi})$ forms an orthonormal family for the scalar product $(u,v) = \int_{\RR^n} \langle \nabla u, \nabla v \rangle dx$ in $\RR^n$, $K_{k,t,\xi}$ is $(n+1)$-dimensional for $k$ large enough and the $Z_{i,k,t,\xi}$ are ``almost'' orthogonal. We denote by $K_{k,t,\xi}^{\perp}$ its orthogonal in $H^1(M)$ for the scalar product given by \eqref{psH1}.

\medskip

\noindent We did not specify the choice of $f$ and $X$ yet. Let $f_0 > 0$ be a positive constant. Here $f$ will be a perturbation of $f_0$ by small bumps:
\ben \label{deffintro}
f = f_0 + \sum_{k \ge 0} s_k \chi\left( \frac{1}{r_k} \big( \exk \big)^{-1}(x) \right), 
\een
where $(s_k)_k$ is a sequence of real numbers converging to zero and satisfying $|s_k| = O(\mk^N)$ for a sufficiently large $N \in \mathbb{N}^*$.  Let $X_0$ denote any smooth field of $1$-forms in $M$ which vanishes in a neighbourhood of $\xi_0$. Let $Z$ be a fixed smooth $1$-form in $\RR^n$, compactly supported in $B_0(1)$, and with $|Z_0(0)| > 0$. Define then, for any $x \in M$:
\ben \label{defXintro}
X(x) = X_0(x) + \sum_{k \ge 0} \mk^{\frac{n-1}{2}} Z \left( \frac{1}{r_k} \big( \exk \big)^{-1}(x) \right) ,
\een
where $\mk$ and $r_k$ are as in \eqref{defmk} and \eqref{relationrkmk}. Up to reducing $\Vert X_0 \Vert_\infty$ and the $\tau_k$ such an $X$ always satisfies \eqref{introeta}. In the following, we will also let: 
\ben \label{defalpha}
\alpha := |Z(0)| = \frac{|X(\xi_k)|_{g_{\xi_k}}}{\mk^{\frac{n-1}{2}}} \textrm{ for all } k.
\een
This parameter $\alpha$ will have to be chosen small, see the proof of Proposition \ref{propsystreduit}. Again, with \eqref{defmk} and \eqref{relationrkmk}, $f$ and $X$ can always be chosen to belong to $C^r(M)$ for $r \in \mathbb{N}^*$.

\medskip

\noindent Finally, let 
\ben \label{defEk}
 \mathcal{E} = \left \{ (\vek)_{k\in \mathbb{N}}, \vek >0, \lim_{k \to \infty} \vek = 0 \right \}
 \een
be the set of sequences of positive real numbers converging to $0$. For $(\vek)_k \in \mathcal{E}$ and for a given value of $(t,\xi) \in (0, +\infty) \times M$ we define the following sequence of subsets of $C^2(M)$:
\ben \label{defFea}
F_k = F \big( \vek,t,\xi \big) = \left \{ v \in C^0(M) \textrm{ such that }  \left \| \frac{v}{u + W_{k,t,\xi}} \right \|_{C^0(M)} \le \vek \right \},
\een
where $u=u(X)$ is defined after \eqref{syseta} and $W_{k,t,\xi}$ is as in \eqref{bulle}.

\section{An $H^1$ finite-dimensional reduction method for the semi-decoupled system} \label{theorieH1}

\noindent As discussed in the Introduction, the supercritical nonlinear coupling of system \eqref{intro1} makes the usual variational energy methods ineffective.  In this Section we perform the first step of the fixed-point procedure in the proof of Theorem \ref{thprincipal}.

\medskip

\noindent As a first step in view of the $H^1$-theory, we get rid of the negative non-linearity in \eqref{intro1}. We let, for any $\ve >0$ and $r \in \RR$:
\[
\rho_\ve(r) = \left \{
\bal
& \ve \textrm{ if } r < \ve \\
& r \textrm{ if } r \ge \ve. \\ 
\eal \right.
\] 
We let $T$ denote any field of $1$-forms and we introduce the following truncation of the scalar equation of \eqref{intro1}:
\ben \label{ELeps}
\triangle_g u + hu = f u^{2^*-1} + \frac{|\Lg T + \sigma|^2 + \pi^2}{\rho_{\ve}(u)^{2^*+1}}.
\een
The following Lemma, proven in Premoselli-Wei \cite{PremoselliWei}, holds:
\begin{lemme} \label{minor}
There exists $\ve_0 > 0$ depending only on $g,h,f,\pi$  such that for any $1$-form $T$ and any $0 \le \ve \le \ve_0$, any $C^2$ positive solution $u$ of \eqref{ELeps} satisfies:
\[ \min_M u \ge \ve_0. \]  
In particular, for $0 \le \ve \le \ve_0$, any $C^2$ positive solution of \eqref{ELeps}  also solves:
\[ \triangle_g u + hu = f u^{2^*-1} + \frac{|\Lg T + \sigma|^2 + \pi^2}{u^{2^*+1}}. \]
\end{lemme}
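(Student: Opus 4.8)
The plan is to derive the lower bound from a local estimate at the minimum point together with the structure of the negative nonlinearity, exploiting the fact that the $\rho_\ve(u)^{-2^*-1}$ term blows up as $u \to 0$. Let $u$ be a $C^2$ positive solution of \eqref{ELeps} and let $x_0 \in M$ be a point where $u$ attains its minimum. At $x_0$ we have $\triangle_g u(x_0) = -\mathrm{div}_g(\nabla u)(x_0) \le 0$, since $x_0$ is an interior minimum of $u$ on the closed manifold $M$ (the Hessian of $u$ is nonnegative there, so its trace, $-\triangle_g u$, is nonnegative). Plugging this into \eqref{ELeps} evaluated at $x_0$ gives
\[
h(x_0) u(x_0) \ge f(x_0) u(x_0)^{2^*-1} + \frac{|\Lg T + \sigma|_g^2(x_0) + \pi(x_0)^2}{\rho_\ve(u(x_0))^{2^*+1}} \ge \frac{\pi(x_0)^2}{\rho_\ve(u(x_0))^{2^*+1}},
\]
where we have discarded the (nonnegative) first term on the right since $f>0$ and $u>0$.

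Next I would distinguish two regimes according to whether $\min_M u = u(x_0)$ is below or above the truncation level $\ve$. If $u(x_0) \ge \ve$, then $\rho_\ve(u(x_0)) = u(x_0)$, and the displayed inequality becomes $h(x_0) \ge \pi(x_0)^2 u(x_0)^{-2^*-2}$, i.e. $u(x_0)^{2^*+2} \ge \pi(x_0)^2 / h(x_0)$; if $u(x_0) < \ve$, then $\rho_\ve(u(x_0)) = \ve$ and one gets $h(x_0) u(x_0) \ge \pi(x_0)^2 \ve^{-2^*-1}$, which already forces $u(x_0)$ to be large unless $\pi(x_0)$ is very small — this will need care when $\pi$ has zeros. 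The subtlety is that $\pi$ may vanish somewhere, so $\pi(x_0)^2/h(x_0)$ is not bounded below by a positive constant pointwise; I expect this to be the main obstacle, and it is presumably handled in \cite{PremoselliWei} by a more global argument rather than a pure maximum-principle evaluation at one point — for instance by integrating \eqref{ELeps} against a suitable test function, or by a Harnack-type/elliptic estimate that compares $\min_M u$ with $\|u\|_{L^1}$ or with the value of $u$ near the set $\{\pi \ne 0\}$, using that $\pi \not\equiv 0$ and $\triangle_g + h$ is coercive. Concretely, integrating \eqref{ELeps} over $M$ yields $\int_M h u \, dv_g \ge \int_M f u^{2^*-1} dv_g + \int_M (|\Lg T+\sigma|_g^2 + \pi^2)\rho_\ve(u)^{-2^*-1} dv_g$; if $\min_M u$ were tiny, then $\rho_\ve(u)^{-2^*-1}$ would be enormous on a definite region where $\pi \ne 0$, contradicting the fact that $\int_M h u$ is controlled (via coercivity and, say, the $f u^{2^*-1}$ term or an a priori $L^1$ bound) — this is the mechanism that should produce a threshold $\ve_0$ depending only on $g,h,f,\pi$.

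Once the lower bound $\min_M u \ge \ve_0$ is established for $0 \le \ve \le \ve_0$, the final assertion is immediate: on the set $\{u \ge \ve_0\} = M$ one has $u \ge \ve_0 \ge \ve$, hence $\rho_\ve(u) = u$ everywhere, so \eqref{ELeps} reduces to
\[
\triangle_g u + hu = f u^{2^*-1} + \frac{|\Lg T + \sigma|_g^2 + \pi^2}{u^{2^*+1}},
\]
which is exactly the scalar equation of \eqref{intro1}. Since the statement attributes the full proof to \cite{PremoselliWei}, I would at this point simply invoke that reference for the delicate quantitative step, having explained above why the maximum-principle heuristic at $x_0$ captures the right phenomenon and why the only genuine difficulty is dealing with the zero set of $\pi$.
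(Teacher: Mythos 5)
The maximum-principle step at the interior minimum is fine, and you correctly pinpoint the obstruction: if $\pi(x_0)=0$, the pointwise inequality at $x_0$ is vacuous. Note also that the paper itself gives no proof of Lemma~\ref{minor} --- it is cited from \cite{PremoselliWei} --- so your attempt is being measured against a complete argument, not against text in this paper. Your closing ``in particular'' step is correct.

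The gap is in the completion you sketch. You assert that $\int_M hu\,dv_g$ is ``controlled (via coercivity and, say, the $fu^{2^*-1}$ term or an a priori $L^1$ bound).'' None of these does the job: coercivity bounds $\int_M(|\nabla u|^2+hu^2)\,dv_g$ from \emph{below} and says nothing about $\int_M hu\,dv_g$ from above; the $fu^{2^*-1}$ term sits on the same side of the integrated identity as the negative nonlinearity and hence only pushes $\int_M hu\,dv_g$ up, not down; and no a priori $L^1$ bound is available here. A second, related, hole is that for $\rho_\ve(u)^{-2^*-1}$ to be large ``on a definite region where $\pi\ne 0$'' you must know $u$ is small \emph{there}, not merely at $x_0$; nothing you wrote excludes the smallness being confined to $\{\pi=0\}$. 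Both holes are closed by the one tool you list among your options but do not actually deploy: since $u>0$ and $(\triangle_g+h)u\ge 0$ (the right-hand side of \eqref{ELeps} is nonnegative), $u$ is a positive supersolution of the coercive uniformly elliptic operator $\triangle_g+h$, and the weak Harnack inequality, chained over a finite cover of the compact $M$, gives $\Vert u\Vert_{L^1(M)}\le C(g,h)\,\min_M u$. This simultaneously controls $\big|\int_M hu\,dv_g\big|$ by $\min_M u$ and, by Chebyshev, forces $u\lesssim \min_M u$ on a fixed fraction of any open set where $\pi^2$ is bounded below; inserting this into the integrated equation (or into a Green's representation at $x_0$) yields $\min_M u\cdot\max\big(\ve,\min_M u\big)^{2^*+1}\ge c(g,h,\pi)>0$, from which the threshold $\ve_0$ follows. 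Without that Harnack step, the integration argument as written does not close.
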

\noindent In the following, if $\ve_0$ is given by Lemma \ref{minor}, we will let:
\ben \label{defeta}
\rho = \rho_\ve \textrm{ for some fixed } 0 < \ve \le \frac{1}{4} \ve_0.
\een 

\noindent We now decouple system \eqref{intro1}. Let $(\vek)_k \in \mathcal{E}$, $(t,\xi) \in (0, +\infty) \times M$  and $v_k \in F_k = F \big( \vek,t,\xi \big)$, where $\mathcal{E}$ and $F_k$ are defined in \eqref{defEk} and \eqref{defFea}. To every $v_k$ we associate a unique $1$-form $T_k = T_{k,t,\xi}$ defined as the unique solution in $M$ of
\ben \label{defZk1forme}
\Dg T_k = \Big(u + W_{k,t,p} + v_k \Big)^{2^*} X + Y.
\een
Such a $1$-form $T_k$ is unique since $(M,g)$ possesses no conformal Killing fields, and Proposition \ref{controleLTkdessus} provides us with sharp pointwise asymptotics on $T_k$. We introduce the following equation in $M$, which is the scalar equation of \eqref{intro1} with an additional source term, of unknown $w$:
\ben \label{damping}
\triangle_g w + hw = fw^{2^*-1} + \frac{|\Lg T + \sigma|^2 + \pi^2}{\rho(w)^{2^*+1}} +  \frac{ |\Lg T_{k,t,\xi} + \sigma|_g^2 - |\Lg T + \sigma|_g^2 }{\left( u + W_{k,t_k,\xi_k} + v_k \right)^{2^*+1}},
\een
where $T_{k,t,\xi}$ is given by \eqref{defZk1forme} and $(u,T)$ are as in \eqref{syseta}. Choosing $v_k$ and investigating equation \eqref{damping} amounts to semi-decoupling the system \eqref{intro1}.
In this section we solve equation \eqref{damping} \emph{via} a standard $H^1$ finite-dimensional reduction.

\medskip
\noindent Classically, the first step consists in showing that the linearized operator for the scalar equation of \eqref{intro1} can be inverted in the orthogonal of $K_{k,t,\xi}$:
\begin{prop} \label{propinvoplin}
Let $D >0$. There exists a positive constant $C$ such that for any $(t,\xi) \in [1/D, D]\times M$ and for any $k$ there holds:
\ben \label{invoplin}
\frac{1}{C}  \| \psi \|_{H^1(M)}  \le \Vert L_{k,t,\xi} (\psi) \Vert_{H^1(M)} \le C \| \psi \|_{H^1(M)}  \textrm{ for any } \psi \in K_{k,t,\xi},
\een
where $K_{k,t,\xi}$ is as in \eqref{noyau} and where we have let, for any $\psi \in K_{k,t,\xi}$ :
\ben \label{defoplin}
\bal
L_{k,t,\xi}(\psi) = 
\Pi_{K_{k,t,\xi}^{\perp}} \Bigg(\psi - \left( \triangle_g + h \right)^{-1} \Big[ (2^*-1) f \big( u + W_{k,t,\xi} \big)^{2^*-2} \psi  \\
- (2^*+1) \frac{|\Lg T + \sigma|^2 + \pi^2}{\big(u + W_{k,t,\xi} \big)^{2^*+2}} \psi \Big]  \Bigg),\\
\eal 
\een
where $\Pi_{K_{k,t,\xi}^{\perp}}$ denotes the orthogonal projection on $K_{k,t,\xi}^\perp$ with respect to the scalar product given in \eqref{psH1}. In \eqref{defoplin}, $(u,T) = (u(X), T(X))$ are defined in the discussion following \eqref{syseta}.
\end{prop}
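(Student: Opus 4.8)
The plan is to establish the invertibility estimate \eqref{invoplin} by a by-now standard argument for Lyapunov-Schmidt reductions, combining a blow-up (contradiction) argument with the non-degeneracy of the limit profile $u$ and the non-degeneracy of the Euclidean bubble $U_\xi$. The upper bound in \eqref{invoplin} is immediate: since the coefficients $(2^*-1) f (u + W_{k,t,\xi})^{2^*-2}$ and $(2^*+1)(|\Lg T + \sigma|^2 + \pi^2)(u + W_{k,t,\xi})^{-2^*-2}$ are bounded in, say, $L^{n/2}(M)$ uniformly in $k$ and in $(t,\xi)\in[1/D,D]\times M$, the operator $\psi \mapsto (\triangle_g+h)^{-1}[\cdots]$ is bounded on $H^1(M)$ by Sobolev embedding and elliptic regularity, and the orthogonal projection $\Pi_{K^\perp_{k,t,\xi}}$ has norm one; so $L_{k,t,\xi}$ is a bounded operator with norm controlled independently of the parameters.

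The core is the lower bound, which I would prove by contradiction. Suppose it fails: then there exist sequences $k\to\infty$, $(t_k,\xi_k)\in[1/D,D]\times M$ and $\psi_k \in K_{k,t_k,\xi_k}^\perp$ with $\|\psi_k\|_{H^1(M)} = 1$ and $\|L_{k,t_k,\xi_k}(\psi_k)\|_{H^1(M)} \to 0$. Write $f_k := L_{k,t_k,\xi_k}(\psi_k)$, so that $\psi_k - (\triangle_g+h)^{-1}[(2^*-1)f(u+W_k)^{2^*-2}\psi_k - (2^*+1)(|\Lg T+\sigma|^2+\pi^2)(u+W_k)^{-2^*-2}\psi_k]$ lies in $K_{k,t_k,\xi_k}$ and differs from $f_k$ by an element of $K_{k,t_k,\xi_k}$; testing against $\psi_k$ (which is orthogonal to the kernel) and against the $Z_{i,k,t_k,\xi_k}$ will let me bound the kernel part and conclude that, with $r_k$ as above and up to a subsequence, $\psi_k \rightharpoonup \psi_\infty$ weakly in $H^1(M)$. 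The first step is to identify the weak limit: passing to the limit in the equation, using that $W_k \to 0$ in $H^1(M)$ and $W_k^{2^*-2} \to 0$ away from $\xi_0$, and that $\xi_k \to \xi_0$ (or along a subsequence $\xi_k \to \bar\xi$), one finds that $\psi_\infty$ solves the linearized equation at $(u,T)$, namely $\triangle_g \psi_\infty + h\psi_\infty = (2^*-1)fu^{2^*-2}\psi_\infty - (2^*+1)(|\Lg \tilde Y+\sigma|^2+\pi^2)u^{-2^*-2}\psi_\infty$ in the weak sense. By the strict stability \eqref{uetastableX} of $u$, the associated quadratic form is coercive, so $\psi_\infty \equiv 0$.

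The second step is the rescaled analysis near the concentration point, which captures the bubble's contribution. Set $\tilde\psi_k(y) = \delta_k^{(n-2)/2}\psi_k(\exp_{\xi_k}^{g_{\xi_k}}(\delta_k y))$ for $y$ in a ball of radius $\sim r_k/\delta_k \to \infty$. Standard arguments (the equation for $\psi_k$ rescales to $\triangle_{\mathrm{eucl}}\tilde\psi_k = (2^*-1)f(\xi_k)U_{\bar\xi}^{2^*-2}\tilde\psi_k + o(1)$ in $C^0_{loc}$, since the negative-nonlinearity coefficient, rescaled, tends to zero and the metric flattens by \eqref{confnorm}) give, up to a subsequence, $\tilde\psi_k \to \tilde\psi_\infty$ in $C^1_{loc}(\RR^n)$ with $\tilde\psi_\infty \in \mathcal D^{1,2}(\RR^n)$ solving the linearized Euclidean equation \eqref{eqlin}. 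By Bianchi-Egnell \cite{BianchiEgnell}, $\tilde\psi_\infty = \sum_{i=0}^n c_i V_{i,\bar\xi}$. But the orthogonality conditions $\langle \psi_k, Z_{i,k,t_k,\xi_k}\rangle_h = 0$ pass to the limit — after rescaling, $Z_{i,k,t_k,\xi_k}$ becomes (up to normalization) $V_{i,\bar\xi}$ and $\langle\cdot,\cdot\rangle_h$ becomes the Dirichlet product on $\RR^n$ — forcing all $c_i = 0$, hence $\tilde\psi_\infty \equiv 0$. Finally, knowing that $\psi_k \to 0$ both weakly in $H^1(M)$ and in the rescaled sense near $\xi_0$, one runs a standard argument to upgrade this to $\|\psi_k\|_{H^1(M)} \to 0$: testing the equation for $\psi_k$ against $\psi_k$ itself, the right-hand side is $\int_M (2^*-1)f(u+W_k)^{2^*-2}\psi_k^2 - (2^*+1)(\cdots)\psi_k^2 + o(1)$; the $u$-part tends to $0$ since $\psi_k \rightharpoonup 0$ and the coefficient is subcritical-integrable, while the $W_k$-part concentrates at $\xi_0$ and, by the rescaled convergence $\tilde\psi_\infty\equiv 0$ together with the strong $\mathcal D^{1,2}$-decomposition at the bubble scale, also tends to $0$. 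Hence $\|\psi_k\|_{H^1(M)}^2 = o(1)$, contradicting $\|\psi_k\|_{H^1(M)} = 1$.

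The main obstacle I expect is the last upgrading step — controlling the critical term $\int_M W_k^{2^*-2}\psi_k^2$ when only weak $H^1$-convergence of $\psi_k$ is available. This is exactly the point where concentration-compactness subtleties enter: one must show there is no leftover mass escaping at the bubble scale, which requires the pointwise/rescaled control near $\xi_0$ rather than merely energy bounds. Here the $C^0$-type machinery and the precise form of $W_{k,t,\xi}$ in \eqref{bulle} — in particular the uniformity in $(t,\xi)\in[1/D,D]\times M$ and the lower bound $r_k^N \gg \mu_k$ from \eqref{relationrkmk}, which guarantees the cutoff $\chi$ does not interfere at the bubble scale — do the work; the argument is standard in the Lyapunov-Schmidt literature (compare \cite{RobertVetois2}, \cite{Rey}, \cite{Wei}) but the presence of the negative nonlinearity $(|\Lg T+\sigma|^2+\pi^2)u^{-2^*-2}$ means one must check at each step that this term, both globally and after rescaling, is harmless — globally it only improves coercivity via \eqref{uetastableX}, and after rescaling by $\delta_k^2$ it is $O(\delta_k^2 W_k^{-2^*-2})$ which, since $W_k \gtrsim \delta_k^{(n-2)/2}$ on the bubble core, is $o(1)$ locally.
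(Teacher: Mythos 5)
Your proposal follows the standard Lyapunov--Schmidt invertibility argument (blow-up/contradiction, weak limit annihilated by the non-degeneracy of $u$, rescaled limit annihilated by Bianchi--Egnell and the kernel-orthogonality, then upgrading weak to strong convergence). The paper does not prove the proposition: it simply cites Robert--V\'etois \cite{RobertVetois} for the details, and your argument is essentially the one in that reference, correctly adapted to include the negative nonlinearity.

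Two small slips, neither conceptual. First, in the weak-limit equation you write $|\Lg\tilde Y+\sigma|^2$ where it should be $|\Lg T+\sigma|^2$ with $T=T(X)$; this is what appears in \eqref{uetastableX}, which is the coercivity you invoke. Second, in the final parenthetical you assert $W_k\gtrsim\delta_k^{(n-2)/2}$ on the bubble core; the correct lower bound is $W_k\gtrsim\delta_k^{-(n-2)/2}$ (the bubble is \emph{large} near its center), so that $\delta_k^2 W_k^{-2^*-2}\lesssim\delta_k^{2n}\to 0$. As written the exponent would actually make the rescaled negative nonlinearity blow up, but the conclusion you state ($o(1)$ locally) is the correct one and follows from the correct bound.
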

\noindent The proof of Proposition \ref{propinvoplin} follows from standard arguments and is clearly detailed in Robert-Vetois \cite{RobertVetois}. The main result of this section is the resolution of the full non-linear equation \eqref{damping}, given by the following Proposition:

\begin{prop} \label{propptfixe1}
Let $D >0$ and $(\vek)_k \in \mathcal{E}$ and assume that  $\vek >> \mk^{\frac{3}{2}}$ as $k \to + \infty$, where $\mk$ is defined in \eqref{defmk}. Let $(t_k,\xi_k)_k$ be a sequence in $[1/D, D]\times M$ and, for any $k$, let $v_k \in F_k = F \big( \vek,t_k,\xi_k \big)$. For $k$ large enough, there exists a function $\phi_k = \phi_k \big(t_k,\xi_k,v_k \big) \in K_{k,t_k,\xi_k}^\perp$ that solves the following equation:
\ben \label{restephik}
\bal
&\Pi_{K_{k,t_k,\xi_k}^\perp} \Bigg \{ u + W_{k,t_k,\xi_k} + \phi_k \\
&- \left( \triangle_ g+ h \right)^{-1} \left( f \left( u + W_{k,t_k,\xi_k} + \phi_k \right)^{2^*-1}  + \frac{|\Lg T + \sigma|_g^2 + \pi^2}{\rho \left( u + W_{k,t_k,\xi_k} + \phi_k\right)^{2^*+1}} \right) \\
& - \left( \triangle_g + h \right)^{-1} \left( \frac{ |\Lg T_k + \sigma|_g^2 - |\Lg T + \sigma|_g^2 }{\left( u + W_{k,t_k,\xi_k} + v_k \right)^{2^*+1}} \right) \Bigg \} = 0.
\eal
\een
This $\phi_k$ is the unique solution of \eqref{restephik} in $K_{k,t_k,\xi_k}^\perp \cap B_{H^1(M)}(0, C \eta \ve_k)$, where $C$ denotes some
positive constant that does not depend on $k$, on the choice of $(t_k,\xi_k)_k$ or on $\eta$ as in \eqref{introeta}. Also, in \eqref{restephik}, $K_{k,t_k,\xi_k}$ is as in \eqref{noyau}, $\rho$ is as in \eqref{defeta},
 and $T_k$ is as in \eqref{defZk1forme}. 
\end{prop}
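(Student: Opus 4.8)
\medskip

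\noindent The plan is to carry out a Lyapunov--Schmidt reduction: cast \eqref{restephik} as a fixed-point equation and solve it with the Banach--Picard fixed point theorem, the uniform invertibility of the linearized operator being provided by Proposition \ref{propinvoplin}. First I would expand the nonlinearities of \eqref{restephik} to first order around the ansatz $u + W_{k,t_k,\xi_k}$: for $\phi \in K_{k,t_k,\xi_k}^\perp$, the left-hand side of \eqref{restephik} rewrites as
\ben \label{planexpansion}
L_{k,t_k,\xi_k}(\phi) - \mathcal{R}_k - \mathcal{Q}_k(\phi) - \mathcal{S}_k,
\een
where $L_{k,t_k,\xi_k}$ is the operator of \eqref{defoplin}; the ansatz error is
\[ \mathcal{R}_k = \Pi_{K_{k,t_k,\xi_k}^\perp}\left\{ \left(\triangle_g + h\right)^{-1}\left( f\big(u + W_{k,t_k,\xi_k}\big)^{2^*-1} + \frac{|\Lg T + \sigma|_g^2 + \pi^2}{\rho\big(u + W_{k,t_k,\xi_k}\big)^{2^*+1}}\right) - \big(u + W_{k,t_k,\xi_k}\big)\right\}\,; \]
$\mathcal{Q}_k(\phi)$ gathers, composed with $\left(\triangle_g + h\right)^{-1}$ and projected on $K_{k,t_k,\xi_k}^\perp$, the remainders of the first-order Taylor expansions of $r \mapsto f r^{2^*-1}$ and $r \mapsto (|\Lg T + \sigma|_g^2 + \pi^2)\rho(r)^{-2^*-1}$ at $r = u + W_{k,t_k,\xi_k}$; and the semi-decoupling source is
\[ \mathcal{S}_k = \Pi_{K_{k,t_k,\xi_k}^\perp}\left(\triangle_g + h\right)^{-1}\left( \frac{|\Lg T_k + \sigma|_g^2 - |\Lg T + \sigma|_g^2}{\big(u + W_{k,t_k,\xi_k} + v_k\big)^{2^*+1}}\right). \]
Since $r \mapsto \rho(r)^{-2^*-1}$ is bounded and globally Lipschitz on $\RR$ -- which is exactly the purpose of the truncation $\rho$ of \eqref{defeta} -- all these objects are well defined for $\phi \in H^1(M)$, and, by \eqref{planexpansion} and since $L_{k,t_k,\xi_k}$ maps $K_{k,t_k,\xi_k}^\perp$ bijectively onto itself, solving \eqref{restephik} is equivalent to finding a fixed point in $K_{k,t_k,\xi_k}^\perp$ of $\Phi_k(\phi) := L_{k,t_k,\xi_k}^{-1}\big(\mathcal{R}_k + \mathcal{Q}_k(\phi) + \mathcal{S}_k\big)$.

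\medskip

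\noindent The heart of the argument is then a set of three estimates, uniform in $(t_k,\xi_k) \in [1/D, D]\times M$. For the ansatz error, using the explicit form \eqref{bulle} of $W_{k,t_k,\xi_k}$, the conformal normal coordinates \eqref{confnorm}--\eqref{propLambdaSg}, the size conditions $|s_k| = O(\mk^N)$, $\beta_k \gg \mk$ and $r_k^N \gg \mk$, and the standard interaction estimates between $u$ and $W_{k,t_k,\xi_k}$ -- noting that $\rho(u + W_{k,t_k,\xi_k}) = u + W_{k,t_k,\xi_k}$ since $\min_M u \ge \ve_0 > \ve$ by Lemma \ref{minor} and \eqref{defeta} -- one obtains $\Vert \mathcal{R}_k \Vert_{H^1(M)} = O(\mk^{3/2})$, hence $\mathcal{R}_k = o(\eta\vek)$ in $H^1(M)$ by the assumption $\vek \gg \mk^{3/2}$. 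For the nonlinear term, Hölder's and Sobolev's inequalities -- splitting the domain according to whether $|\phi|$ exceeds a fixed threshold, in order to handle the corner of $\rho$ -- yield $\Vert \mathcal{Q}_k(\phi) - \mathcal{Q}_k(\phi')\Vert_{H^1(M)} \le C\big(\Vert \phi\Vert_{H^1(M)} + \Vert \phi'\Vert_{H^1(M)}\big)^{2^*-2}\Vert \phi - \phi'\Vert_{H^1(M)}$ with $\mathcal{Q}_k(0) = 0$, so on $B_{H^1(M)}(0, C\eta\vek)$ the map $\phi \mapsto \mathcal{Q}_k(\phi)$ is a contraction whose Lipschitz constant tends to $0$ as $k \to +\infty$. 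For the source term -- and this is where Proposition \ref{controleLTkdessus} enters -- one uses that $\Dg(T_k - T) = \big((u + W_{k,t_k,\xi_k} + v_k)^{2^*} - u^{2^*}\big)X$ with $\Vert X\Vert_{L^\infty(M)} = \eta$ to control $\Lg(T_k - T)$ pointwise through the sharp $1$-form asymptotics of Proposition \ref{controleLTkdessus}; then, writing $|\Lg T_k + \sigma|_g^2 - |\Lg T + \sigma|_g^2 = |\Lg(T_k - T)|_g^2 + 2\langle \Lg(T_k - T), \Lg T + \sigma\rangle_g$ and exploiting $v_k \in F_k$, i.e.\ $|v_k| \le \vek(u + W_{k,t_k,\xi_k})$, to bound the denominator $(u + W_{k,t_k,\xi_k} + v_k)^{2^*+1}$ from below by a fixed multiple of $(u + W_{k,t_k,\xi_k})^{2^*+1}$, one arrives at $\Vert \mathcal{S}_k\Vert_{H^1(M)} \le C\eta\vek$ with $C$ independent of $k$, of $(t_k,\xi_k)$ and of $\eta$.

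\medskip

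\noindent With these three facts in hand, taking $C$ large enough (depending only on the constants of Proposition \ref{propinvoplin} and on the constant appearing in the bound on $\mathcal{S}_k$), the first and third estimates show that $\Phi_k$ maps $B := K_{k,t_k,\xi_k}^\perp \cap B_{H^1(M)}(0, C\eta\vek)$ into itself for $k$ large, while the second makes $\Phi_k$ a contraction on $B$; the Banach--Picard fixed point theorem then yields a unique $\phi_k \in B$ with $\phi_k = \Phi_k(\phi_k)$, equivalently solving \eqref{restephik}, and the uniqueness assertion of the Proposition is exactly the uniqueness in the fixed point theorem, any solution of \eqref{restephik} lying in $B$ being automatically a fixed point of $\Phi_k$. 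The step I expect to be the main obstacle is the source-term estimate: contrary to the classical reduction, in which the ansatz error is the leading term governing the size of the remainder, here the semi-decoupling source $\mathcal{S}_k$ dominates and carries the fast-growing field $\Lg T_k$, so that securing the clean bound $\Vert \mathcal{S}_k\Vert_{H^1(M)} \le C\eta\vek$ with $C$ independent of $\eta$ -- which is precisely what allows the fixed-point ball to be taken of radius $C\eta\vek$ -- relies both on the precise $1$-form asymptotics of Proposition \ref{controleLTkdessus} and on the pointwise a priori control $v_k \in F_k$; this balance is also what underlies the restriction $\vek \gg \mk^{3/2}$ and the appearance of the factor $\eta$ in the statement.
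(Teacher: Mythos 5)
Your proof is correct and follows essentially the same route as the paper: both cast \eqref{restephik} as a fixed-point equation for $\phi$ via the invertibility of $L_{k,t_k,\xi_k}$ (Proposition \ref{propinvoplin}), bound the ansatz error in $H^1$ by $O(\mk^{3/2}) = o(\eta\vek)$, obtain the quadratic Lipschitz estimate $C(\Vert\phi_1\Vert^{4/(n-2)}+\Vert\phi_2\Vert^{4/(n-2)})\Vert\phi_1-\phi_2\Vert$ for the nonlinear remainder, and -- crucially -- exploit the pointwise control $v_k \in F_k$ together with the $1$-form asymptotics of Proposition \ref{controleLTkdessus} to bound the semi-decoupling source by $C\eta\vek$, before applying Banach--Picard on a ball of radius $\sim\eta\vek$. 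The only cosmetic difference is that the paper packages the ansatz error and source term into a single $R_k$ while you keep them as $\mathcal{R}_k$ and $\mathcal{S}_k$; the substance of each estimate is the same.
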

\noindent As before, in \eqref{restephik} $(u,T) = (u(X), T(X))$ denote the specific solution of \eqref{syseta} obtained by the implicit function theorem when \eqref{introeta} holds. As an obvious consequence of Proposition \ref{propptfixe1}, the function $\phi_k$ constructed therein satisfies:
\ben \label{propphik1}
\phi_k \in K_{k,t_k,\xi_k}^\perp \textrm{ for all } (t_k,\xi_k)_k \in [1/D, D] \times M,
\een
and
\ben \label{propphik2}
\Vert \phi_k \Vert_{H^1(M)} \le C \eta \vek,
\een
for some constant $C$ which is independent of $(t_k,\xi_k)_k$, $k$ and $\eta$.

\medskip
\noindent Note that the necessity to introduce a source term in \eqref{damping} is due to the blow-up behavior of $\Lg T_k$ as given by Proposition \ref{controleLTkdessus} below. Here, in the setting of Proposition \ref{propptfixe1}, the finite-dimensional reduction will work since the nonlinearity $u \mapsto (|\Lg T + \sigma|^2 + \pi^2)\rho(u)^{-2^*+1}$ is of subcritical type in the sense of Robert-V\'etois \cite{RobertVetois}. But, as can be easily checked, there is no hope to even get an analogue of Proposition \ref{propinvoplin} if instead we considered $ (|\Lg T_k + \sigma|^2 + \pi^2)\rho(u)^{-2^*+1}$, with $T_k$ given by \eqref{defZk1forme}.

\begin{proof}
Let $D >0$ and $(\vek)_k \in \mathcal{E}$. Assume that 
\ben \label{condvek}
\vek >> \mk^{\frac{3}{2}}
\een
where $\mk$ is as in \eqref{defmk}. Let $(t_k,\xi_k)_k \in [1/D,D] \times M$ and $(v_k)_k$, $v_k \in F_k =  F_{\ve_k,t_k,\xi_k}$ be fixed. In the proof of Proposition \ref{propptfixe1}, for the sake of clarity and since no ambiguity will occur, we will omit the dependence in $t_k$ and $\xi_k$ in the quantities appearing. Hence we shall denote $K_{k,t_k,\xi_k}$ by $K_k$, $W_{k,t_k,\xi_k}$ by $W_k$ and so on. Let $\phi \in K_{k}^\perp$ be fixed.  It is easily seen that $\phi$ solves:
\be
\bal
\Pi_{K_{k}^\perp} &\Bigg \{ u + W_{k} + \phi \\
&- \left( \triangle_ g+ h \right)^{-1} \left( f \left( u + W_{k} + \phi \right)^{2^*-1}  + \frac{|\Lg T + \sigma|_g^2 + \pi^2}{\rho \left( u + W_{k} + \phi\right)^{2^*+1}} \right) \\
& + \left( \triangle_g + h \right)^{-1} \left( \frac{ |\Lg T_k + \sigma|_g^2 - |\Lg T + \sigma|_g^2 }{\left( u + W_{k} + v_k \right)^{2^*+1}} \right) \Bigg \} = 0
\eal
\ee
if and only if $\phi$ solves the fixed-point equation:
\be 
\phi = \Theta_k(\phi) \qquad \textrm{ in } K_k^{\perp},
\ee
where we have let
\ben \label{propptfixe2}
\Theta_k(\phi) = L_{k}^{-1} \circ \Pi_{K_k^\perp} \circ \left( \triangle_g + h \right)^{-1} \circ N_k(\phi) - L_k^{-1} \circ \Pi_{K_k^\perp} (R_k),
\een
where $L_k= L_{k,t_k,\xi_k}$ is as in \eqref{defoplin} and where in \eqref{propptfixe2} we have let
\ben \label{propptfixe3}
\bal
N_k(\phi) & = f \left[ \left(u + W_k + \phi \right)^{2^*-1} - \left( u + W_k \right)^{2^*-1} - (2^*-1) \left( u + W_k \right)^{2^*-2} \phi \right] \\
& + \left( |\Lg T + \sigma|_g^2 + \pi^2 \right) \Bigg[ \rho\left( u + W_k + \phi \right)^{-2^*-1} - \left( u + W_k \right)^{-2^*-1} + (2^*+1) \left( u+W_k \right)^{-2^*-2} \phi \Bigg]
\eal
\een
and
\ben \label{propptfixe4}
\bal
R_k = u + W_k - \left( \triangle_g + h\right)^{-1} & \left( f \left( u + W_k \right)^{2^*-1} + \frac{|\Lg T + \sigma|_g^2 + \pi^2 }{\left( u + W_k \right)^{2^*+1}}\right) \\
& + \left( \triangle_g + h \right)^{-1} \left( \frac{ |\Lg T_k + \sigma|_g^2 - |\Lg T + \sigma|_g^2}{\left(u + W_k + v_k \right)^{2^*+1} } \right).
\eal
\een
Note that $\Theta_k$ is well-defined because of Proposition \ref{propinvoplin}. We now apply Banach-Picard's fixed-point theorem to $\Theta_k$ defined in \eqref{propptfixe2}. Let $\phi_1, \phi_2 \in K_{k}^\perp$. Standard computations show that there holds, for some positive constant $C$ independent of $k$ and $\eta$ as in \eqref{introeta}:
\ben \label{propptfixe5}
\bal
\left | \left | f \left[ \left(u + W_k + \phi_1 \right)^{2^*-1} - \left( u + W_k + \phi_2 \right)^{2^*-1} - (2^*-1) \left( u + W_k \right)^{2^*-2} ( \phi_1 - \phi_2) \right] \right| \right|_{(H^1(M))'} \\
\le C \left( \Vert \phi_1\Vert_{H^1(M)}^{\frac{4}{n-2}} + \Vert \phi_2 \Vert_{H^1(M)}^{\frac{4}{n-2}} \right)  \Vert \phi_1 - \phi_2 \Vert_{H^1(M)},
\eal
\een
and 
\ben \label{propptfixe6}
\bal
& \Bigg| \Bigg|  \left( |\Lg T + \sigma|_g^2 + \pi^2 \right) \Bigg[ \rho\left( u + W_k + \phi_1 \right)^{-2^*-1} - \rho \left( u + W_k + \phi_2 \right)^{-2^*-1} \\
& \qquad \qquad \qquad \qquad \qquad \qquad \qquad \qquad + (2^*+1) \left( u+W_k \right)^{-2^*-2} (\phi_1 - \phi_2) \Bigg] \Bigg| \Bigg|_{(H^1(M))'} \\
& \le C \sup_{v \in [\phi_1, \phi_2]} \left| \left| \frac{\rho' \left( u + W_k + v \right)}{\rho \left( u+W_k+v \right)^{2^*+2}} - \frac{1}{\left(u + W_k \right)^{2^*+2}} \right| \right|_{L^{\frac{n}{2}}(M)} \Vert \phi_1 - \phi_2 \Vert_{H^1(M)}.
\eal
\een
Using the definition of $\rho$ in \eqref{defeta} one obtains that for any $v \in [\phi_1, \phi_2] \subset K_k^\perp$:
\ben \label{propptfixe7}
 \left| \left| \frac{\rho' \left( u + W_k + v \right)}{\rho \left( u+W_k+v \right)^{2^*+2}} - \frac{1}{\left(u + W_k \right)^{2^*+2}} \right| \right|_{L^{\frac{n}{2}}(M)} \le C \Vert v \Vert_{H^1(M)}^{\frac{4}{n-2}},
 \een
 where $C$ is a positive constant that does not depend on $k, \eta$ or $v \in [\phi_1, \phi_2]$. Gathering \eqref{propptfixe5}, \eqref{propptfixe6} and \eqref{propptfixe7} one therefore has that:
 \ben \label{propptfixe8}
\Vert \Theta_k(\phi_1) - \Theta_k(\phi_2) \Vert_{H^1(M)} \le C_1 \left(  \Vert \phi_1\Vert_{H^1(M)}^{\frac{4}{n-2}} + \Vert \phi_2 \Vert_{H^1(M)}^{\frac{4}{n-2}} \right) \Vert \phi_1 - \phi_2 \Vert_{H^1(M)},  
 \een
for some positive constant $C_1$ that neither depends on $k$ nor on $\eta$. We now estimate $\Theta_k(0)$. By Proposition \ref{propinvoplin} there holds:
\ben \label{propptfixe9}
\Vert \Theta_k(0) \Vert_{H^1(M)} \le C \Vert (\triangle_g + h)R_k \Vert_{L^{\frac{2n}{n+2}}(M)},
\een
for some positive $C$ independent of $k$ and $\eta$, where $R_k$ is defined in \eqref{propptfixe4}. Straightforward computations using \eqref{defhintro}, \eqref{defhintro6} and \eqref{deffintro} show that:
\ben \label{propptfixe91}
\left| \left| \left( \triangle_g + h\right)(u + W_k) -  f \left( u + W_k \right)^{2^*-1} + \frac{|\Lg T + \sigma|_g^2 + \pi^2 }{\left( u + W_k \right)^{2^*+1}} \right| \right|_{L^{\frac{2n}{n+2}}(M)} = o(\dk^{\frac32})
\een 
as $k \to + \infty$, and the $o(\dk^{\frac32})$ term in \eqref{propptfixe91}  is uniform in the choice of $(t_k,\xi_k)_k \in[1/D, D] \times M$. We now write that
\[ |\Lg T_k + \sigma|_g^2 - |\Lg T + \sigma|_g^2 = \left \langle \Lg T_k - \Lg T, \Lg T_k + \Lg T + 2 \sigma \right \rangle_{g}\] 
where the scalar product $\langle \cdot, \cdot \rangle_{g}$ is the standard one induced by the metric $g$ on $(2,0)$-tensors. Using the pointwise estimates on $|\Lg T_k - \Lg T|_g$ given by  \eqref{estLTkLT}, \eqref{defXintro} and \eqref{relationrkmk} 
one finds that:
\ben \label{propptfixe10}
 \left| \left| \frac{ |\Lg T_k + \sigma|_g^2 - |\Lg T + \sigma|_g^2}{\left(u + W_k + v_k \right)^{2^*+1} } \right| \right|_{L^{\frac{2n}{n+2}}(M)} \le C' \left( \eta \vek + \dk^{\frac{n}{4}} \right),
\een
for some positive $C'$ independent of $k$ and $\eta$. Remember that throughout this paper we always assume that $n \ge 6$, so \eqref{propptfixe9}, \eqref{propptfixe91} and \eqref{propptfixe10} show that there exists a positive constant $C_2$ independent of $k$ and $\eta$ such that:
\ben \label{propptfixe11}
\Vert \Theta_k(0) \Vert_{H^1(M)} \le C_2 \Big( \eta \vek + \dk^\frac{3}{2} \Big).
\een
We let now 
\ben \label{propptfixe111}
c_k = 2 C_2 \eta \vek,
\een
where $C_2$ is given by \eqref{propptfixe11} and $\eta$ is as in \eqref{introeta}. With \eqref{propptfixe8}, \eqref{propptfixe11} and \eqref{condvek} we obtain that, for any $\phi_1, \phi_2 \in K_k^\perp \cap B_{H^1(M)}(0,c_k)$, there holds:
\ben \label{propptfixe12}
 \Vert \Theta_k(\phi_1) - \Theta_k(\phi_2) \Vert_{H^1(M)} \le C_1 \Big(2  c_k^\frac{4}{n-2} \Big) \Vert \phi_1 - \phi_2 \Vert_{H^1(M)}
 \een
and 
\ben \label{propptfixe13}
 \Vert \Theta_k(\phi_1) \Vert_{H^1(M)} \le \left( \frac{1}{2} + o(1) \right) c_k.
 \een
Up to choosing $k$ large enough, \eqref{propptfixe12} and \eqref{propptfixe13} show that $\Theta_k$ is $\frac{1}{2}$-Lipschitz from $B_{H^1(M)}(0,2 c_k)$ into itself. Banach-Picard's fixed-point theorem applies and, with \eqref{propptfixe11}, provides us with a function $\phi_k \in K_k^\perp$ satisfying \eqref{restephik}, \eqref{propphik1}, \eqref{propphik2}, and which is the only solution of \eqref{restephik} in $K_k \cap B_{H^1(M)}(0, c_k)$, where $c_k$ is as in \eqref{propptfixe111}.
\end{proof}

\noindent Let us point out again that the proof of Proposition \ref{propptfixe1} crucially relies on the \emph{pointwise} estimates available on $v_k$, in particular for estimate \eqref{propptfixe10}.

\section{Pointwise $C^0$ estimates on $\phi_k$} \label{partie4}

\noindent Let $D >0$ and $(\vek)_k \in \mathcal{E}$, defined in \eqref{defEk}. Assume that  $\vek >> \mk^{\frac{3}{2}}$ as $k \to + \infty$, where $\mk$ is defined in \eqref{defmk}. Let $(t_k,\xi_k)_k$ be any sequence in $ [1/D, D]\times M$. For any $v_k \in F_k = F \big( (\vek)_k,t_k,\xi_k \big)$, Proposition \ref{propptfixe1} shows the existence of a function $\phi_k = \phi_k \big(t_k,\xi_k,v_k \big) \in K_{k,t_k,\xi_k}^\perp$ that solves \eqref{restephik} and satisfies \eqref{propphik1} and \eqref{propphik2}.

\noindent As discussed in the Introduction, the proof of Theorem \ref{thprincipal} goes through the application of a Banach-Picard fixed-point theorem to the mapping $v_k \mapsto \phi_k$. This requires to show in particular that such a mapping leaves $F_k$ defined in \eqref{defFea} invariant or, in other words, to obtain an explicit pointwise control on $\phi_k$. This task is achieved in this section and in the two following ones.

\medskip
\noindent From here until the end of the paper, if $(f_k)_k$ denotes some sequence of real numbers or some sequence of functions, the notation $O(f_k)$ will denote a quantity whose absolute value can be bounded by the product of $|f_k|$ and of a constant \emph{independent of $k$ and $\eta$ as in \eqref{introeta}}. The notation $o(f_k)$ is defined accordingly. Similarly, we will write ``$f_k \lesssim g_k$'' when there exists a positive constant $C$ \emph{independent of $k$ and $\eta$ as in \eqref{introeta}} such that $f_k \le C g_k$ for any $k$.

\medskip
\noindent In this section we obtain a first, rough, asymptotic pointwise control of $\phi_k$:
\begin{prop} \label{propC0grossier}
Let $D >0$ and $(\vek)_k \in \mathcal{E}$ and assume that  $\vek >> \mk^{\frac{3}{2}}$ as $k \to + \infty$, where $\mk$ is defined in \eqref{defmk}. Let $(t_k,\xi_k)_k$ be a sequence of points in $[1/D, D]\times M$, and let
$v_k \in F_k = F \big( \vek,t_k,\xi_k \big)$. There exists a sequence $(\nu_k)_k$ of positive numbers that goes to zero as $k \to + \infty$ such that:
\ben \label{estC0grossiere}
|\phi_k(x)| \le \nu_k \Big( u(x) + W_{k,t_k,\xi_k}(x) \Big) \qquad \textrm{ for any } x \in M.
\een
In \eqref{estC0grossiere} we have let $\phi_k = \phi_k \big(t_k,\xi_k,v_k \big) \in K_{k,t_k,\xi_k}$ be the solution of \eqref{restephik} given by Proposition \ref{propptfixe1}. 
\end{prop}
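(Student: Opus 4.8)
The plan is to argue by contradiction and to run the $C^0$-theory machinery of Druet--Hebey--Robert \cite{DruetHebeyRobert} on the function $u_{k} := u + W_{k,t_k,\xi_k} + \phi_k$, treating the source term in \eqref{damping}/\eqref{restephik} as a harmless perturbation. Suppose \eqref{estC0grossiere} fails: then for every sequence $(\nu_k)_k\to 0$ there is, after relabelling, a sequence of points $y_k\in M$ with $|\phi_k(y_k)| > \nu_k\bigl(u(y_k)+W_{k,t_k,\xi_k}(y_k)\bigr)$, i.e. the ratio $\|\phi_k/(u+W_k)\|_{C^0(M)}$ does not tend to zero. First I would record what is already known: by Proposition \ref{propptfixe1}, $\|\phi_k\|_{H^1(M)} \le C\eta\vek \to 0$, so $u_k \to u_0$ weakly in $H^1$ and $W_{k,t_k,\xi_k}$ is a standard single bubble with concentration parameter $\delta_k = \mk t_k$, $t_k\in[1/D,D]$. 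The elliptic equation solved by $u_k$ (after projecting away from $K_{k,t_k,\xi_k}$ and using Lemma \ref{minor} to remove the truncation $\rho$) is the scalar equation of \eqref{intro1} with coefficient $|\Lg T+\sigma|_g^2+\pi^2$ fixed, plus the source term $S_k := \bigl(|\Lg T_k+\sigma|_g^2 - |\Lg T+\sigma|_g^2\bigr)\bigl(u+W_k+v_k\bigr)^{-2^*-1}$ and a finite-dimensional correction $\sum_j \lambda_{k,j} Z_{j,k,t_k,\xi_k}$ whose coefficients are controlled through Proposition \ref{propinvoplin}. Since $v_k\in F_k$, the pointwise bounds on $\Lg T_k$ and $\Lg T_k - \Lg T$ from Proposition \ref{controleLTkdessus} (and \eqref{estLTkLT}) give a pointwise bound on $S_k$ that is negligible with respect to $(u+W_k)^{2^*-1}$ — this is exactly why the source term was artificially isolated in \eqref{schemastep11}.

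The core of the argument is then the $C^0$ concentration analysis applied to $u_k$. I would proceed in the standard stages. \emph{Stage 1 (weak pointwise estimate).} Normalising by $\|\phi_k/(u+W_k)\|_{C^0}$ and localising, one shows that the rescaled remainder either converges to a nontrivial solution of the linearised equation \eqref{eqlin} on $\RR^n$ — which is impossible since $\phi_k \perp K_{k,t_k,\xi_k}$, and in the limit this orthogonality forces the limit to be orthogonal to $\mathrm{Span}\{V_{i,\xi_0}\}$, hence zero — or it converges to a solution of the linearised equation at the weak limit $u_0$, which vanishes by the non-degeneracy assumption \eqref{uetastableX}. \emph{Stage 2 (exhaustion / no other concentration points).} One must rule out secondary bubbles: since $\|\phi_k\|_{H^1}\to 0$ there is no extra energy, so at most one bubble (the prescribed $W_k$) can form, and away from $\xi_k$ the sequence $u_k$ stays uniformly bounded and converges in $C^0_{loc}$ to $u_0$. \emph{Stage 3 (pointwise estimate up to the bubble).} Using the maximum principle / Green's representation together with the global $L^{2^*}$ control, one upgrades convergence to the pointwise bound $u_k \lesssim u + W_k$ on all of $M$, from which $\phi_k = u_k - u - W_k = o(u+W_k)$ in $C^0(M)$ follows — contradicting the failure hypothesis. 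The existence of a sequence $(\nu_k)_k\to 0$ making \eqref{estC0grossiere} hold is then obtained by the usual diagonal extraction argument from $\|\phi_k/(u+W_k)\|_{C^0(M)} = o(1)$.

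Throughout, the non-degeneracy \eqref{uetastableX} of $u$ and the orthogonality $\phi_k\in K_{k,t_k,\xi_k}^\perp$ are what close the two alternatives in Stage 1; the smallness $\|\phi_k\|_{H^1}\to 0$ is what makes Stage 2 trivial (no energy for secondary bubbles). The main obstacle I anticipate is Stage 1 at the scale of the bubble: one has to implement the rescaling $\hat\phi_k(y) = \delta_k^{(n-2)/2}\phi_k(\exp_{\xi_k}^{g_{\xi_k}}(\delta_k y)) / \|\phi_k/(u+W_k)\|_{C^0}$ carefully, check that the source term $S_k$ and the finite-dimensional correction $\sum_j\lambda_{k,j}Z_{j,k,t_k,\xi_k}$ remain lower-order after rescaling — this uses the assumption $\vek \gg \mk^{3/2}$ and the bound $|s_k|=O(\mk^N)$, $|S_k|\lesssim \eta\vek + \dk^{n/4}$ from \eqref{propptfixe10} — and then pass to the limit in the rescaled equation to identify the blow-up limit among the $V_{i,\xi_0}$. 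Handling the transition region between the bubble scale and the fixed scale (where $u_0$ dominates) in a single pointwise estimate, exactly as in \cite{DruetHebeyRobert} and as was done in the blow-up analyses of \cite{DruetPremoselli, Premoselli4}, is the technically heaviest point.
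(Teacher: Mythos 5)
Your high-level plan (rescale, classify the blow-up limit, propagate pointwise control via a representation formula) is in the right spirit, but it is not what the paper does for Proposition~\ref{propC0grossier}, and the substitution you make creates genuine gaps. First, the paper's proof of this proposition does \emph{not} invoke the orthogonality $\phi_k\in K_{k,t_k,\xi_k}^\perp$ or the non-degeneracy~\eqref{uetastableX}: those tools are saved for the later, finer analyses (Claim~\ref{claimNphik} inside Proposition~\ref{C02pres}, and Proposition~\ref{propestglob}). To use the orthogonality to kill a blow-up limit $\tilde\phi_0$ as you propose, you must (i) show $\tilde\phi_0 \in H^1(\mathbb{R}^n)$ so that Bianchi--Egnell applies, which requires a pointwise decay estimate like $|\hat\phi_k(y)|\lesssim(1+|y|)^{2-n}$ obtained by an iterated Green-representation bootstrap (the analogue of~\eqref{C02s17} in the paper), and (ii) pass the orthogonality relation $\langle Z_{i,k},\phi_k\rangle_h=0$ to the limit, which forces you to control boundary terms on $\partial B_{\xi_k}(R\delta_k)$ and exterior terms via the same decay. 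None of this is available at the stage where Proposition~\ref{propC0grossier} is proved, and you do not supply it.

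Second, and more critically, your Stage~2 (``no extra energy, so at most one bubble can form'') is a heuristic, not a proof. The exclusion of intermediate-scale blow-up -- that is, that there is no sequence $x_k$ with $\theta_k(x_k)^{(n-2)/2}|\phi_k(x_k)|$ bounded away from zero at scales $\delta_k\ll\tilde\delta_k\ll1$ -- is exactly Step~4 of the paper's proof, and it is the technical core. The mechanism there is not energy accounting: one rescales $u_k$ at the putative bad scale $\tilde\delta_k=u_k(x_k)^{-2/(n-2)}$, applies a Harnack inequality \emph{specific to the Einstein--Lichnerowicz equation} (Proposition~6.1 of~\cite{Premoselli4}, needed because of the negative-power nonlinearity) to get a two-sided bound on the rescaled solution, and then contradicts the $L^{2^*}$-smallness $\|\phi_k\|_{H^1}\to0$ together with the fact that $\tilde\delta_k^{(n-2)/2}W_k=o(1)$ off the bubble. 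This Harnack step in turn requires the uniform positive lower bound on $u_k$ established in Step~3 of the paper, which is also what actually removes the truncation $\rho$ (Lemma~\ref{minor} does not directly apply because $u_k$ only solves the equation modulo Kernel elements). Without the Harnack/lower-bound machinery your argument does not rule out a hidden secondary concentration, and the final pointwise upgrade in your Stage~3 would then fail.
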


\noindent As a consequence of Proposition \ref{propC0grossier}, Lemma \ref{minor} and Proposition \ref{propptfixe1} one has that for any $(t_k,\xi_k)_k \in [1/D, D] \times M$ and for any $v_k \in F(\vek,t_k,\xi_k)$, letting
\ben \label{defuk}
u_{k,t_k,\xi_k,v_k} = u + W_{k,t_k,\xi_k} + \phi_k(t_k,\xi_k,v_k),
\een
there exist real numbers $(\lki(t_k,\xi_k,v_k))_{0 \le i \le n}$ such that $u_{k,t_k,\xi_k,v_k}$ satisfies:
\ben \label{vraieeqphik}
\bal
 \left( \triangle_g + h \right)&u_{k,t_k,\xi_k,v_k}  - f u_{k,t_k,\xi_k,v_k}^{2^*-1}  - \frac{|\Lg T + \sigma|_g^2 + \pi^2}{ u_{k,t_k,\xi_k,v_k}^{2^*+1}} \\
& -  \frac{ |\Lg T_k + \sigma|_g^2 - |\Lg T + \sigma|_g^2 }{\left( u + W_{k,t_k,\xi_k} + v_k \right)^{2^*+1}}  = \sum_{i=0}^n \lki(t_k,\xi_k,v_k) \left( \triangle_g + h \right) Z_{i,k}.
\eal
\een
Here again, $T_{k,t_k,\xi_k}$ is as in \eqref{defZk1forme}.

\begin{proof}
Let $D >0$ and $(\vek)_k \in \mathcal{E}$ and assume that  $\vek >> \mk^{\frac{3}{2}}$ as $k \to + \infty$, where $\mk$ is defined in \eqref{defmk}. Let $(t_k,\xi_k)_k$ be a sequence in $ [1/D, D]\times M$, let $v_k \in F_k = F \big( \vek,t_k,\xi_k \big)$ and let $\phi_k = \phi_k \big(t_k,\xi_k,v_k \big) \in K_{k,t_k,\xi_k}^\perp$ be the solution of \eqref{restephik} given by Proposition \ref{propptfixe1}. In particular, for any $k$ large enough, there exist $(\lki)_{0 \le i \le n } = (\lki(t_k,\xi_k, v_k))_{0 \le i \le n}$ such that $\phi_k$ solves:
\ben \label{eqphikgros}
\bal
 \left( \triangle_g + h \right) \left( u + W_k + \phi_k \right)& - f \left( u + W_k + \phi_k \right)^{2^*-1}  - \frac{|\Lg T + \sigma|_g^2 + \pi^2}{\rho \left( u + W_k + \phi_k\right)^{2^*+1}} \\
& -  \frac{ |\Lg T_k + \sigma|_g^2 - |\Lg T + \sigma|_g^2 }{\left( u + W_k + v_k \right)^{2^*+1}}  = \sum_{i=0}^n \lki \left( \triangle_g + h \right) Z_{i,k},
\eal
\een
where the $Z_{i,k}$ are defined in \eqref{defZk} and where $T_k$ is as in \eqref{defZk1forme}. As before, in \eqref{eqphikgros} and later on we shall omit the dependence in $t_k$ and $\xi_k$ and let $W_k = W_{k,t_k,\xi_k}$ and so on. A first, obvious remark is that \eqref{eqphikgros} and standard elliptic regularity results show that $u+W_k + \phi_k$ belongs to $C^1(M)$, and then so does $\phi_k$. The proof of Proposition \ref{propC0grossier} goes through several steps.

\medskip

\noindent \textbf{Step $1$: Estimation of the $\lki$.}
We show that there holds:
\ben \label{C01step1}
\sum_{i = 0}^n |\lki| = O (\eta \vek) + O (\dk^{\frac{3}{2}}).
\een
For this, we rewrite \eqref{eqphikgros} as:
\ben \label{C01step11}
\bal
 (\triangle_g + h )R_k +& \left( \triangle_g + h \right) \phi_k -  f \left[ \left( u + W_k + \phi_k \right)^{2^*-1} - \left(u + W_k \right)^{2^*-1} \right] \\
& -  \left( |\Lg T + \sigma|_g^2 + \pi^2 \right) \left(\rho \left( u + W_k + \phi_k\right)^{-2^*-1} - (u+W_k)^{-2^*-1} \right)  \\
&= \sum_{i=0}^n \lki \left( \triangle_g + h \right) Z_{i,k},
\eal
\een
where $R_k$ is as in \eqref{propptfixe4}. Let $0 \le i \le n$ be fixed. There holds, by \eqref{propphik2}, \eqref{propptfixe91}, \eqref{propptfixe10} and H\"older's inequality that:
\ben \label{C01step12}
\left \langle R_k + \phi_k, Z_{i,k} \right \rangle_h = O (\dk^\frac{3}{2}) + O(\eta \vek).
\een
Since we have:
\[ \left| \left( u + W_k + \phi_k \right)^{2^*-1} - \left(u + W_k \right)^{2^*-1} \right| = O\left( \left(u+ W_k\right)^{2^*-2} |\phi_k| \right) + O \left(|\phi_k|^{2^*-1} \right),  \]
there holds that 
\ben \label{C01step13}
\int_M  f \left[ \left( u + W_k + \phi_k \right)^{2^*-1} - \left(u + W_k \right)^{2^*-1} \right] Z_{i,k} dv_g = O(\eta \vek).
\een
The sequence of functions $\Big(\left( |\Lg T + \sigma|_g^2 + \pi^2 \right) \left(\rho \left( u + W_k + \phi_k\right)^{-2^*-1} - (u+W_k)^{-2^*-1} \right) \Big)_k$ is uniformly bounded in $L^\infty(M)$, so we have, since $n \ge 6$:
\ben \label{C01step14}
\int_M \left( |\Lg T + \sigma|_g^2 + \pi^2 \right) \left(\rho \left( u + W_k + \phi_k\right)^{-2^*-1} - (u+W_k)^{-2^*-1} \right)Z_{i,k} dv_g = o(\dk^{\frac32}).
\een
Finally, there holds for any $0 \le j \le n$, that:
\ben \label{C01step15}
\left \langle Z_{i,k} , Z_{j,k} \right \rangle_h = \delta_{ij} \Vert \nabla V_{i,\xi} \Vert_{L^2(\RR^n)}^2 + o(1) ,
\een
where $V_{i,\xi} $ is as in \eqref{defVki}. Multiplying \eqref{C01step11} by $Z_{i,k}$, integrating and using \eqref{C01step12} -- \eqref{C01step15} yields \eqref{C01step1}.

\medskip

\noindent \textbf{Step $2$: Local behavior of $\phi_k$.}
In this step we show that:
\ben \label{C01step2}
\dk^\pui u_k \left( \exp_{\xi_k}^{g_{\xi_k}} (\dk \cdot ) \right) \longrightarrow U_{\xi_0} \textrm{ in } C^1_{loc}(\RR^n),
\een
as $k \to + \infty$, where $\xi_0 = \lim_{k \to +\infty} \xi_k$, $U_{\xi_0}$ is defined in \eqref{eqlindefU}, $\dk$ in \eqref{defdkyk} and the exponential map for $g_{\xi_k}$ is as in \eqref{confnorm} and \eqref{metconforme}. In order to prove \eqref{C01step2}, we define $\tilde{u}_k$ in $B_0(r_k / \dk)$ by:
\ben \label{C01step21}
\tilde{u}_k(x) = \dk^\pui u_k \left( \exp_{\xi_k}^{g_{\xi_k}} (\dk x ) \right).
\een
Remember that $r_k$ is a positive radius given by \eqref{relationrkmk} and that $W_k$ is supported in $B_{\xi_k}(2 r_k)$, the ball being taken for the metric $g_{\xi_k}$. It is easily seen that for any $x \in B_0(r_k/\dk)$, $\tilde{u}_k$ satisfies:
\ben \label{C01step22}
\bal
 & \triangle_{g_k} \tilde{u}_k(x) + \dk^2 h(x_k) \tilde{u}_k(x) = f(x_k) \tilde{u}_k(x)^{2^*-1} + \dk^{\frac{n+2}{2}} \frac{|\Lg T + \sigma|_g^2 + \pi^2}{ \rho \left( u + W_k + \phi_k\right)^{2^*+1}}(x_k) \\
 &+ \dk^{\frac{n+2}{2}} \frac{ |\Lg T_k + \sigma|_g^2 - |\Lg T + \sigma|_g^2 }{\left( u + W_k + v_k \right)^{2^*+1}}(x_k)  + \sum_{i=0}^n \lki \dk^{\frac{n+2}{2}} \left( \triangle_g + h \right) Z_{i,k}(x_k),
 \eal
  \een
where, in \eqref{C01step22}, we have let $x_k = \exp_{\xi_k}^{g_{\xi_k}} \left( \dk x \right)$ and $g_k =  \left( \exp_{\xi_k}^{g_{\xi_k}}\right)^*g_{\xi_k}(\dk \cdot)$. By definition of $\rho$ in \eqref{defeta} there holds:
\ben \label{C01step23}
\dk^{\frac{n+2}{2}} \frac{|\Lg T + \sigma|_g^2 + \pi^2}{ \rho \left( u + W_k + \phi_k\right)^{2^*+1}}(x_k) \longrightarrow 0 \textrm{ in } C^0_{loc}(\RR^n).
\een
Using \eqref{estLTkLT} below, \eqref{defXintro}, \eqref{relationrkmk} and the pointwise control on $v_k$ given by the definition of $F_k$ in \eqref{defFea} we also have:
\ben \label{C01step24}
\dk^{\frac{n+2}{2}} \frac{ |\Lg T_k + \sigma|_g^2 - |\Lg T + \sigma|_g^2 }{\left( u + W_k + v_k \right)^{2^*+1}}(x_k) \longrightarrow 0 \textrm{ in } C^0_{loc}(\RR^n).
\een
The Laplacian of $Z_{i,k}$ is computed using \eqref{lapZik} below, and with \eqref{C01step1} we also obtain that:
\ben \label{C01step25}
\sum_{i=0}^n \lki \dk^{\frac{n+2}{2}} \left( \triangle_g + h \right) Z_{i,k}(x_k) \longrightarrow 0 \textrm{ in } L^\infty_{loc}(\RR^n).
\een
By definition of $u_k$ in \eqref{defuk} and $\tilde{u}_k$ in \eqref{C01step21} there holds, for any $x \in \RR^n$, that: 
\[ \lim_{r \to 0} \limsup_{k \to +\infty} \int_{B_x(r)} \tilde{u}_k^{2^*} dv_{g_k} = 0, \]
and therefore an adaptation of Trudinger's standard argument (see for instance Hebey \cite{HebeyZLAM}, theorem $2.15$) along with \eqref{C01step22}, \eqref{C01step23}, \eqref{C01step24} and \eqref{C01step25} shows that $\tilde{u}_k$ converges strongly in $C^1_{loc}(\RR^n)$. By \eqref{propphik2} and since $\ve_k \to 0$, there holds
\[ \int_K \dk^{n} \left |\phi_k \left(  \exp_{\xi_k}^{g_{\xi_k}} (\dk x) \right) \right|^{2^*} dv_{g_k} = \int_{ \exp_{\xi_k}^{g_{\xi_k}} (\dk K)} |\phi_k|^{2^*} dv_g = o(1), \]
so that using the explicit expression \eqref{defuk} of $u_k$, we see that  \eqref{C01step2} holds true.

\medskip

\noindent \textbf{Step $3$: A lower-bound on $\phi_k$.}
We now show that for any sequence $(x_k)_k$ of points of $M$ there holds:
\ben \label{C01step3}
\phi_k(x_k) \ge o \big( u(x_k) \big) + o \big( W_k(x_k) \big).
\een
Let $G$ be the Green's function of the operator $\triangle_g + h $ in $M$ and let $(x_k)_k$ be a sequence of points in $M$. First, by the definition of $\rho$ in \eqref{defeta},  since $\vek \to 0$, by \eqref{propphik2} and Fatou's lemma, we have:
\ben \label{C01step31}
\bal
\int_M &\frac{|\Lg T + \sigma|_g^2 + \pi^2}{\rho \left( u + W_k + \phi_k\right)^{2^*+1}}(y) G(x_k,y) dv_g(y) \\
& \ge \int_M \frac{|\Lg T + \sigma|_g^2 + \pi^2}{u^{2^*+1}}(y) G(x_k,y) dv_g(y) + o(1)
\eal
\een
as $k \to + \infty$. Let $(A_k)_k$ denote some sequence of real numbers such that $A_k \to + \infty$ as $k \to \infty$ and such that $A_k \delta_k \to 0$ as $k \to \infty$. 
By \eqref{bulle} and \eqref{propphik2}, $W_k + \phi_k \to 0$ almost everywhere in $M \backslash B_{x_k}(A_k \delta_k)$ and thus Fatou's lemma gives again that: 
\ben \label{C01step32}
\bal
\int_{M \backslash B_{x_k}(A_k\dk)} f & \left( u + W_k + \phi_k \right)^{2^*-1}(y) G(x_k,y) dv_g(y) \\
&\ge  \int_{M} f u^{2^*-1}(y) G(x_k,y) dv_g(y) + o(1).
\eal
\een
Using \eqref{bulle}, \eqref{propphik2}, the local convergence given in \eqref{C01step2} and standard properties of Green's function (see e.g. Robert \cite{RobDirichlet}), Fatou's lemma also shows that:
\ben \label{C01step33}
\int_{B_{x_k}(A_k \dk)} f \left( u + W_k + \phi_k \right)^{2^*-1}(y) G(x_k,y) dv_g(y) \ge \big( 1 + o(1) \big) W_k(x_k) + o(1)
\een
as $k\to +\infty$ (see for instance Hebey \cite{HebeyZLAM}, proposition $6.1$). Independently, using the pointwise estimate given by \eqref{estLTkLT} below, the definition of $X$ in \eqref{defXintro}, \eqref{relationrkmk}, the fact that $\vek \to 0$ and standard properties of Green's function, one obtains that:
\ben \label{C01step34}
\int_M  \frac{ |\Lg T_k + \sigma|_g^2 - |\Lg T + \sigma|_g^2 }{\left( u + W_k + v_k \right)^{2^*+1}}(y) G(x_k,y) dv_g(y) = o(1)
\een
as $k \to +\infty$. Having in mind that $|Z_{i,k}| \lesssim W_k$ for any $0 \le i \le n$, it remains to write a Green's representation formula for the operator $\triangle_g +h $ in $M$ and to use \eqref{eqphikgros}, \eqref{C01step1}, \eqref{C01step31}, \eqref{C01step32}, \eqref{C01step33} and \eqref{C01step34} to obtain that:
\ben \label{C01step35}
\bal
u_k(x_k) \ge  \int_{M} f u^{2^*-1}(y) G(x_k,y) dv_g(y) +  \int_M \frac{|\Lg T + \sigma|_g^2 + \pi^2}{u^{2^*+1}}(y) G(x_k,y) dv_g(y) \\
+ \big( 1 + o(1) \big) W_k(x_k) + o(1).
\eal
\een
Since $u$ solves the scalar equation of \eqref{syseta}, a Green's representation formula for $u$ with \eqref{defuk} and  \eqref{C01step35} then concludes the proof of \eqref{C01step3}. 

\medskip
\noindent Note that, by the definition of $\rho$ in \eqref{defeta}, \eqref{C01step3} shows in particular that $u_k$ in \eqref{defuk} actually satisfies:
\ben \label{C01eqphik}
\bal
 \left( \triangle_g + h \right)&u_k  - f u_k^{2^*-1}  - \frac{|\Lg T + \sigma|_g^2 + \pi^2}{ u_k^{2^*+1}} \\
& -  \frac{ |\Lg T_k + \sigma|_g^2 - |\Lg T + \sigma|_g^2 }{\left( u + W_k + v_k \right)^{2^*+1}}  = \sum_{i=0}^n \lki \left( \triangle_g + h \right) Z_{i,k}.
\eal
\een

\noindent \textbf{Step $4$: A global weak estimate on $\phi_k$.}
We prove now that 
\ben \label{C01step4}
\tk(x)^\pui \left| \phi_k (x)\right| \longrightarrow 0 \textrm{ in } L^\infty(M)
\een 
as $k \to +\infty$, where we have let, for any $x \in M$:
\ben \label{defthetak}
\tk(x) = \dk + d_{g_{\xi_k}}(\xi_k, x).
\een
The proof of \eqref{C01step4} proceeds by contradiction: assume that there exists a sequence $(x_k)_k$ in $M$ such that 
\ben \label{C01step41}
\tk(x_k)^2 | \phi_k (x_k)|^{2^*-2} = \max_{x \in M} \tk(x)^2 |\phi_k(x)|^{2^*-2} \ge \ve_0
\een 
for some $\ve_0 > 0$. We start by noticing that there holds:
\ben \label{C01step43}
\tk(x_k)^2 W_k(x_k)^{2^*-2} = o(1)
\een
and 
\ben \label{C01step44}
u_k(x_k) \to + \infty
\een
as $k \to +\infty$, where $x_k$ is defined in \eqref{C01step4}. Equations \eqref{C01step43} and \eqref{C01step44} follow from a straightforward adaptation of the arguments in Hebey \cite{HebeyZLAM} (proposition $7.1$) combined with \eqref{C01step1}, \eqref{C01step2}, \eqref{C01step3}, \eqref{estLTkLT} and \eqref{C01eqphik}. We let in what follows
\be 
\tdk = u_k(x_k)^{- \frac{2}{n-2}}.
\ee
Equation \eqref{C01step44} implies in particular that $\tdk \to 0$ as $k \to +\infty$. For any $x \in B_0(i_g(M)/ \tdk)$, we let:
\ben \label{C01step46}
\tvk(x) = \tdk^\pui u_k \left( \exp_{x_k}^{g_{x_k}} (\tdk x )\right).
\een
In case $\dk = o(\tdk)$ and $d_{g_{x_k}}(x_k, \xi_k) = O(\tdk)$, let $\mathcal{S} = \{ \tilde{\xi}_0 \}$, where $\tilde{\xi}_0 = \lim_{k \to +\infty} \frac{1}{\tdk} \left( \exp_{x_k}^{g_{x_k}}\right)^{-1}(\xi_k)$. Otherwise, let $\mathcal{S} = \emptyset$. Let $K \subset \subset \RR^n \backslash \mathcal{S}$ be a compact set. For any $z \in K$, let $z_k = \exp_{x_k}^{g_{x_k}} \left( \tdk x \right)$. There holds in particular that
\ben \label{choixdek}
d_{g_{\xi_k}}(\xi_k, z_k) \ge C_0 \tdk
\een
for some positive constant $C_0$. Using \eqref{C01step41}, \eqref{C01step43} and \eqref{C01step46} one easily obtains that for any  $x \in K$,
\ben \label{C01step47}
\left| \tvk(x) -  \tdk^{\pui} W_k(z_k) \right|^{2^*-2} = O(1),
\een
and the constant in the $O(1)$ term obviously depends on $K$. We now claim that 
\ben \label{C01step48}
\tdk^\pui W_k(z_k) = o(1).
\een
By \eqref{bulle} and \eqref{choixdek}, the only case where \eqref{C01step48} is not clearly satisfied is when $\frac{1}{C} \dk \le \tdk \le C \dk$ and $d_{g_{\xi_k}} (\xi_k, z_k) \le C \dk$ for some positive constant $C$. In this case there also holds that $d_{g_{\xi_k}}(\xi_k, x_k) = O (\dk)$, hence $\liminf_{k \to + \infty} \tdk^\pui W_k(x_k) > 0$ by definition of $W_k$, which contradicts \eqref{C01step43}. Hence \eqref{C01step48} holds true. Coming back to \eqref{C01step47} with \eqref{C01step48} we have in particular:
\ben \label{C01step49}
\tvk(x) \le C_K \textrm{ for all } x \in K.
\een
By construction of $\tvk$ in \eqref{C01step46} there holds $\tvk(0) = 1$. As an easy consequence of \eqref{C01step41} and of the definition of $\mathcal{S}$ above one always has $|\tilde{\xi}_0| >0$ whenever $\tilde{\xi}_0$ is finite. Hence \eqref{C01step48} can be applied to some compact subset $K \ni 0$ and yields
\ben \label{C01step410}
\tdk^\pui W_k(x_k) = o(1)
\een 
as $k \to + \infty$ which, combined with \eqref{C01step41}, also gives:
\ben \label{C01step411}
\tk(x_k) \ge \frac{1}{C} \tdk
\een
for some positive $C$ independent of $k$, where $\tk$ is as in \eqref{defthetak}. Let now $0 \in K \subset \subset \RR^n \backslash \mathcal{S}$. By \eqref{C01eqphik} and \eqref{C01step46}, $\tvk$ satisfies, for any $x \in K$:
\ben \label{C01step412}
\bal
 \triangle_{\tilde{g}_k} \tvk(x) & + \tdk^2 h(z_k) \tvk(x) = f(z_k) \tvk(x)^{2^*-1} + \tdk^{2n}  \frac{\left( |\Lg T + \sigma|_g^2 + \pi^2\right)(z_k)}{ \tvk^{2^*+1}(x)} \\
&+ \tdk^{\frac{n+2}{2}}  \frac{ |\Lg T_k + \sigma|_g^2 - |\Lg T + \sigma|_g^2 }{\left( u + W_k + v_k \right)^{2^*+1}}(z_k) + \tdk^{\frac{n+2}{2}} \sum_{i=0}^n \lki \left( \triangle_g + h \right) Z_{i,k}(z_k),
\eal
\een
where again $z_k = \exp_{x_k}^{g_{x_k}} (\tdk x)$ and where $\tilde{g}_k = \left( \exp_{x_k}^{g_{x_k}} \right)^* g(\tdk \cdot)$. Using estimate \eqref{estLTkLT} below, \eqref{defXintro} and using the pointwise estimates on $v_k$ given by the choice $v_k \in F_k = F(\vek, t_k, \xi_k)$ it is easily seen that there holds
\ben \label{C01step413}
 \tdk^{\frac{n+2}{2}}  \frac{ |\Lg T_k + \sigma|_g^2 - |\Lg T + \sigma|_g^2 }{\left( u + W_k + v_k \right)^{2^*+1}}(z_k) = O \left( \tdk^{\frac{n+2}{2}} \right) \textrm{ for any } x \in K.
\een 
Combining \eqref{C01step48} with the pointwise expression of $(\triangle_h + h) Z_{i,k}$ given by \eqref{lapZik} below, the inequality $|Z_{i,k}| \lesssim W_k$ for $0 \le i \le n$ and with \eqref{relationrkmk} yields:
\ben \label{C01step414}
\tdk^{\frac{n+2}{2}} \sum_{i=0}^n \lki \left( \triangle_g + h \right) Z_{i,k}(z_k) = o(1) \textrm{ uniformly in } K.
\een
In the end, \eqref{C01step49}, \eqref{C01step413} and \eqref{C01step414} give with \eqref{C01step412} that $\tvk$ satisfies in $K$:
\ben \label{C01step415}
 \triangle_{\tilde{g}_k} \tvk(x) + \tdk^2 h(z_k) \tvk(x) = f(z_k) \tvk(x)^{2^*-1} + \frac{o(1)}{\tvk^{2^*+1}},
\een
where the term $o(1)$ is uniform in $K$. With \eqref{C01step49} and \eqref{C01step415}, the Harnack inequality for the Einstein-Lichnerowicz equation stated in Premoselli \cite{Premoselli4} (Proposition $6.1$) then shows that there exists a positive constant $C_K$ such that 
\ben \label{C01step416}
\frac{1}{C_K} \le \tvk \le C_K \textrm{ in } K
\een
and thus, by standard elliptic theory, that $\tvk$ converges in $C^2_{loc} (\RR^n \backslash \mathcal{S})$ towards a positive solution $\tilde{w}_0$ of 
\[ \triangle_\xi \tilde{w}_0 = f(x_0) \tilde{w}_0^{2^*-1} \textrm{ in } \RR^n \backslash \mathcal{S},\]
where $x_0 = \lim_{k \to + \infty} x_k$.  In particular, \eqref{C01step416} shows that 
\ben \label{C01step417}
\int_K \tilde{w}_0^{2^*} dy > 0 \textrm{ for any compact set } K \subset \subset \RR^n \backslash \mathcal{S}.
\een
But independently, by \eqref{C01step46}, \eqref{defuk}, \eqref{C01step48} and \eqref{propphik2}, one has that $\tvk \to 0$ in $L^{2^*}(K)$ as $k \to + \infty$, which is a contradiction with \eqref{C01step417}. This concludes the proof of \eqref{C01step4}.

\medskip

\noindent In the following we let, for any $\delta >0$:
\ben \label{defetadelta}
\nu_k(\delta) = \sup_{M \backslash B_{\xi_k}(\delta)} u_k.
\een

\noindent \textbf{Step $5$: A first set of strong pointwise estimates.}
We show that for any $\ve >0$, there exist $R_\ve >0$, $\delta_\ve >0 $ and $C_\ve > 0$ such that:
\ben \label{C01step5}
u_k(x) \le C_\ve \left( \dk^{\pui (1 - 2\ve)} d_{g_{\xi_k}}(\xi_k,x)^{(2-n)(1-\ve)} + \nu_k(\delta_\ve) d_{g_{\xi_k}}(\xi_k,x)^{(2-n)\ve} \right),
\een
for any $k$ and for any $x \in M \backslash B_{\xi_k}(R_\ve \dk)$. For $\ve >0$, we define the following function in $M$:
\ben \label{C01step51}
\Psi_{k,\ve}(x) = \dk^{\pui (1 - 2\ve)}  G (\xi_k,x)^{1-\ve} + \nu_k(\delta_\ve) G(\xi_k,x)^\ve,
\een
where $G$ is the Green's function of $\triangle_g + h$ in $M$. We let $R > 0$ and let $(x_k)_k$ be defined by:
\ben \label{C01step52}
\frac{u_k}{\Psi_{k,\ve}}(x_k) = \sup_{M \backslash B_{\xi_k}(R \dk)} \frac{u_k}{\Psi_{k,\ve}}.
\een
We now claim that, up to choosing $R$ large enough and $\delta$ small enough, there holds, for $k >> 1$, that:
\ben \label{C01step53}
x_k \in \partial B_{\xi_k}(R \dk) \textrm{ or } d_{g_{\xi_k}}(\xi_k,x_k) \ge \delta.
\een
The proof of \eqref{C01step53} proceeds by contradiction: if we assume that \eqref{C01step53} does not hold, then \eqref{C01step52} gives that:
\ben \label{C01step54}
d_{g_{\xi_k}}(\xi_k,x_k)^2 \frac{\triangle_g \Psi_{k,\ve}}{\Psi_{k,\ve}}(x_k) \le d_{g_{\xi_k}}(\xi_k,x_k)^2 \frac{\triangle_g u_k}{u_k} (x_k).
\een
Straightforward computations show that
\ben \label{C01step55}
d_{g_{\xi_k}}(\xi_k,x_k)^2 \frac{\triangle_g \Psi_{k,\ve}}{\Psi_{k,\ve}}(x_k) \ge - (1 - \ve) (1 - C \ve) d_{g_{\xi_k}}(\xi_k,x_k)^2 + C \ve(1 - \ve)
\een
for some positive constant $C$ independent of $k$. Independently, using \eqref{C01eqphik} we have:
\ben \label{C01step56} 
\bal
& d_{g_{\xi_k}}(\xi_k,x_k)^2 \frac{\triangle_g u_k}{u_k} (x_k) = - d_{g_{\xi_k}}(\xi_k,x_k)^2 h(x_k) + d_{g_{\xi_k}}(\xi_k,x_k)^2 f(x_k) u_k^{2^*-2}(x_k)  \\
& + d_{g_{\xi_k}}(\xi_k,x_k)^2  \frac{|\Lg T + \sigma|_g^2 + \pi^2}{ u_k^{2^*+2}}(x_k) +  d_{g_{\xi_k}}(\xi_k,x_k)^2 \frac{ |\Lg T_k + \sigma|_g^2 - |\Lg T + \sigma|_g^2 }{u_k \left( u + W_k + v_k \right)^{2^*+1}}(x_k)  \\
& + d_{g_{\xi_k}}(\xi_k,x_k)^2 \frac{1}{u_k}\sum_{i=0}^n \lki \left( \triangle_g + h \right) Z_{i,k}(x_k).
\eal
\een
By \eqref{defuk}, \eqref{C01step4} and since we assumed that \eqref{C01step53} does not hold, we have that
\ben \label{C01step57}
d_{g_{\xi_k}}(\xi_k,x_k)^2 f(x_k) u_k^{2^*-2}(x_k) \le o(1) + \Vert u \Vert_{L^\infty(M)}^{2^*-2} \delta^2 + C \left( 1 + \frac{f(\xi_k)}{n(n-2)} R^2 \right)^{-2}
\een
for some positive constant $C$ independent of $k$. Similarly, using in addition \eqref{C01step3}:
\ben \label{C01step58}
d_{g_{\xi_k}}(\xi_k,x_k)^2  \frac{|\Lg T + \sigma|_g^2 + \pi^2}{ u_k^{2^*+2}}(x_k) \le C' \delta^2
\een
for some positive $C'$ independent of $k$. Here, $(u,T)$ are always as in \eqref{syseta}. Using the pointwise estimates on $v_k$ given by \eqref{defFea}, \eqref{defXintro}, \eqref{C01step3}, \eqref{relationrkmk} and \eqref{estLTkLT} below there holds:
\ben \label{C01step59}
 \bal
& d_{g_{\xi_k}}(\xi_k,x_k)^2 \frac{ |\Lg T_k + \sigma|_g^2 - |\Lg T + \sigma|_g^2 }{u_k \left( u + W_k + v_k \right)^{2^*+1}} \\
 & \qquad \qquad \le \left \{
\bal
& O(\dk) \textrm{ if } d_{g_{\xi_k}}(\xi_k,x_k) \le \sqrt{\dk} \\
& C(n,g,u_0) \delta ^2 + o(1) \textrm{ if } d_{g_{\xi_k}}(\xi_k,x_k) \ge \sqrt{\dk}. \\
\eal \right.
\eal
\een
Finally, since $|Z_{i,k}| \lesssim W_k$,
by \eqref{lapZik} and \eqref{relationrkmk} one gets, mimicking the proof of \eqref{C01step57}, that:
\ben \label{C01step510}
\bal
& d_{g_{\xi_k}}(\xi_k,x_k)^2 \frac{1}{u_k}\sum_{i=0}^n \lki \left( \triangle_g + h \right) Z_{i,k}(x_k) \\
& \qquad \le C(n,g,u_0,h) \delta^2 +  C \left( 1 + \frac{f(\xi_k)}{n(n-2)} R^2 \right)^{-2} + o(1).
\eal
\een
For a fixed value of $\ve$, \eqref{C01step54}, \eqref{C01step57}, \eqref{C01step58}, \eqref{C01step59} and \eqref{C01step510} give a contradiction with \eqref{C01step55} up to choosing $R$ large enough and $\delta$ small enough. This shows that  \eqref{C01step53} holds. Then \eqref{C01step5} follows from \eqref{C01step51} and the local convergence given by \eqref{C01step2}.

\medskip

\noindent \textbf{Step $6$: End of the proof of \eqref{estC0grossiere}.} We show that there exists a sequence $\nu_k$ of positive real numbers converging to $0$ as $k \to + \infty$ such that for any $x \in M$ there holds:
\ben \label{C01step6}
|\phi_k(x)| \le \nu_k \big( W_k(x) + u(x) \big).
\een
Let $(x_k)_k$ be a sequence of points in $M$. We prove in what follows that there holds:
\ben \label{C01step61}
|\phi_k(x_k)| = o \big(W_k(x_k) \big) + o(1).
\een
Since $u > 0$ in $M$, \eqref{C01step6} follows from \eqref{C01step61}. Assume first that $d_{g_{\xi_k}}(\xi_k,x_k) = O(\dk)$. Then \eqref{C01step61} follows from \eqref{C01step2}. Assume then that $d_{g_{\xi_k}}(\xi_k,x_k) \not \to 0$ as $k \to + \infty$. Then \eqref{C01step61} follows from \eqref{C01step4}. We may therefore assume that
\ben \label{C01step61bis}
\dk << d_{g_{\xi_k}}(\xi_k,x_k) << 1 .
\een
Let $G$ denote again the Green's function of $\triangle_g + h$. By \eqref{C01step1} and since $|Z_{i,k}(x_k)| \lesssim W_k(x_k)$ for $0 \le i \le n$ we easily get that
\ben \label{C01step62}
\int_M G(x_k,y) \sum_{i=0}^n \lki \left( \triangle_g + h \right) Z_{i,k}(y) dv_g (y) = \sum_{i=0}^n \lki Z_{i,k}(x_k) = o \big( W_k(x_k) \big).
\een
Independently, since $\Big( \left( |\Lg T + \sigma|_g^2 + \pi^2\right) u_k^{-2^*-1} \Big)_k$ is uniformly bounded in $L^\infty(M)$ by \eqref{C01step3}, we have that:
\ben \label{C01step63}
\int_M G(x_k,y)  \frac{|\Lg T + \sigma|_g^2 + \pi^2}{ u_k^{2^*+1}}(y) dv_g(y) = \int_M G(x_k,y)  \frac{|\Lg T + \sigma|_g^2 + \pi^2}{ u^{2^*+1}}(y) dv_g(y) + o(1)
\een
as $k \to +\infty$. Now for some fixed $0 < \ve < \frac{2}{n+2}$ we write that
\[
\bal
&  \int_M G(x_k,y) f(y) u_k^{2^*-1}(y) dv_g(y)  \\
 & = \int_{B_{\xi_k}(R_\ve \dk)} G(x_k,y) f (y) u_k^{2^*-1}(y) dv_g(y) + \int_{M \backslash B_{\xi_k}(R_\ve \dk) } G(x_k,y) f(y) u_k^{2^*-1}(y) dv_g(y) .
 \eal
 \]
 On the one side, \eqref{C01step2} along with \eqref{C01step61bis} shows that
 \ben \label{C01step64}
    \int_{B_{\xi_k}(R_\ve \dk)} G(x_k,y) f(y) u_k^{2^*-1}(y) dv_g(y) = O \big( W_k(x_k) \big) + o(1).
 \een
On the other side, using \eqref{C01step5} we obtain that:
\ben \label{C01step65}
 \int_{M \backslash B_{\xi_k}(R_\ve \dk) } G(x_k,y) f(y) u_k^{2^*-1}(y) dv_g(y) = O \big( W_k(x_k) \big) + O(1)
\een
as $k \to + \infty$. Gathering \eqref{C01step34}, \eqref{C01step62}, \eqref{C01step63}, \eqref{C01step64}  and \eqref{C01step65} and writing a representation formula for $u_k$ gives, with \eqref{C01eqphik}, that :
\ben \label{C01step66}
|\phi_k(x_k)| \le C \big( W_k(x_k) + u(x_k) \big),
\een
for some positive constant $C$ that neither depends on $k$ nor on $\eta$ in \eqref{introeta}. In particular, $C$ in \eqref{C01step66} does not depend on the choice of $(t_k,\xi_k)_k$ and $(v_k)_k$. It remains to improve \eqref{C01step66} into \eqref{C01step61}. Using the expression of the conformal laplacian of $W_k$ given by \eqref{lapbulle} below, and since $f \in C^1(M)$, the following representation formula holds true for $W_k$:
\ben \label{C01step67}
W_k(x_k) = \int_M G(x_k,y) f(\xi_k) W_k^{2^*-1}(y) dv_g(y) + o \big( W_k(x_k) \big) + o(1), 
\een
where to obtain \eqref{C01step67} we used that there holds, by \eqref{C01step61bis} and Giraud's lemma (see \cite{HebeyZLAM}, lemma $7.5$):
\ben \label{C01step68}
\int_M G(x_k,y) W_k(y) dv_g(y)  = O \left( \dk^\pui \tk(x_k)^{4-n} \right) = O \big( \dk W_k(x_k) \big) + O(\dk),
\een
where $\tk$ is defined in \eqref{defthetak}. We now write a representation formula for $u_k - W_k - u$, where $u_k$ is defined in \eqref{defuk}. Since $u$ solves \eqref{syseta}, by \eqref{C01eqphik}, \eqref{C01step62}, \eqref{C01step63}, \eqref{C01step34} and \eqref{C01step67} there holds:
\ben \label{C01step69}
\phi_k(x_k) = \int_M G(x_k,y) f(y) \left( u_k^{2^*-1} - W_k^{2^*-1} - u^{2^*-1}\right)(y)  dv_g(y) + o \big( W_k(x_k) \big) + o(1).
\een
By \eqref{C01step2} there exists a sequence $A_k \to + \infty$ such that
\[ \left| \left| \dk^\pui u_k \left( \exp_{\xi_k}^{g_{\xi_k}}(\dk \cdot) \right) - U_{\xi_0} \right| \right|_{C^0(B_0(A_k))} \to 0 \]
as $k \to +\infty$, where $U_{\xi_0}$ is defined in \eqref{eqlindefU}. We can always choose such a sequence $(A_k)_k$ to have $A_k \dk = o(\sqrt{\dk})$, so that there holds, by the dominated convergence theorem:
\ben \label{C01step610}
\int_{B_{\xi_k}(A_k \dk)} G(x_k,y) f(y) \left( u_k^{2^*-1} - W_k^{2^*-1} - u^{2^*-1}\right)(y)  dv_g(y) = o \big( W_k(x_k) \big) + o(1).
\een
By \eqref{bulle} and \eqref{C01step66} there holds that
\[ \left| u_k^{2^*-1} - W_k^{2^*-1} - u^{2^*-1}\right| \lesssim W_k + |\phi_k| \textrm{ in } M \backslash B_{\xi_k}(\sqrt{\dk}). \]
Since by \eqref{C01step66} $\phi_k$ is uniformly bounded in $M \backslash B_{\xi_k}(\sqrt{\dk})$ we obtain with \eqref{propphik2}, \eqref{C01step68} and since $\vek = o(1)$ that 
\ben \label{C01step610bis}
\int_{M \backslash B_{\xi_k}(\sqrt{\dk})} G(x_k,y) f(y) \left( u_k^{2^*-1} - W_k^{2^*-1} - u^{2^*-1}\right)(y)  dv_g(y) = o(1) + o(W_k(x_k)).
\een
By \eqref{C01step66} we can write that there holds in $B_{\xi_k}(\sqrt{\dk})$:
\be
\left| u_k^{2^*-1} - W_k^{2^*-1} - u^{2^*-1}\right| \lesssim  W_k^{2^*-1}.
\ee 
Since $A_k \to + \infty$ as $k \to +\infty$ straightforward computations therefore show that:
\ben \label{C01step612}
\int_{B_{\xi_k}(\sqrt{\dk}) \backslash B_{\xi_k}(A_k \dk)} G(x_k,y) f(y) \left( u_k^{2^*-1} - W_k^{2^*-1} - u^{2^*-1}\right)(y)  dv_g(y) = o \big( W_k(x_k) \big) + o(1).
\een
Equation \eqref{C01step61} then follows from \eqref{C01step69}, \eqref{C01step610}, \eqref{C01step610bis} and \eqref{C01step612}.
\end{proof}

\noindent As a consequence of Proposition \ref{propC0grossier} we also obtain pointwise bounds on the gradient of $\phi_k$. More precisely there holds:
\ben  \label{estC0grossieregrad}
|\nabla \phi_k(x)| \lesssim \Big( 1 + \dk^{\pui} \tk(x)^{1-n} \Big),
\een
where $\tk$ is defined in \eqref{defthetak}. Indeed, using Proposition \ref{propC0grossier}, \eqref{defXintro} and \eqref{estLTkLT}, equation \eqref{C01eqphik} can be written as:
\[ \triangle_g \phi_k + h \phi_k = O\big(W_k^{2^*-1}\big) + O(1),\]
so that writing again a representation formula for $\phi_k$ and differentiating yields easily \eqref{estC0grossieregrad}.

\section{Second-order estimates on $\phi_k$.} \label{theorieC0ordre2}

\noindent Let $D >0$ and $(\vek)_k \in \mathcal{E}$. Assume that  $\vek >> \mk^{\frac{3}{2}}$ as $k \to + \infty$, where $\mk$ is defined in \eqref{defmk}. Let $(t_k,\xi_k)_k \in [1/D, D]\times M$, for any $k$ let $v_k \in F_k = F \big( (\vek)_k,t_k,\xi_k \big)$, and let $\phi_k = \phi_k \big(t_k,\xi_k,v_k \big) $ be given by Proposition \ref{propptfixe1}. 

\medskip

\noindent 
The asymptotic pointwise estimates on $\phi_k$ obtained in Proposition \ref{propC0grossier} are not accurate enough to even say that $\phi_k$ belongs to $F(\vek,t,\xi)$. In view of the final fixed-point argument we need to control $\nu_k$ introduced in \eqref{estC0grossiere} only in terms of $\vek$ and uniformly in the choice of $(t_k,\xi_k,v_k)_k$. This task is achieved in this section and in the following one.

\medskip

\noindent In this Section we perform a second-order pointwise expansion of $u_k$ defined in \eqref{defuk} and obtain finer pointwise estimates on $\phi_k$. 
\medskip

\noindent We keep using the notations $f_k = O(g_k)$, $f_k = o(g_k)$ and $f_k \lesssim g_k$ introduced in Section \ref{partie4}. For the sake of clarity, we will also use the following notational shorthand: if $f \in L^\infty(M)$, $\Vert f \Vert_{L^\infty(2 r_k)}$ will be used to denote the quantity $\Vert f \Vert_{L^\infty(B_{\xi_k}(2 r_k))}$. Also, the notation $\mathds{1}_{nlcf}$ will be used to denote a term which only appears when the manifold $M$ is non-locally conformally flat.

\medskip

\noindent We first obtain refined estimates on $\phi_k$ when the bubble $W_{k}$ is the dominant term in the $C^0$-decomposition of $u_k$:

\begin{prop} \label{C02pres}
Let $D >0$, $(\vek)_k \in \mathcal{E}$ and assume that  $\vek >> \mk^{\frac{3}{2}}$ as $k \to + \infty$, where $\mk$ is defined in \eqref{defmk}. Let $(t_k,\xi_k)_k$ be a sequence in $ [1/D, D]\times M$, let $v_k \in F_k = F \big( \vek,t_k,\xi_k \big)$ and let $\phi_k = \phi_k \big(t_k,\xi_k,v_k \big)$ be given by Proposition \ref{propptfixe1}. 
Let $(x_k)_k$ be any sequence of points in $B_{\xi_k}(2 \sqrt{\dk})$. There holds:
\begin{itemize}
\item If $n \ge 7$:
\ben \label{estC02pres}
\bal
\tk(x_k) \left| \nabla \phi_k (x_k) \right|  &+ |\phi_k(x_k)|  \lesssim  \Vert \phi_k \Vert_{L^\infty(\Omega_k)} +  \sqrt{\dk} \Vert \nabla \phi_k \Vert_{L^\infty(\Omega_k)} 
  + \dk  \\
& + \Bigg[  
 \dk^{\frac{n}{2}}
 + \dk \Vert \nabla f \Vert_{L^\infty(2 r_k)}  + \Vert h - c_n S_g \Vert_{L^\infty(2 r_k)} \dk^2  \left| \ln{ \left( \frac{\tk(x_k)}{\dk} \right)} \right| \\
 & + \Vert h - c_n S_g \Vert_{L^\infty(2 r_k)} \tk(x_k)^2 + \tk(x_k)^4 \mathds{1}_{nlcf} \Bigg]  W_k(x_k) + \left( \frac{\dk}{\tk(x_k)}\right)^2 .\\
\eal
\een
\item If $n = 6$:
\ben \label{estC02pres2}
\bal
& \tk(x_k) \left| \nabla \phi_k (x_k) \right| + |\phi_k(x_k)|  \lesssim   \Vert \phi_k \Vert_{L^\infty(\Omega_k)} +  \sqrt{\dk} \Vert \nabla \phi_k \Vert_{L^\infty(\Omega_k)}  +\dk \\
& + \Bigg[ \dk^3 +  \dk \Vert \nabla f \Vert_{L^\infty(2 r_k)} + \big \Vert h - \frac15 S_g - 2 f u \big \Vert_{L^\infty(2 r_k)} \Big(  \dk^2 \left| \ln{ \left( \frac{\tk(x_k)}{\dk} \right)}\right| +  \tk(x_k)^2 \Big)  \Bigg] W_k(x_k) .\\
\eal
\een
\end{itemize}
In \eqref{estC02pres} and \eqref{estC02pres2} we have let:
\ben \label{defOmegak}
\Omega_k = B_{\xi_k}(2 r_k) \backslash B_{\xi_k}(\sqrt{\dk}),
\een
and $\tk(x_k)$ is as in \eqref{defthetak}.
\end{prop}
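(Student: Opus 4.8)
The point $x_k$ lies in the region $B_{\xi_k}(2\sqrt{\dk})$ where the bubble dominates, $W_k(x_k) \gtrsim \dk^{\pui}\tk(x_k)^{2-n} \gg 1 \ge u$, with $\tk$ as in \eqref{defthetak}. The plan is to derive, on $B_{\xi_k}(2\sqrt{\dk})$, a linear equation for $\phi_k$ with $W_k^{2^*-2}$-potential and an explicit error, then to write a Green's representation formula and estimate each contribution. Working in the conformal chart $\exp_{\xi_k}^{g_{\xi_k}}$, I would expand the nonlinearity in \eqref{C01eqphik} around $W_k$,
\[
f u_k^{2^*-1} = f W_k^{2^*-1} + (2^*-1) f W_k^{2^*-2}(u+\phi_k) + O\big(W_k^{2^*-3}(u+\phi_k)^2\big) + O\big(|\phi_k|^{2^*-1}\big),
\]
and use the expansion \eqref{lapbulle} of $(\triangle_g+h)W_k$ together with the definitions \eqref{defhintro}, \eqref{defhintro6} and \eqref{deffintro} to rewrite \eqref{C01eqphik} on $B_{\xi_k}(2\sqrt{\dk})$ as
\[
(\triangle_g+h)\phi_k - (2^*-1) f W_k^{2^*-2}\phi_k = E_k + \sum_{i=0}^n \lki (\triangle_g+h)Z_{i,k}.
\]
Here $E_k$ gathers the source $(2^*-1) f W_k^{2^*-2}u$; the negative nonlinearity $-(|\Lg T+\sigma|_g^2+\pi^2) u_k^{-2^*-1} = O(W_k^{-2^*-1})$; the coupling source, of lower order by \eqref{estLTkLT}, \eqref{defXintro} and the bound $v_k\in F_k$; the remainder of the expansion of $(\triangle_g+h)W_k$, which carries the factors $\Vert\nabla f\Vert_{L^\infty(2r_k)}$ and $\Vert h - c_n S_g\Vert_{L^\infty(2r_k)}$ — respectively $\Vert h - \tfrac15 S_g - 2fu\Vert_{L^\infty(2r_k)}$ when $n=6$, since $2fu$ is built into $h$ by \eqref{defhintro6} — as well as the Weyl contribution $\mathds{1}_{nlcf}$ via \eqref{propLambdaSg}; and the quadratic remainders above. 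A direct computation, using the explicit profile \eqref{bulle} of $W_k$ and Giraud's lemma (\cite{HebeyZLAM}, Lemma $7.5$), shows that each of these pieces, once integrated against the Green's function $G$ of $\triangle_g+h$, produces one of the bracketed terms of \eqref{estC02pres} or \eqref{estC02pres2}; the logarithms $|\ln(\tk(x_k)/\dk)|$ appear when integrating $G\,W_k^{2^*-2}$ against the slowly decaying profile $\dk^{\pui}\tk^{2-n}$ across the neck $\dk \ll d_{g_{\xi_k}}(\xi_k,\cdot)\ll\sqrt{\dk}$, while the standalone $\dk$ and $(\dk/\tk(x_k))^2$ terms come from the $u$-source and the negative nonlinearity.

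In the Green's representation of $\phi_k$ on $B_{\xi_k}(2\sqrt{\dk})$, the boundary integral over $\partial B_{\xi_k}(2\sqrt{\dk})\subset\Omega_k$ is controlled, via the Poisson kernel and the gradient bound \eqref{estC0grossieregrad}, by $\Vert\phi_k\Vert_{L^\infty(\Omega_k)} + \sqrt{\dk}\,\Vert\nabla\phi_k\Vert_{L^\infty(\Omega_k)}$; the kernel contribution $\sum_i\lki(\triangle_g+h)Z_{i,k}$ is absorbed using $|Z_{i,k}|\lesssim W_k$, the expansion \eqref{lapZik} of $(\triangle_g+h)Z_{i,k}$ and the bound $\sum_i|\lki| = O(\eta\vek) + O(\dk^{3/2})$ of \eqref{C01step1}. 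The $\tk(x_k)\,|\nabla\phi_k(x_k)|$ part is then obtained by differentiating the representation formula and arguing in the same way.

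I expect the main difficulty to be the terms which are not small relative to $\phi_k$ itself — the self-interaction $(2^*-1) f W_k^{2^*-2}\phi_k$ and the quadratic remainders $O(W_k^{2^*-3}\phi_k^2) + O(|\phi_k|^{2^*-1})$ — which cannot be handled by a direct Green's estimate: after rescaling at scale $\dk$, the limiting linearized operator $\triangle_{\mathrm{eucl}} - (2^*-1)f(\xi_0)U_{\xi_0}^{2^*-2}$ is not invertible, its kernel being spanned by the $V_{i,\xi_0}$ of \eqref{defVki} (Bianchi-Egnell \cite{BianchiEgnell}). This is where the orthogonality $\phi_k\in K_{k,t_k,\xi_k}^\perp$ is essential. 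The plan is to argue by contradiction: assuming the asserted estimate fails along a subsequence, one normalizes $\phi_k$ by the right-hand side it should satisfy, rescales the normalized remainder about $\xi_k$ at scale $\dk$ as in \eqref{C01step21}, and passes to the limit in $C^1_{loc}(\RR^n)$ using \eqref{C01step2}, the error bounds above and standard elliptic theory; the limit is a bounded solution of the linearized equation \eqref{eqlin}, and letting $k\to+\infty$ in the relations $\langle\phi_k, Z_{i,k}\rangle_h = 0$ forces it to be $L^2(\RR^n)$-orthogonal to every $V_{i,\xi_0}$, hence identically zero by Bianchi-Egnell \cite{BianchiEgnell} — contradicting the normalization. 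Two points then require care: ruling out that the maximizing sequence escapes to spatial infinity in the rescaled picture, which follows from the a priori decay \eqref{C01step5} and \eqref{estC0grossiere}; and correctly handling the two weights in the left-hand side of \eqref{estC02pres}, which forces one to distinguish the bubble core $\tk(x_k)\sim\dk$ from the neck $\dk\ll\tk(x_k)\le 2\sqrt{\dk}$. Finally, the $n=6$ case follows the same scheme with the expansion \eqref{lapbulle} and the estimate of $E_k$ pushed one order further, which accounts for the $\dk^3$ term and the combination $h - \tfrac15 S_g - 2fu$ in \eqref{estC02pres2}.
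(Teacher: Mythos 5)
Your plan lines up with the paper's in most respects: a Green representation on $B_{\xi_k}(2\sqrt{\dk})$ with $G$ the Green function of $\triangle_g+h$, direct Giraud-type bounds on the source terms, and, for the self-interaction $(2^*-1)fW_k^{2^*-2}\phi_k$, a rescaling-and-contradiction argument in which the orthogonality $\phi_k\in K_{k,t_k,\xi_k}^\perp$ combined with the Bianchi--Egnell classification kills the limit. The core-versus-neck distinction and the need for an iteration are also correctly anticipated.

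However, there is a genuine gap in your treatment of the kernel coefficients. You propose to absorb $\sum_i\lki Z_{i,k}(x_k)$ using the bound $\sum_i|\lki|=O(\eta\vek)+O(\dk^{3/2})$ of \eqref{C01step1}, but this is far too weak. Since $|Z_{i,k}(x_k)|\sim\dk^{-\pui}$ when $\tk(x_k)\sim\dk$, that bound only yields $\big|\sum_i\lki Z_{i,k}(x_k)\big|\lesssim\dk^{3/2-\pui}$, which is $\dk^{-1}$ already for $n=7$ and diverges, while the right-hand side of \eqref{estC02pres} is $O(1)$ at the peak; one actually needs $\sum_i|\lki|\lesssim\dk^{\pui}(\cdot)$. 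The paper obtains this by leaving the $\lki$ as unknowns in the representation formula for $\phi_k-\sum_i\lki Z_{i,k}$ (the preliminary inequality \eqref{C02s13}), then evaluating it at $n+1$ suitably separated points in $B_{\xi_k}(\dk)$ to solve linearly for the $\lki$ and bootstrap the much sharper bound \eqref{C02s1estimlki}. Without this step the representation formula does not close and the contradiction argument cannot even be set up. A secondary imprecision: ``letting $k\to+\infty$ in $\langle\phi_k,Z_{i,k}\rangle_h=0$'' is not immediate -- one must integrate by parts on $B_{\xi_k}(R\dk)$, control the boundary term and the exterior integrals via \eqref{lapZik} and the iterated decay (the paper's \eqref{C02s17}), and the rescaled limit equation initially carries $\sum_i\tilde\lambda_i\triangle_\xi V_{i,\xi_0}$ on the right-hand side, whose coefficients must first be shown to vanish using the decay \eqref{C02s19bis} before Bianchi--Egnell applies.
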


\begin{proof}
Let $D >0$ and $(\vek)_k \in \mathcal{E}$ and assume that  $\vek >> \mk^{\frac{3}{2}}$ as $k \to + \infty$, where $\mk$ is defined in \eqref{defmk}. Let $(t_k,\xi_k)_k$ be a sequence in $ [1/D, D]\times M$, let $v_k \in F_k = F \big( \vek,t_k,\xi_k \big)$ and let $\phi_k = \phi_k \big(t_k,\xi_k,v_k \big)$ be given by Proposition \ref{propptfixe1}.
Proposition \ref{propC0grossier} applies so that $u_k$ given by \eqref{defuk} solves \eqref{vraieeqphik}. As before, we shall omit the dependence in $t_k$ and $\xi_k$ and let $W_k = W_{k,t_k,\xi_k}$ and so on.  We first obtain pointwise estimates on $\phi_k$ that we later improve into gradient estimates.

\medskip
\noindent \textbf{We first assume that $n \ge 7$.} From \eqref{vraieeqphik}, using \eqref{syseta} and \eqref{lapbulle} below, $\phi_k$ is easily seen to satisfy:
\ben \label{eqphik}
\bal
 \left( \triangle_g + h \right) \left(  \phi_k - \sum_{i=0}^n \lki Z_{i,k} \right)   &=  f \left( u_k^{2^*-1} - W_k^{2^*-1} - u^{2^*-1} \right) \\
 &+ \big(f - f(\xi_k) \big) W_k^{2^*-1}  - \big(h - c_n S_g \big) W_k  \\ 
&- c_n S_{g_{\xi_k}} \Lambda_{g_{\xi_k}}^{2^*-2} W_k + O (\dk^\pui \mathds{1}_{d_k \le 2 r_k}) + O \big(\dk^\pui r_k^{-n} \mathds{1}_{r_k \le d_k \le 2 r_k} \big) \\ 
 &  + \left(|\Lg T + \sigma|_g^2 + \pi^2 \right) \left( u_k^{-2^*-1} - u^{-2^*-1} \right) \\
& +  \frac{ |\Lg T_k + \sigma|_g^2 - |\Lg T + \sigma|_g^2 }{\left( u + W_k + v_k \right)^{2^*+1}} . \\
\eal
\een
Remember that by \eqref{relationrkmk} there holds that $r_k >> \sqrt{\dk}$ for $t \in [1/D, D]$, where $\dk$ is given by \eqref{defdkyk}. In \eqref{eqphik}  we have let, for any $x \in M$:
\ben \label{notdk}
d_k(x) = d_{g_{\xi_k}}(\xi_k, x).
\een
Also, in \eqref{eqphik}, the notation $\dk^\pui r_k^{-n} \mathds{1}_{r_k \le d_k \le 2 r_k}$ is used to denote a smooth function, supported in $B_{\xi_k}(2 r_k) \backslash B_{\xi_k}(r_k)$, and whose $C^0$ norm is bounded by $\dk^\pui r_k^{-n} $. Let $(x_k)_k $ be a sequence of points in $B_{\xi_k}(2 \sqrt{\dk})$. 
If $x_k \in \Omega_k$ as in \eqref{defOmegak} we clearly have:
\ben \label{C02s11}
|\phi_k(x_k)| \le \Vert \phi_k \Vert_{L^\infty(\Omega_k)}.
\een
Assume now that $x_k \in B_{\xi_k}(\sqrt{\dk})$. As before, we let $G$ denote the Green's function of $\triangle_g + h$ in $M$. A representation formula for $\phi_k - \sum_{i=0}^n \lki Z_{i,k}$ in $B_{\xi_k}(2 \sqrt{\dk})$ gives, using \eqref{relationrkmk} and \eqref{eqphik}, that:
\ben \label{C02s12}
\bal
\phi_k(x_k) &= \sum_{i=0}^n \lki Z_{i,k} (x_k) + O \big( \dk^{\pui} r_k^{2-n}\big) +  O \left( \Vert \phi_k \Vert_{L^\infty(\Omega_k)} \right) \\
&+ O \left( \sqrt{\dk} \Vert \nabla \phi_k \Vert_{L^\infty(\Omega_k)} \right) +  O \left( \sum_{i=0}^n |\lki| \right) \\
&+ I_1 + I_2 + I_3 + I_4 + I_5 + I_6, 
\eal
\een
where we have let:
\ben \label{I1toI6}
\bal
I_1 & = \int_{B_{\xi_k}(2\sqrt{\dk})} f(y) \left( u_k^{2^*-1} - W_k^{2^*-1} - u^{2^*-1} \right)(y) G(x_k,y) dv_g(y), \\
I_2 & = \int_{B_{\xi_k}(2\sqrt{\dk})} \big(f(y) - f(\xi_k) \big) W_k^{2^*-1}(y) G(x_k,y) dv_g(y), \\
I_3 & = - \int_{B_{\xi_k}(2\sqrt{\dk})} \big(h - c_n S_g \big)(y)  W_k(y) G(x_k,y) dv_g(y), \\
I_4 & = - \int_{B_{\xi_k}(2\sqrt{\dk})} c_n S_{g_{\xi_k}}(y) \Lambda_{g_{\xi_k}}^{2^*-2}(y) W_k(y) G(x_k,y) dv_g(y), \\
I_5 & =  \int_{B_{\xi_k}(2\sqrt{\dk})}  \left(|\Lg T + \sigma|_g^2 + \pi^2 \right) \left( u_k^{-2^*-1} - u^{-2^*-1} \right)(y) G(x_k,y) dv_g(y), \textrm{ and } \\
I_6 & =  \int_{B_{\xi_k}(2\sqrt{\dk})}  \frac{ |\Lg T_k + \sigma|_g^2 - |\Lg T + \sigma|_g^2 }{\left( u + W_k + v_k \right)^{2^*+1}}(y) G(x_k,y) dv_g(y).
\eal
\een
The definition of $W_k$ in \eqref{bulle} yields:
\ben \label{C02s1I2}
I_2 \lesssim  \dk \Vert \nabla f \Vert_{L^\infty(2 r_k)} W_k(x_k),
\een
while an application of Giraud's lemma (see \cite{HebeyZLAM}, lemma $7.5$) shows that
\ben \label{C02s1I3}
I_3 \lesssim \Vert h - c_n S_g \Vert_{L^\infty(2 r_k)} \tk(x_k)^2 W_k(x_k)
\een
and, using \eqref{propLambdaSg}, that:
\ben \label{C02s1I4}
I_4 = \left \{
\bal
& 0 &\textrm{ if } (M,g) \textrm{ is locally conformally flat} \\
&O \left( \tk(x_k)^4 W_k(x_k) \right) &\textrm{ if } n \ge 7 \textrm{ and } (M,g) \textrm{ is not l.c.f.} \\
& O \left( \dk^2 |\ln \left( \tk(x_k) \right)| \right) &\textrm{ if } n = 6 \textrm{ and } (M,g) \textrm{ is not l.c.f}, \\
\eal \right.
\een
where $\tk(x)$ is as in \eqref{defthetak}. Since $u_k$ is uniformly bounded from below by \eqref{C01step3} we also have:
\ben \label{C02s1I5}
I_5 \lesssim  \dk.
\een
With \eqref{estLTkLT} below and since $v_k \in F_k$  (defined in \eqref{defFea}), we obtain that:
\ben \label{C02s1I6}
I_6  \lesssim  \dk. 
\een
Note that \eqref{C02s1I2}, \eqref{C02s1I4}, \eqref{C02s1I5} and \eqref{C02s1I6} actually hold even if $n = 6$. By \eqref{estC0grossiere} there holds, in $B_{\xi_k}(\sqrt{\dk})$:
\ben \label{C02s1I10}
\left| u_k^{2^*-1} - W_k^{2^*-1} - u^{2^*-1} \right| \lesssim W_k^{2^*-2} |\phi_k| + W_k^{2^*-2},
\een
so that we have
\ben \label{C02s1I1}
I_1 \lesssim  \left( \frac{\dk}{\tk(x_k)} \right)^2 \left(  \Vert \phi_k \Vert_{L^\infty(B_{\xi_k}(2\sqrt{\dk}))} + 1 \right) .
\een
Gathering the estimates \eqref{C02s1I2} to \eqref{C02s1I1} in \eqref{C02s12} and using \eqref{relationrkmk} gives:
\ben \label{C02s13}
\bal
&\left| \phi_k(x_k) - \sum_{i=0}^n \lki Z_{i,k} (x_k) \right| \lesssim   \Vert \phi_k \Vert_{L^\infty(\Omega_k)} +  \sqrt{\dk} \Vert \nabla \phi_k \Vert_{L^\infty(\Omega_k)} +  \sum_{i=0}^n |\lki| \\
& \qquad + \dk +  \left( \frac{\dk}{\tk(x_k)} \right)^2  \left( 1 + \Vert \phi_k \Vert_{L^\infty(B_{\xi_k}(2\sqrt{\dk}))} \right)\\
&\qquad + \Bigg[ \dk \Vert \nabla f \Vert_{L^\infty(2 r_k)} + \Vert h - c_n S_g \Vert_{L^\infty(2 r_k)} \tk(x_k)^2 + \tk(x_k)^4 \mathds{1}_{nlcf} \Bigg] W_k(x_k)  .\\
\eal
\een
In \eqref{C02s13}, the notation $\mathds{1}_{nlcf}$ is used to denote a term which vanishes when $(M,g)$ is conformally flat. Equation \eqref{C02s13} holds for any sequence $x_k \in B_{\xi_k}(\sqrt{\dk})$. Applying it to a well-chosen set of $(n+1)$ points lying in $B_{\xi_k}(\dk)$ one obtains that:
\ben \label{C02s1estimlki}
\bal
\sum_{i=0}^n |\lki| &\lesssim \dk^\pui \Big( \Vert \phi_k \Vert_{L^\infty(\Omega_k)} +  \sqrt{\dk} \Vert \nabla \phi_k \Vert_{L^\infty(\Omega_k)} \Big)  \\
& + \dk^\pui 
 + \dk \Vert \nabla f \Vert_{L^\infty(2 r_k)} + \Vert h - c_n S_g \Vert_{L^\infty(2 r_k)} \dk^2 \\
& + \dk^\pui \Vert \phi_k \Vert_{L^\infty(B_{\xi_k}(2\sqrt{\dk}))} + \dk^4 \mathds{1}_{nlcf}. \\
\eal
\een
Plugging \eqref{C02s1estimlki} into \eqref{C02s13}, using that $|Z_{i,k}| \lesssim W_k$ and using \eqref{C02s11} gives the improved estimate:
\ben \label{C02s14}
\bal
|\phi_k(x_k)| & \lesssim  \Vert \phi_k \Vert_{L^\infty(\Omega_k)} +  \sqrt{\dk} \Vert \nabla \phi_k \Vert_{L^\infty(\Omega_k)} \\ 
&  + \dk +  \left( 1 +  \Vert \phi_k \Vert_{L^\infty(B_{\xi_k}(2\sqrt{\dk}))} \right) \left( \frac{\dk}{\tk(x_k)} \right)^2 \\ 
& + \Bigg[ \dk \Vert \nabla f \Vert_{L^\infty(2 r_k)} + \Vert h - c_n S_g \Vert_{L^\infty(2 r_k)} \tk(x_k)^2 +  \tk(x_k)^4 \mathds{1}_{nlcf}\Bigg] W_k(x_k),  \\
\eal
\een
which now holds for any sequence $x_k \in B_{\xi_k}(2 \sqrt{\dk})$. 
We now claim that the following holds:

\begin{claim} \label{claimNphik}
There holds
\ben \label{eqclaimNphik}
\Vert \phi_k \Vert_{L^\infty(B_{\xi_k}(2\sqrt{\dk}))} \lesssim \max \left(1, M_k \right),
\een
where we have let
\ben \label{C02s1defMk}
\bal
M_k &=  \Vert \phi_k \Vert_{L^\infty(\Omega_k)} +  \sqrt{\dk} \Vert \nabla \phi_k \Vert_{L^\infty(\Omega_k)} 
 +  \dk  \\ 
& + \dk^{2 - \frac{n}{2}}\Vert \nabla f \Vert_{L^\infty(2 r_k)} + \dk^{3 - \frac{n}{2}} \Vert h - c_n S_g \Vert_{L^\infty(2 r_k)}  + \dk^{5 - \frac{n}{2}} \mathds{1}_{nlcf} ,\\
\eal
\een
and where $\Omega_k$ is as in \eqref{defOmegak}.
\end{claim}

\begin{proof}[Proof of Claim \ref{claimNphik}]

\noindent Let $(x_k)_k$ be a sequence of points of $B_{\xi_k}(2 \sqrt{\dk})$ satisfying $|\phi_k(x_k)| = \max_{B_{\xi_k}(2 \sqrt{\dk})} |\phi_k|$. Claim \ref{claimNphik} is trivially satisfied if $x_k \in \Omega_k$, so we assume in the following that $x_k \in B_{\xi_k}(\sqrt{\dk})$. We proceed by contradiction and assume that there holds:
\ben \label{C02claimcontra}
|\phi_k(x_k)| = \Vert \phi_k \Vert_{L^\infty(B_{\xi_k}(2\sqrt{\dk}))} >> \max \left(1, M_k \right)
\een
as $k \to + \infty$, where $M_k$ is defined in \eqref{C02s1defMk}. Let $(y_k)_k$ be any other sequence of points in $B_{\xi_k}(\sqrt{\dk})$. Assumption \eqref{C02claimcontra} implies in particular that $\Vert \phi_k \Vert_{L^\infty(B_{\xi_k}(2\sqrt{\dk}))} >> 1$, so that applying \eqref{C02s14} at $y_k$ yields, with \eqref{C02claimcontra}:
\ben \label{C02s15}
 |\phi_k(y_k)|   
\lesssim \left(  \left( \frac{\dk}{\tk(y_k)} \right)^2 + o(1) \right) \Vert \phi_k \Vert_{L^\infty(B_{\xi_k}(2\sqrt{\dk}))}. 
\een
Using \eqref{C02s15}, we now compute again $I_1$ in \eqref{I1toI6} and obtain, since $\dk \le \tk(y_k)$:
\[ I_1  \lesssim  \left( \left( \frac{\dk}{\tk(y_k)} \right)^{\frac{7}{2}} + o(1) \right)\Vert \phi_k \Vert_{L^\infty(B_{\xi_k}(2\sqrt{\dk}))} +   \left( \frac{\dk}{\tk(y_k)} \right)^2,  \]
which in turn, with \eqref{C02s12} and \eqref{C02claimcontra} implies that there holds:
\[  |\phi_k(y_k)| \lesssim \left(  \left( \frac{\dk}{\tk(y_k)} \right)^{\frac{7}{2}} + o(1) \right) \Vert \phi_k \Vert_{L^\infty(B_{\xi_k}(2\sqrt{\dk}))}. \]
Replugging the latter estimate in the computation of $I_1$ improves again, and after a finite number of iterations one obtains:
\ben \label{C02s17}
|\phi_k(y_k)| \lesssim  \left(  \left( \frac{\dk}{\tk(y_k)} \right)^{n-2} + o(1) \right) \Vert \phi_k \Vert_{L^\infty(B_{\xi_k}(2\sqrt{\dk}))} \textrm{ for any } y_k \in B_{\xi_k}(\sqrt{\dk}). 
\een
In particular, \eqref{C02s17} applied to the sequence $x_k$ given by \eqref{C02claimcontra} yields:
\be 
\tk(x_k) \lesssim \dk,
\ee
where $\tk$ is as in \eqref{defthetak}. We define now, for $y \in B_0(2\dk^{- \frac12})$:
\be 
\tpk(x) = \Vert \phi_k \Vert_{L^\infty(B_{\xi_k}(2\sqrt{\dk}))}^{-1} \phi_k \left( \exp_{\xi_k}^{g_{\xi_k}} (\dk y )\right),
\ee
and let $\tilde{x}_k = \frac{1}{\dk} \left( \exp_{\xi_k}^{g_{\xi_k}} \right)^{-1}(x_k)$. Hence, $\tilde{x}_k \to \tilde{x}_0 \in \RR^n$ as $k \to + \infty$. We also have $\Vert \tpk \Vert_{L^\infty(2 \dk^{- \frac12})} \le 1$, and using \eqref{eqphik}, \eqref{estC0grossiere}, \eqref{C02claimcontra} and standard elliptic theory, we obtain that $\tpk$ converges in $C^1_{loc}(\RR^n)$, up to a subsequence, to some function $\tpz$ satisfying $|\tpz(\tilde{x}_0)| =1$ and
\ben \label{C02s19}
\triangle_\xi \tpz - (2^*-1) f(\xi_0) U_{\xi_0}^{2^*-2} \tpz = \sum_{i=0}^n \tilde{\lambda}_i \triangle_\xi V_{i,\xi_0},
\een 
where $\xi_0 = \lim_{k \to + \infty} \xi_k$, $U_{\xi_0}$ is defined in \eqref{eqlindefU}, $V_{i,\xi_0}$ is as in \eqref{defVki} and where we have let, up to a subsequence:
\[ \tilde{\lambda}_i = \lim_{k \to + \infty} \frac{\lki}{\dk^\pui \Vert \phi_k \Vert_{L^\infty(B_{\xi_k}(2\sqrt{\dk}))}}. \]
Note that this limit exists, up to a subsequence, by \eqref{C02s1estimlki} and \eqref{C02claimcontra}. Also, in \eqref{C02s19}, $\xi$ stands for the Euclidean metric in $\RR^n$. Passing \eqref{C02s17} to the limit also gives that:
\ben \label{C02s19bis}
|\tpz(y)| \lesssim \left( 1 + |y| \right)^{2-n} \textrm{ for any } y\in \RR^n.
\een
With the latter estimate we can integrate \eqref{C02s19} against $V_{i,\xi_0}$ for any $0 \le i \le n$. Since $V_{i,\xi_0}$ solves \eqref{eqlin} there holds $\tilde{\lambda}_i = 0$ and thus $\tpz$ satisfies:
\ben \label{C02s110}
\triangle_\xi \tpz = (2^*-1) f(\xi_0) U_{\xi_0}^{2^*-2} \tpz . ,
\een 
Again with \eqref{C02s19bis} this shows that $\tpz \in H^1(M)$ and then the Bianchi-Egnell \cite{BianchiEgnell} classification result applies and gives that
\ben \label{C02s111}
\tpz \in \textrm{Span} \{ V_{i,\xi_0}, 0 \le i \le n \}.
\een

\medskip
\noindent To conclude the proof of Claim \ref{claimNphik} we now show that $\tpz \in \textrm{Span} \{ V_{i,\xi}, 0 \le i \le n \}^\perp$. With \eqref{C02s111} this will show that $\tpz \equiv 0$, thus contradicting the fact that $|\tpz(\tilde{x}_0)| = 1$. By \eqref{propphik1}, $\phi_k \in K_k^{\perp}$, where $K_k = K_{k,t_k,\xi_k}$ is defined in \eqref{noyau}. Hence for any $0 \le i \le n$ there holds:
\be 
\int_M \left( \left \langle \nabla Z_{i,k}, \nabla \phi_k \right \rangle_g + h Z_{i,k} \phi_k \right) dv_g = 0.
\ee
Let $R >0$ and $0 \le i \le n$. Integrating by parts the latter equation gives
\ben  \label{C02s112}
\bal
&\int_{B_{\xi_k}(R \dk)} \left \langle \nabla Z_{i,k}, \nabla \phi_k \right \rangle_g + h Z_{i,k} \phi_k dv_g = - \int_{\partial B_{\xi_k}(R \dk)} \phi_k \partial_\nu Z_{i,k} d\sigma_g \\
&- \int_{M \backslash B_{\xi_k}(R \dk)} (h-c_n S_g) Z_{i,k} \phi_k dv_g - \int_{M \backslash B_{\xi_k}(R \dk)} \left( \triangle_g + c_nS_g \right)Z_{i,k} \phi_k dv_g.
\eal
\een
Using \eqref{propphik2} and H\"older's inequality we get, with \eqref{C02claimcontra}, that:
\ben \label{C02s113}
\bal
\left|  \int_{M \backslash B_{\xi_k}(R \dk)} (h-c_n S_g) Z_{i,k} \phi_k dv_g  \right| &\lesssim \Vert h - c_n S_g \Vert_{L^\infty(2 r_k)} \eta \dk^2 \vek \\
& = o \left( \dk^\pui \Vert \phi_k \Vert_{L^\infty(B_{\xi_k}(2\sqrt{\dk}))} \right).
\eal
\een
Using \eqref{C02s17} we have:
\ben \label{C02s114}
\left| \int_{\partial B_{\xi_k}(R \dk)} \phi_k \partial_\nu Z_{i,k} d\sigma_g \right| \lesssim  \left( \frac{1}{\left(1 + R \right)^{n-2}}  + o(1) \right) \dk^\pui \Vert \phi_k \Vert_{L^\infty(B_{\xi_k}(2\sqrt{\dk}))}. 
\een
We now compute the third integral in \eqref{C02s112} by using  \eqref{lapZik} below. By Proposition \ref{propC0grossier} and \eqref{relationrkmk} we have
\ben \label{C02s115}
\bal
\int_{M \backslash B_{\xi_k}(R \dk)} \dk^{\frac{n}{2}} r_k^{-n-1} \mathds{1}_{r_k \le d_k \le 2 r_k} |\phi_k| dv_g + \int_{M \backslash B_{\xi_k}(R \dk)} \dk^\pui r_k^{-n} \mathds{1}_{r_k \le d_k \le 2 r_k} |\phi_k| dv_g  \\
+ \int_{M\backslash B_{\xi_k}(R \dk)} \dk^{\pui} \phi_k dv_g = o(\dk^\pui) = o \left( \dk^\pui \Vert \phi_k \Vert_{L^\infty(B_{\xi_k}(2\sqrt{\dk}))} \right),
\eal
\een
where the last equality is due to \eqref{C02claimcontra}. By \eqref{propLambdaSg} there holds
\[ |S_{g_{\xi_k}}|(x) \lesssim d_k(x)^2 \textrm{ in } B_{\xi_k}(2r_k), \]
where $d_k$ is the Riemannian distance defined in \eqref{notdk}, so that by \eqref{C02claimcontra} and \eqref{C02s17} we have:
\ben \label{C02s116}
\bal
 \int_{M \backslash B_{\xi_k}(R \dk)} c_n S_{g_{\xi_k}} \Lambda_{\xi_k}^{2^*-2} Z_{i,k} \phi_k dv_g  & =  \int_{B_{\xi_k}(2 r_k) \backslash B_{\xi_k}(\sqrt{\dk})} c_n S_{g_{\xi_k}} \Lambda_{\xi_k}^{2^*-2} Z_{i,k} \phi_k dv_g \\
 &+  \int_{B_{\xi_k}(\sqrt{\dk}) \backslash B_{\xi_k}(R \dk)} c_n S_{g_{\xi_k}} \Lambda_{\xi_k}^{2^*-2} Z_{i,k} \phi_k dv_g \\
& = o \left( \dk^\pui \Vert \phi_k \Vert_{L^\infty(B_{\xi_k}(2\sqrt{\dk}))} \right), \\
\eal
\een
where the last equality is again given by \eqref{C02claimcontra}. 
In case where $(M,g)$ is not locally conformally flat we have, using \eqref{C02claimcontra}, that:
\ben \label{C02s1171}
\bal
\left| \int_{M \backslash B_{\xi_k}(\sqrt{\dk})}  \dk^{\frac{n}{2}} \tk(\cdot)^{2-n} \mathds{1}_{d_k \le 2 r_k} |\phi_k| dv_g \right| & \lesssim \dk^{\frac{n}{2}}  \Vert \phi_k \Vert_{L^\infty(\Omega_k)}\\
& = o \left( \dk^\pui \Vert \phi_k \Vert_{L^\infty(B_{\xi_k}(2\sqrt{\dk}))} \right), \\
\eal
\een
and using \eqref{C02s17} we also obtain that:
\ben \label{C02s1172}
\bal
\left| \int_{B_{\xi_k}(\sqrt{\dk}) \backslash B_{\xi_k}(R \dk)} \dk^{\frac{n}{2}} \tk(\cdot)^{2-n} \mathds{1}_{d_k \le 2 r_k} |\phi_k| dv_g\right| 
 = o \left( \dk^\pui \Vert \phi_k \Vert_{L^\infty(B_{\xi_k}(2\sqrt{\dk}))} \right).
\eal
\een
Finally, by \eqref{C02claimcontra} we can write that: 
\ben \label{C02s118}
\bal
\left| \int_{M \backslash B_{\xi_k}(\sqrt{\dk})} f(\xi_k) W_k^{2^*-2} Z_{i,k} \phi_k dv_g \right| & \lesssim \dk^{\frac{n}{2}} \Vert \phi_k \Vert_{L^\infty( \Omega_k)}, \\
& = o \left( \dk^\pui \Vert \phi_k \Vert_{L^\infty(B_{\xi_k}(2\sqrt{\dk}))} \right) \\
\eal
\een
where $\Omega_k$ is as in \eqref{defOmegak}, and using \eqref{C02s17} one gets that
\ben \label{C02s119}
\bal
&\left| \int_{B_{\xi_k}(\sqrt{\dk}) \backslash B_{\xi_k}(R \dk)} f(\xi_k) W_k^{2^*-2} Z_{i,k} \phi_k dv_g \right| \\
 &\qquad  \lesssim  \left( \frac{1}{\left(1 + R \right)^{n-2}} + o(1) \right)  \dk^\pui \Vert \phi_k \Vert_{L^\infty(B_{\xi_k}(2\sqrt{\dk}))}.
\eal
\een
Combining \eqref{C02s115}, \eqref{C02s116}, \eqref{C02s1171}, \eqref{C02s1172}, \eqref{C02s118} and \eqref{C02s119} in \eqref{lapZik} we obtain that:
\ben \label{C02s120}
\left| \int_{M \backslash B_{\xi_k}(R \dk)} \left( \triangle_g + c_nS_g \right)Z_{i,k} \phi_k dv_g \right| \lesssim  \left( \frac{1}{\left(1 + R \right)^{n-2}} + o(1) \right)  \dk^\pui \Vert \phi_k \Vert_{L^\infty(B_{\xi_k}(2\sqrt{\dk}))}.
\een
We now divide \eqref{C02s112} by $\dk^\pui \Vert \phi_k \Vert_{L^\infty(B_{\xi_k}(2\sqrt{\dk}))}$. We first use \eqref{C02s113}, \eqref{C02s114}, \eqref{C02s120}, the $C^1$ convergence of $\tpk$ towards $\tpz$ and the expression of $Z_{i,k}$ in \eqref{defZk} to pass to the limit as $k \to + \infty$. We then use \eqref{C02s19bis} to pass to the limit as $R \to +\infty$, to obtain that:
\[ \int_{\RR^n} \left \langle \nabla \tpz, \nabla V_{i,\xi_0} \right \rangle_{eucl} dx = 0 \textrm{ for all } 0 \le i \le n, \]
where $\xi_0 = \lim_{k \to +\infty} \xi_k$. We thus have in the end that: 
\[ \tpz \in \textrm{Span} \{ V_{i,\xi_0}, 0 \le i \le n \}^{\perp}.\]
With \eqref{C02s111}, this implies that $\tpz \equiv 0$ which contradicts the fact that $|\tpz(\tilde{x}_0)| = 1$, and therefore concludes the proof of Claim \ref{claimNphik}.   
\end{proof}

\noindent Hence, equation \eqref{eqclaimNphik} holds true. Assume first that there holds $M_k \lesssim 1$, where $M_k$ is defined in \eqref{C02s1defMk}. Then \eqref{C02s14} shows that, for any sequence $x_k \in B_{\xi_k}(2 \sqrt{\dk})$, there holds:
\ben \label{C02conclu1}
\bal
|\phi_k(x_k)| & \lesssim  \Vert \phi_k \Vert_{L^\infty(\Omega_k)} +  \sqrt{\dk} \Vert \nabla \phi_k \Vert_{L^\infty(\Omega_k)} 
  + \dk +  \left( \frac{\dk}{\tk(x_k)} \right)^2\\ 
& + \Bigg[ \dk \Vert \nabla f \Vert_{L^\infty(2 r_k)} + \Vert h - c_n S_g \Vert_{L^\infty(2 r_k)} \tk(x_k)^2 + \tk(x_k)^4 \mathds{1}_{nlcf} \Bigg] W_k(x_k).  \\ 
\eal
\een
Assume now that $M_k >> 1$. Then, plugging \eqref{C02s1defMk} in \eqref{C02s14} yields:
\ben \label{C02conclu21}
\bal
|\phi_k(x_k)| & \lesssim  \Vert \phi_k \Vert_{L^\infty(\Omega_k)} +  \sqrt{\dk} \Vert \nabla \phi_k \Vert_{L^\infty(\Omega_k)}  
  + \dk + \left( \frac{\dk}{\tk(x_k)} \right)^2 \\ 
& + \Bigg[ \dk \Vert \nabla f \Vert_{L^\infty(2 r_k)} + \Vert h - c_n S_g \Vert_{L^\infty(2 r_k)} \tk(x_k)^2 + \tk(x_k)^4 \mathds{1}_{nlcf}\Bigg] W_k(x_k)  \\
&  + \Bigg[  \dk^3   
 + \dk^{4 - \frac{n}{2}} \Vert \nabla f \Vert_{L^\infty(2 r_k)} + \dk^{5 - \frac{n}{2}} \Vert h - c_n S_g \Vert_{L^\infty(2 r_k)} \\
 & \qquad \qquad \qquad \qquad + \dk^{7 - \frac{n}{2}} \mathds{1}_{nlcf} \Bigg] \tk(x_k)^{-2} .\\
\eal
\een
We proceed again as we did to obtain \eqref{C02s17}: we use \eqref{C02conclu21} to obtain a better estimate of $I_1$ in \eqref{I1toI6}, which in turn gives with \eqref{C02s12} an improved pointwise estimate of $|\phi_k|$. After a finite number of iterations one obtains:
\ben \label{sansgrad7}
\bal
& |\phi_k(x_k)|  \lesssim  \Vert \phi_k \Vert_{L^\infty(\Omega_k)} +  \sqrt{\dk} \Vert \nabla \phi_k \Vert_{L^\infty(\Omega_k)} 
  + \dk  \\
& + \Bigg[  
 \dk^{\frac{n}{2}} 
 + \dk \Vert \nabla f \Vert_{L^\infty(2 r_k)}  + \Vert h - c_n S_g \Vert_{L^\infty(2 r_k)} \dk^2  \left| \ln{ \left( \frac{\tk(x_k)}{\dk} \right)} \right| \\
 & + \Vert h - c_n S_g \Vert_{L^\infty(2 r_k)} \tk(x_k)^2 + \tk(x_k)^4 \mathds{1}_{nlcf} \Bigg]  W_k(x_k) + \left( \frac{\dk}{\tk(x_k)}\right)^2 .\\
\eal
 \een

\medskip

\noindent \textbf{We now assume that $n = 6$.}
The proof closely follows the $n \ge 7$ case so we only highlight the main differences. From \eqref{vraieeqphik}, using \eqref{syseta}, Proposition \ref{propC0grossier} and \eqref{lapbulle}, it is easily seen that $\phi_k$ satisfies:
\ben \label{eqphik6}
\bal
 \left( \triangle_g + h \right) \left(  \phi_k - \sum_{i=0}^6 \lki Z_{i,k} \right)   & =  (2+o(1))  f \big( u+ W_k \big) \phi_k \\
 &+ \big(f - f(\xi_k) \big) W_k^{2}  - \big(h - \frac{1}{5} S_g  - 2 f u\big) W_k  \\ 
&- \frac{1}{5} S_{g_{\xi_k}} \Lambda_{g_{\xi_k}} W_k + O (\dk^2 \mathds{1}_{d_k \le 2 r_k}) + O \big(\dk^2 r_k^{-6} \mathds{1}_{r_k \le d_k \le 2 r_k} \big) \\ 
 &  + \left(|\Lg T + \sigma|_g^2 + \pi^2 \right) \left( u_k^{-4} - u^{-4} \right) \\
& +  \frac{ |\Lg T_k + \sigma|_g^2 - |\Lg T + \sigma|_g^2 }{\left( u + W_k + v_k \right)^{4}} . \\
\eal
\een
Let 
 $(x_k)_k $ be a sequence of points in $B_{\xi_k}(2 \sqrt{\dk})$. 
If $x_k \in \Omega_k$ we have as before:
\ben \label{C02s126}
|\phi_k(x_k)| \le \Vert \phi_k \Vert_{L^\infty(\Omega_k)}.
\een
Assume now that $x_k \in B_{\xi_k}(\sqrt{\dk})$. Mimicking \eqref{C02s1I3}, there holds
\[ \int_{B_{\xi_k}(2 \sqrt{\dk})} \big( h - \frac{1}{5}S_g - 2 fu \big) W_k(y) G(x_k,y) dv_g(y) \lesssim \Vert  h - \frac{1}{5}S_g - 2 fu \Vert_{L^\infty(2 r_k)} \left( \frac{\dk}{\tk(x_k)}\right)^2,  \]
so that using \eqref{C02s1I2}--\eqref{C02s1I6}, a representation formula for $ \phi_k - \sum_{i=0}^6 \lki Z_{i,k}$ in $B_{\xi_k}(2 \sqrt{\dk})$ gives, with \eqref{eqphik6}, that:
\ben \label{C02s136}
\bal
& \left| \phi_k - \sum_{i=0}^6 \lki Z_{i,k}\right|(x_k)  \lesssim   \Vert \phi_k \Vert_{L^\infty(\Omega_k)} +  \sqrt{\dk} \Vert \nabla \phi_k \Vert_{L^\infty(\Omega_k)} +  \sum_{i=0}^6 |\lki| + \dk \\
 & + \dk \Vert \nabla f \Vert_{L^\infty(2 r_k)}  W_k(x_k)  + \left(  \big \Vert h - \frac15 S_g - 2 f u \big \Vert_{L^\infty(2 r_k)} +  \Vert \phi_k \Vert_{L^\infty(B_{\xi_k}(2\sqrt{\dk}))}\right) \left( \frac{\dk}{\tk(x_k)} \right)^2  .\\
\eal
\een
As before, the $|\lki|$ are estimated by 
\ben \label{C02s1estimlki6}
\bal
\sum_{i=0}^6 |\lki| &\lesssim \dk^2 \Big( \Vert \phi_k \Vert_{L^\infty(\Omega_k)} +  \sqrt{\dk} \Vert \nabla \phi_k \Vert_{L^\infty(\Omega_k)} \Big) + \dk^3   \\
&
 + \dk \Vert \nabla f \Vert_{L^\infty(2 r_k)} + \big \Vert h -  \frac15 S_g - 2 fu \big \Vert_{L^\infty(2 r_k)} \dk^2 + \dk^2 \Vert \phi_k \Vert_{L^\infty(B_{\xi_k}(2\sqrt{\dk}))},  \\
\eal
\een
so that plugging \eqref{C02s1estimlki6} in \eqref{C02s136} and using \eqref{C02s126} gives, for any sequence of points $x_k \in B_{\xi_k}(2 \sqrt{\dk})$:
\ben \label{C02s146}
\bal
|\phi_k(x_k)| & \lesssim  \Vert \phi_k \Vert_{L^\infty(\Omega_k)} +  \sqrt{\dk} \Vert \nabla \phi_k \Vert_{L^\infty(\Omega_k)} + \dk \Vert \nabla f \Vert_{L^\infty(2 r_k)} W_k(x_k) + \dk \\
& +  \left(  \big \Vert h - \frac15 S_g - 2 f u \big \Vert_{L^\infty(2 r_k)} +  \Vert \phi_k \Vert_{L^\infty(B_{\xi_k}(2\sqrt{\dk}))}\right)\left( \frac{\dk}{\tk(x_k)} \right)^2. \\
\eal
\een
We now claim that there holds:
\ben \label{eqclaimNphik6}
\Vert \phi_k \Vert_{L^\infty(B_{\xi_k}(2\sqrt{\dk}))} \lesssim N_k
\een
where we have let:
\ben \label{C02s1defMk6}
\bal
N_k &=  \Vert \phi_k \Vert_{L^\infty(\Omega_k)} +  \sqrt{\dk} \Vert \nabla \phi_k \Vert_{L^\infty(\Omega_k)} 
 +  \dk  + \dk^{-1}\Vert \nabla f \Vert_{L^\infty(2 r_k)} + \big \Vert h - \frac15 S_g - 2 f u \big \Vert_{L^\infty(2 r_k)},\\
\eal
\een
and where $\Omega_k$ is as in \eqref{defOmegak}. To prove \eqref{eqclaimNphik6}, let $(x_k)_k$ be a sequence of points in $B_{\xi_k}(2 \sqrt{\dk})$ satisfying $|\phi_k(x_k)| = \max_{B_{\xi_k}(2 \sqrt{\dk})} |\phi_k|$. Estimate \eqref{eqclaimNphik6} is trivially satisfied if $x_k \in \Omega_k$, so we assume in the following that $x_k \in B_{\xi_k}(\sqrt{\dk})$. We proceed by contradiction and assume that there holds:
\ben \label{C02claimcontra6}
|\phi_k(x_k)| = \Vert \phi_k \Vert_{L^\infty(B_{\xi_k}(2\sqrt{\dk}))} >>  N_k
\een
as $k \to + \infty$, where $N_k$ is defined in \eqref{C02s1defMk6}. Let $(y_k)_k$ be any other sequence of points in $B_{\xi_k}(\sqrt{\dk})$. Proceeding as in \eqref{C02s15}--\eqref{C02s17} we obtain that there holds:
\ben \label{C02s176}
|\phi_k(y_k)| \lesssim  \left(  \left( \frac{\dk}{\tk(y_k)} \right)^{4} + o(1) \right) \Vert \phi_k \Vert_{L^\infty(B_{\xi_k}(2\sqrt{\dk}))} ,  
\een
so that \eqref{C02s176} applied to the sequence $x_k$ given by \eqref{C02claimcontra6} yields:
\be 
\tk(x_k) \lesssim \dk,
\ee
where $\tk$ is as in \eqref{defthetak}. Define now, for $y \in B_0(2 \dk^{-\frac12})$:
\be  
\tpk(x) = \Vert \phi_k \Vert_{L^\infty(B_{\xi_k}(2\sqrt{\dk}))}^{-1} \phi_k \left( \exp_{\xi_k}^{g_{\xi_k}} (\dk y )\right),
\ee
and let $\tilde{x}_k = \frac{1}{\dk} \left( \exp_{\xi_k}^{g_{\xi_k}} \right)^{-1}(x_k)$. Here again,
$\tilde{x}_k \to \tilde{x}_0 \in \RR^n$ as $k \to + \infty$. The arguments that led to \eqref{C02s111} adapt therefore with no modification and we obtain in the end that 
 $\tpk$ converges in $C^1_{loc}(\RR^n)$, up to a subsequence, to some function $\tpz$ satisfying $|\tpz(\tilde{x}_0)| =1$ and
\ben \label{C02s1116}
\tpz \in \textrm{Span} \{ V_{i,\xi_0}, 0 \le i \le 6 \}.
\een

\medskip
\noindent To conclude the proof of \eqref{eqclaimNphik6} we now show that $\tpz \in \textrm{Span} \{ V_{i,\xi}, 0 \le i \le 6 \}^\perp$. 
By \eqref{propphik1}, $\phi_k \in K_k^{\perp}$, where $K_k = K_{k,t_k,\xi_k}$ is defined in \eqref{noyau}. Hence for any $0 \le i \le 6$ and for any $R > 0$ fixed, there holds:
\ben  \label{C02s1126}
\bal
&\int_{B_{\xi_k}(R \dk)} \left \langle \nabla Z_{i,k}, \nabla \phi_k \right \rangle_g + h Z_{i,k} \phi_k dv_g = - \int_{\partial B_{\xi_k}(R \dk)} \phi_k \partial_\nu Z_{i,k} d\sigma_g \\
&- \int_{M \backslash B_{\xi_k}(R \dk)} (h-\frac15 S_g) Z_{i,k} \phi_k dv_g - \int_{M \backslash B_{\xi_k}(R \dk)} \left( \triangle_g + \frac15 S_g \right)Z_{i,k} \phi_k dv_g.
\eal
\een
By \eqref{C02claimcontra6} and \eqref{defZk} one has that: 
\[ \int_{M \backslash B_{\xi_k}(\sqrt{\dk})} (h-\frac15 S_g) Z_{i,k} \phi_k dv_g = o( \dk^2 \Vert \phi_k \Vert_{L^\infty(B_{\xi_k}(2\sqrt{\dk}))}),\]
while using \eqref{C02s176} there holds that:
\[   \int_{B_{\xi_k}(\sqrt{\dk}) \backslash B_{\xi_k}(R \dk)} (h-c_n S_g) Z_{i,k} \phi_k dv_g  = o \big( \dk^2  \Vert \phi_k \Vert_{L^\infty(B_{\xi_k}(2\sqrt{\dk}))} \big), \]
that
\[ \left| \int_{\partial B_{\xi_k}(R \dk)} \phi_k \partial_\nu Z_{i,k} d\sigma_g \right| \lesssim  \left( \frac{1}{\left(1 + R \right)^{4}}  + o(1) \right) \dk^2 \Vert \phi_k \Vert_{L^\infty(B_{\xi_k}(2\sqrt{\dk}))}, \]
and, with \eqref{relationrkmk} and \eqref{C02claimcontra6}, that
\[ \bal
\int_{M \backslash B_{\xi_k}(R \dk)} \dk^{3} r_k^{-7} \mathds{1}_{r_k \le d_k \le 2 r_k} |\phi_k| dv_g + \int_{M \backslash B_{\xi_k}(R \dk)} \dk^2 r_k^{-6} \mathds{1}_{r_k \le d_k \le 2 r_k} |\phi_k| dv_g  \\
+ \int_{M\backslash B_{\xi_k}(R \dk)} \dk^{2} \phi_k dv_g = o \left( \dk^2 \Vert \phi_k \Vert_{L^\infty(B_{\xi_k}(2\sqrt{\dk}))} \right).
\eal \]
Using \eqref{propLambdaSg}, \eqref{C02claimcontra6} and \eqref{C02s176} we get also that:
\[ \bal
\int_{M \backslash B_{\xi_k}(R \dk)} & \frac15 S_{g_{\xi_k}} \Lambda_{g_{\xi_k}} Z_{i,k} \phi_k dv_g \\
& =  \int_{M \backslash B_{\xi_k}(\sqrt{\dk})} \frac15 S_{g_{\xi_k}} \Lambda_{g_{\xi_k}} Z_{i,k} \phi_k dv_g + \int_{B_{\xi_k}(\sqrt{\dk}) \backslash B_{\xi_k}(R \dk)}\frac15 S_{g_{\xi_k}} \Lambda_{g_{\xi_k}} Z_{i,k} \phi_k dv_g \\
&  =  o \left( \dk^2 \Vert \phi_k \Vert_{L^\infty(B_{\xi_k}(2\sqrt{\dk}))} \right).
\eal
\]
Similarly, we get that
\[ \int_{M \backslash B_{\xi_k}(R \dk)}  \dk^{3} \tk(\cdot)^{-4} \mathds{1}_{ d_k \le 2 r_k} |\phi_k| dv_g  
= o \left( \dk^2 \Vert \phi_k \Vert_{L^\infty(B_{\xi_k}(2\sqrt{\dk}))} \right)
\]
that
\[ \bal
\left| \int_{M \backslash B_{\xi_k}(\sqrt{\dk})} f(\xi_k) W_k Z_{i,k} \phi_k dv_g \right| & \lesssim \dk^3 \Vert \phi_k \Vert_{L^\infty( \Omega_k)}, \\
& = o \left( \dk^2 \Vert \phi_k \Vert_{L^\infty(B_{\xi_k}(2\sqrt{\dk}))} \right), \\
\eal \]
and, using \eqref{C02s176}, that
\[ \left| \int_{B_{\xi_k}(\sqrt{\dk}) \backslash B_{\xi_k}(R \dk)} f(\xi_k) W_k Z_{i,k} \phi_k dv_g \right| \lesssim \left( \frac{1}{(1+R)^4} + o(1)\right) \dk^2 \Vert \phi_k \Vert_{L^\infty(B_{\xi_k}(2\sqrt{\dk}))},\] 
so that passing to the limit as $k \to + \infty$ and then as $R \to + \infty$ gives, with \eqref{C02s1126}:
\[ \tilde{\phi}_0  \in \textrm{Span} \{ V_{i,\xi_0}, 0 \le i \le 6 \}^{\perp},\]
which is a contradiction with \eqref{C02s1116} since $|\tpz(\tilde{x}_0)| = 1$. Therefore \eqref{eqclaimNphik6} is proven, and an iteration argument as for the $n \ge 7$ case gives:
\ben \label{sansgrad6}
\bal
& |\phi_k(x_k)|  \lesssim   \Vert \phi_k \Vert_{L^\infty(\Omega_k)} +  \sqrt{\dk} \Vert \nabla \phi_k \Vert_{L^\infty(\Omega_k)}  +\dk  + \Bigg[ \dk^3 +  \dk \Vert \nabla f \Vert_{L^\infty(2 r_k)} \\
& + \big \Vert h - \frac15 S_g - 2 f u \big \Vert_{L^\infty(2 r_k)}  \dk^2 \left| \ln{ \left( \frac{\tk(x_k)}{\dk} \right)}\right| + \big \Vert h - \frac15 S_g - 2 f u \big \Vert_{L^\infty(2 r_k)} \tk(x_k)^2  \Bigg] W_k(x_k) .\\
\eal
\een

\medskip

\noindent To conclude the proof of Proposition \ref{C02pres} it remains to improve \eqref{sansgrad7} and \eqref{sansgrad6} into the final estimates \eqref{estC02pres} and \eqref{estC02pres2}. We only prove the $n \ge 7$ case, as the six-dimensional case follows from similar arguments. Let $(x_k)_k$ be any sequence of points in $B_{\xi_k}(2 \sqrt{\dk})$. We write a Green representation formula and differentiate it at $x_k$. As in \eqref{C02s1I2}--\eqref{C02s1I6} one obtains that: 
\ben 
\bal
& \left|  \nabla \left( \psi_k - \sum_{j=0}^n \lki Z_{i,k} \right) (x_k) \right| \lesssim 
\frac{1}{\sqrt{\dk}} \Vert \phi_k \Vert_{L^\infty(\Omega_k)} +  \Vert \nabla \phi_k \Vert_{L^\infty(\Omega_k)} 
  + \sqrt{\dk}  \\
& + \Bigg[  
 \dk^{\frac{n}{2}} 
 + \dk \Vert \nabla f \Vert_{L^\infty(2 r_k)}  + \Vert h - c_n S_g \Vert_{L^\infty(2 r_k)} \tk(x_k)^2 + \tk(x_k)^4 \mathds{1}_{nlcf} \Bigg]  \dk^\pui \tk(x_k)^{1-n} \\
 & + \int_{B_{\xi_k}(2 \sqrt{\dk})} f \left| u_k^{2^*-1} - W_k^{2^*-1} - u^{2^*-1} \right|(y) \nabla_x G(x_k,y)dv_g(y). \\
\eal
\een
The last term is then estimated with \eqref{sansgrad7} to obtain \eqref{estC02pres}.
\end{proof}

\noindent Note that using Claim \ref{claimNphik} and \eqref{eqclaimNphik6} in \eqref{C02s1estimlki} and \eqref{C02s1estimlki6} yields:
\begin{itemize}
\item If $n \ge 7$:
\ben \label{grosseestimlki}
\bal
\sum_{i=0}^n |\lki| &\lesssim \dk^\pui \Big( \Vert \phi_k \Vert_{L^\infty(\Omega_k)} +  \sqrt{\dk} \Vert \nabla \phi_k \Vert_{L^\infty(\Omega_k)} \Big) + \dk^\pui \\
& + \dk \Vert \nabla f \Vert_{L^\infty(2 r_k)} + \Vert h - c_n S_g \Vert_{L^\infty(2 r_k)} \dk^2 + \dk^4 \mathds{1}_{nlcf}. \\
\eal
\een
\item If $n = 6$:
\ben \label{grosseestimlki6}
\bal
\sum_{i=0}^6 |\lki| &\lesssim \dk^2 \Big( \Vert \phi_k \Vert_{L^\infty(\Omega_k)} +  \sqrt{\dk} \Vert \nabla \phi_k \Vert_{L^\infty(\Omega_k)} \Big) + \dk^3 \\
& + \dk \Vert \nabla f \Vert_{L^\infty(2 r_k)} + \big \Vert h - \frac15 S_g - 2 f u\big \Vert_{L^\infty(2 r_k)} \dk^2 . \\
\eal
\een
\end{itemize}

\medskip

\noindent As a consequence of these refined local estimates we are now in position to obtain global pointwise estimates on $\phi_k$ in the whole manifold $M$:

\begin{prop} \label{propestglob}
Let $D >0$ and $(\vek)_k \in \mathcal{E}$ and assume that  $\vek >> \mk^{\frac{3}{2}}$ as $k \to + \infty$, where $\mk$ is defined in \eqref{defmk}. Let $(t_k,\xi_k)_k$ be a sequence in $ [1/D, D]\times M$, let $v_k \in F_k = F \big( \vek,t_k,\xi_k \big)$ and let $\phi_k = \phi_k \big(t_k,\xi_k,v_k \big)$ be given by Proposition \ref{propptfixe1}. 
Let $(x_k)_k$ be any sequence of points in $M$. There holds then:
\ben \label{ponctuelphik}
|\phi_k(x_k)| \le C \left( \dk + \eta \vek \right) \Big( u(x_k) + W_k(x_k) \Big),
\een
where $\eta$ is as in \eqref{introeta}, for some positive constant $C$ independent of $\eta$ and $k$. As a consequence, we have the following gradient estimate:
\be
 \left| \nabla \phi_k(x_k) \right| \le C \left( \dk + \eta \vek \right) \left( 1 + \dk^\pui \tk(x_k)^{1-n} \right).
 \ee
\end{prop}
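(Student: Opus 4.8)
The plan is to establish \eqref{ponctuelphik} from a Green's representation formula written with respect to the linearised operator at $u$,
\[ L_u \;:=\; \triangle_g + h - (2^*-1)\,f\,u^{2^*-2} + (2^*+1)\,\frac{|\Lg T+\sigma|_g^2+\pi^2}{u^{2^*+2}}, \]
using the strict stability \eqref{uetastableX} of $u$ to absorb the dominant linear term, and using Proposition \ref{C02pres} to control $\phi_k$ inside the core ball $B_{\xi_k}(\sqrt{\dk})$. By \eqref{uetastableX}, $L_u$ is coercive, hence it admits a Green's function $G_u$ with $|G_u(x,y)| \lesssim d_g(x,y)^{2-n}$, and the representation $\phi_k = \int_M G_u(\cdot,y)(L_u\phi_k)(y)\,dv_g(y)$ holds since $\phi_k \in C^1(M)$. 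The preliminary step is a set of auxiliary integral estimates, all obtained from Giraud's lemma (\cite{HebeyZLAM}, Lemma $7.5$), the explicit form \eqref{bulle} of $W_k$, and the choices of $h,f,X$ of Section \ref{notations}, by splitting each integral over $B_{\xi_k}(\sqrt{\dk})$ and its complement: namely
\[ \int_M G_u(\cdot,y)\big(u+W_k\big)(y)\,dv_g(y) \lesssim u+W_k, \qquad \int_M G_u(\cdot,y)\,W_k^{2^*-2}(y)\,dv_g(y) \lesssim \dk\big(u+W_k\big), \]
\[ \int_{M\setminus B_{\xi_k}(\sqrt{\dk})} G_u(\cdot,y)\,W_k^{2^*-1}(y)\,dv_g(y) \lesssim \dk\big(u+W_k\big), \qquad \dk\int_M G_u(\cdot,y)\,W_k^{2^*-1}(y)\,dv_g(y) \lesssim \dk\big(u+W_k\big), \]
while the $G_u$-potentials of the perturbation terms produced by $(\triangle_g+h)(u+W_k) - f(u+W_k)^{2^*-1} - (|\Lg T+\sigma|_g^2+\pi^2)(u+W_k)^{-2^*-1}$ — the $W_k$-interaction terms, $(f(\xi_k)-f)W_k^{2^*-1}$, and the terms from \eqref{lapbulle} such as $(h-c_nS_g)W_k$ and the $S_{g_{\xi_k}}$-term — are each $\lesssim \dk(u+W_k)$, by \eqref{relationrkmk} and the fact that $\|\nabla f\|_{L^\infty(2r_k)}$ and $\|h-c_nS_g\|_{L^\infty(2r_k)}$ are, by construction, respectively $O(\mk^N/r_k)$ and $O(\tau_k)$.

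Next I rewrite \eqref{C01eqphik}, Taylor-expanding the nonlinearities around $u+W_k$ and moving $(2^*-1)fu^{2^*-2}\phi_k - (2^*+1)(|\Lg T+\sigma|_g^2+\pi^2)u^{-2^*-2}\phi_k$ to the left, so as to obtain $L_u\phi_k = \mathcal{R}_k$ on $M$, where $\mathcal{R}_k$ collects: the perturbation terms above; the source term, which is $O(\eta\vek)$ in $L^\infty$ by \eqref{estLTkLT}, \eqref{defXintro}, \eqref{relationrkmk} and $v_k \in F_k$; the term $\sum_i \lki(\triangle_g+h)Z_{i,k}$, whose $G_u$-potential is $\lesssim \big(\sum_i|\lki|\big)(u+W_k)$ and for which \eqref{grosseestimlki}--\eqref{grosseestimlki6} give $\sum_i|\lki| \lesssim \dk + o(1)M_k$ with $M_k := \|\phi_k\|_{L^\infty(\Omega_k)} + \sqrt{\dk}\,\|\nabla\phi_k\|_{L^\infty(\Omega_k)}$; the quadratic-in-$\phi_k$ remainders, which are $o(1)(u+W_k)^{2^*-2}|\phi_k|$ since $|\phi_k| \le \nu_k(u+W_k)$ by Proposition \ref{propC0grossier}; and the ``$W_k$-correction'' $(2^*-1)f\big((u+W_k)^{2^*-2}-u^{2^*-2}\big)\phi_k$ together with its $|\Lg T|$-analog. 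This last term is the delicate one. On $B_{\xi_k}(\sqrt{\dk})$ I bound it by $W_k^{2^*-2}\big(M_k + \dk(u+W_k)\big)$, using Proposition \ref{C02pres} — after checking, via the structure of $h,f$ and \eqref{relationrkmk}, that on $B_{\xi_k}(2\sqrt{\dk})$ each $W_k$-coefficient bracket and the additive term $(\dk/\tk)^2$ of \eqref{estC02pres}--\eqref{estC02pres2} is $\lesssim (\dk+\eta\vek)(u+W_k)$, so that Proposition \ref{C02pres} reads $|\phi_k| \lesssim M_k + \dk(u+W_k)$ there. On $M\setminus B_{\xi_k}(\sqrt{\dk})$, since $2^*-2 < 1$ the bracket is $\lesssim W_k^{2^*-2}$, so the term is $\lesssim W_k^{2^*-2}\,\|\phi_k/(u+W_k)\|_{C^0(M)}\,(u+W_k)$.

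The third step is the absorption. Inserting $\mathcal{R}_k$ into $\phi_k = \int_M G_u(\cdot,y)\mathcal{R}_k(y)\,dv_g(y)$, dividing by $u+W_k$, and taking the supremum over $M$, one finds that every term is bounded either by $C(\dk+\eta\vek)(u+W_k)$, or by $C\dk\,M_k\,(u+W_k)$ (the core part of the $W_k$-correction, thanks to $\int_M G_u W_k^{2^*-2} \lesssim \dk(u+W_k)$ and $\dk\int_M G_u W_k^{2^*-1}\lesssim \dk(u+W_k)$), or by $o(1)\,\|\phi_k/(u+W_k)\|_{C^0(M)}\,(u+W_k)$ (the quadratic remainders, the outer part of the $W_k$-correction via $\int_{M\setminus B_{\xi_k}(\sqrt{\dk})}G_uW_k^{2^*-1}\lesssim\dk(u+W_k)$ and $\int_M G_uW_k^{2^*-2}\lesssim\dk(u+W_k)$, and the $\sum_i\lki$-term). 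Since $M_k \lesssim \|\phi_k/(u+W_k)\|_{C^0(M)} + 1$ by \eqref{estC0grossieregrad} and $\sup_{\Omega_k}(u+W_k)\lesssim 1$, the $\dk M_k$-contribution is again $o(1)\|\phi_k/(u+W_k)\|_{C^0(M)} + o(\dk)$. Absorbing these $o(1)$-contributions into the left-hand side — legitimate for $k$ large as $\|\phi_k/(u+W_k)\|_{C^0(M)} < \infty$ by Proposition \ref{propC0grossier} — yields $\|\phi_k/(u+W_k)\|_{C^0(M)} \le C(\dk+\eta\vek)$, which is \eqref{ponctuelphik}. The gradient estimate then follows by writing a representation for $\phi_k$ with respect to $\triangle_g+h$: by \eqref{C01eqphik}, \eqref{ponctuelphik}, \eqref{defXintro}, \eqref{estLTkLT}, \eqref{lapbulle} and \eqref{lapZik}, $(\triangle_g+h)\phi_k = O\big((\dk+\eta\vek)(1+W_k^{2^*-1}+W_k^{2^*-2})\big)$, and differentiating under the integral sign and applying Giraud's lemma once more gives $|\nabla\phi_k(x)| \lesssim (\dk+\eta\vek)\big(1 + \dk^{\pui}\tk(x)^{1-n}\big)$.

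The main obstacle is the handling of the ``$W_k$-correction'' term $\big((u+W_k)^{2^*-2}-u^{2^*-2}\big)\phi_k$: where the bubble dominates this term has the full size of $\phi_k$, and controlling it is exactly what forces the simultaneous use of (i) the strict stability \eqref{uetastableX}, which allows one to peel off the leading linear term $(2^*-1)fu^{2^*-2}\phi_k$ and invert the coercive operator $L_u$ pointwise, and (ii) Proposition \ref{C02pres}, which supplies the extra factor $\dk$ inside $B_{\xi_k}(\sqrt{\dk})$ making the core contribution absorbable. The gain $\int_M G_u(\cdot,y)W_k^{2^*-2}(y)\,dv_g(y) \lesssim \dk(u+W_k)$ — rather than merely $\lesssim u+W_k$ — is the arithmetical core of the proof and is where the hypothesis $n \ge 6$ enters.
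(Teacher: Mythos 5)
Your proof is correct and follows essentially the same strategy as the paper: you rewrite the equation so that the linearised operator $L_u$ appears on the left, exploit its coercivity from \eqref{uetastableX} to get Green's-function bounds, split the $G_u$-potentials over $B_{\xi_k}(\sqrt{\dk})$ and its complement, feed in Proposition \ref{C02pres} inside the core to extract the extra factor of $\dk$ from $W_k^{2^*-2}\phi_k$, and close with a self-bounding argument on $\nu_k = \|\phi_k/(u+W_k)\|_{C^0(M)}$. The only cosmetic differences are that the paper keeps $u_k^{2^*-1}-W_k^{2^*-1}-(2^*-1)u^{2^*-2}\phi_k-u^{2^*-1}$ as a single block while you Taylor-expand around $u+W_k$ and isolate the ``$W_k$-correction'', and that the paper phrases the absorption as $\nu_k \lesssim \dk+\eta\vek+\nu_k^2$ with $\nu_k=o(1)$ rather than $\nu_k\le C(\dk+\eta\vek)+o(1)\nu_k$ — these are equivalent; also you invoke the specific forms of $h$ and $f$ in estimating the perturbation potentials, which is harmless but, as the paper notes after the Proposition, not actually needed.
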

\noindent Note that the constant $C$ appearing in the statement of Proposition \ref{propestglob} \emph{a priori} depends on $g, u_0$ (as in \eqref{defu0}), $h,f, \sigma, \pi$, but does not depend on the choice of the sequences $(\vek)_k, (t_k)_k, (\xi_k)_k$ and $(v_k)_k$.

\begin{proof}
Let $D >0$ and $(\vek)_k \in \mathcal{E}$ and assume that  $\vek >> \mk^{\frac{3}{2}}$ as $k \to + \infty$, where $\mk$ is defined in \eqref{defmk}. Let $(t_k,\xi_k)_k$ be a sequence in $ [1/D, D]\times M$, let $v_k \in F_k = F \big( \vek,t_k,\xi_k \big)$ and let $\phi_k = \phi_k \big(t_k,\xi_k,v_k \big)$ be given by Proposition \ref{propptfixe1}. 
Define:
\ben \label{supphikbulle}
\nu_k = \left| \left| \frac{\phi_k}{u + W_k} \right| \right|_{L^\infty(M)}.
\een
Proposition \ref{propC0grossier} shows that $\nu_k = o(1)$ as $k \to + \infty$. 
We let in the following, for any $v \in H^1(M)$:
\ben \label{Ldefop}
L_u (v) = \triangle_g v + \left[ h - (2^*-1) f u^{2^*-2} + (2^*+1) \frac{|\Lg T + \sigma|_g^2 + \pi^2}{u^{2^*+2}}\right] v.
\een
$L_u$ is the linearized operator of the scalar equation of \eqref{syseta} at $u$. 

\medskip

\noindent \textbf{As before, assume first that $n \ge 7$.} It is easily seen from \eqref{eqphik} that $\phi_k$ satisfies:
\ben \label{nouveqphik}
\bal
 L_u \phi_k & =  \sum_{i=0}^n \lki \left( \triangle_g + h \right)Z_{i,k}  +  f \left( u_k^{2^*-1} - W_k^{2^*-1} - (2^*-1) u^{2^*-2} \phi_k - u^{2^*-1} \right) \\
 &+ \big(f - f(\xi_k) \big) W_k^{2^*-1}  - \big(h - c_n S_g \big) W_k  \\ 
&- c_n S_{g_{\xi_k}} \Lambda_{g_{\xi_k}}^{2^*-2} W_k + O (\dk^\pui \mathds{1}_{dk \le 2 r_k} ) + O \big(\dk^\pui r_k^{-n} \mathds{1}_{r_k \le d_k \le 2 r_k} \big) \\ 
 &  + \left(|\Lg T + \sigma|_g^2 + \pi^2 \right) \left( u_k^{-2^*-1} - u^{-2^*-1} + (2^*+1) u^{-2^*-2} \phi_k \right) \\
& +  \frac{ |\Lg T_k + \sigma|_g^2 - |\Lg T + \sigma|_g^2 }{\left( u + W_k + v_k \right)^{2^*+1}} . \\\eal
\een
Let $(x_k)_k$ be any sequence of points in $M$. Let $G_u$ be the Green's function of the operator $L_u$ defined in \eqref{Ldefop}. By \eqref{uetastableX}, the operator $L_u$ is coercive, and therefore its Green function satisfies  (see for instance Robert \cite{RobDirichlet}):
\ben \label{estLu}
\frac{1}{C} d_{g_{\xi_k}}(x,y)^{2-n} \le G_u(x,y) \le C d_{g_{\xi_k}}(x,y)^{2-n},
\een
for some positive constant $C$. Note that since we assumed $|\sigma|_g + \pi > 0$ somewhere in $M$, the constant $C$ in \eqref{estLu} does not depend on $\eta$ in \eqref{introeta}, provided $\eta$ is small enough.  First, by \eqref{grosseestimlki}, 
 by Proposition \ref{propC0grossier} and since $|Z_{i,k}| \lesssim W_k$ for all $0 \le i \le n$, we have:
\ben \label{C030}
\bal
 \int_M G_u(x_k,y) & \sum_{i=0}^n \lki \left( \triangle_g + h \right)Z_{i,k} (y) dv_g (y)  \\
& \lesssim \Bigg[ 
\dk^\pui 
 + \dk \Vert \nabla f \Vert_{L^\infty(2 r_k)} + \Vert h - c_n S_g \Vert_{L^\infty(2 r_k)} \dk^2 + \dk^4 \mathds{1}_{nlcf} \Bigg] \dk^\pui \tk(x_k)^{2-n}. \\
\eal
\een
Independently, straightforward computations using \eqref{estLu} show that there holds:
\ben \label{C031}
\bal
&\int_M G_u(x_k,y) \Big[ \big(f(y) - f(\xi_k) \big) W_k^{2^*-1}(y) + \big(h - c_n S_g \big)  W_k(y)  + c_n S_{g_{\xi_k}}(y) W_k(y) \Big]  dv_g(y) \\
& \quad \lesssim 
\Big( \dk \Vert \nabla f \Vert_{L^\infty(2 r_k)} + \Vert h - c_n S_g \Vert_{L^\infty(2 r_k)} \tk(x_k)^2 \Big) \dk^{\pui} \tk(x_k)^{2-n} + \dk^\pui \tk(x_k)^{6-n} \mathds{1}_{nlcf} .\\
\eal
\een
Similarly, there holds:
\ben \label{C032}
\int_M G_u(x_k,y) \left[  \dk^\pui+ \dk^\pui r_k^{-n} \mathds{1}_{r_k \le d_k \le 2 r_k} \right] dv_g(y) =  O \big( \dk^\pui r_k^{2-n} \big),
\een
where $d_k$ is as in \eqref{notdk}. Now, with \eqref{defuk}, straightforward computations using Proposition \ref{propC0grossier} yield that:
\ben \label{C033}
\bal
 &\int_{B_{\xi_k}(\sqrt{\dk})}  G_u(x_k,y)\left(|\Lg T + \sigma|_g^2 + \pi^2 \right) \\
 & \times \left( u_k^{-2^*-1} - u^{-2^*-1} + (2^*+1) u^{-2^*-2} \phi_k \right)(y) dv_g(y) \lesssim \dk^{\frac{n}{2}} \tk(x_k)^{2-n} + \dk^{\pui} \tk(x_k)^{4-n},
\eal
\een
while using \eqref{defuk}, Proposition \ref{propC0grossier} and the definition of $\nu_k$ in \eqref{supphikbulle} there holds:
\ben \label{C034}
\bal
 \int_{M \backslash B_{\xi_k}(\sqrt{\dk})}&  G_u(x_k,y)\left(|\Lg T + \sigma|_g^2 + \pi^2 \right) \\
 & \times \left( u_k^{-2^*-1} - u^{-2^*-1}  + (2^*+1) u^{-2^*-2} \phi_k \right)(y) dv_g(y) \\
 & \lesssim  \dk^\pui \tk(x_k)^{4-n}  + \nu_k^2 .
\eal
\een
Similarly, using \eqref{defuk} and the definition of $\nu_k$ in \eqref{supphikbulle} one obtains that:
\ben \label{C034bis}
\bal
\int_{M \backslash B_{\xi_k}(\sqrt{\dk})}  & G_u(x_k,y) f \left( u_k^{2^*-1} - W_k^{2^*-1} - (2^*-1) u^{2^*-2} \phi_k - u^{2^*-1} \right)(y) dv_g(y) \\
& \lesssim  
\dk^\pui \tk(x_k)^{4-n} + \nu_k^2.
\eal
\een
Using Proposition \ref{C02pres}  
and estimating $\Vert \phi_k \Vert_{L^\infty(\Omega_k)} + \sqrt{\dk} \Vert \nabla \phi_k \Vert_{L^\infty(\Omega_k)}$ with Proposition \ref{propC0grossier} gives that:
\ben \label{C035}
\bal
 \int_{B_{\xi_k}(\sqrt{\dk})} & G_u(x_k,y) f \left( u_k^{2^*-1} - W_k^{2^*-1} - (2^*-1) u^{2^*-2} \phi_k - u^{2^*-1} \right)(y) dv_g(y) \\
 & \lesssim
  \left( \frac{\dk}{\tk(x_k)}\right)^2  + \Bigg[ \dk^{\frac{n}{2}} + \dk \Vert \nabla f \Vert_{L^\infty(2 r_k)} + \Vert h - c_n S_g \Vert_{L^\infty(2 r_k)} \dk^2 \left| \ln(\dk) \right| \\
  &+ \dk^2 \tk(x_k) \mathds{1}_{nlcf} \Bigg]  \dk^{\pui} \tk(x_k)^{2-n}. \\ \eal
\een
Finally, using again \eqref{estLTkLT} below and since $v_k \in F_k$, there holds that:
\ben \label{C036}
\int_{B_{\xi_k}(\sqrt{\dk})}  G_u(x_k,y)  \frac{ |\Lg T_k + \sigma|_g^2 - |\Lg T + \sigma|_g^2 }{\left( u + W_k + v_k \right)^{2^*+1}}(y)  dv_g(y) \lesssim \dk^{\frac{n}{2}} \tk(x_k)^{2-n} + \dk \vek
\een
and that
\ben \label{C037}
\int_{M \backslash B_{\xi_k}(\sqrt{\dk})} G_u(x_k,y)  \frac{ |\Lg T_k + \sigma|_g^2 - |\Lg T + \sigma|_g^2 }{\left( u + W_k + v_k \right)^{2^*+1}}(y)  dv_g(y) \lesssim \dk^{\frac{n}{2}} \tk(x_k)^{2-n} + \dk^\pui \tk(x_k)^{4-n} 
 + \eta \vek,
\een
where $\eta$ is as in \eqref{introeta}. Writing a Green's representation formula for \eqref{nouveqphik} together with \eqref{C030} --
\eqref{C037}, with Proposition \ref{propC0grossier} and with \eqref{relationrkmk} gives then:
\ben \label{C038}
\bal
|\phi_k(x_k)| &\lesssim   \eta \vek+ \nu_k^2 +  \dk \Big( u(x_k) + W_k(x_k) \Big)
\eal
\een
where $\nu_k$ is as in \eqref{supphikbulle}. 
Also, to obtain \eqref{C038} we used that there holds, for any $x \in M$:
\ben \label{asymptotermes}
\left( \frac{\dk}{\tk(x_k)} \right)^2 + \dk^{\pui} \tk(x)^{4-n} + \dk^{\frac{n}{2}} \tk(x)^{2-n} \le C \dk \Big( u(x)  + W_k(x) \Big),
\een
for some positive constant $C$ independent of $\eta$ and $k$. Coming back to the definition of $\nu_k$ in \eqref{supphikbulle}, it remains to apply \eqref{C038} at the sequence $(x_k)_k$ of points of $M$ where $\nu_k$ is attained. Since, by Proposition \ref{propC0grossier}, there holds that $\nu_k \to 0$ as $k \to + \infty$, we obtain in the end that
\[ \nu_k \lesssim \dk + \eta \vek, \]
which concludes the proof of \eqref{ponctuelphik} when $n \ge 7$. 

\medskip
\noindent \textbf{Assume now that $n = 6$.} 
Rewrite \eqref{eqphik} as:
\ben \label{nouveqphik6}
\bal
 L_u \phi_k & =  \sum_{i=0}^6 \lki \left( \triangle_g + h \right)Z_{i,k}  +  2 f  W_k \phi_k + f \phi_k^2 + \big(f - f(\xi_k) \big) W_k^{2} \\
 & - \big(h - \frac15 S_g - 2f u \big) W_k  - \frac15 S_{g_{\xi_k}} \Lambda_{\xi_k} W_k + O (\dk^2 \mathds{1}_{dk \le 2 r_k} ) + O \big(\dk^2 r_k^{-6} \mathds{1}_{r_k \le d_k \le 2 r_k} \big) \\ 
 &  + \left(|\Lg T + \sigma|_g^2 + \pi^2 \right) \left( u_k^{-4} - u^{-4} + 4 u^{-5} \phi_k \right) \\
& +  \frac{ |\Lg T_k + \sigma|_g^2 - |\Lg T + \sigma|_g^2 }{\left( u + W_k + v_k \right)^{4}} . \\
\eal
\een
Let $(x_k)_k$ be any sequence of points in $M$. As before, a Green's representation formula for \eqref{nouveqphik6} using Propositions \ref{propC0grossier} and \ref{C02pres} and \eqref{relationrkmk} gives now the following estimate:
\ben \label{sticazzi6}
 \bal
 |\phi_k(x_k)| & \lesssim \left( \frac{\dk}{\tk(x_k)}\right)^2 + \dk + \vek + \nu_k^2 + \left( \dk + \dk^2 \left| \ln{\dk} \right| \Vert h - \frac15 S_g - 2f u \big \Vert_{L^\infty(2 r_k)} + \nu_k^2 \right) \dk^2 \tk(x_k)^{-4} \\
 & \lesssim \left( \dk + \eta \vek + \nu_k^2 \right) \big( u + W_k \big)(x_k).
 \eal 
 \een
 Applying \eqref{sticazzi6} at the sequence of points where $\nu_k$ as in \eqref{supphikbulle} is attained concludes the proof of \eqref{ponctuelphik} for the $6$-dimensional case.
 
 \medskip
 
 \noindent The gradient estimates in Proposition \ref{propestglob} are obtained from \eqref{ponctuelphik} by a representation formula argument as before.
\end{proof}

\noindent Proposition \ref{C02pres} provides more information than needed to just prove Proposition \ref{propestglob}. The precision of estimates \eqref{estC02pres} and \eqref{estC02pres2} will turn out to be crucial in section \ref{DL} to obtain precise asymptotic expansions of the $\lki$.

\medskip

\noindent
It is worth noting that Propositions \ref{C02pres} and \ref{propestglob} do depend on the choice of $X$ given by \eqref{defXintro} (mostly to obtain pointwise estimates on the source term) but do not use the specific form of the other coefficients, e.g. of $h$ and $f$ defined in \eqref{defhintro}, \eqref{defhintro6} and \eqref{deffintro}. In particular, Propositions \ref{C02pres} and \ref{propestglob} still hold true for arbitrary choices of $h, \pi \in C^0(M)$, $f \in C^1(M)$, $\sigma \in C^0(M)$ and $Y \in C^0(M)$.

\section{Global fixed-point argument and resolution of the reduced problem} \label{pointfixage}

\noindent 
\noindent Proposition \ref{propestglob} shows that we have pointwise estimates on the remainder $\phi_k$ defined by Proposition \ref{propptfixe1} which only depend on the data $\mk$ and $\vek$, and not on the chosen sequences $(t_k,\xi_k)_k$ or $(v_k)_k$. 

\medskip

\noindent In this section we crucially use this result to show that Banach-Picard's fixed-point theorem applies to the mapping $v_k \mapsto \phi_k$ and yields a solution to the reduced problem for system \eqref{intro1}. The main result of this section is the following:

\begin{prop} \label{propsystreduit}
Let $D >0$. Assume that $\eta$ and $\alpha$ defined in \eqref{introeta} and \eqref{defalpha} are small enough. There exists $k_0 \in \mathbb{N}$ such that for any sequence $(t_k,\xi_k)_k \in [1/D, D]\times M$ and for any $k \ge k_0$, there exists a function $\phi_k = \phi_k \big(t_k,\xi_k \big) \in K_{k,t_k, \xi_k}^\perp$ that satisfies the following system of equations:
\ben \label{systreduit}
\left \{ 
\bal
& \Pi_{K_{k,t_k,\xi_k}^\perp} \Bigg [ u_{k} - \left( \triangle_ g+ h \right)^{-1} \left( f  u_{k}^{2^*-1}  + \frac{|\Lg T_{k} + \sigma|_g^2 + \pi^2}{ u_{k}^{2^*+1}} \right) \Bigg] = 0, \\
& \Dg T_{k} = u_{k}^{2^*} X + Y,\\
\eal \right. 
\een
where, as in \eqref{defuk}, we have let $u_{k} = u + W_{k,t_k,\xi_k} + \phi_k(t_k,\xi_k)$. In addition, there exists a positive constant $C$, independent of $(t_k,\xi_k)_k$ such that there holds:
\ben \label{eqsystreduit}
 \Vert \phi_k(t_k,\xi_k) \Vert_{H^1(M)} \le C \dk \textrm{ and } |\phi_k(t_k,\xi_k)| \le C \dk \Big( u + W_{k,t_k,\xi_k} \Big) \textrm{ in } M,
 \een
and such that $\phi_k(t_k,\xi_k)$ is the unique solution of \eqref{systreduit} in $K_{k,t_k,\xi_k}^\perp$ satisfying in addition \eqref{eqsystreduit}. Also, for any $k$, the mapping $(t,\xi) \mapsto \phi_k(t,\xi) \in C^1(M)$ is continuous.
\end{prop}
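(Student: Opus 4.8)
The plan is to set up a fixed-point scheme for the ``ping-pong'' mapping $v_k \mapsto \phi_k$ in the strong space $F_k = F(\vek,t_k,\xi_k)$ and then show it is a contraction there. First I would fix $D>0$, fix a sequence $(t_k,\xi_k)_k \in [1/D,D]\times M$, and choose the auxiliary sequence $(\vek)_k$ in a way compatible with the constraints of the previous sections: namely $\vek = C_0(\dk + \eta\,\vek)$ is not possible directly, so instead one picks $\vek = A\,\dk$ for a large constant $A$, which satisfies $\vek \gg \mk^{3/2}$ (since $\dk = \mk t_k$) and is therefore admissible for Propositions \ref{propptfixe1}, \ref{propC0grossier}, \ref{C02pres} and \ref{propestglob}. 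For such a choice Proposition \ref{propestglob} gives $|\phi_k| \le C(\dk + \eta\vek)(u + W_k) \le C(1 + A\eta)\dk(u+W_k)$, so, provided $A$ is chosen large and then $\eta$ (equivalently $\Vert X\Vert_\infty$) is chosen small enough that $C(1+A\eta) \le A$, the mapping $\mathcal{T}_k : v_k \mapsto \phi_k(t_k,\xi_k,v_k)$ sends $F_k$ into itself. This is the ``invariance'' half and is essentially the content of Proposition \ref{propestglob} once the parameters are calibrated.

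Next I would prove the contraction estimate. Given $v_k^1, v_k^2 \in F_k$, let $\phi_k^1,\phi_k^2$ be the associated solutions and set $\Phi_k = \phi_k^1 - \phi_k^2$. Subtracting the two copies of equation \eqref{vraieeqphik} (equivalently \eqref{C01eqphik}), and writing $u_k^i = u + W_k + \phi_k^i$, one sees that $\Phi_k$ solves
\ben \label{systreduit_diff}
\bal
L_u \Phi_k = \sum_{i=0}^n (\lambda_k^{i,1} - \lambda_k^{i,2})(\triangle_g + h)Z_{i,k} &+ f\big( (u_k^1)^{2^*-1} - (u_k^2)^{2^*-1} - (2^*-1)u^{2^*-2}\Phi_k\big) \\
&+ \big(|\Lg T + \sigma|_g^2 + \pi^2\big)\big( (u_k^1)^{-2^*-1} - (u_k^2)^{-2^*-1} + (2^*+1)u^{-2^*-2}\Phi_k\big) \\
&+ \frac{|\Lg T_k^1 + \sigma|_g^2 - |\Lg T_k^2 + \sigma|_g^2}{(u + W_k + v_k^1)^{2^*+1}} + O\!\left( \frac{|v_k^1 - v_k^2|}{(u+W_k)^{2^*+2}}\right),
\eal
\een
where $T_k^j$ solves \eqref{defZk1forme} with $v_k^j$. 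The first key input is the elliptic estimate $|\Lg T_k^1 - \Lg T_k^2|_g \lesssim \Vert X\Vert_\infty \, \|v_k^1 - v_k^2\|$ controlled pointwise via \eqref{estLTkLT}, which makes the difference of the source terms of order $O(\eta)$ times the $C^0$-distance between $v_k^1$ and $v_k^2$ measured relative to $u + W_k$. The second input is that $\Phi_k$ is orthogonal to $K_k$ (since both $\phi_k^j$ are), so exactly the second-order-blow-up machinery of Proposition \ref{C02pres} — re-run on the difference equation \eqref{systreduit_diff} rather than on $\phi_k$ itself — yields a local and then (as in Proposition \ref{propestglob}) a global estimate
\[
\left\| \frac{\Phi_k}{u + W_k}\right\|_{C^0(M)} \le C\big( \eta + \dk + o(1) + \alpha^{\kappa}\big)\left\| \frac{v_k^1 - v_k^2}{u + W_k}\right\|_{C^0(M)},
\]
where $\kappa>0$ and the $\alpha^\kappa$ term accounts for the rescaled coupling bump in \eqref{defXintro} (here the smallness of $\alpha$ from \eqref{defalpha} is used, exactly as announced in the statement). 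Choosing $\eta$ and $\alpha$ small and $k \ge k_0$ large makes the constant $\le \tfrac12$, so $\mathcal{T}_k$ is a $\tfrac12$-contraction of $F_k$ into itself.

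Banach--Picard then produces a unique fixed point $\phi_k = \phi_k(t_k,\xi_k) \in K_k^\perp \cap F_k$; plugging $v_k = \phi_k$ into \eqref{defZk1forme}--\eqref{C01eqphik} collapses the source term and shows that $u_k = u + W_k + \phi_k$ solves the genuine reduced system \eqref{systreduit} with $T_k = T_{k,t_k,\xi_k}$. The bound \eqref{eqsystreduit} is then just Proposition \ref{propestglob} with $\eta\vek = O(\dk)$ (again using $\vek \asymp \dk$), together with \eqref{propphik2} for the $H^1$ bound; uniqueness of $\phi_k$ among solutions satisfying \eqref{eqsystreduit} follows from the same contraction estimate applied with $v_k^j = \phi_k^j$ themselves (a fixed point of a contraction is unique). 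Finally, continuity of $(t,\xi) \mapsto \phi_k(t,\xi) \in C^1(M)$ follows from the uniform contraction (the fixed point depends continuously on the contraction parameters, hence on $(t,\xi)$ through $W_{k,t,\xi}$, $K_{k,t,\xi}$ and $Z_{i,k,t,\xi}$, all of which are continuous in $(t,\xi)$), upgraded from $C^0$ to $C^1$ by the representation-formula/elliptic-regularity argument used at the end of the proof of Proposition \ref{propestglob}.

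The main obstacle I expect is the contraction estimate for the difference $\Phi_k$: unlike the invariance step, it is not a direct corollary of an earlier proposition, and one must redo the full blow-up analysis of Sections \ref{partie4}--\ref{theorieC0ordre2} on equation \eqref{systreduit_diff}. The delicate points are (i) controlling the difference of the nonlinear source terms $\frac{|\Lg T_k^1+\sigma|^2 - |\Lg T_k^2+\sigma|^2}{(\cdots)^{2^*+1}}$ sharply enough — here one needs the pointwise asymptotics of $\Lg T_k^j$ from Proposition \ref{controleLTkdessus} and the elementary identity $|A|^2 - |B|^2 = \langle A - B, A + B\rangle$ applied with $A = \Lg T_k^1 + \sigma$, $B = \Lg T_k^2 + \sigma$ — and (ii) absorbing the $\sum_i |\lambda_k^{i,1} - \lambda_k^{i,2}|$ contribution, which is estimated as in Step 1 of Proposition \ref{propC0grossier} and in Claim \ref{claimNphik} by testing the difference equation against the $Z_{i,k}$ and using $\Phi_k \in K_k^\perp$. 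Getting the final constant strictly below $\tfrac12$ is exactly where the two independent smallness parameters $\eta = \Vert X\Vert_\infty$ and $\alpha = |Z(0)|$ are spent.
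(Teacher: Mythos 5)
Your proposal is essentially the paper's proof: fix $\vek$ proportional to $\dk$ so that Proposition \ref{propestglob} (invariance of $F_k$) and Proposition \ref{propptfixe1} apply; show the ping-pong map $v\mapsto \phi_k(t,\xi,v)$ is a contraction of $F_k$ for the weighted sup-norm $\|v\|_{F_k}=\|v/(u+W_k)\|_{C^0}$ by writing the difference equation for $\Phi_k=\phi_k^1-\phi_k^2$, using the $1$-form estimates \eqref{estLTkLT}--\eqref{estLT1mT2} for the source term, and exploiting $\Phi_k\in K_k^\perp$; then apply Banach--Picard and read off uniqueness and continuity from it.

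One remark on how the detailed contraction estimate is actually obtained, since it is the core of the argument and your sketch compresses it. You claim a global bound $\|\Phi_k\|_{F_k}\le C(\eta+\dk+o(1)+\alpha^\kappa)\|v_k^1-v_k^2\|_{F_k}$. The paper does not prove (and does not need) such a factored form. It splits the argument according to where the sup of $\Phi_k/(u+W_k)$ is attained: when this happens at distance $O(\dk)$ from $\xi_k$, a rescaling and the Bianchi--Egnell classification combined with the orthogonality $\Phi_k\in K_k^\perp$ yield an $o(1)\cdot\|v_k^1-v_k^2\|_{F_k}$ bound; when it happens at distance $\gg\dk$, a Green's representation formula for the coercive linearized operator $L_u$ gives a bound of the form $(\text{self-referential term bounded by }D_1R^{-2}\|\Phi_k\|_{F_k}) + (\eta\text{- and }\alpha\text{-small terms})\|v_k^1-v_k^2\|_{F_k}$, which after fixing $R$ and then shrinking $\eta$, $\alpha$ and enlarging $k$ absorbs to $\|\Phi_k\|_{F_k}\le\frac12\|v_k^1-v_k^2\|_{F_k}$. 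This second-regime constant does not tend to zero as $\eta,\alpha\to 0$ (the $D_1R^{-2}$ term is fixed once $R$ is), so the contraction factor is only $\le\frac12$, not small; your conclusion is nevertheless correct, and both smallness parameters are spent exactly where you say.

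Two further details worth being explicit about, both of which your sketch leaves implicit: (i) when $v_k=\phi_k$ is the fixed point, one must use the lower bound of Lemma \ref{minor} (through Step 3 of Proposition \ref{propC0grossier}) to drop the truncation $\rho$ and recombine the damping source term into the genuine nonlinearity, so that $u_k$ really solves \eqref{systreduit}; (ii) for the continuity of $(t,\xi)\mapsto\phi_k(t,\xi)$ the paper appeals to the uniqueness of the fixed point in the appropriate ball together with the continuous dependence on $(t,\xi)$ of $W_{k,t,\xi}$, $K_{k,t,\xi}$, $Z_{i,k,t,\xi}$, plus elliptic regularity to upgrade from $C^0$ to $C^1$ -- which is the route you indicate.
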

\noindent  In Proposition \ref{propsystreduit}, $K_{k,t_k,\xi_k}^\perp$ is again as in \eqref{noyau}. The smallness assumption on $\eta$ and $\alpha$ is made clear in the course of the proof.
\begin{proof}
We let $C_0 = C_0(n,g,u_0,h,f,\sigma,\pi,D)$ denote the smallest of the two positive constants appearing in equations \eqref{ponctuelphik} and \eqref{propphik2}, and we let:
\ben \label{choixvek}
\vek = 4 C_0 \dk.
\een
Assume that $\eta$ in \eqref{introeta} is chosen so that there holds:
\ben \label{conditioneta}
C_0 \eta \le \frac{1}{4}.
\een
Let $(t_k,\xi_k)_k$ be a sequence in $[1/D, D] \times M$ and let $F_k = F(\vek,t_k,\xi_k)$, where $F(\vek,t_k,\xi_k)$ is defined in \eqref{defFea} and $\vek$ is as in \eqref{choixvek}. For any $k$, we define the following mapping:
\ben \label{defmapping}
\Psi_k: \left \{ 
\bal
& F(\vek,t_k,\xi_k) \to F(\vek,t_k,\xi_k) \cap  \Big \{ \vp \in K_{k,t_k,\xi_k}^\perp, \Vert \vp \Vert_{H^1(M)} \le \frac{\vek}{4} \Big \} \\
& v \longmapsto \phi_k = \phi_k(t_k,\xi_k,v) 
\eal \right.,
\een
where $\phi_k(t_k,\xi_k,v)$ is given by Proposition \ref{propptfixe1}. For any $k$, we endow the set $F_k$ with the norm
\ben \label{normeFvek}
\Vert v \Vert_{F_k} = \left| \left|  \frac{v}{u + W_{k,t_k,\xi_k}} \right| \right|_{C^0(M)}.
\een 
That the mapping $\Psi_k$ in \eqref{defmapping} is well-defined for $k \ge k_0$ is a consequence of \eqref{choixvek}, \eqref{conditioneta}, Proposition \ref{propestglob} and \eqref{propphik2}, which also show that the value of such a $k_0$ is independent of the choice of the sequence $(t_k,\xi_k)_k$. We now show that for $k$ sufficiently large the mapping $\Psi_k$ is a contraction for the norm \eqref{normeFvek}. 

\medskip

\noindent  Let, for any $k$, $v_k^1, v_k^2 \in F_k$ and denote by $\phi_k^1$ and $\phi_k^2$ the associated images by $\Psi_k$. Using coherent notations, for $i = 1,2$, we will let $u_k^i = u + W_{k,t_k,\xi_k} + \phi_k^i$ and $T_k^i$ will denote the unique solution of $\Dg T_k^i = {u_k ^i}^{2^*} X + Y$ in $M$. As before, we shall omit the dependence in $t_k$ and $\xi_k$ in the computations. By Proposition \ref{propptfixe1}, for any $k$ there exist $(\lkja)_{0 \le j \le n}$ and $(\lkjb)_{0 \le j \le n}$ such that $\phi_k^1 - \phi_k^2$ satisfies:
\ben \bal \label{eqp1p2}
\Big( \triangle_g + h \Big)& \Big( \phi_k^1 - \phi_k^2 \Big)  = f \left[ \Big(u + W_k + \phi_k^1 \Big)^{2^*-1} - \Big(u + W_k + \phi_k^2 \Big)^{2^*-1} \right] \\
&+ \Big( \pi^2 + \left| \sigma + \Lg T \right|_g^2 \Big) \left[ \Big(u + W_k + \phi_k^1 \Big)^{-2^*-1} - \Big(u + W_k + \phi_k^2 \Big)^{-2^*-1} \right] \\
&+ \left( \left| \sigma + \Lg T_k^1 \right|_g^2 - \left| \sigma + \Lg T \right|_g^2 \right) \left[ \Big(u + W_k + v_k^1 \Big)^{-2^*-1} - \Big(u + W_k + v_k^2 \Big)^{-2^*-1}\right] \\
&+ \Big(u + W_k + v_k^2 \Big)^{-2^*-1} \left( \left| \sigma + \Lg T_k^1 \right|_g^2 - \left| \sigma + \Lg T_k^2 \right|_g^2 \right) \\
&+ \sum_{j=0}^n \left( \lkja - \lkjb\right) \Big( \triangle_g + h \Big) Z_{j,k}.
\eal \een
We first estimate the $H^1$-norm of $\phi_k^1 - \phi_k^2$. Since $v_k^i \in F_k$ there holds, with \eqref{introeta}:
\[ \big| \Dg(T_k^1 - T_k^2) \big| \lesssim \big(u + W_k \big)^{2^*} \eta \Vert v_k^1 - v_k^2 \Vert_{F_k}, \]
so that mimicking the proof of Proposition \ref{controleLTkdessus} yields, with \eqref{defXintro}: for any $x \in M$,
\ben \label{estLT1mT2}
 \big| \Lg (T_k^1 - T_k^2) \big|(x) \lesssim \eta \big( \dk^{\frac{n-1}{2}} \tk(x)^{1-n} + 1 \big) \Vert v_k^1 - v_k^2 \Vert_{F_k}. 
 \een
We now apply $(\triangle_g + h)^{-1}$ to \eqref{eqp1p2} and project on $K_k^{\perp} = K_{k,t_k,\xi_k}^{\perp}$. Using \eqref{estLTkLTordre2}, Proposition \ref{invoplin} and the techniques developed in the proof of Proposition \ref{propptfixe1} we then get that:
\ben \label{p1p2H1}
\Vert \phi_k^1 - \phi_k^2 \Vert_{H^1(M)} \lesssim \eta \Vert v_k^1 - v_k^2 \Vert_{F_k}.
\een
Let now $0 \le j \le n$ and integrate \eqref{eqp1p2} against $Z_{j,k}$. With \eqref{p1p2H1} one obtains:
\ben \label{estl1l2} 
\bal
\sum_{j=0}^n \left| \lkja - \lkjb\right| \lesssim \eta \Vert v_k^1 - v_k^2 \Vert_{F_k}.
\eal
\een

\medskip

\noindent 
We now show that, up to suitably choosing $\eta$ and $\alpha$ as in \eqref{introeta} and \eqref{defalpha}, for $k$ large enough there always holds: 
\ben \label{12obj}
\Vert  \phi_k^1 - \phi_k^2 \Vert_{F_k} \le \frac{1}{2}  \Vert v_k^1 - v_k^2 \Vert_{F_k}.
\een
For this, we let $\Phi_k = \phi_k^1 - \phi_k^2$ and let $x_k$ be the point where the $F_k$-norm of $\Phi_k$ is attained:
\ben  \label{p1p2defxk}
\left | \frac{\Phi_k}{u + W_k} (x_k) \right| = \sup_{x \in M} \left | \frac{\Phi_k}{u + W_k} (x) \right| = \Vert \Phi_k \Vert_{F_k}.
\een

\bigskip

\noindent We distinguish between two cases. We first assume that $d_{g_{\xi_k}}(\xi_k, x_k) = O(\dk)$. We then let, for any $x \in B_0(i_g(M)/\dk)$:
\ben \label{p1p2deftphi}
\tilde{\Phi}_k(x) = \frac{\dk^{\pui} \Phi_k}{\Vert \Phi_k \Vert_{F_k} +  \Vert v_k^1 - v_k^2 \Vert_{F_k}} \big( \exp_{\xi_k}^{g_{\xi_k}} (\dk x) \big).
\een
We also let $\tilde{x}_k = \frac{1}{\dk}{ \exp_{\xi_k}^{g_{\xi_k}}}^{-1}(x_k)$. There holds then: $\tilde{x}_k \to \tilde{x}_0 \in \RR^n$ as $k \to + \infty$ with $|\tilde{x}_0| = R_0$ for some $R_0 \ge 0$. It is easily seen that $\Vert \tilde{\Phi}_k \Vert_{L^\infty} \le 1$ and 
by \eqref{eqp1p2}, using Proposition \ref{propestglob} and standard elliptic theory, we get that the sequence $\tilde{\Phi}_k$ converges in $C^{1}_{loc}(\RR^n)$ to some function $\tilde{\Phi}_0$  which is a solution of:
\ben \label{p1p2eqtp0}
\triangle_\xi \tilde{\Phi}_0 = (2^*-1) f(\xi_0) U_{\xi_0}^{2^*-2} + \sum_{j = 0}^n \tilde{\lambda}_{0,j} \triangle_\xi V_{j,\xi_0},
\een
and which satisfies, for any $x \in \RR^n$:
\ben  \label{p1p2maxphi0}
|\tilde{\Phi}_0(x)| \le |\tilde{\Phi}_0(\tilde{x}_0)| = \left( \lim_{k \to + \infty} \frac{\Vert \Phi_k \Vert_{F_k}}{\Vert \Phi_k \Vert_{F_k} +  \Vert v_k^1 - v_k^2 \Vert_{F_k}} \right) \left(1 + \frac{f(\xi_0)}{n(n-2)} R_0^2 \right)^{1 - \frac{n}{2}}.
\een
In \eqref{p1p2eqtp0} $U_{\xi_0}$ and the $V_{j,\xi_0}$ are defined in \eqref{eqlindefU} and \eqref{defVki}, and we have let
\[ \tilde{\lambda}_{0,j} = \lim_{k \to + \infty} \frac{ \lkja - \lkjb}{\Vert \Phi_k \Vert_{F_k} +  \Vert v_k^1 - v_k^2 \Vert_{F_k}}. \]
This limit exists, up to a subsequence, by \eqref{estl1l2}. A first thing to notice is that $\tilde{\Phi}_0 \in L^{2^*}(\RR^n)$. This is a consequence of \eqref{p1p2H1}, of the scaling invariance of the $L^{2^*}$ norm and of the definition of $\tilde{\Phi}_k$ in \eqref{p1p2deftphi}. Therefore, we can integrate \eqref{p1p2eqtp0} against $V_{j,\xi_0}$ for all $ 0 \le j \le n$ and \eqref{eqlin} shows that there holds $\tilde{\lambda}_{0,j} = 0$ for all $0 \le j \le n$. Since $\tilde{\Phi}_0 \in L^{2^*}(\RR^n)$, there holds then $\tilde{\Phi }_0 \in H^1(\RR^n)$ and the Bianchi-Egnell \cite{BianchiEgnell} classification result applies and shows that 
\ben \label{p1p2noyau}
\tilde{\Phi}_0 \in \textrm{Span} \big\{ V_{j,\xi_0}, 0 \le j \le n \big\}.
\een
Similarly to what we did in the proof of Claim \ref{claimNphik}, we will now show that $\tilde{\Phi}_0 \in \textrm{Span} \big\{ V_{j,\xi_0}, 0 \le j \le n \big\}^{\perp}$. Since $\Phi_k \in K_k^{\perp}$ we can write, for any $R > 0$ and $0 \le j \le n$, that 
\ben  \label{p1p2ortho1}
\bal
&\int_{B_{\xi_k}(R \dk)} \left \langle \nabla Z_{j,k}, \nabla \Phi_k \right \rangle_g + h Z_{j,k} \Phi_k dv_g =  -  \int_{\partial B_{\xi_k}(R \dk)} \Phi_k \partial_\nu Z_{j,k} d\sigma_g \\
&- \int_{M \backslash B_{\xi_k}(R \dk)} (h-c_n S_g) Z_{j,k} \Phi_k dv_g - \int_{M \backslash B_{\xi_k}(R \dk)} \left( \triangle_g + c_nS_g \right)Z_{j,k} \Phi_k dv_g.
\eal
\een
Mimicking the computations that led to \eqref{C02s113} -- \eqref{C02s120} and using \eqref{lapZik} and the definition of the $\Vert \cdot \Vert_{F_k}$-norm in \eqref{normeFvek} one obtains that, for any $0 \le j \le n$:
\ben \label{p1p2ortho2}
\left| \int_{B_{\xi_k}(R \dk)} \left \langle \nabla Z_{j,k}, \nabla \Phi_k \right \rangle_g + h Z_{j,k} \Phi_k dv_g \right| \lesssim \Big( o(1) + R^{-2} \Big) \Vert \Phi_k \Vert_{F_k}.
\een
Dividing both sides of \eqref{p1p2ortho2} by $\Vert \Phi_k \Vert_{F_k} +  \Vert v_k^1 - v_k^2 \Vert_{F_k}$, using the definition of $\tilde{\Phi}_k$ in \eqref{p1p2deftphi} and the convergence of $\tilde{\Phi}_k$ to $\tilde{\Phi}_0$, letting $k \to +\infty$ and then $R \to + \infty$, we then obtain that $\tilde{\Phi}_0 \in \textrm{Span} \big\{ V_{j,\xi_0}, 0 \le j \le n \big\}^{\perp}$. With \eqref{p1p2noyau}, this gives that $\tilde{\Phi}_0 \equiv 0$. Using \eqref{p1p2maxphi0} this gives in turn that
\[ \Vert \phi_k^1 - \phi_k^2 \Vert_{F_k} = o \Big( \Vert v_k^1 - v_k^2 \Vert_{F_k} \Big), \]
which proves \eqref{12obj} in this case.

\bigskip 

\noindent  We now assume that there holds, up to a subsequence, that 
\ben \label{p1p2hypoxk}
\frac{d_{g_{\xi_k}}(\xi_k, x_k)}{\dk} \to + \infty
\een
as $ k \to + \infty$, where $x_k$ is given by \eqref{p1p2defxk}. Recall the definition of the operator $L_u$ introduced in \eqref{Ldefop}. Then, using \eqref{ponctuelphik} and \eqref{choixvek}, we rewrite \eqref{eqp1p2} as:
\ben \label{p1p2grand1}
\bal
L_u(\Phi_k) & = O \Bigg( \min\big( W_k^{2^*-2}, W_k \big) |\Phi_k| \Bigg) + o(|\Phi_k|) \\
&+ \left( \left| \sigma + \Lg T_k^1 \right|_g^2 - \left| \sigma + \Lg T \right|_g^2 \right) \left[ \Big(u + W_k + v_k^1 \Big)^{-2^*-1} - \Big(u + W_k + v_k^2 \Big)^{-2^*-1}\right] \\
&+ \Big(u + W_k + v_k^2 \Big)^{-2^*-1} \left( \left| \sigma + \Lg T_k^1 \right|_g^2 - \left| \sigma + \Lg T_k^2 \right|_g^2 \right) \\
&+ \sum_{j=0}^n \left( \lkja - \lkjb\right) \Big( \triangle_g + h \Big) Z_{j,k}. \\
\eal
\een
By \eqref{uetastableX}, the Green's function $G_u$ of $L_u$ satisfies the pointwise bounds \eqref{estLu}. We now write a Green representation formula for $L_u$ at $x_k$. Using \eqref{p1p2grand1}, \eqref{estLT1mT2}, \eqref{estl1l2}, \eqref{estLTkLT} below together with \eqref{defXintro} and \eqref{estLu} we get that there holds:
\ben \label{p1p2grand2}
\bal
|\Phi_k(x_k)| & \le o \Bigg( \Vert \Phi_k \Vert_{F_k} \Big(u + W_k \Big)(x_k)  \Bigg) + D_0 \big(\alpha + \eta \big) \Vert v_k^1 - v_k^2 \Vert_{F_k} \\
& + \int_M G_u(x_k,y)  \min\Big( W_k^{2^*-2}(y), W_k(y) \Big) |\Phi_k|(y) dv_g(y)
\eal
\een
for some positive $D_0$ that does not depend on $k$, and where $\eta$ and $\alpha$ are as in \eqref{introeta} and \eqref{defalpha}. Let $R >0$ be fixed. We have, using \eqref{p1p2hypoxk}, that:
\ben \label{p1p2grand3}
\bal
 \Bigg| \int_{M \backslash B_{\xi_k}(R \dk)} & G_u(x_k,y)  \min\Big( W_k^{2^*-2}(y), W_k(y) \Big) |\Phi_k|(y) dv_g(y) \Bigg| \\
 & \lesssim  \int_{M \backslash B_{\xi_k}(\sqrt{\dk})} G_u(x_k,y)  W_k(y) \Vert \Phi_k \Vert_{F_k} dv_g(y) \\
 & + \int_{B_{\xi_k}(\sqrt{\dk}) \backslash B_{\xi_k}(R \dk)} G_u(x_k,y)  W_k^{2^*-1}(y) \Vert \Phi_k \Vert_{F_k} dv_g(y) \\
 & \le D_1 \Big( \frac{1}{R^2} + \dk \Big) \Big( u + W_k \Big)(x_k) \Vert \Phi_k \Vert_{F_k},
\eal
\een
for some positive constant $D_1$ which does not depend on $k$ or on $R$. Independently, if we let $p > \frac{n}{4}$ be fixed, two H\"older inequalities together with \eqref{p1p2hypoxk} and \eqref{p1p2H1} show that
\ben \label{p1p2grand4}
\bal
\int_{B_{\xi_k}(R \dk)} G_u(x_k,y)  W_k^{2^*-2} |\Phi_k(y)| dv_g(y) \lesssim D_p R^{\frac{n+2}{2} - \frac{n}{p}} \eta \Vert v_k^1 - v_k^2 \Vert_{F_k} \dk^\pui d_{g_{\xi_k}}(\xi_k,x_k)^{2-n},
\eal
\een
for some positive constant $D_p$ which depends on $p$ but not on $k$ or $R$. Choose now $R_0 > 0$ such that $D_1 R_0^{-2} = \frac18$, where $D_1$ is given by \eqref{p1p2grand3}. Assume then that $\alpha$ and $\eta$ are small enough to have $D_p {R_0}^{\frac{n+2}{2} - \frac{n}{p}} \eta \le \frac18$ and $D_0 \big(\alpha + \eta \big) \le \frac18$, where $D_0$ and $D_p$ are given by \eqref{p1p2grand2} and \eqref{p1p2grand4}. Then, plugging \eqref{p1p2grand3} and \eqref{p1p2grand4} in \eqref{p1p2grand2} and using the definition \eqref{p1p2defxk} of $x_k$ gives:
\[ \Vert \phi_k^1 - \phi_k^2 \Vert_{F_k} \le \frac{1}{2} \Vert v_k^1 - v_k^2 \Vert_{F_k},\]
thus concluding the proof of \eqref{12obj}. The fact that \eqref{12obj} holds for large $k$ independent on the choice of $(t_k, \xi_k)_k$ follows by a standard contradiction argument, up to passing to a subsequence. 

\medskip

\noindent Now, the uniqueness property that defines $\phi(t_k,\xi_k,v_k)$ (and is stated in Proposition \ref{propptfixe1}) shows that, for any $k$ large enough, a function $\vp \in F(\vek,t_k,\xi_k) \cap  \{ \vp \in K_{k,t_k,\xi_k}^\perp, \Vert \vp \Vert_{H^1(M)} \le \frac{\vek}{4} \}$ solves \eqref{systreduit} if and only if it is a fixed-point of $\Psi_k$ defined in \eqref{defmapping}. Using \eqref{12obj}, Banach-Picard's fixed-point theorem asserts, for any $k$, the existence of such a fixed-point $\phi_k(t_k,\xi_k)$ as well as its uniqueness in $F(C \dk, t_k, \xi_k) \cap K_{k,t_k,\xi_k}^\perp \cap B_{H^1(M)}(0, C \dk)$. The estimates in \eqref{eqsystreduit} follow then from \eqref{propphik2}, \eqref{propestglob} and \eqref{choixvek}. Finally, the continuity of the mapping $(t, \xi) \in (0, + \infty) \times M \mapsto \phi_k(t,\xi) \in C^1(M)$ follows from direct arguments, using \eqref{C01step1}, standard elliptic theory and the uniqueness property of $\phi_k(t,\xi) $ in $F(C \dk, t_k, \xi_k) \cap K_{k,t_k,\xi_k}^\perp \cap B_{H^1(M)}(0, C \dk)$, thus concluding the proof of Proposition \ref{propsystreduit}.
\end{proof}

\medskip

\noindent One might surprised by the use of Banach-Picard's fixed-point theorem in the proof of Proposition \ref{propsystreduit}. In view of Proposition \ref{propestglob}, an application of Schauder's fixed-point theorem would seem preferable to construct a solution of \eqref{systreduit} -- and the proof would indeed be simpler. The reason for using Banach-Picard's theorem is that Schauder's theorem does not provide a preferred solution of \eqref{systreduit}, and in particular yields no uniqueness property on the remainder $\phi_k(t,\xi)$ thus constructed. A striking consequence is that if $\phi_k(t,\xi)$ is not uniquely determined in some way, one is not able to prove its continuity in the choice of $(t,\xi)\in (0, + \infty) \times M$. 

\noindent This remark is best understood anticipating a little on Sections \ref{DL} and \ref{argumentconclusif}. If we were to construct a remainder $\phi_k(t,\xi)$ by Schauder's fixed-point theorem it would still satisfy \eqref{eqsystreduit} and the (analogue of the) estimates of Proposition \ref{C02pres}, so that the expansions we perform in Section \ref{DL} would still hold true. In particular, one would obtain an exact analogue of \eqref{grosseeqlambda}, but where the error terms $R_k^{i}(t,p)$, $0 \le i \le n$, would only be bounded functions whose uniform bound goes to zero as $k \to +\infty$. Since $(t, \xi) \mapsto \phi_k(t,\xi)$ is not continuous, these $R_k^i(t,p)$ would be non-continuous too, and the usual final annihilating arguments would fail.

\medskip

\noindent The pointwise estimates obtained in Proposition  \ref{propestglob} still hold true for the function $\phi_k(t,\xi)$ given by Proposition \ref{propsystreduit}, when substituting $\vek$ by $\mk$. Estimate \eqref{estLTkLT} also remains true for the field of $1$-forms $T_{k,t,\xi}$ associated to $u + W_{k,t,\xi} + \phi_k(t,\xi)$ by \eqref{defZk1forme}. Proposition \ref{C02pres} gives the following control:

\begin{prop} \label{proplocalmieux}
Let $D >0$, $(t_k,\xi_k)_k$ be a sequence in $ [1/D, D]\times M$, and let $\phi_k = \phi_k \big(t_k,\xi_k \big)$ denote the function given by Proposition \ref{propsystreduit}. 
Let $(x_k)_k$ be any sequence of points in $B_{\xi_k}(2 \sqrt{\dk})$. Then there holds:
\begin{itemize}
\item If $n \ge 7$:
\be
\bal
|\phi_k(x_k)| & \lesssim \dk + \dk \Vert \nabla f \Vert_{L^\infty(2 r_k)} + \dk \Vert h - c_n S_g \Vert_{L^\infty(2 r_k)} +\dk \\
&  + \left( \frac{\dk}{\tk(x_k)}\right)^2 + \Bigg[ \dk^{\frac{n}{2}}  +  \dk \Vert \nabla f \Vert_{L^\infty(2 r_k)} \\
& +  \Vert h - c_n S_g \Vert_{L^\infty(2 r_k)} \Big( \tk(x_k)^2 + \dk^2 \ln \left( \frac{\tk(x_k)}{\dk} \right) \Big)  + \tk(x_k)^4 \mathds{1}_{nlcf} \Bigg] W_k(x_k). \\
\eal
\ee
\item If $n = 6$:
\be
\bal
|\phi_k(x_k)| & \lesssim \dk + \dk \Vert \nabla f \Vert_{L^\infty(2 r_k)} + \dk \big \Vert h - \frac15 S_g - 2f u \big \Vert_{L^\infty(2 r_k)}   \\
& + \Bigg[ \dk^{3}  +  \dk \Vert \nabla f \Vert_{L^\infty(2 r_k)}  + \big \Vert h - \frac15 S_g - 2fu \big \Vert_{L^\infty(2 r_k)} \Big( \tk(x_k)^2 + \dk^2 \ln \left( \frac{\tk(x_k)}{\dk}\right) \Big)\Bigg] W_k(x_k). \\
\eal
\ee
\end{itemize}
\end{prop}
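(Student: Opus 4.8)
\noindent The plan is to deduce Proposition~\ref{proplocalmieux} from the conditional local estimates of Proposition~\ref{C02pres}, once the two \emph{a priori} quantities $\Vert\phi_k\Vert_{L^\infty(\Omega_k)}$ and $\sqrt{\dk}\Vert\nabla\phi_k\Vert_{L^\infty(\Omega_k)}$ appearing in \eqref{estC02pres} and \eqref{estC02pres2} have been controlled. The first observation is that the function $\phi_k(t_k,\xi_k)$ furnished by Proposition~\ref{propsystreduit} is, by construction, equal to $\phi_k(t_k,\xi_k,v_k)$ for the Banach--Picard fixed point $v_k=\phi_k(t_k,\xi_k)$, which by \eqref{eqsystreduit} belongs to $F(C\dk,t_k,\xi_k)\cap K_{k,t_k,\xi_k}^\perp$. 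Hence all the conclusions of Propositions~\ref{propC0grossier}, \ref{C02pres} and~\ref{propestglob} apply to $\phi_k(t_k,\xi_k)$, now with $\vek$ replaced by $\mk$; and since $\dk=\mk t_k\asymp\mk$ uniformly for $t_k\in[1/D,D]$, Proposition~\ref{propestglob} gives the global pointwise bound $|\phi_k(x)|\lesssim\dk\big(u(x)+W_k(x)\big)$ together with the global gradient bound $|\nabla\phi_k(x)|\lesssim\dk\big(1+\dk^{\pui}\tk(x)^{1-n}\big)$ on $M$.

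\noindent The second step is to restrict these two global bounds to the annulus $\Omega_k=B_{\xi_k}(2r_k)\backslash B_{\xi_k}(\sqrt{\dk})$ of \eqref{defOmegak}. On $\Omega_k$ one has $\tk(x)\ge\sqrt{\dk}$, so that from \eqref{bulle} we get $W_k(x)\lesssim\dk^{\pui}\tk(x)^{2-n}\le\dk^{\pui}(\sqrt{\dk})^{2-n}=1$ and $\dk^{\pui}\tk(x)^{1-n}\le\dk^{\pui}(\sqrt{\dk})^{1-n}=\dk^{-1/2}$. Since $u$ is bounded, this yields $\Vert\phi_k\Vert_{L^\infty(\Omega_k)}\lesssim\dk$ and $\Vert\nabla\phi_k\Vert_{L^\infty(\Omega_k)}\lesssim\dk^{1/2}$, whence $\Vert\phi_k\Vert_{L^\infty(\Omega_k)}+\sqrt{\dk}\Vert\nabla\phi_k\Vert_{L^\infty(\Omega_k)}\lesssim\dk$.

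\noindent The third step is then purely mechanical: I would plug the last bound into \eqref{estC02pres} for $n\ge7$ and into \eqref{estC02pres2} for $n=6$, for an arbitrary sequence $(x_k)_k$ in $B_{\xi_k}(2\sqrt{\dk})$, and discard the gradient contribution $\tk(x_k)|\nabla\phi_k(x_k)|$ since it is not part of the statement of Proposition~\ref{proplocalmieux}. The terms $\Vert\phi_k\Vert_{L^\infty(\Omega_k)}+\sqrt{\dk}\Vert\nabla\phi_k\Vert_{L^\infty(\Omega_k)}$ are then absorbed into $O(\dk)$, and what remains is exactly the asserted right-hand side (in fact a slightly stronger inequality, the statement additionally carrying the harmless terms $\dk\Vert\nabla f\Vert_{L^\infty(2r_k)}$ and $\dk\Vert h-c_nS_g\Vert_{L^\infty(2r_k)}$ — respectively $\dk\Vert h-\frac15 S_g-2fu\Vert_{L^\infty(2r_k)}$ when $n=6$ — which may also be recovered by re-deriving the bound on $\Omega_k$ through a Green's representation for $L_u$ keeping only the contributions of $(f-f(\xi_k))W_k^{2^*-1}$, $(h-c_nS_g)W_k$ and $c_nS_{g_{\xi_k}}W_k$, exactly as in the proof of Proposition~\ref{propestglob}).

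\noindent I do not expect a genuine obstacle here: the result is essentially ``feed the now-unconditional global estimates of Proposition~\ref{propestglob} back into the conditional local estimates of Proposition~\ref{C02pres}''. The only points requiring care are the uniformity of all the $\lesssim$-constants in the choice of $(t_k,\xi_k)_k$ — which is automatic since Propositions~\ref{C02pres} and~\ref{propestglob} are already uniform on $[1/D,D]\times M$ and $\dk\asymp\mk$ with constants depending only on $D$ — and the handling of the overlap region near $\partial B_{\xi_k}(\sqrt{\dk})$, where one must check that $W_k=O(1)$ so that the crude bound $\Vert\phi_k\Vert_{L^\infty(\Omega_k)}\lesssim\dk$ can be used without loss.
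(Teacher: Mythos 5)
Your proof is correct and follows exactly the argument in the paper: one first uses Proposition~\ref{propestglob} (with $\vek$ replaced by $\mk$, since $\phi_k(t_k,\xi_k)$ is a fixed point of the map of Proposition~\ref{propsystreduit}) to bound $\Vert\phi_k\Vert_{L^\infty(\Omega_k)}+\sqrt{\dk}\Vert\nabla\phi_k\Vert_{L^\infty(\Omega_k)}\lesssim\dk$, then plugs this into \eqref{estC02pres} or \eqref{estC02pres2}. (Your parenthetical remark about the extra first-line terms is slightly muddled — they do not need to be "recovered" since they only weaken the stated bound — but the argument is unaffected.)
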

\begin{proof}
Proposition \ref{propestglob} shows that there holds:
\[ \Vert \phi_k \Vert_{L^\infty(B_{\xi_k} (2 r_k) \backslash B_{\xi_k}( \sqrt{\dk}))} + \sqrt{\dk} \Vert \nabla \phi_k \Vert_{L^\infty(B_{\xi_k} (2 r_k) \backslash B_{\xi_k}( \sqrt{\dk}))}\lesssim \dk ,\]
and the result then follows from an application of  Proposition \ref{C02pres}.
\end{proof}

\noindent To be able to perform the asymptotic expansion along $K_{k,t,\xi}^\perp$ in the next section, we will need more precise estimates than \eqref{ponctuelphik} on the behaviour of $\phi_k(t,\xi)$ at distances from $\xi_k$ which are large compared to $\sqrt{\mk}$. We quantify more precisely the fall-off of $\phi_k(t,\xi)$ far away from the center of the bubbles in the next Proposition:

\begin{prop} \label{proploinmieux}
Let $D >0$, $(t_k,\xi_k)_k$ be a sequence in $ [1/D, D]\times M$, and let $\phi_k = \phi_k \big(t_k,\xi_k \big)$ denote the function given by Proposition \ref{propsystreduit}. Let $(R_k)_k$, $R_k \ge 1$, denote a sequence of positive numbers. There holds:
\ben \label{falloffphik}
\Vert \phi_k \Vert_{L^\infty(M \backslash B_{\xi_k}(R_k \sqrt{\dk}))} \lesssim \frac{\dk}{R_k^2} + R_k^2 \dk^{2} + \dk^{\pui} r_k^{-n}.
\een
\end{prop}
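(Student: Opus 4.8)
The statement in Proposition \ref{proploinmieux} is a refinement of the global estimate \eqref{ponctuelphik} in the exterior region $M \backslash B_{\xi_k}(R_k\sqrt{\dk})$, where the bubble $W_k$ is sub-dominant. The strategy is to revisit the Green's representation formula for $L_u\phi_k$ that was already written in the proof of Proposition \ref{propestglob} (equation \eqref{nouveqphik} for $n\geq 7$, \eqref{nouveqphik6} for $n=6$), but now evaluated at a point $x_k$ with $d_{g_{\xi_k}}(\xi_k,x_k) \geq R_k\sqrt{\dk}$, and to keep track of the $R_k$-dependence in each of the source terms instead of absorbing everything into $\dk(u+W_k)$. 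Recall that by Proposition \ref{propestglob} (applied with $\vek$ replaced by $\mk$, as noted after Proposition \ref{proplocalmieux}) we already know $|\phi_k| \lesssim \dk(u+W_k)$ and $\|\phi_k\|_{H^1}\lesssim\dk$ globally, and that $\|\phi_k\|_{L^\infty(\Omega_k)}+\sqrt{\dk}\|\nabla\phi_k\|_{L^\infty(\Omega_k)}\lesssim\dk$. So the bound $\dk/R_k^2$ should come from the self-improving term $\int G_u(x_k,y)\min(W_k^{2^*-2},W_k)|\phi_k|$, the bound $R_k^2\dk^2$ should come from the curvature/coefficient source terms (which carry factors like $\Vert h - c_nS_g\Vert_{L^\infty(2r_k)}\dk^2$ times $\dk^{\pui}\tk(x_k)^{2-n}$, and one exchanges $\tk(x_k)^{2-n}$ at scale $R_k\sqrt{\dk}$ for powers of $R_k$), and the $\dk^{\pui}r_k^{-n}$ term should come from the cutoff-derivative contributions supported near $\partial B_{\xi_k}(r_k)$ and from the pieces of the source that only decay like $\dk^{\pui}$ on the whole manifold.

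First I would set $x_k$ with $d_{g_{\xi_k}}(\xi_k,x_k)=:s_k \geq R_k\sqrt{\dk}$ and split the Green integral $\int_M G_u(x_k,y)(\cdots)dv_g(y)$ over the three regions $B_{\xi_k}(\sqrt{\dk})$, $B_{\xi_k}(s_k/2)\backslash B_{\xi_k}(\sqrt{\dk})$, and $M\backslash B_{\xi_k}(s_k/2)$, using the two-sided bound \eqref{estLu} on $G_u$. In the inner region the integrand is controlled by Proposition \ref{C02pres} (or the cleaner Proposition \ref{proplocalmieux}) and one uses that $G_u(x_k,y)\lesssim s_k^{2-n}$ there; a direct computation then shows this contributes at most $\dk^{\pui}s_k^{2-n}\cdot\dk^{\pui} + \dots \lesssim \dk s_k^{-2}\dk + \dots$, which is $\lesssim \dk/R_k^2$ after inserting $s_k\geq R_k\sqrt{\dk}$ (the key arithmetic being $\dk^{\pui}\cdot\dk^{\pui}s_k^{2-n}\cdot s_k^2 \sim \dk^{n-2}s_k^{4-n}$, harmless, and the genuinely binding term $\dk^2 s_k^{-2}\cdot(\dk/\dk)$ giving $\dk/R_k^2$ once one remembers $W_k(x_k)\lesssim\dk^{\pui}s_k^{2-n}$ and $s_k^2\geq R_k^2\dk$). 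In the intermediate region one uses $|\phi_k(y)|\lesssim \dk(u(y)+W_k(y))$ together with the improved local estimate of Proposition \ref{proplocalmieux} for the $y\in B_{\xi_k}(2\sqrt{\dk})$ pieces, and standard Giraud-type lemma estimates (Hebey \cite{HebeyZLAM}, Lemma 7.5) to evaluate $\int G_u(x_k,y)W_k(y)^{p}dv_g(y)$; this is where the $R_k^2\dk^2$ term is produced, since the coefficient source terms contribute $\dk^2\|h-c_nS_g\|_{L^\infty(2r_k)}\cdot\dk^{\pui}s_k^{2-n}\cdot(\text{log or }s_k^2)$ and, crucially, the piece of the source behaving like $\dk^{\pui}\mathds{1}_{d_k\leq 2r_k}$ contributes $\dk^{\pui}s_k^{2}\sim\dk^{\pui}\cdot R_k^2\dk$ after integrating against $G_u$. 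The outer region $M\backslash B_{\xi_k}(s_k/2)$ is the easiest: there $\phi_k$ and all source terms are uniformly $O(\dk)$ or $O(\dk^{\pui}r_k^{-n})$ away from the bubble, $G_u\in L^1$, and one gets a contribution $\lesssim \dk + \dk^{\pui}r_k^{-n}$; the $\dk^{\pui}r_k^{-n}$ piece is exactly what forces that term into \eqref{falloffphik} and cannot be improved since the cutoff in \eqref{bulle} and \eqref{defZk} produces genuine source mass of that size on the annulus $r_k\leq d_k\leq 2r_k$.

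The main obstacle I expect is bookkeeping the $R_k$-dependence uniformly: one must check that no term secretly grows like a positive power of $R_k$ beyond $R_k^2$, in particular in the self-improving integral $\int G_u(x_k,y)\min(W_k^{2^*-2},W_k)(y)|\phi_k(y)|dv_g(y)$, where a naive bound using only $|\phi_k|\lesssim\dk(u+W_k)$ gives $\lesssim\dk(u+W_k)(x_k)$, i.e. too weak — one genuinely needs the improved fall-off inductively, i.e. to first establish a weaker bound $\|\phi_k\|_{L^\infty(M\backslash B_{\xi_k}(R\sqrt{\dk}))}\lesssim \dk R^{-1}$, say, reinsert it, and iterate finitely many times as in the passage from \eqref{C02s15} to \eqref{C02s17} until the exponent of $W_k^{2^*-2}$ is large enough for the integral to converge with the desired $R_k^{-2}$ gain. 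So the proof is: (i) write the representation formula; (ii) decompose into the three regions and use \eqref{estLu}; (iii) bound each region using Propositions \ref{propestglob}, \ref{proplocalmieux} and \eqref{estLTkLT}, \eqref{defXintro}, \eqref{lapZik}, \eqref{relationrkmk}, Giraud's lemma, and the already-known $H^1$ and $L^\infty$ bounds; (iv) run a finite bootstrap on the exponent to upgrade the self-improving term; (v) take the supremum over $x_k$ with $d_{g_{\xi_k}}(\xi_k,x_k)\geq R_k\sqrt{\dk}$ and collect $\dk R_k^{-2} + R_k^2\dk^2 + \dk^{\pui}r_k^{-n}$. Since the argument is entirely parallel to the proofs of Propositions \ref{propestglob} and \ref{C02pres}, I would present it compactly, referring back to those computations for the routine estimates and only spelling out the $R_k$-tracking.
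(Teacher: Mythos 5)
Your overall strategy — write a representation formula for $L_u\phi_k$ at a point $x_k$ with $d_{g_{\xi_k}}(\xi_k,x_k)\ge R_k\sqrt{\dk}$, split into annular regions using the Green-function bound \eqref{estLu}, and then read off the estimate at the point where the exterior supremum is attained — is indeed what the paper does. But the mechanism by which the proof closes is different from the one you describe, and the estimate you propose leaves a gap that your bootstrap does not fill.

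The obstacle you identify, namely the self-improving term $\int_M G_u(x_k,y)\min(W_k^{2^*-2},W_k)|\phi_k|\,dv_g$, is in fact not an obstacle at all: plugging the already-known global bound $|\phi_k|\lesssim\dk(u+W_k)$ into it and using Giraud's lemma, the contribution of $B_{\xi_k}(\sqrt{\dk})$ is $\lesssim\dk W_k(x_k)\lesssim\dk R_k^{2-n}\le\dk/R_k^2$, and the contribution of $M\setminus B_{\xi_k}(\sqrt{\dk})$ is $\lesssim\dk\cdot\dk^{\pui}\tk(x_k)^{4-n}\lesssim R_k^{4-n}\dk^2\le\dk^2$; both are admissible without any iteration. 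The genuinely problematic contribution is the one you file under ``the outer region gives $\lesssim\dk$''. That bound is not good enough: for $R_k\to\infty$ with $R_k\sqrt{\dk}\to 0$ the right-hand side of \eqref{falloffphik} is $o(\dk)$, so a contribution of order $\dk$ cannot simply be tolerated. It comes from the coupling source term $\big(|\Lg T_k+\sigma|_g^2-|\Lg T+\sigma|_g^2\big)(u+W_k+\phi_k)^{-2^*-1}$, and the crude estimate \eqref{estLTkLT} that you invoke produces precisely a non-decaying remainder $\Vert X\Vert_{C^0}\vek\sim\eta\dk$, which after integrating $G_u\in L^1$ leaves $\eta\dk$ in the final bound with no mechanism to remove it.

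What the paper does instead is apply the refined expansion \eqref{estLTkLTordre2}, which replaces the crude error $\eta\vek$ by the finer $\eta\vek\dk R_k^2+\eta\Vert\phi_k\Vert_{L^\infty(M\setminus B_{\xi_k}(R_k\sqrt{\dk}))}$. The first piece produces the $R_k^2\dk^2$ term of \eqref{falloffphik}; the second is self-referential — it is exactly the quantity one is trying to bound — and it is eliminated by an absorption argument, not a bootstrap: one writes \eqref{plm1}, which displays a constant $C$ independent of $\eta$ and $k$, imposes $C\eta\le\tfrac12$ (the hypothesis that $\Vert X\Vert_{L^\infty}$ is small is used here exactly as it is in Proposition \ref{propsystreduit}), and evaluates the representation formula at the point where $|\phi_k|$ attains $\Vert\phi_k\Vert_{L^\infty(M\setminus B_{\xi_k}(R_k\sqrt{\dk}))}$ so that the term can be moved to the left-hand side. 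Without \eqref{estLTkLTordre2} and this $\eta$-absorption, the proof does not close, and your proposed finite bootstrap on the nonlinear term — which is unnecessary for that term — does nothing to fix it, because it never touches the $1$-form coupling.
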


\begin{proof}
Let $(t_k,\xi_k)_k$ be a sequence in $ [1/D, D]\times M$ and let $(R_k)_k$, $R_k \ge 1$, denote a sequence of positive numbers. Let $(x_k)_k$ be a sequence of points in $M$ satisfying $d_{g_{\xi_k}} (\xi_k, x_k) \ge R_k \sqrt{\dk}$. Let again $G_u$ denote the Green's function of the operator $L_u$ defined in \eqref{Ldefop}. We use \eqref{C036} and \eqref{estLTkLTordre2} below to write that there holds, for some positive constant $C$ that neither depends on $k$ nor on $\eta$ as in \eqref{introeta}, that:
\ben \label{plm1}
\bal
\int_{M} G_u(x_k,y)  \frac{ |\Lg T_k + \sigma|_g^2 - |\Lg T + \sigma|_g^2 }{\left( u + W_k + v_k \right)^{2^*+1}}(y)  dv_g(y) \le C \Big( \dk^{\frac{n}{2}} \tk(x_k)^{2-n} \\
 + \eta \Vert \phi_k \Vert_{L^\infty(M \backslash B_{\xi_k}(R_k \sqrt{\dk}))} + \dk^2 + \eta R_k^2 \dk^{2} + \dk^{\frac{n-1}{2}} \tk(x_k)^{3-n} \Big).
\eal
\een
Here, $T_k = T_{k,t_k,\xi_k}$ is the solution of the $1$-form equation in \eqref{systreduit}. Assume now that $\eta$ in \eqref{introeta} is small enough so as to have
\be 
C\eta \le \frac{1}{2},
\ee
where $C$ is the constant appearing in \eqref{plm1}. Estimate \eqref{falloffphik} then follows from \eqref{plm1} by choosing the sequence $(x_k)_k$ to be such that
\[ |\phi_k(x_k)| =  \Vert \phi_k \Vert_{L^\infty(M \backslash B_{\xi_k}(R_k \sqrt{\dk}))} \]
and by writing a representation formula for $L_u$ at $x_k$.  The representation formula is written for \eqref{nouveqphik} and the terms appearing in it are estimated by using \eqref{C030}--\eqref{C035} and \eqref{eqsystreduit}.
\end{proof}
\noindent The considerations in the remark following Proposition \ref{propestglob} apply here too: Propositions \ref{propsystreduit}, \ref{proplocalmieux} and \ref{proploinmieux} still hold true for arbitrary choices of $h, \pi \in C^0(M)$, $f \in C^1(M)$, $\sigma \in C^0(M)$ and $Y \in C^0(M)$ once $X$ is given by \eqref{defXintro}.

\section{Expansion of the Kernel coefficients} \label{DL}

\noindent Let $D >0$. We let, for any $p \in \overline{B_0(1)}$:
\ben \label{definitionyk}
y_k = \exp_{\xi_k}^{g_{\xi_k}}(\beta_k p),
\een
where $\beta_k$ is defined in \eqref{propbetak} and $(\xi_k)_k$ is the sequence chosen in  Section \ref{notations}. Throughout this section the functions $W_{k,t,y_k}$ and $Z_{i,k,t,y_k}$ defined in \eqref{bulle} and \eqref{defZk} will be denoted by $W_{k,t,p}$ and $Z_{i,k,t,p}$, with $\dk(t)$ again given by \eqref{defdkyk}. For any $(t,p) \in [1/D, D] \times \overline{B_0(1)}$ we will denote by $\phi_k(t,p)$ the function $\phi_k(t,y_k)$ given by Proposition \ref{propsystreduit} when $y_k$ is given by \eqref{definitionyk}, and as before we let  $u_{k,t,p} = u + W_{k,t,p} + \phi_k(t,p)$. Proposition \ref{propsystreduit} shows that there exist real numbers $(\lki(t,p))_{0 \le i \le n}$ such that $u_{k,t,p}$ satisfies:
\ben \label{systnoyaupres}
\left \{ 
\bal
& \left( \triangle_ g+ h \right) u_{k,t,p} = f  u_{k,t,p}^{2^*-1}  + \frac{|\Lg T_{k,t,p} + \sigma|_g^2 + \pi^2}{ u_{k,t,p}^{2^*+1}} + \sum_{i=0}^n \lki(t,p) \left( \triangle_g + h \right)Z_{i,k,t,p}, \\
& \Dg T_{k,t,p} = u_{k,t,p}^{2^*} X + Y.\\
\eal \right. 
\een
Since $\phi_k(t,p)$, $W_{k,t,p}$ and $Z_{i,k,t,p}$ are continuous in the choice of $(t,p)$, then so are the $(\lki(t,p))_{0\le i \le n}$.

\medskip

\noindent In this section we conclude the proof of Theorem \ref{thprincipal} by showing that for any $k$ there exists $(t_k,p_k)$ such that $\lki(t_k,p_k) = 0$ for any $0 \le i \le n$. By \eqref{systnoyaupres}, the function $u_{k,t_k,p_k}$ will therefore provide the desired solution of \eqref{intro1} and this will conclude the proof of Theorem \ref{thprincipal}. We do this by performing an asymptotic expansion of the $(\lki(t,p))_{0 \le i \le n}$ as $k \to + \infty$ towards some limiting function that possesses zeroes. As before, we distinguish between the $n = 6$ and $n \ge 7$ cases. All the expansions obtained in this section will be uniform in the choice of $(t,p)\in [1/D, D] \times \overline{B_0(1)}$. In this section we will often apply previously obtained results, such as Proposition \ref{propsystreduit}. They will always be applied for the sequence $(t,y_k)$ where $y_k$ is given by \eqref{definitionyk}. As mentioned in the Introduction, the expansions in this section do not rely on the assumption $r_k \to 0$.

\subsection{The $n \ge 7$ case.} \label{DLenergie}

We start by re-writing the scalar equation in \eqref{systnoyaupres} as:
\ben \label{DL1}
\bal
 &\sum_{i=0}^n \lki(t,p) \left( \triangle_g + h \right) Z_{i,k,t,p} = \\
 & \left( \triangle_g + h \right)W_{k,t,p} - f(y_k) W_{k,t,p}^{2^*-1} + \left( f(y_k) - f \right) W_{k,t,p}^{2^*-1} \\
 & - f \Bigg[ \big( u + W_{k,t,p} + \phi_k(t,p)\big)^{2^*-1} - \big( u + W_{k,t,p} \big)^{2^*-1} - (2^*-1) \big(u + W_{k,t,p} \big)^{2^*-2} \phi_k(t,p)\Bigg] \\
 & - f \Bigg[ \big(u + W_{k,t,p} \big)^{2^*-1} - u^{2^*-1} - W_{k,t,p}^{2^*-1} \Bigg] \\
  & + \left( \triangle_g + h \right)\phi_k(t,p) - (2^*-1) f(y_k) W_{k,t,p}^{2^*-2} \phi_k(t,p) \\
 & - (2^*-1) f \Bigg[ \big( u + W_{k,t,p} \big)^{2^*-2} - W_{k,t,p}^{2^*-2} \Bigg] \phi_k(t,p) \\
 & + (2^*-1) \big( f(y_k) - f \big) W_{k,t,p}^{2^*-2} \phi_k(t,p) \\
 & + \frac{|\Lg T + \sigma|_g^2 + \pi^2}{u^{2^*+1}} - \frac{|\Lg T_{k,t,p} + \sigma |_g^2 + \pi^2}{\big(u + W_{k,t,p} + \phi_k(t,p) \big)^{2^*+1}} . \\
\eal
\een 
We integrate each side of equation \eqref{DL1} against $Z_{i,k,t,p}$, for a given $0 \le i \le n$. For any $0 \le i \le n$ we let:
\ben \label{DL2}
\bal
I_{1,i} & = \int_M \left( \left( \triangle_g + h \right)W_{k,t,p} - f(y_k) W_{k,t,p}^{2^*-1} + \left( f(y_k) - f \right) W_{k,t,p}^{2^*-1} \right)Z_{i,k,t,p} dv_g, \\
I_{2,i} & = - \int_M f \Bigg[ \big( u + W_{k,t,p} + \phi_k(t,p)\big)^{2^*-1} - \big( u + W_{k,t,p} \big)^{2^*-1} \\
& \qquad \qquad \qquad \qquad- (2^*-1) \big(u + W_{k,t,p} \big)^{2^*-2} \phi_k(t,p)\Bigg]  Z_{i,k,t,p} dv_g, \\
I_{3,i} & = - \int_M  f \Bigg[ \big(u + W_{k,t,p} \big)^{2^*-1} - u^{2^*-1} - W_{k,t,p}^{2^*-1} \Bigg] Z_{i,k,t,p} dv_g, \\
I_{4,i} & = \int_M \Bigg[  \left( \triangle_g + h \right)\phi_k(t,\xi) - (2^*-1) f(\xi_k) W_{k,t,\xi}^{2^*-2} \phi_k(t,\xi) \Bigg] Z_{i,k,t,\xi} dv_g, \\
I_{5,i} & = - (2^*-1) \int_M f \Bigg[ \big( u + W_{k,t,p} \big)^{2^*-2} - W_{k,t,p}^{2^*-2} \Bigg] \phi_k(t,p) Z_{i,k,t,p} dv_g, \\
I_{6,i} & = (2^*-1) \int_M  \big( f(y_k) - f \big) W_{k,t,p}^{2^*-2} \phi_k(t,p) Z_{i,k,t,p} dv_g, \\
I_{7,i} & = \int_M \left(  \frac{|\Lg T + \sigma|_g^2 + \pi^2}{u^{2^*+1}} - \frac{|\Lg T_{k,t,p} + \sigma |_g^2 + \pi^2}{\big(u + W_{k,t,p} + \phi_k(t,p) \big)^{2^*+1}} \right) Z_{i,k,t,p} dv_g.
\eal
\een
We will need precise asymptotic expansions of the $\lki(t,p)$ and so will always distinguish in the following between the $i = 0$ and $1 \le i \le n$ cases, to take into account the different decay of the $Z_{i,k,t,p}$. Our aim is to show that the dominant contributions in the expansion of the $\lki(t,p)$ come from the integrals $I_{1,i}$ and $I_{3,i}$ in \eqref{DL2}.

\bigskip

\noindent The integrals in \eqref{DL2} are computed in a series of Claims. We start with the computations of $I_{1,i}$:
\begin{claim}
There holds:
\ben \label{I11}
\bal
I_{1,0} = & \frac{8(n-1)}{(n-2)(n-4)} K_n^{-n} f(\xi_0)^{- \frac{n}{2}} \tau_k \mk^2 H(p) t^2 \\
&- \frac{1}{3} \frac{n (n-2)}{(n-4)(n-6)} f(\xi_0)^{-1 - \frac{n}{2}} K_n^{-n} |W_g(\xi_0)|_g^2 \mk^4 t^4 + o(\tau_k \dk^2) +  o(\dk^4 \mathds{1}_{nlcf}) + o(\dk^\pui),
\eal
\een
and, for any $1 \le i \le n$:
\ben \label{I12}
\bal
I_{1,i} =&\frac{2n(n-1)}{n-4} K_n^{-n} f(\xi_0)^{- \frac{n}{2}} \frac{\tau_k}{\beta_k} \mk^3 \nabla_i H(p) t^3 \\
& - f(\xi_0)^{-1-\frac{n}{2}} K_n^{-n} \frac{n^2 (n-2)^2}{24(n-4)(n-6)} \nabla_i (|W(\cdot)|_g^2)(\xi_0) \mk^5 t^5 + o(\dk^5 \mathds{1}_{nlcf}) + o(\dk^{\frac{n}{2}}).\\
\eal
\een
Here $\xi_0 = \lim_{k \to + \infty} \xi_k$ and $\tau_k$ and $\mk$ are as in \eqref{defmk}.
\end{claim}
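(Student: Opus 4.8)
The proof is a direct computation of the integral $I_{1,i} = \int_M \left( (\triangle_g + h)W_{k,t,p} - f(y_k) W_{k,t,p}^{2^*-1} + (f(y_k) - f)W_{k,t,p}^{2^*-1}\right) Z_{i,k,t,p}\, dv_g$, split according to the three summands in the integrand and according to the index $i$. The plan is to first recall the pointwise expansion of $(\triangle_g + h)W_{k,t,p}$: using \eqref{lapbulle} below together with the conformal normal coordinates property \eqref{confnorm}, \eqref{propLambda} and the scalar curvature vanishing \eqref{propLambdaSg}, one has $(\triangle_g + h)W_{k,t,p} - f(y_k)W_{k,t,p}^{2^*-1} = (h - c_n S_g)W_{k,t,p} + c_n S_{g_{y_k}}\Lambda_{g_{y_k}}^{2^*-2}W_{k,t,p} + (\text{compactly supported remainder in the annulus } r_k \le d_k \le 2r_k)$, where by \eqref{propLambdaSg} the term $S_{g_{y_k}}$ vanishes to second order at $y_k$, so $S_{g_{y_k}}\Lambda^{2^*-2}(x) = \frac{1}{6}|W_g(y_k)|_g^2 d_{g_{y_k}}(y_k,x)^2 \mathds{1}_{nlcf} + O(d_k^3)$. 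One then substitutes $y_k = \exp_{\xi_k}^{g_{\xi_k}}(\beta_k p)$ and uses the explicit form of $h$ given by \eqref{defhintro}: on $B_{\xi_k}(\beta_k)$ one has $h - c_n S_g = \tau_k H\left(\frac{1}{\beta_k}(\exp_{\xi_k}^{g_{\xi_k}})^{-1}(\cdot)\right)$, so in rescaled variables $x = \exp_{y_k}^{g_{y_k}}(\dk z)$ this coefficient evaluates at $y_k$ to $\tau_k H(p)$ with gradient $\frac{\tau_k}{\beta_k}\nabla H(p)$ (using that $\beta_k >> \mk$, so that the bubble is concentrated at a scale much smaller than $\beta_k$ and $H$ is essentially frozen at its Taylor expansion around $p$).

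Next I would rescale the integral. Writing $x = \exp_{y_k}^{g_{y_k}}(\dk z)$, $dv_g = \dk^n(1 + O(\dk^N|z|^N))dz$ by \eqref{confnorm}, and using the explicit forms of $W_{k,t,p}$ from \eqref{bulle} and $Z_{i,k,t,p}$ from \eqref{defZk}, the rescaled bubble converges to $U_{\xi_0}$ as in \eqref{eqlindefU} and the rescaled $Z_{i,k,t,p}$ converge to $t^{(n-2)/2}V_{i,\xi_0}$ (for $i=0$) or to $t^{n/2}$ times the appropriate component (for $1 \le i \le n$), with the weight functions $|z|^2$ from the curvature term paired against these. For $I_{1,0}$: the term $(h-c_nS_g)W_{k,t,p}$ contributes $\tau_k H(p) t^2 \dk^2 \cdot f(\xi_0)^{-n/2}K_n^{-n}\int_{\RR^n} U_{\xi_0}(z)V_{0,\xi_0}(z)\,dz$ times a universal constant coming from $\int_{\RR^n}(1 + c|z|^2)^{1-n/2}(c|z|^2 - 1)(1 + c|z|^2)^{-n/2}\,dz$, which evaluates to $\frac{8(n-1)}{(n-2)(n-4)}$ after the standard radial integration (this requires $n \ge 5$ for convergence), while the curvature term contributes $-\frac{1}{3}\frac{n(n-2)}{(n-4)(n-6)}f(\xi_0)^{-1-n/2}K_n^{-n}|W_g(\xi_0)|_g^2\mk^4 t^4\mathds{1}_{nlcf}$ after integrating $\frac{1}{6}|W_g(\xi_0)|_g^2|z|^2\cdot U_{\xi_0}\cdot V_{0,\xi_0}$ (this requires $n \ge 7$ for convergence, hence the need for the refined $n=6$ treatment elsewhere). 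The first-order term $(f(y_k)-f)W_{k,t,p}^{2^*-1}$ integrated against $Z_{0,k,t,p}$ vanishes to leading order by oddness in $z$ (the integrand is $\nabla f(y_k)\cdot z$ times an even function), giving a contribution absorbed in $o(\tau_k\dk^2)$ via the bound $|s_k| = O(\mk^N)$ from \eqref{deffintro}. The annulus remainder contributes $O(\dk^{(n-2)/2}r_k^{-n}\cdot\dk^n\cdot r_k^{-(n-2)}\cdot\ldots) = o(\dk^{(n-2)/2})$ using \eqref{relationrkmk}.

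For $I_{1,i}$ with $1 \le i \le n$: now $Z_{i,k,t,p}$ is \emph{odd} in $z$, so the leading even terms drop out and one keeps the gradient contributions. The term $(h - c_nS_g)W_{k,t,p}$ paired against $Z_{i,k,t,p}$ picks up $\frac{\tau_k}{\beta_k}\nabla_i H(p)$ from Taylor expanding $h$, times $t^3\mk^3 f(\xi_0)^{-n/2}K_n^{-n}$ and a universal integral $\int_{\RR^n}z_i^2 U_{\xi_0}(z)\cdot(\ldots)\,dz$ evaluating (with the normalization constants) to $\frac{2n(n-1)}{n-4}$; the curvature term, whose coefficient $\nabla(|W_g(\cdot)|_g^2)$ at $\xi_0$ now appears after expanding one more order, contributes $-f(\xi_0)^{-1-n/2}K_n^{-n}\frac{n^2(n-2)^2}{24(n-4)(n-6)}\nabla_i(|W_g|_g^2)(\xi_0)\mk^5 t^5\mathds{1}_{nlcf}$, convergent for $n \ge 7$; the $f$-gradient term and the annulus remainder are again of size $o(\dk^5\mathds{1}_{nlcf}) + o(\dk^{n/2})$. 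Throughout, all $o(\cdot)$ are uniform in $(t,p) \in [1/D,D]\times\overline{B_0(1)}$ because the rescaled profiles converge uniformly in these parameters and the curvature and coefficient data are fixed smooth functions.

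The main obstacle is bookkeeping: tracking the exact numerical constants through the rescaling, the expansion of $\Lambda_{g_{y_k}}$ and the metric $(\exp^{g_{y_k}})^*g_{y_k}$ in conformal normal coordinates to the right order, and correctly identifying \emph{which} Taylor coefficient of $h$ and of $S_{g_{y_k}}$ survives for each index $i$ (the parity argument in $z$ is what decides whether one sees $H(p)$, $\nabla H(p)$, $|W_g(\xi_0)|_g^2$ or $\nabla(|W_g|_g^2)(\xi_0)$). One must also be careful that the two bumps in $h$ (the one carrying $\tau_k$, concentrated at scale $\beta_k$) and in $f$ (the one carrying $s_k$, concentrated at scale $r_k$) interact with the bubble at the right scales: since $\beta_k >> \mk$ and $r_k >> \beta_k$, the bubble only ``sees'' the value and first derivatives of these bumps at $y_k$, which is the point of the scale separation imposed in \eqref{propbetak}--\eqref{relationrkmk}. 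No genuinely hard analysis is involved beyond what was already set up; the claim is essentially a careful Laplace-type expansion of a concentrating integral.
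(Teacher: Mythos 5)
Your proposal is correct and follows essentially the same route the paper intends: reduce via \eqref{lapbulle} and conformal normal coordinates to Taylor expansions of $h-c_n S_g$, $f$, and $S_{g_{\xi_k}}$ about $y_k$, rescale by $\delta_k$, and compute the limiting integrals by parity, with the scale separation $\mk \ll \beta_k \ll r_k$ freezing the bump profiles. The paper phrases this by first recording the parameter-derivative identities $\partial_t W_k = \frac{n-2}{2t}Z_{0,k}$ and $\partial_{\xi_i}W_k = \frac{f(\xi)}{n\delta_k}Z_{i,k} + (\text{lower order})$ and then citing Esposito--Pistoia--V\'etois and Robert--V\'etois for the resulting expansions, whereas you attack the rescaled integral directly; the two are the same computation. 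One imprecision worth flagging: the pointwise formula ``$S_{g_{y_k}}\Lambda^{2^*-2}(x) = \frac16|W_g(y_k)|_g^2 d_{g_{y_k}}(y_k,x)^2 + O(d_k^3)$'' overstates what \eqref{propLambdaSg} gives --- that identity fixes only $\triangle_{g_\xi}S_{g_\xi}(\xi)$, so the quadratic Taylor term involves the full Hessian of $S_{g_\xi}$, of which only the trace (contributing $-\frac{1}{12n}|W_g|^2 r^2$ after angular averaging; note the sign) survives the radial integration against $U_{\xi_0}V_{0,\xi_0}$; this is where the $-\frac13\frac{n(n-2)}{(n-4)(n-6)}$ in \eqref{I11} and the $-\frac{n^2(n-2)^2}{24(n-4)(n-6)}$ in \eqref{I12} actually come from, but it is bookkeeping and does not alter the structure of your argument.
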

\noindent In \eqref{I11} and \eqref{I12} $W_g(\xi_0)$ denotes the Weyl tensor at $\xi_0$ (which vanishes if $(M,g)$ is locally conformally flat) and $K_n^{-n}$ is the $H^1$-energy of the standard bubble defined by:
\ben \label{defKnn}
K_n = \sqrt{\frac{4}{n(n-2) \omega_n^{\frac{2}{n}}}}, 
\een
where $\omega_n$ is the volume of the standard unit $n$-sphere.
\begin{proof}
Using the definition of conformal normal coordinates as in \eqref{confnorm} and \eqref{propLambda}, and using \eqref{bulle} and \eqref{defZk}, there holds, for any fixed $t >0$ and $\xi \in M$, that:
\[ \bal
\frac{\partial}{\partial t} W_{k,\delta, \xi} & = \frac{n-2}{2 t} Z_{0,k,t,\xi}, \\
\frac{\partial}{\partial \xi_i} W_{k,t,\xi} & = \frac{f(\xi)}{n \delta_k(t)} Z_{i,k,t,\xi} + O\big( \dk^\pui r_k^{1-n} \mathds{1}_{r_k \le d_{g_\xi} \le 2 r_k } \big) + O \big( |\nabla f(\xi)|_g W_{k,t,\xi} \big) + O \big( d_{g_\xi}(\xi, \cdot) W_{k,t,\xi}).
\eal \]
These expressions are for instance obtained from the estimations in Appendix A in Esposito-Pistoia-V\'etois \cite{EspositoPistoiaVetois} and those in Section $4$ of Robert-V\'etois \cite{RobertVetois}. Estimates \eqref{I11} and \eqref{I12} follow then from straightforward computations using \eqref{defhintro}, \eqref{deffintro} and \eqref{relationrkmk}.
\end{proof}

\bigskip

\begin{claim}
There holds:
\ben \label{I22bis} 
I_{2,0} = o(\dk^{\pui}) \textrm{ if } (M,g) \textrm{ is l.c.f. or if } n\le 10,
\een
\ben \label{I23bis}
I_{2,0} = o(\dk^4) \textrm{ if } (M,g) \textrm{ is not l.c.f. and } n\ge 11, 
\een
and, for $1 \le i \le n$,
\ben \label{I24}
I_{2,i} = o(\dk^{\frac{n}{2}}) + o(\dk^5 \mathds{1}_{nlcf}) . 
\een
\end{claim}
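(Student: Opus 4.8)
The plan is to estimate $I_{2,i}$, which involves the \emph{quadratic} remainder $\big( u + W_{k,t,p} + \phi_k\big)^{2^*-1} - \big(u+W_{k,t,p}\big)^{2^*-1} - (2^*-1)\big(u+W_{k,t,p}\big)^{2^*-2}\phi_k$. By the mean value theorem this quantity is pointwise controlled by $O\big((u+W_{k,t,p})^{2^*-3}\phi_k^2\big)+O\big(|\phi_k|^{2^*-1}\big)$ when $n\le 6$ (where $2^*-1\le 3$ one should instead write $O\big((u+W_{k,t,p}+|\phi_k|)^{2^*-3}\phi_k^2\big)$), so in every dimension it is bounded by a constant times $\big(W_{k,t,p}^{2^*-3}+1\big)\phi_k^2 + |\phi_k|^{2^*-1}$. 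The first step is to insert this pointwise bound and the estimate $|Z_{i,k,t,p}|\lesssim W_{k,t,p}$ into the integral defining $I_{2,i}$, splitting the manifold into the inner region $B_{y_k}(\sqrt{\dk})$ and its complement.

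On the inner region I would use the refined estimates of Proposition~\ref{proplocalmieux} (the version for the genuine solution $\phi_k(t,\xi)$ given by Proposition~\ref{propsystreduit}) to bound $|\phi_k|$, and the explicit expression \eqref{bulle} for $W_{k,t,p}$; a change of variables $x = \exp_{\xi_k}^{g_{\xi_k}}(\dk y)$ turns the integral into a convergent (for $n\le 10$, resp. suitably weighted for $n\ge 11$) integral over $\RR^n$ against $U_{\xi_0}^{2^*-3}$ times the square of the limiting profile of $\phi_k$. The point is that each factor of $\phi_k$ carries an extra power of $\dk$ (or of the small coefficients $\Vert\nabla f\Vert_{L^\infty(2r_k)}$, $\Vert h-c_nS_g\Vert_{L^\infty(2r_k)}$, $\dk^2\mathds{1}_{nlcf}$) compared to $W_{k,t,p}$, so that $\phi_k^2$ against $W_{k,t,p}^{2^*-2}$ is of strictly smaller order than the main terms $\dk^2$ (for $i=0$) or $\dk^{n/2}$, $\dk^5\mathds{1}_{nlcf}$ (for $1\le i\le n$) appearing in $I_{1,i}$. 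On the outer region $M\backslash B_{y_k}(\sqrt{\dk})$, I would use Proposition~\ref{proploinmieux} with $R_k=\sqrt{\dk}^{-1}\cdot$(appropriate scale) — more precisely the fall-off $\Vert\phi_k\Vert_{L^\infty(M\backslash B_{\xi_k}(R\sqrt{\dk}))}$ combined with the decay $W_{k,t,p}(x)\lesssim \dk^{\frac{n-2}{2}}d_{g_{\xi_k}}(\xi_k,x)^{2-n}$, and integrate; here the smallness is even more comfortable because $W_{k,t,p}^{2^*-2}$ is integrable away from the concentration point and $\phi_k$ is genuinely small there.

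The bookkeeping that distinguishes \eqref{I22bis}, \eqref{I23bis} and \eqref{I24} is the standard one: when $(M,g)$ is locally conformally flat or $n\le 10$ the ``error'' order $o(\dk^{\frac{n}{2}})$ for $i\ge 1$ and $o(\dk^{\frac{n-2}{2}})$ for $i=0$ is what is being claimed, whereas for $n\ge 11$ non-l.c.f. the relevant comparison order is $o(\dk^4)$ for $i=0$ (and $o(\dk^5\mathds{1}_{nlcf})$ for $i\ge 1$), coming from the Weyl-curvature term in $I_{1,i}$; one simply keeps the slower of the two rates. I expect the main obstacle to be the dimension-dependent integrability of the model integrals $\int_{\RR^n} U_{\xi_0}^{2^*-3}(1+|y|)^{2(2-n)}\,dy$ and similar ones — these converge only for $n\le 10$, which is exactly why the statement bifurcates — so in the non-l.c.f. case $n\ge 11$ one has to carry an extra logarithmic or power weight from the second-order expansion of $\phi_k$ in Proposition~\ref{proplocalmieux} and verify that the resulting bound is still $o(\dk^4)$. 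Once these model integrals are controlled, the rest is routine substitution of the estimates of Propositions~\ref{proplocalmieux} and~\ref{proploinmieux}.
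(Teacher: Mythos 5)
Your overall strategy matches the paper's: bound the Taylor remainder pointwise by $(u+W_{k,t,p})^{2^*-3}|\phi_k|^2$ (your extra $O(|\phi_k|^{2^*-1})$ is harmless but superfluous, since $|\phi_k|\lesssim\dk(u+W_{k,t,p})$ already keeps the argument of the power away from zero), split the integral at $\sqrt{\dk}$, and feed in the refined local estimates of Proposition~\ref{proplocalmieux} on the inner ball with the crude bound $|Z_{i,k,t,p}|\lesssim W_{k,t,p}$. One inessential difference: on the outer region the paper simply invokes the global bound $|\phi_k|\le C\dk(u+W_{k,t,p})$ from \eqref{eqsystreduit}, which already yields $o(\dk^{n/2})$ there; Proposition~\ref{proploinmieux} is not needed for this particular claim (it becomes necessary later, e.g.\ for $J_{3,i}$ in $n=6$).

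The one substantive inaccuracy is your explanation of why the statement bifurcates at $n=10$. You attribute it to a failure of integrability of model integrals such as $\int_{\RR^n}U_{\xi_0}^{2^*-3}(1+|y|)^{2(2-n)}\,dy$, but that integral converges for every $n\ge 6$ (the integrand decays like $|y|^{-n-2}$), and the other candidates you might have in mind change behavior at $n=8$, not $n=10$. In the paper the relevant surviving term after plugging Proposition~\ref{proplocalmieux} into the inner integral is of size $\dk^4\Vert h-c_nS_g\Vert_{L^\infty(2r_k)}^2$, and the dichotomy comes solely from the definition of $\mk$ in \eqref{defmk}: for $7\le n\le 10$ (or l.c.f.) one has $\mk=\tau_k^{2/(n-6)}$, hence $\Vert h-c_nS_g\Vert_{L^\infty(2r_k)}=\tau_k\sim\dk^{(n-6)/2}$ and the term is $\sim\dk^{n-2}=o(\dk^{(n-2)/2})$, whereas for $n\ge 11$ non-l.c.f.\ one has $\mk=\tau_k^{1/2}$, so $\Vert h-c_nS_g\Vert_{L^\infty(2r_k)}\sim\dk^2$ and the term is $\sim\dk^8=o(\dk^4)$. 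The split is thus a consequence of the ansatz's calibration of $\tau_k$ against $\mk$ (chosen so that the $H$-bump balances the $u(\xi_0)$-term or the Weyl term in $I_{1,0}$), not of a loss of convergence of rescaled integrals.
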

\begin{proof}
First, we have that:
\be
\bal
 f \Bigg[ \big( u + W_{k,t,p} + \phi_k(t,p)\big)^{2^*-1} - \big( u + W_{k,t,p} \big)^{2^*-1} - (2^*-1) \big(u + W_{k,t,p} \big)^{2^*-2} \phi_k(t,p)\Bigg] \\
  \lesssim \big(u + W_{k,t,p} \big)^{2^*-3} |\phi_k(t,p)|^2. 
\eal
 \ee
 Using \eqref{eqsystreduit} and since $n \ge 7$ and $ |Z_{0,k,t,p}|  \lesssim W_{k,t,p}$, we write that:
\[ \bal
\int_{M \backslash B_{y_k}(\sqrt{\dk})} \big(u + W_{k,t,p} \big)^{2^*-3} |\phi_k(t,p)|^2 |Z_{0,k,t,p}| dv_g \lesssim \dk^2 \int_{B_{y_k}(2 r_k) \backslash B_{y_k}(\sqrt{\dk})}  |Z_{0,k,t,p}| dv_g \\
+ \dk^2 \int_{B_{y_k}(2 r_k) \backslash B_{y_k}(\sqrt{\dk})}  W_{k,t,p}^{2^*} dv_g 
\eal\]
which gives then
\ben \label{I21}
\int_{M \backslash B_{y_k}(\sqrt{\dk})} \big(u + W_{k,t,p} \big)^{2^*-3} |\phi_k(t,p)|^2 |Z_{0,k,t,p}| dv_g = o(\dk^{\frac{n}{2}}).
\een
We now use Proposition \ref{proplocalmieux} and \eqref{deffintro} to write that there holds:
\ben \label{I22}
\bal
\int_{ B_{y_k}(\sqrt{\dk})} W_{k,t,p}^{2^*-3} |\phi_k(t,p)|^2 |Z_{0,k,t,p}| dv_g & \lesssim o(\dk^{\frac{n}{2}}) + o(\dk^4 \mathds{1}_{nlcf}) +  \dk^4  \Vert h - c_n S_g \Vert_{L^\infty(2 r_k)}^2  .
\eal
\een
Assume first that $(M,g)$ is locally conformally flat or that $n \le 10$. Then, using \eqref{defhintro} there holds $\Vert h - c_n S_g \Vert_{L^\infty(2 r_k)}^2 = \dk^{n-6}$ so that with \eqref{I21} we obtain \eqref{I22bis}.
Assume then that $n \ge 11$ and that $(M,g)$ is not locally conformally flat. Then \eqref{defhintro} shows that $\Vert h - c_n S_g \Vert_{L^\infty(2 r_k)}^2 = \dk^{4}$ so that gathering \eqref{I21} and \eqref{I22} and since $n \ge 7$, we obtain \eqref{I23bis}.

\medskip 

\noindent Let now $1 \le i \le n$ be fixed. On one side, using \eqref{eqsystreduit} and since $n \ge 7$ and $|Z_{i,k,t,p}| \lesssim W_{k,t,p}$ we have that
\[ \int_{M \backslash B_{y_k}(\sqrt{\dk})} \big(u + W_{k,t,p} \big)^{2^*-3} |\phi_k(t,p)|^2 |Z_{i,k,t,p}| dv_g = o(\dk^{\frac{n}{2}}). \]
On the other side, using again Proposition \ref{proplocalmieux}, \eqref{deffintro} and \eqref{defhintro} shows that 
\[ \int_{ B_{y_k}(\sqrt{\dk})} W_{k,t,p}^{2^*-3} |\phi_k(t,p)|^2 |Z_{i,k,t,p}| dv_g = o (\dk^{\frac{n}{2}}) + o(\dk^5 \mathds{1}_{nlcf}). \]
Combining the latter estimates we get \eqref{I24}.
\end{proof}
\bigskip

\begin{claim}
There holds:
\ben \label{I31}
I_{3,0} = - \frac{1}{2} (n-2)^2 \left( n(n-2)\right)^{\pui} \omega_{n-1} f(\xi_0)^{1 - \frac{n}{2}} u(\xi_0) \dk^{\pui} + o(\dk^{\pui})
\een
and, for any $1 \le i \le n$,
\ben \label{I32}
I_{3,i} = - C(n) f(\xi_0)^{- \frac{n}{2}} \nabla_i u(\xi_0) \dk^{\frac{n}{2}} + o(\dk^{\frac{n}{2}}),
\een
where $C(n)$ is some explicit, positive, numerical constant only depending on $n$.
\end{claim}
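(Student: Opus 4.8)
The plan is to compute the integrals $I_{3,0}$ and $I_{3,i}$ from \eqref{DL2} by localizing the integration and performing a pointwise Taylor expansion of $u$ around $\xi_0$. Recall that
\[
I_{3,i} = - \int_M f \Bigl[ \bigl(u + W_{k,t,p}\bigr)^{2^*-1} - u^{2^*-1} - W_{k,t,p}^{2^*-1} \Bigr] Z_{i,k,t,p}\, dv_g,
\]
and the bracket vanishes quadratically in $W_{k,t,p}$ where $W_{k,t,p}$ is small, while it is of order $u\, W_{k,t,p}^{2^*-2}$ where $W_{k,t,p}$ is large. Since $W_{k,t,p}$ and $Z_{i,k,t,p}$ are supported in $B_{y_k}(2r_k)$ and concentrate at scale $\dk$, the main contribution comes from a ball $B_{y_k}(A\dk)$ with $A \to +\infty$ slowly, and the tail $B_{y_k}(2r_k)\setminus B_{y_k}(A\dk)$ is controlled by \eqref{relationrkmk} and the explicit decay of $W_{k,t,p}$ and $Z_{i,k,t,p}$.

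First I would change variables via $x = \exp_{\xi_k}^{g_{\xi_k}}(\dk z)$. In these coordinates, using \eqref{confnorm}, \eqref{propLambda}, $dv_{g_{\xi_k}} = (1+O(|y|^N))dy$, and the definitions \eqref{bulle}--\eqref{defZk}, the rescaled profiles converge to $U_{\xi_0}$ and to $V_{0,\xi_0}$ (resp. $V_{i,\xi_0}$) as in \eqref{eqlindefU}, \eqref{defVki}. For $i=0$: the leading term in the bracket is $(2^*-1) u(\xi_0) W_{k,t,p}^{2^*-2}$ (the $W_{k,t,p}^{2^*-1}$ and $u^{2^*-1}$ terms being already subtracted, and $(u+W)^{2^*-1} - W^{2^*-1} = (2^*-1)u W^{2^*-2} + O(u^2 W^{2^*-3})$ pointwise near the center), so
\[
I_{3,0} = -(2^*-1) f(\xi_0) u(\xi_0) \int_{\RR^n} U_{\xi_0}^{2^*-2} V_{0,\xi_0}\, \dk^{\frac{n-2}{2}}\, (1+o(1))\, dy + o(\dk^{\frac{n-2}{2}}),
\]
where the power $\dk^{\frac{n-2}{2}}$ comes from the $\dk^{\frac{n-2}{2}}$ prefactor in $Z_{0,k,t,p}$ and the fact that $W_{k,t,p}^{2^*-2}$ scales as $\dk^{-2}$, $dv_g$ as $\dk^n$, so $\dk^{-2}\cdot\dk^n\cdot\dk^{\frac{n-2}{2}}\cdot\dk^{-(n-2)} = \dk^{\frac{n-2}{2}}$ after accounting for the $\dk^{\frac{n-2}{2}}$ in $W$ itself. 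The Euclidean integral $\int_{\RR^n} U_{\xi_0}^{2^*-2} V_{0,\xi_0}\,dy$ is explicit; a direct computation with $V_{0,\xi_0} = \partial_t(U)/$(scaling) and the substitution reducing to a Beta-function integral, together with $\omega_{n-1} = $ the $(n-1)$-sphere volume, gives the stated constant $-\frac{1}{2}(n-2)^2(n(n-2))^{\frac{n-2}{2}}\omega_{n-1} f(\xi_0)^{1-\frac n2} u(\xi_0)$ after inserting the explicit value of $f(\xi_0)$-dependent normalizations in \eqref{bulle}.

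For $1 \le i \le n$, the term $(2^*-1)u(\xi_0) W_{k,t,p}^{2^*-2}$ integrates to zero against $Z_{i,k,t,p}$ by oddness (the limiting integrand $U_{\xi_0}^{2^*-2} V_{i,\xi_0}$ is odd in $z_i$), so one must go to the next order in the Taylor expansion of $u$: $u(x) = u(\xi_0) + \langle\nabla u(\xi_0), \exp^{-1}(x)\rangle + O(d^2)$. The linear term $\langle\nabla u(\xi_0), \dk z\rangle$ paired with the odd $V_{i,\xi_0}$ produces a non-vanishing diagonal contribution $\nabla_i u(\xi_0) \int_{\RR^n} |z_i|^2 U_{\xi_0}^{2^*-2}(\cdots)\,dy$, which carries one extra power of $\dk$, hence the scaling $\dk^{n/2}$ and the constant $-C(n) f(\xi_0)^{-n/2}\nabla_i u(\xi_0)$. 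Throughout, error terms from the metric expansion \eqref{confnorm}, from replacing $f$ by $f(\xi_0)$ (using $f\in C^1$ and \eqref{deffintro}, so $|f - f(\xi_0)| = O(d) + O(\mk^N)$), from the cutoff $\chi$, and from the region where $u$ is not well-approximated by its Taylor polynomial are all $o(\dk^{\frac{n-2}{2}})$ (resp. $o(\dk^{n/2}))$ thanks to \eqref{relationrkmk}; the tail estimates use $W_{k,t,p}(x) \lesssim \dk^{\frac{n-2}{2}}d_{g_{\xi_k}}(\xi_k,x)^{2-n}$ and $|Z_{i,k,t,p}(x)| \lesssim W_{k,t,p}(x)$. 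The main obstacle here is purely bookkeeping: carefully tracking which power of $\dk$ each term contributes and checking that the odd/even parity correctly kills the would-be leading terms for $1\le i\le n$, so that the genuine leading term is the one proportional to $\nabla u(\xi_0)$; the Euclidean integrals themselves are classical and reduce to Beta functions.
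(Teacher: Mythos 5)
Your approach — localize to a shrinking ball around the bubble's center, bound the tail using the decay of $W_{k,t,p}$ and $Z_{i,k,t,p}$, rescale by $\dk$, and identify the limiting Euclidean integrals (with the oddness cancellation forcing a Taylor expansion of $u$ to first order for $1 \le i \le n$) — is exactly what the paper does, albeit the paper cuts at radius $\sqrt{\dk}$ and leaves the inner integral to the reader as ``straightforward computations.''

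One small but genuine inaccuracy worth flagging in your motivating remark: the bracket $(u+W)^{2^*-1}-u^{2^*-1}-W^{2^*-1}$ does \emph{not} vanish quadratically in $W$ as $W\to 0$. Since $\partial_W\big[(u+W)^{2^*-1}-W^{2^*-1}\big]\big|_{W=0}=(2^*-1)u^{2^*-2}\neq 0$ (recall $2^*-2>0$), the bracket is of size $(2^*-1)u^{2^*-2}W + O\big(W^{2^*-1}\big)$, i.e.\ genuinely \emph{linear} in $W$ where $W$ is small. This is precisely the bound the paper uses, $\big|\text{bracket}\big|\lesssim W_{k,t,p}$ on $M\setminus B_{y_k}(\sqrt{\dk})$, and fortuitously it still gives the required $o(\dk^{\pui})$ (resp.\ $o(\dk^{n/2})$) tail estimate; one finds $\int_{M\setminus B(\sqrt{\dk})} W_{k,t,p}\,|Z_{i,k,t,p}|\,dv_g = O(\dk^{n/2})$ (resp.\ $O(\dk^{(n+1)/2})$). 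So your conclusion is unaffected, but the justification as written is off. You should also be a little careful with the center of the rescaling: in this section the bubbles $W_{k,t,p}$ and $Z_{i,k,t,p}$ are centered at $y_k=\exp_{\xi_k}^{g_{\xi_k}}(\beta_k p)$, not at $\xi_k$, and the Taylor expansion of $u$ should be taken around $y_k$; since $y_k\to\xi_0$ this does not change the limiting constants, but it is the correct base point.
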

\begin{proof}
\noindent Write that:
\[ 
\bal 
\left |\int_{M \backslash B_{y_k}(\sqrt{\dk})} f \Bigg[ \big(u + W_{k,t,p} \big)^{2^*-1} - u^{2^*-1} - W_{k,t,p}^{2^*-1} \Bigg] Z_{i,k,t,p} dv_g \right| & \lesssim \int_{M \backslash B_{y_k}(\sqrt{\dk})} W_{k,t,p} |Z_{i,k,t,p}| dv_g   \\
& \lesssim \left \{
\bal
& o(\dk^{\pui}) \textrm{ if } i = 0, \\
& o( \dk^{\frac{n}{2}}) \textrm{ if } 1 \le i \le n .\\
\eal \right.
\eal
\]
The integral over $B_{y_k}(\sqrt{\dk})$ is then easily computed using the explicit expressions of $W_{k,t,p}$ and $Z_{i,k,t,p}$ given in \eqref{bulle} and \eqref{defZk}. Straightforward computations give \eqref{I31} and \eqref{I32}.
\end{proof}

\bigskip

\begin{claim}
There holds:
\ben \label{I40}
 I_{4,0} = o (\dk^\pui) + o(\dk^4 \mathds{1}_{nlcf}), 
\een
and, for any $1 \le i \le n$:
\ben \label{Int4}
I_{4,i} =  o(\dk^{\frac{n}{2}})  + o(\dk^5 \mathds{1}_{nlcf}).
\een
\end{claim}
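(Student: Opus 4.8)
Since $\phi_k(t,p) \in K_{k,t,p}^\perp$ by \eqref{propphik1} and the $Z_{i,k,t,p}$ are smooth, an integration by parts on the closed manifold $M$ gives, for every $0 \le i \le n$,
\ben
\int_M \big[ (\triangle_g + h)\phi_k(t,p)\big] Z_{i,k,t,p}\, dv_g = \big\langle \phi_k(t,p), Z_{i,k,t,p}\big\rangle_h = 0 = \int_M \phi_k(t,p)\, (\triangle_g + h) Z_{i,k,t,p}\, dv_g .
\een
Hence, recalling the definition of $I_{4,i}$ in \eqref{DL2},
\ben \label{I4bypart}
I_{4,i} = - (2^*-1) f(\xi_k) \int_M W_{k,t,p}^{2^*-2}\, \phi_k(t,p)\, Z_{i,k,t,p}\, dv_g = \int_M \phi_k(t,p)\, E_{i,k}\, dv_g ,
\een
where $E_{i,k} := (\triangle_g + h) Z_{i,k,t,p} - (2^*-1) f(\xi_k) W_{k,t,p}^{2^*-2} Z_{i,k,t,p}$ measures the failure of $Z_{i,k,t,p}$ to solve the linearised equation. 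The point is that, although $W_{k,t,p}^{2^*-2} Z_{i,k,t,p}$ is not small close to $\xi_k$, the quantity $E_{i,k}$ is: using the expression of $(\triangle_g + h) Z_{i,k,t,p}$ provided by \eqref{lapZik}, one sees that $E_{i,k}$ is supported in $B_{\xi_k}(2 r_k)$ and splits as the sum of (i) a remote part, supported in $B_{\xi_k}(2 r_k)\setminus B_{\xi_k}(r_k)$, produced by the derivatives of the cut-off $\chi$; (ii) a curvature part bounded by $O\big( d_{g_{\xi_k}}(\xi_k,\cdot)^2 |Z_{i,k,t,p}|\big)\mathds{1}_{nlcf} + O\big( d_{g_{\xi_k}}(\xi_k,\cdot)^N |Z_{i,k,t,p}|\big)$ coming from the scalar curvature $S_{g_{\xi_k}}$ through \eqref{propLambdaSg}; (iii) the term $\big(h - c_n S_g\big) Z_{i,k,t,p}$, which by \eqref{defhintro} is supported in $B_{\xi_k}(\beta_k)$ and of size $O(\tau_k)$; and (iv) a term bounded by $O\big( \Vert \nabla f\Vert_{L^\infty(2 r_k)}\, d_{g_{\xi_k}}(\xi_k,\cdot)\, W_{k,t,p}^{2^*-2}|Z_{i,k,t,p}|\big)$ arising from the non-constancy of $f$, which is negligible in view of \eqref{deffintro} and \eqref{relationrkmk}.

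It then remains to bound $\int_M \phi_k(t,p)\, E_{i,k}\, dv_g$ by inserting the pointwise control of $\phi_k(t,p)$ established above, distinguishing the regions near and far from $\xi_k$ and the cases $i = 0$, where $Z_{0,k,t,p}$ decays like $W_{k,t,p}$, and $1 \le i \le n$, where $Z_{i,k,t,p}$ gains an extra factor of order $\dk / d_{g_{\xi_k}}(\xi_k,\cdot)$ at distances $\gg \dk$ from $\xi_k$. On $M \setminus B_{\xi_k}(r_k)$ -- in particular against the remote part (i) -- one uses the improved fall-off of Proposition \ref{proploinmieux}; on $B_{\xi_k}(r_k) \setminus B_{\xi_k}(\sqrt{\dk})$ one uses \eqref{eqsystreduit}, together again with Proposition \ref{proploinmieux} applied with a suitable sequence of radii; on $B_{\xi_k}(\sqrt{\dk})$, where $E_{i,k}$ carries either the decaying weight $d_{g_{\xi_k}}(\xi_k,\cdot)^2 \mathds{1}_{nlcf}$ or the smallness of $h - c_n S_g$ and of $\nabla f$, one uses the sharp local estimate of Proposition \ref{proplocalmieux} on $\phi_k(t,p)$ -- the crude bound \eqref{eqsystreduit} being useless there, as $W_{k,t,p}$ is then unbounded. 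Carrying out the resulting elementary integrations and comparing the powers of $\dk$, $\tau_k$, $\beta_k$ and $r_k$ by means of \eqref{defmk}, \eqref{propbetak} and \eqref{relationrkmk}, one checks that every contribution is $o(\dk^{\pui}) + o(\dk^4\mathds{1}_{nlcf})$ when $i = 0$ and $o(\dk^{\frac n2}) + o(\dk^5\mathds{1}_{nlcf})$ when $1 \le i \le n$; the slowest-decaying contribution always comes from the curvature part (ii) and is of order $\dk^5$ in the non-locally conformally flat case, while it vanishes when $(M,g)$ is locally conformally flat. This proves \eqref{I40} and \eqref{Int4}.

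The only delicate point is thus the last one: the bookkeeping of the exponents on $B_{\xi_k}(\sqrt{\dk})$, where $W_{k,t,p}^{2^*-2} Z_{i,k,t,p}$ is large, so that the decay comes entirely from the smallness of $E_{i,k}$ there combined with the sharp local estimate of Proposition \ref{proplocalmieux}; in this region the orthogonality $\phi_k(t,p) \in K_{k,t,p}^\perp$ has already been absorbed into the identity \eqref{I4bypart}, and the relevant computations are carried out exactly as those leading to \eqref{C02s120}.
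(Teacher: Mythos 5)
Your proposal is correct and follows essentially the same route as the paper: integrate by parts to obtain $I_{4,i}=\int_M \phi_k(t,p)\,E_{i,k}\,dv_g$ (the paper's \eqref{I4prep}), split $E_{i,k}$ via \eqref{lapZik}, and then estimate term-by-term using the crude global bound \eqref{eqsystreduit} away from the concentration point and the refined estimate of Proposition~\ref{proplocalmieux} inside $B_{\xi_k}(\sqrt{\dk})$. Two minor remarks: the orthogonality $\phi_k\in K_{k,t,p}^\perp$ that you invoke is not actually used by the paper here (it is already what underlies Proposition~\ref{proplocalmieux}), and your appeal to Proposition~\ref{proploinmieux} in the far region is superfluous for $n\ge 7$ --- the paper gets \eqref{I4i1}, and the $i=0$ cut-off term, directly from \eqref{eqsystreduit} and \eqref{relationrkmk}, reserving Proposition~\ref{proploinmieux} for the sharper accounting required in the $n=6$ case ($J_{3,i}$).
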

\begin{proof}
\noindent 
Integrating by parts, $I_{4,i}$ is best written as:
\ben \label{I4prep}
 I_{4,i}  = \int_M \Bigg[  \left( \triangle_g + h \right) Z_{i,k,t,p} - (2^*-1) f(y_k) W_{k,t,p}^{2^*-2} Z_{i,k,t,p} \Bigg] \phi_k(t,p)  dv_g. 
 \een
Again, we start with $I_{4,0}$. Using \eqref{eqsystreduit}, there holds: 
\[ \int_M \left( \dk^\pui r_k^{-n} \mathds{1}_{r_k \le d_{g_{y_k}} \le 2 r_k } +  \dk^{\pui} \mathds{1}_{nlcf} \right) |\phi_k(t, \xi)| dv_g   = o(\dk^\pui). \]
Using  \eqref{defhintro} and \eqref{eqsystreduit} it is easily seen that there also holds:
\[ \int_M (h - c_n S_g) Z_{0,k,t,p} \phi_k(t,p) dv_g = o(\dk^\pui) + o(\dk^4 \mathds{1}_{nlcf}). \]
Also, \eqref{propLambdaSg} shows that $|S_{g_{y_k}}| = O(d_{g_{y_k}}(y_k,\cdot)^2)$, so that with \eqref{eqsystreduit} we obtain:
\[ \int_M \Lambda_{y_k}^{2^*-2} c_n S_{g_{y_k}} Z_{0,k,t,p} \phi_k(t,p) = o(\dk^\pui) + o(\dk^4 \mathds{1}_{nlcf}).\]
Combining \eqref{lapZik} with \eqref{I4prep} yields in the end \eqref{I40}.

\medskip

\noindent Let now $1 \le i \le n$. Because of \eqref{relationrkmk} and \eqref{eqsystreduit} we have:
\ben \label{I4i1}
 \int_M  \dk^{\frac{n}{2}} r_k^{-n-1} \mathds{1}_{r_k \le d_{g_{y_k}} \le 2 r_k }  |\phi_k(t,p)| dv_g = o(\dk^{\frac{n}{2}}). 
 \een
There holds that 
\ben \label{I4i2} 
\bal
\int_M \Big| &(h-c_n S_g) Z_{i,k,t,p} \phi_k(t,p)\Big| dv_g \\
& = \int_{M \backslash B_{y_k}(\sqrt{\dk})} \left| (h-c_n S_g) Z_{i,k,t,p} \phi_k(t,p)\right| dv_g + \int_{ B_{y_k}(\sqrt{\dk})} \left| (h-c_n S_g) Z_{i,k,t,p} \phi_k(t,p)\right| dv_g \\
& = o(\dk^{\frac{n}{2}}) + o(\dk^5 \mathds{1}_{nlcf}), \\
\eal
  \een
where the first integral is estimated using \eqref{eqsystreduit} and the second one is estimated using Proposition \ref{proplocalmieux} and \eqref{defhintro}. Assume that $(M,g)$ is not locally conformally flat. Write again that:
\ben \label{I4i4} \bal
 \int_M  c_n & \Lambda_{y_k} S_{g_{y_k}} Z_{i,k,t, p} \phi_k(t,p) dv_g \\
 & = \int_{M \backslash B_{y_k}(\sqrt{\dk})}  c_n \Lambda_{y_k} S_{g_{y_k}} Z_{i,k,t, p} \phi_k(t,p) dv_g + \int_{ B_{y_k}(\sqrt{\dk})}  c_n \Lambda_{y_k} S_{g_{y_k}} Z_{i,k,t, p} \phi_k(t,p) dv_g \\
 & = o(\dk^{\frac{n}{2}}) + o(\dk^5 \mathds{1}_{nlcf}),
\eal
\een
where as before the first integral is estimated using \eqref{eqsystreduit} and the second one using Proposition \ref{proplocalmieux}, \eqref{defhintro} and \eqref{deffintro}. Note, as a simple computation shows, that \eqref{eqsystreduit} alone would not be enough to estimate the integral over $B_{y_k}(\sqrt{\dk})$ with the desired precision. Similarly, one obtains that
\ben \label{I4i5}
 \int_M \dk^{\frac{n}{2}} \tk(y)^{2-n} |\phi_k(t,p)|(y) dv_g(y) = o(\dk^{\frac{n}{2}}) +  o(\dk^5 \mathds{1}_{nlcf}).  
 \een
Combining \eqref{lapZik} with \eqref{I4i1}, \eqref{I4i2}, \eqref{I4i4} and \eqref{I4i5} one obtains \eqref{Int4}.
\end{proof}

\bigskip

\noindent Let us now estimate $I_{5,i}$. For $n \ge 6$, there always holds:
\[ \left| \big(u+ W_{k,t,p} \big)^{2^*-2} - W_{k,t,p}^{2^*-2} \right| = O(1). \]
Hence, for any $0 \le i \le n$:
\[ \left| I_{5,i} \right| \lesssim \int_M |Z_{i,k,t,p}| |\phi_k(t,p)| dv_g . \]
Mimicking the computations that led to \eqref{I4i2} one obtains, here also, that
\ben \label{I50}
I_{5,0} = o(\dk^{\frac{n-2}{2}})  + o(\dk^4 \mathds{1}_{nlcf})
\een
and that:
\ben  \label{Int5i}
I_{5,i} = o(\dk^{\frac{n}{2}}) + o(\dk^5 \mathds{1}_{nlcf}).
\een

\bigskip

\noindent By constrast with the previous integrals, $I_{6,i}$ is easily estimated using \eqref{deffintro} and \eqref{eqsystreduit}. Straightforward computations give indeed that for any $0 \le i \le n$:
\ben \label{I6}
I_{6,i} = o(\dk^{\frac{n}{2}}).
\een

\bigskip
\begin{claim}
There holds that:
\ben \label{I76}
I_{7,i} = \left \{ 
\bal
& o(\dk^\pui) \textrm{ if } i =0 \\
& o(\dk^{\frac{n}{2}}) \textrm{ if } 1 \le i \le n. \\
\eal \right.
\een
\end{claim}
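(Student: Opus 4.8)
The plan is to estimate $I_{7,i}$ by splitting it into the contribution of the $\pi^2$-term, which is common to both fractions, and the contribution of the $|\Lg T + \sigma|_g^2$ versus $|\Lg T_{k,t,p} + \sigma|_g^2$ terms, and then controlling the difference of the two denominators $u^{2^*+1}$ and $(u + W_{k,t,p} + \phi_k(t,p))^{2^*+1}$. First I would write
\[
\bal
& \frac{|\Lg T + \sigma|_g^2 + \pi^2}{u^{2^*+1}} - \frac{|\Lg T_{k,t,p} + \sigma |_g^2 + \pi^2}{\big(u + W_{k,t,p} + \phi_k(t,p) \big)^{2^*+1}} \\
& = \left( |\Lg T + \sigma|_g^2 + \pi^2 \right) \left( u^{-2^*-1} - \big(u + W_{k,t,p} + \phi_k(t,p) \big)^{-2^*-1} \right) \\
& \quad + \frac{|\Lg T + \sigma|_g^2 - |\Lg T_{k,t,p} + \sigma|_g^2}{\big(u + W_{k,t,p} + \phi_k(t,p) \big)^{2^*+1}},
\eal
\]
so that $I_{7,i}$ is the sum of two integrals against $Z_{i,k,t,p}$. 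For the first one I would use that $u$ is bounded below (Lemma \ref{minor} and Proposition \ref{propestglob}) so that $u^{-2^*-1} - (u+W_{k,t,p}+\phi_k)^{-2^*-1} = O\big( \min(W_{k,t,p} + |\phi_k(t,p)|, 1)\big)$, together with $|Z_{i,k,t,p}| \lesssim W_{k,t,p}$ and $|\phi_k(t,p)| \lesssim \dk (u + W_{k,t,p})$ from \eqref{eqsystreduit}; estimating the resulting integral of $W_{k,t,p}^2$ (and $W_{k,t,p}|Z_{i,k,t,p}|$) over $M$ gives a contribution that is $o(\dk^\pui)$ when $i = 0$ and $o(\dk^{n/2})$ when $1 \le i \le n$, because the bubble integral $\int_M W_{k,t,p}^2 dv_g = O(\dk^2 |\ln \dk|)$ for $n = 6$ but we are in the $n \ge 7$ range here and $\int_M W_{k,t,p}^2 dv_g = O(\dk^2)$, which multiplied by the extra decay of $Z_{i,k,t,p}$ is negligible compared to the claimed orders; one has to be slightly careful to split the integral at $B_{y_k}(\sqrt{\dk})$ and use Proposition \ref{proplocalmieux} near the center and \eqref{eqsystreduit} away from it, exactly as in the proof of \eqref{I4i2}.

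For the second integral I would invoke the pointwise bound \eqref{estLTkLT} (together with \eqref{defXintro} and \eqref{relationrkmk}) on $|\Lg T_{k,t,p} - \Lg T|_g$ and the identity $|\Lg T + \sigma|_g^2 - |\Lg T_{k,t,p} + \sigma|_g^2 = -\langle \Lg T_{k,t,p} - \Lg T, \Lg T_k + \Lg T + 2\sigma\rangle_g$, which shows that the numerator is $O\big( \dk^{\frac{n-1}{2}} \tk(\cdot)^{1-n} + 1 \big)$ away from $\xi_k$ and is controlled similarly near $\xi_k$; dividing by the denominator, which is bounded below by $W_{k,t,p}^{2^*+1}$ near the center and below by a positive constant away from it, and integrating against $Z_{i,k,t,p}$ (again bounded by $W_{k,t,p}$), one obtains after the usual splitting at $B_{y_k}(\sqrt{\dk})$ a term of order $O(\dk) \cdot O(\dk^{\frac{n-2}{2}})$ plus lower-order terms, which is again $o(\dk^\pui)$ for $i=0$ and $o(\dk^{n/2})$ for $1 \le i \le n$ since $\dk \to 0$; this is essentially the computation already carried out in \eqref{C036}, \eqref{C037} and in the proof of Proposition \ref{proploinmieux}.

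Combining the two contributions yields \eqref{I76}. The main obstacle, as in the preceding claims, is bookkeeping the sharp decay of $Z_{i,k,t,p}$ (which is one power of $\tk$ better for $1 \le i \le n$ than for $i = 0$) against the source-term estimate \eqref{estLTkLT}: one must be careful that the bound on $|\Lg T_{k,t,p} - \Lg T|_g$ decays fast enough at scale $\sqrt{\dk}$ so that the piece of the integral over $B_{y_k}(\sqrt{\dk})$ does not dominate, which is exactly why Proposition \ref{proplocalmieux} and the refined fall-off estimate of Proposition \ref{proploinmieux} are needed here rather than the cruder \eqref{ponctuelphik} alone. No genuinely new idea is required beyond what was used for $I_{4,i}$ and $I_{5,i}$; the claim follows by repeating those arguments with the denominators and the source term in place.
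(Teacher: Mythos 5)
The decomposition of $I_{7,i}$ into $I_{7,i}^1$ and $I_{7,i}^2$, the split of each piece at $B_{y_k}(\sqrt{\dk})$, and the reliance on \eqref{estLTkLT} together with the $C^0$ control from \eqref{eqsystreduit} is exactly the paper's argument, so your skeleton is right. Two points, though, should be fixed.

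First, the bound you actually write down for $I_{7,i}^2$ with $1 \le i \le n$ does not close the argument. You estimate the leading contribution as $O(\dk)\cdot O(\dk^{\frac{n-2}{2}})$ and then assert this is $o(\dk^{\frac{n}{2}})$ ``since $\dk \to 0$''; but $O(\dk^{\frac{n}{2}})$ is not $o(\dk^{\frac{n}{2}})$. For $1 \le i \le n$ you genuinely need the extra decay of $Z_{i,k,t,p}$ relative to $W_{k,t,p}$: by \eqref{defZk}, $|Z_{i,k,t,p}| \lesssim W_{k,t,p}\cdot \dk \tk(\cdot)/(\dk^2 + \tk(\cdot)^2)$, and it is this factor that buys the additional half power of $\dk$ (for instance $\int_{B_{y_k}(\sqrt{\dk})} (u+W_k+\phi_k)^{-2^*-1}\,\dk^{\frac{n-1}{2}}\tk^{1-n}\,|Z_{i,k}|\,dv_g \lesssim \dk^{\frac{n+1}{2}}$, not merely $\dk^{\frac{n}{2}}$), and similarly for the integral over $M\setminus B_{y_k}(\sqrt{\dk})$. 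You allude to this (``the sharp decay of $Z_{i,k,t,p}$ \ldots one power of $\tk$ better''), but the computation you present does not invoke it, so as written the $1 \le i \le n$ case has a gap.

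Second, Propositions \ref{proplocalmieux} and \ref{proploinmieux} are not needed here, and the claim that they are ``exactly why'' the argument works is misdirected. In $I_{7,i}^1$, $\phi_k$ appears only additively inside $(u+W_k+\phi_k)^{-2^*-1}$, and inside $B_{y_k}(\sqrt{\dk})$ one simply bounds the whole factor $|u^{-2^*-1}-(u+W_k+\phi_k)^{-2^*-1}|$ by a constant, so the crude estimate \eqref{eqsystreduit} suffices there, while outside $B_{y_k}(\sqrt{\dk})$ the bound $|\phi_k|\lesssim \dk(u+W_k)$ from \eqref{eqsystreduit} again does the job. In $I_{7,i}^2$, $\phi_k$ enters only in the denominator, which is bounded below by $W_k^{2^*+1}$ near the centre and by a constant away from it, again without any need for the refined local or far-field estimates. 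The refined controls on $\phi_k$ are what is needed for the integrals $I_{4,i}$ and $I_{5,i}$ (and for $J_{3,i}$, $J_{5,i}$ in the $n=6$ case), where $\phi_k$ multiplies a singular kernel; they are not used for $I_{7,i}$.
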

\begin{proof}
For any $0 \le i \le n$, we write:
\ben \label{I71}
\bal 
I_{7,i} & = \int_M  \left( |\Lg T + \sigma|_g^2 + \pi^2 \right) \left( u^{-2^*-1} - \big(u + W_{k,t,p} + \phi_k(t,p) \big)^{-2^*-1} \right) Z_{i,k,t,p} dv_g  \\
& + \int_M \big(u + W_{k,t,p} + \phi_k(t,p) \big)^{-2^*-1}\left(  |\Lg T + \sigma|_g^2 - |\Lg T_{k,t,p} + \sigma |_g^2 \right) Z_{i,k,t,p} dv_g \\
& := I_{7,i}^1 + I_{7,i}^2. \\
\eal
\een
We split the integral $I_{7,i}^1$ into an integral in $B_{y_k}(\sqrt{\dk})$ and another in $M \backslash B_{y_k}(\sqrt{\dk})$. Since the integrand of $I_{7,i}^1$ is bounded by $|Z_{i,k,t,p}|$  in  $B_{y_k}(\sqrt{\dk})$  (up to some positive constant that does not depend on $k$) and since 
\[ \left| u^{-2^*-1} - \big(u + W_{k,t,p} + \phi_k(t,p) \big)^{-2^*-1} \right| \lesssim W_{k,t,p} + |\phi_k(t,p)| \textrm{ in } M \backslash B_{y_k}(\sqrt{\dk}), \]
there holds with \eqref{eqsystreduit} that:
\ben \label{I72}
I_{7,0}^1 = o(\dk^\pui),
\een
and that, for $1 \le i \le n$,
\ben \label{I73}
I_{7,i}^1 = o(\dk^{\frac{n}{2}}).
\een
To compute $I_{7,i}^2$, we again split the integration domain into $B_{y_k}(\sqrt{\dk})$ and  $M \backslash B_{y_k}(\sqrt{\dk})$. On the one side, using \eqref{eqsystreduit} and \eqref{estLTkLT}, we obtain that:
\ben \label{I74}
 \bal
 \int_{B_{y_k}(\sqrt{\dk})}  \big(u + W_{k,t,p} + \phi_k(t,p) \big)^{-2^*-1}& \left(  |\Lg T + \sigma|_g^2 - |\Lg T_{k,t,p} + \sigma |_g^2 \right) Z_{i,k,t,p} dv_g \\
& = \left \{
\bal
& o(\dk^\pui) \textrm{ if } i =0 \\
& o(\dk^{\frac{n}{2}}) \textrm{ if } 1 \le i \le n. \\
\eal \right.
\eal \een
On the other side, using again \eqref{estLTkLT}, we get:
\ben  \label{I75}
\bal
\int_{M \backslash B_{y_k}(\sqrt{\dk})}  \big(u + W_{k,t,p} + \phi_k(t,p) \big)^{-2^*-1}& \left(  |\Lg T + \sigma|_g^2 - |\Lg T_{k,t,p} + \sigma |_g^2 \right) Z_{i,k,t,p} dv_g \\
& = \left \{ 
\bal
& o(\dk^\pui) \textrm{ if } i =0 \\
& o(\dk^{\frac{n}{2}}) \textrm{ if } 1 \le i \le n. \\
\eal \right.
\eal
\een
Combining \eqref{I71}--\eqref{I75} gives \eqref{I76}.
\end{proof}

\subsection{The $n = 6$ case.}

In the $6$-dimensional case we need to take into account the compensation phnomenon for system \eqref{intro1}, since we need to push the asymptotic estimates of the $\lki(t,p)$ one order further. For this, we write the scalar equation of \eqref{systnoyaupres} as:
\[ \bal
\sum_{i=0}^6 \lki(t,p) &\left( \triangle_g + h \right) Z_{i,k,t,p} = \\
& + \left( \triangle_g + h-2fu \right)W_{k,t,p} - f(y_k) W_{k,t,p}^{2} + \left( f(y_k) - f \right) W_{k,t,p}^{2} \\
& + f \phi_k(t,p)^2 \\
& +   \left( \triangle_g + h - 2 fu \right)\phi_k(t,p) - 2 f(y_k) W_{k,t,p} \phi_k(t,p) \\
& + 2 \big( f(y_k) - f \big) W_{k,t,p} \phi_k(t,p) \\
& +  \frac{|\Lg T + \sigma|_g^2 + \pi^2}{u^{4}} - \frac{|\Lg T_{k,t,p} + \sigma |_g^2 + \pi^2}{\big(u + W_{k,t,p} + \phi_k(t,p) \big)^{4}}, \\
\eal\]
and we let, for $0 \le i \le n$:
\[
\bal
J_{1,i} & = \int_M \Bigg[ \left( \triangle_g + h-2fu \right)W_{k,t,p} - f(y_k) W_{k,t,p}^{2} + \left( f(y_k) - f \right) W_{k,t,p}^{2} \Bigg] Z_{i,k,t,p} dv_g, \\
J_{2,i} & = \int_M f \phi_k(t,p)^2 Z_{i,k,t,p} dv_g, \\
J_{3,i} & = \int_M \Bigg[  \left( \triangle_g + h - 2 fu \right)\phi_k(t,p) - 2 f(y_k) W_{k,t,p} \phi_k(t,p)  \Bigg] Z_{i,k,t,p}  dv_g, \\
J_{4,i} & = \int_M 2 \big( f(y_k) - f \big) W_{k,t,p} \phi_k(t,p) Z_{i,k,t,p} dv_g, \\
J_{5,i} & = \int_M  \left(  \frac{|\Lg T + \sigma|_g^2 + \pi^2}{u^{4}} - \frac{|\Lg T_{k,t,p} + \sigma |_g^2 + \pi^2}{\big(u + W_{k,t,p} + \phi_k(t,p) \big)^{4}} \right) Z_{i,k,t,p} dv_g. \\
\eal
\]
\noindent These integrals are again computed in a series of Claims. The situation in the $6$-dimensional case is different, since an additional contribution in the expansion of the $\lki(t,p)$ comes from the integral $J_{5,i}$. 

\medskip

\noindent The computations of $J_{1,i}$ and $J_{2,i}$ follows the exact same lines than in the $n \ge 7$ case. Using \eqref{bulle}, \eqref{defZk}, \eqref{deffintro}, \eqref{relationrkmk} and \eqref{eqsystreduit} it is esily seen that there holds:
\ben \label{J10} 
J_{1,0} =  - 5 K_6^{-6} f(\xi_0)^{-3} H(p) \tau_k \dk^2  + o(\dk^3) ,
\een
and, for $1 \le i \le 6$,
\ben \label{J1i}
J_{1,i} = - 30 K_6^{-6} f(\xi_0)^{-3} \nabla_i H(p) \frac{\tau_k}{\beta_k} \dk^3 + o(\dk^4),
\een
where $K_6^{-6}$ is defined in \eqref{defKnn}. 
Also, mimicking the computations that led to \eqref{I24} gives here as well:
\ben \label{J2}
J_{2,i} = \left \{
\bal
& o(\dk^3), \textrm{ if } i =0, \\
& o(\dk^4), \textrm{ if } 1 \le i \le 6.
\eal \right.
\een

\bigskip

\begin{claim}
There holds:
\ben \label{J40}
J_{3,0} = o(\dk^3),
\een
and, for any $1 \le i \le 6$:
\ben \label{J4i}
J_{3,i} = o(\dk^4).
\een
\end{claim}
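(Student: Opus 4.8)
The proof is the six-dimensional counterpart of the computation of $I_{4,i}$ in Section~\ref{DLenergie}, and proceeds along the same lines. First I would integrate by parts — all functions being in $H^1(M)$ and $M$ closed — to transfer the Laplacian onto $Z_{i,k,t,p}$:
\[
J_{3,i}=\int_M\Big[\bigl(\triangle_g+h-2fu\bigr)Z_{i,k,t,p}-2f(y_k)W_{k,t,p}Z_{i,k,t,p}\Big]\phi_k(t,p)\,dv_g .
\]
The point is that, by \eqref{lapZik}, $Z_{i,k,t,p}$ solves the linearized equation \eqref{eqlin} up to lower-order terms: splitting $\triangle_g+h-2fu=(\triangle_g+\tfrac15 S_g)+(h-\tfrac15 S_g-2fu)$ and inserting the expression of $(\triangle_g+\tfrac15 S_g)Z_{i,k,t,p}$ — whose leading part is exactly $2f(y_k)W_{k,t,p}Z_{i,k,t,p}$, the rest being the scalar-curvature term $\tfrac15\Lambda_{y_k}S_{g_{y_k}}Z_{i,k,t,p}$, terms supported in $B_{y_k}(2r_k)\setminus B_{y_k}(r_k)$, and (in the non-l.c.f.\ case) a term of size $\dk^{3}\tk(\cdot)^{2-n}\mathds{1}_{d_k\le 2r_k}$ — the $2f(y_k)W_{k,t,p}Z_{i,k,t,p}$ contributions cancel. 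One is then left with $\int_M\bigl(h-\tfrac15 S_g-2fu\bigr)Z_{i,k,t,p}\,\phi_k(t,p)\,dv_g$ plus the integrals of $\phi_k(t,p)$ against the three lower-order terms just listed.

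Each of these is estimated by splitting $M$ into $B_{y_k}(\sqrt{\dk})$ and its complement, exactly as for $I_{4,i}$. On $M\setminus B_{y_k}(\sqrt{\dk})$ one uses the weighted bound $|\phi_k(t,p)|\lesssim\dk\,(u+W_{k,t,p})$ of \eqref{eqsystreduit}; on $B_{y_k}(\sqrt{\dk})$ this is not sharp enough — exactly as remarked after \eqref{I4i4} in the $n\ge 7$ case — and one feeds in instead the second-order pointwise control of $\phi_k$ near the bubble given by Proposition~\ref{proplocalmieux}. Throughout one uses the explicit decays $|W_{k,t,p}|\lesssim\dk^{\pui}\tk^{2-n}$, $|Z_{0,k,t,p}|\lesssim\dk^{\pui}\tk^{2-n}$ and $|Z_{i,k,t,p}|\lesssim\dk^{\frac{n}{2}}\tk^{1-n}$ ($1\le i\le n$) read from \eqref{bulle}--\eqref{defZk}; the bound $|S_{g_{y_k}}|\lesssim d_{g_{y_k}}(y_k,\cdot)^2$ of \eqref{propLambdaSg}, which gives a quadratic gain in the $\Lambda_{y_k}S_{g_{y_k}}$-term; the control of $\|\nabla f\|_{L^\infty(2r_k)}$ and $\|h-\tfrac15 S_g-2fu\|_{L^\infty(2r_k)}$ from \eqref{deffintro} and \eqref{defhintro6}; and \eqref{relationrkmk} ($\beta_k\ll r_k$, $r_k^N\gg\mk$) for the annular and non-l.c.f.\ remainders.

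All the integrals that arise are the polar-coordinate estimates already carried out in Section~\ref{DLenergie}, and since $\tau_k=\mk\sim\dk$ in dimension six each turns out to be $o(\dk^3)$, which gives $J_{3,0}=o(\dk^3)$. For $1\le i\le 6$ the extra factor $\dk/\tk$ carried by $Z_{i,k,t,p}$ — together with the fact that, by oddness, the leading (even) part of $\phi_k(t,p)$ near $y_k$ integrates to zero against $Z_{i,k,t,p}$, so that only the variation of the coefficients and the next-order part of $\phi_k(t,p)$ contribute — improves every bound by one power of $\dk$, giving $J_{3,i}=o(\dk^4)$. The main obstacle is the inner-region estimate of $\int_{B_{y_k}(\sqrt{\dk})}\bigl(h-\tfrac15 S_g-2fu\bigr)Z_{i,k,t,p}\,\phi_k(t,p)\,dv_g$: there the profile and $\phi_k$ both concentrate at the scale $\dk$, and only the sharp near-bubble control of Proposition~\ref{proplocalmieux}, not \eqref{eqsystreduit}, is precise enough to close it; the outer region and the lower-order remainders are secondary and are handled exactly as for $I_{4,i}$.
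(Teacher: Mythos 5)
Your integration by parts and the cancellation of $2f(y_k)W_{k,t,p}Z_{i,k,t,p}$ against the leading term of $(\triangle_g+\tfrac15 S_g)Z_{i,k,t,p}$ are exactly the paper's first step. But your estimation strategy on the outer region is too weak and misses the proposition the paper singles out as being specifically needed in dimension $6$, namely Proposition~\ref{proploinmieux}. You propose using only the weighted bound $|\phi_k(t,p)|\lesssim \dk(u+W_{k,t,p})$ of \eqref{eqsystreduit} on $M\setminus B_{y_k}(\sqrt{\dk})$. For the annular error term $\dk^{2}r_k^{-6}\mathds{1}_{r_k\le d_{g_{y_k}}\le 2r_k}$ produced by \eqref{lapZik}, that bound yields
\[
\int_{B_{y_k}(2r_k)\setminus B_{y_k}(r_k)}\dk^{2}r_k^{-6}\,\dk\,u\,dv_g\;\approx\;\dk^{3}r_k^{-6}\cdot r_k^{6}\;=\;O(\dk^{3}),
\]
which is exactly the order one must beat: the claim requires $o(\dk^{3})$ for $J_{3,0}$. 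The paper closes this gap precisely with the fall-off estimate \eqref{falloffphik}, which gives $\|\phi_k\|_{L^\infty(M\setminus B_{y_k}(R_k\sqrt{\dk}))}\lesssim \dk R_k^{-2}+R_k^{2}\dk^{2}+\dk^{2}r_k^{-6}$, a genuinely better bound than $\dk$ far from the concentration point. This is why the paper's argument for $J_{3,0}$ (and several terms in $J_{3,i}$) uses a three-region split $B_{y_k}(\sqrt{\dk})$, $B_{y_k}(R_k\sqrt{\dk})\setminus B_{y_k}(\sqrt{\dk})$, $M\setminus B_{y_k}(R_k\sqrt{\dk})$ with Proposition~\ref{proplocalmieux}, \eqref{eqsystreduit}, and Proposition~\ref{proploinmieux} on the three regions respectively; you only treat the first two regions.

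Two secondary remarks. First, the paper also flags explicitly that the expansions in Section~\ref{DL} do not rely on $r_k\to0$; with your two-region argument, even the $\dk^{2}\mathds{1}_{nlcf}$ term would only give $\dk^{3}r_k^{6}=O(\dk^{3})$ if $r_k$ stays bounded away from zero, so the robustness the paper wants for the fixed-radius instability examples would be lost. Second, the oddness/parity cancellation you invoke to upgrade $J_{3,i}$ ($1\le i\le 6$) from $o(\dk^{3})$ to $o(\dk^{4})$ is not part of the paper's argument and is not needed: the extra power of $\dk$ comes simply from the faster decay $|Z_{i,k,t,p}|\lesssim \dk^{\frac n2}\tk^{1-n}$ relative to $|Z_{0,k,t,p}|\lesssim \dk^{\pui}\tk^{2-n}$, together with the same three-region bounds. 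You should replace the parity heuristic with that bookkeeping and import Proposition~\ref{proploinmieux} into the outer-region estimates.
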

\begin{proof}
\noindent As for the $n \ge 7$ case, we rewrite $J_{3,i}$ as:
\[ J_{3,i} = \int_M \Big( \left[  \triangle_g + (h-2 f u) \right]  Z_{i,k, t,p} - 2 f(y_k) W_{k,t,p} Z_{i,k,t,p}\Big) \phi_k(t,p) dv_g.\]
Compared to the $n \ge 7$ case, the estimation of $J_{3,i}$ requires the additional information on $\phi_k(t,p)$ given by Proposition \ref{proploinmieux}. Namely, using \eqref{falloffphik} for a suitable choice of a sequence $(R_k)_k$ yields:
\[ \int_M \left(  \dk^2 r_k^{-6} \mathds{1}_{r_k \le d_{g_{y_k}} \le 2 r_k } +  \dk^{2} \mathds{1}_{nlcf})\right) \phi_k(t,p) dv_g = o(\dk^3), \]
while using \eqref{defhintro} and \eqref{eqsystreduit} gives:
\[ \int_M \left( h - \frac15 S_g - 2 fu \right) Z_{0,k,t,p} \phi_k(t,p) dv_g = o(\dk^3). \]
If $(M,g)$ is not locally conformally flat there holds, for some $R_k$ satisfying $R_k \sqrt{\dk} = o(1)$:
\ben \label{J3mean}
 \bal
\int_{M }&  \frac15 \Lambda_{y_k} S_{g_{y_k}} Z_{0,k,t,p} \phi_k(t,p)  dv_g \\
& = \int_{M \backslash B_{y_k}(R_k \sqrt{\dk})} \frac{1}{5}  \Lambda_{y_k}S_{g_{y_k}} Z_{0,k,t,p} \phi_k(t,p)  dv_g  + \int_{ B_{y_k}(R_k \sqrt{\dk}) \backslash B_{y_k}(\sqrt{\dk})} \frac15  \Lambda_{y_k}S_{g_{y_k}} Z_{0,k,t,p} \phi_k(t,p)  dv_g\\ 
& + \int_{B_{y_k}( \sqrt{\dk})}  \frac{1}{5}  \Lambda_{y_k}S_{g_{y_k}} Z_{0,k,t,p} \phi_k(t,p)  dv_g \\
& = o(\dk^3), 
\eal
\een
where we used again \eqref{propLambdaSg} and we estimated the first integral using \eqref{falloffphik}, the second one using \eqref{eqsystreduit} and the third one using Proposition \ref{proplocalmieux}. With \eqref{lapZik} we therefore obtain \eqref{J40}.

\medskip
\noindent Let now $1 \le i \le 6$. Similarly, using \eqref{falloffphik} for a suitable radius $R_k$ and using \eqref{relationrkmk} yields:
\[ \int_M \dk^3 r_k^{-7} |\phi_k(t,\xi)| dv_g = o(\dk^4),\]
and as in \eqref{J3mean} \eqref{falloffphik}, \eqref{eqsystreduit}, Proposition \ref{proplocalmieux} and \eqref{propLambdaSg} show that if $(M,g)$ is not locally conformally flat, then:
\[ \int_M  \frac15 \Lambda_{y_k} S_{g_{y_k}} Z_{i,k,t,p} \phi_k(t,p) dv_g = o (\dk^4). \]
Independently, there holds that:
\[ \bal
 \int_M & \big(h- \frac15 S_g - 2 f u \big) Z_{i,k,t,p} \phi_k(t,p) dv_g \\
  =& \int_{B_{y_k}(\sqrt{\dk})}  \big(  h- \frac15 S_g - 2 f u \big) Z_{i,k,t,p} \phi_k(t,p) dv_g \\
 &+ \int_{M \backslash B_{y_k}(\sqrt{\dk})}  \big( h- \frac15 S_g - 2 f u \big) Z_{i,k,t,p} \phi_k(t,p) dv_g \\
=&  o(\dk^4),
 \eal \]
 where we used \eqref{defhintro} and Proposition \ref{proplocalmieux} to estimate the first integral and \eqref{defhintro} and \eqref{eqsystreduit} to estimate the second one. Finally, in case $(M,g)$ is not locally conformally flat, mimicking the computations that led to \eqref{J3mean} we get that:
  \[ \int_M  \dk^3 \tk(\cdot)^{-4} |\phi_k(t,p)| dv_g = o(\dk^4). \]
Combining the above estimates with \eqref{lapZik} gives in the end \eqref{J4i}. 
\end{proof}

\bigskip

\noindent Here again, $J_{4,i}$ is easily estimated : straightforward computations using \eqref{deffintro} give indeed that, for any $0 \le i \le 6$:
\ben \label{autreJ4i}
J_{4,i} = o(\dk^4).
\een

\noindent As already mentioned, and unlike in the higher-dimensional case, the coupling field $X$ enters the expansion of $J_{5,i}$ for $n=6$:
\begin{claim}
There holds:
\ben \label{J50final}
 J_{5,0} = \kappa \dk^3 + o(\dk^3),
 \een
 where the constant $\kappa$ is explicitly given by \eqref{defkappa} below, and, for any $1 \le i \le 6$:
 \ben \label{J5i}
J_{5,i} = O(\dk^{\frac72}).
\een
\end{claim}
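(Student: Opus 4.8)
The plan is to estimate $J_{5,i}$ by decomposing it, as in the higher-dimensional case, into the two pieces
\[
J_{5,i} = \int_M \left( |\Lg T + \sigma|_g^2 + \pi^2 \right)\left( u^{-4} - (u + W_{k,t,p} + \phi_k(t,p))^{-4} \right) Z_{i,k,t,p}\, dv_g + \int_M \frac{|\Lg T + \sigma|_g^2 - |\Lg T_{k,t,p} + \sigma|_g^2}{(u + W_{k,t,p} + \phi_k(t,p))^4} Z_{i,k,t,p}\, dv_g =: J_{5,i}^1 + J_{5,i}^2,
\]
and to track carefully which contributions survive at order $\dk^3$ when $i=0$. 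First I would treat $J_{5,i}^1$ exactly as $I_{7,i}^1$ was treated for $n\ge 7$: split the domain at $B_{y_k}(\sqrt{\dk})$, bound the integrand by $|Z_{i,k,t,p}|$ inside the small ball and by $(W_{k,t,p} + |\phi_k(t,p)|)$ outside it, and apply \eqref{eqsystreduit} together with Proposition \ref{proplocalmieux}. The new subtlety is that in dimension $6$ one must expand $u^{-4} - (u+W_{k,t,p})^{-4} = 4 u^{-5} W_{k,t,p} + O(W_{k,t,p}^2)$ inside $B_{y_k}(\sqrt{\dk})$ and compute $\int_M u^{-5} W_{k,t,p} Z_{0,k,t,p}\, dv_g$ explicitly using the formulas \eqref{bulle} and \eqref{defZk}; this integral is of size $\dk^{3}$ and its leading coefficient is a piece of $\kappa$. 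The $\phi_k$-dependent corrections and the $W_{k,t,p}^2$ term are $o(\dk^3)$ by Proposition \ref{proplocalmieux}.

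Next I would handle $J_{5,i}^2$, which is where the coupling field $X$ enters. Writing $|\Lg T + \sigma|_g^2 - |\Lg T_{k,t,p} + \sigma|_g^2 = \langle \Lg T - \Lg T_{k,t,p}, \Lg T + \Lg T_{k,t,p} + 2\sigma\rangle_g$, I would invoke the pointwise asymptotics on $\Lg T_{k,t,p} - \Lg T$ furnished by \eqref{estLTkLT} (and its second-order refinement \eqref{estLTkLTordre2}), together with the explicit form of $X$ in \eqref{defXintro} — in particular the rescaled bump $\mk^{(n-1)/2} Z(\cdot/r_k)$ at $\xi_k$, which for $n=6$ has size $\mk^{5/2}$. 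The key point is that $\Dg T_{k,t,p} - \Dg T = (u_{k,t,p}^{2^*} - u^{2^*})X = \big((u+W_{k,t,p}+\phi_k)^{3} - u^3\big)X$, so the dominant contribution comes from $W_{k,t,p}^3 X$ localized near $\xi_k$; solving the Lamé system and integrating against $Z_{0,k,t,p}$ then produces a term of order $\dk^{3}$ whose coefficient involves $\alpha^2 = |Z(0)|^2$ (through $|\Lg T_{k,t,p}|_g^2$) and the geometry of $Z_0$. Collecting this with the $J_{5,0}^1$ contribution defines the constant $\kappa$ in \eqref{defkappa}, and for $1\le i\le 6$ the odd parity of $Z_{i,k,t,p}$ against the (even) dominant profile forces a cancellation, leaving only the $O(\dk^{7/2})$ remainder claimed in \eqref{J5i}.

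The main obstacle will be the computation of $J_{5,0}^2$: one needs a sufficiently precise expansion of $\Lg T_{k,t,p}$ near the concentration point, not merely the crude bound $|\Lg T_{k,t,p} - \Lg T| \lesssim \dk^{(n-1)/2}\tk^{1-n} + 1$ used elsewhere, but the refined version that isolates the leading homogeneous profile coming from inverting $\Dg$ against $W_{k,t,p}^3 X$. This is exactly the content of \eqref{estLTkLTordre2}, which gives $\Lg T_{k,t,p} = \Lg T + \mk^{(n-1)/2}\dk^{(n-2)/2}\, (\text{explicit rescaled profile}) + \text{lower order}$; one must then square it, subtract $|\Lg T + \sigma|_g^2$, and extract the $\dk^3$ coefficient. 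The rescaling of $X$ at $\xi_k$ is precisely why $\kappa$ in \eqref{defkappa} carries a factor built from $Z$ — a phenomenon absent for $n\ge 7$ because there the analogous term is $o(\dk^{n/2})$ and does not compete with $I_{1,i}$ and $I_{3,i}$. Once $\kappa$ is identified, the remaining estimates ($J_{5,i}^1$ for $i\ge 1$, and all the $o$-terms) are routine applications of \eqref{eqsystreduit}, Proposition \ref{proplocalmieux} and Proposition \ref{proploinmieux}, exactly parallel to the $n\ge 7$ arguments above.
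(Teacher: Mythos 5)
Your plan for $J_{5,0}^1$ contains a genuine error that would produce the wrong order of magnitude. You propose to Taylor-expand $u^{-4} - (u+W_{k,t,p})^{-4} = 4u^{-5}W_{k,t,p} + O(W_{k,t,p}^2)$ inside $B_{y_k}(\sqrt{\dk})$ and evaluate $\int u^{-5}W_{k,t,p}Z_{0,k,t,p}\,dv_g$, asserting it is of size $\dk^3$. But this expansion is invalid near the concentration point: for $d_{g_{\xi_k}}(\xi_k,\cdot) \ll \sqrt{\dk}$ one has $W_{k,t,p} \gg u$, so the $O(W_{k,t,p}^2)$ "error" dominates the linear term. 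In fact a direct computation in $n=6$ (using $W_k Z_{0,k} \sim \dk^4(\dk^2 + c d^2)^{-5}(cd^2 - \dk^2)$ with $c = f/24$ and rescaling at scale $\dk$) gives
\begin{equation*}
\int_M u^{-5} W_{k,t,p} Z_{0,k,t,p}\, dv_g = O(\dk^2), \quad \text{not } O(\dk^3),
\end{equation*}
the extra power of $\dk^{-1}$ coming precisely from the annulus $\dk \lesssim d \lesssim \sqrt{\dk}$ where the linearization overshoots. The true difference $u^{-4} - (u+W_k)^{-4}$ saturates at the bounded quantity $u^{-4}$ in this region, which is why $J_{5,0}^1$ is of order $\dk^3$. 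The paper's proof does not linearize: it keeps the full nonlinear difference, rescales the integral at scale $\sqrt{\dk}$, and passes to the limit by dominated convergence, yielding the integral $\int_{\RR^6}\bigl[u(\xi_0)^{-4} - \bigl(u(\xi_0) + (\tfrac{24}{f(\xi_0)})^2|y|^{-4}\bigr)^{-4}\bigr]|y|^{-4}\,dy$ as the coefficient of $\dk^3$; the outer part $M\setminus B_{y_k}(R\sqrt{\dk})$ is controlled using Proposition \ref{proploinmieux} (not Proposition \ref{proplocalmieux}, as one needs the improved far-field decay of $\phi_k$).

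A secondary point: for $J_{5,0}^2$ you identify \eqref{estLTkLTordre2} as the result that "isolates the leading homogeneous profile" of $\Lg T_{k,t,p} - \Lg T$, but \eqref{estLTkLTordre2} is only an upper bound, with no identification of a leading term. The precise expansion the paper actually uses is \eqref{estLTkLT0}, which states $\Lg T_k = (1+O(\dk))\Lg\Theta_k + \Lg T + O(\cdots)$, together with the explicit asymptotic \eqref{asymptoThetak} for $\Lg\Theta_k$; these combine to give $|\Lg T + \sigma|_g^2 - |\Lg T_{k} + \sigma|_g^2 = -|\Lg\Theta_k|_g^2 + O(|\Lg\Theta_k|_g) + O(\dk)$, from which the $\alpha^2$-dependent piece of $\kappa$ is extracted by the same $\sqrt{\dk}$-rescaling and dominated convergence. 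Your intuition about the role of the rescaled bump in $X$ is correct, but you would need \eqref{estLTkLT0} and \eqref{asymptoThetak}, not \eqref{estLTkLTordre2}, to carry it out.
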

\begin{proof}
 We again write that there holds, for any $0 \le i \le 6$:
\ben \label{J51}
\bal 
J_{5,i} & = \int_M  \left( |\Lg T + \sigma|_g^2 + \pi^2 \right) \left( u^{-4} - \big(u + W_{k,t,p} + \phi_k(t,p) \big)^{-4} \right) Z_{i,k,t,p} dv_g  \\
& + \int_M \big(u + W_{k,t,p} + \phi_k(t,p) \big)^{-4}\left(  |\Lg T + \sigma|_g^2 - |\Lg T_{k,t,p} + \sigma |_g^2 \right) Z_{i,k,t,p} dv_g \\
& := J_{5,i}^1 + J_{5,i}^2. \\
\eal
\een
Let $R > 0$.
Since there holds
\[  \Big| u^{-4} - \big(u + W_{k,t,p} + \phi_k(t,p) \big)^{-4} \Big| \lesssim \left( W_{k,t,p} + |\phi_k(t,p)| \right) \textrm{ in } M \backslash B_{y_k}(R \sqrt{\dk}), \]
then \eqref{falloffphik} shows that:
\ben \label{J52}
 \left| \int_{M \backslash B_{y_k}(R \sqrt{\dk})}  \left( |\Lg T + \sigma|_g^2 + \pi^2 \right) \left( u^{-4} - \big(u + W_{k,t,p} + \phi_k(t,p) \big)^{-4} \right) Z_{0,k,t,p} dv_g \right| \lesssim \frac{\dk^3}{R^2} + o(\dk^3). 
 \een
Independently, by \eqref{eqsystreduit}, Lebesgue's dominated convergence theorem shows that there holds:
\ben \label{J53}
\bal
\dk^{-3} & \int_{B_{y_k}(R \sqrt{\dk})}  \left( |\Lg T + \sigma|_g^2 + \pi^2 \right) \left( u^{-4} - \big(u + W_{k,t,p} + \phi_k(t,p) \big)^{-4} \right) Z_{0,k,t,p} dv_g \\
& =\frac{(24)^2}{f(\xi_0)}\big( |\Lg T + \sigma|_g^2 + \pi^2 \big)(\xi_0) \int_{B_0(R)} \Bigg[ u(\xi_0)^{-4} - \left( u(\xi_0) + \left( \frac{24}{f(\xi_0)} \right)^2 |y|^{-4} \right)^4 \Bigg] |y|^{-4} dy + o(1), \\
\eal
\een
as $k \to + \infty$, so that \eqref{J52} and \eqref{J53} together show that there holds
\ben \label{J54}
\bal
J_{5,0}^1 & = \frac{(24)^2}{f(\xi_0)}\big( |\Lg T + \sigma|_g^2 + \pi^2 \big)(\xi_0) \\
& \times \int_{\RR^6} \Bigg[ u(\xi_0)^{-4} - \left( u(\xi_0) + \left( \frac{24}{f(\xi_0)} \right)^2 |y|^{-4} \right)^4 \Bigg] |y|^{-4} dy \cdot \dk^3 + o(\dk^3).
\eal
\een

\noindent Using \eqref{estLTkLT0} with \eqref{choixvek} one gets that:
\[ |\Lg T + \sigma|_g^2 - |\Lg T_{k,t,\xi} + \sigma |_g^2 = - |\Lg \Theta_k|_g^2 + O\big( |\Lg \Theta_k|_g \big) + O(\dk),\]
where $\Theta_k$ is defined in \eqref{defchampsapp} below. Using the asymptotic \eqref{asymptoThetak} together with \eqref{defXintro} and the dominated convergence theorem show that:
\ben \label{J55}
\bal
 & J_{5,0}^2 = - C(6) f(\xi_0)^{-8} \alpha^2  \\
 & \times \int_{\RR^6} \left( u(\xi_0) + \left( \frac{f(\xi_0)}{24}\right)^{-2} |y|^{-4}\right)^{-4} \left(2 + 28 \left| \left \langle \frac{Z(0)}{| Z(0)|_{eucl}}, \frac{y}{|y|} \right \rangle \right|^2 \right) |y|^{-14} dy \cdot \dk^3,
\eal
 \een
 where we have let $C(6) = 2^{32} 3^9 5^{-4}$ and where $\alpha$ is as in \eqref{defalpha}. Combining \eqref{J54} and \eqref{J55} in \eqref{J51} one obtains in the end \eqref{J50final}, where $\kappa$ is given by:
  \ben \label{defkappa}
 \bal
 \kappa &= \frac{(24)^2}{f(\xi_0)}\big( |\Lg T + \sigma|_g^2 + \pi^2 \big)(\xi_0)  \times \int_{\RR^6} \Bigg[ u(\xi_0)^{-4} - \left( u(\xi_0) + \left( \frac{24}{f(\xi_0)} \right)^2 |y|^{-4} \right)^4 \Bigg] |y|^{-4} dy \\
 &- C(6) f(\xi_0)^{-8} \alpha^2   \times \int_{\RR^6} \left( u(\xi_0) + \left( \frac{f(\xi_0)}{24}\right)^{-2} |y|^{-4}\right)^{-4} \left(2 + 28 \left| \left \langle \frac{Z(0)}{| Z(0)|_{eucl}}, \frac{y}{|y|} \right \rangle \right|^2 \right) |y|^{-14} dy.
 \eal
 \een
 In particular, up to choosing $\alpha$ as in \eqref{defalpha} small enough, we have $\kappa > 0$.

\medskip

\noindent Let now $1 \le i \le 6$. Mimicking the proof of \eqref{J52} and \eqref{J53} gives, by \eqref{falloffphik} and the dominated convergence theorem, that
\be 
J_{5,i}^1 = o( \dk^{\frac72}).
\ee
Finally, using the expansion \eqref{estLTkLT0} along with \eqref{asymptoThetak} shows, again by dominated convergence, that there holds:
\be
J_{5,i}^2 = O(\dk^{\frac72}).
\ee
With \eqref{J51} we obtain that \eqref{J5i} holds. 
\end{proof}

\section{Conclusive argument} \label{argumentconclusif}

\noindent In this section we conclude the proof of Theorem \ref{thprincipal}. We use the expansions of the $\lki(t,p)$ obtained in Section \ref{DL} to show that, for any $k$, there exist $(t_k,p_k) \in [1/D,D] \times \overline{B_0(1)}$ such that $\lki(t_k,p_k) = 0$ for any $0 \le i \le n$.  We use here the notations of Section \ref{DL}. In particular, $\dk$ and $y_k$ are defined by \eqref{defdkyk} and \eqref{definitionyk}.

\medskip

\noindent First, for any $0 \le i \le n$, there holds:
\ben \label{DLNZi}
 \int_M \left \langle \nabla Z_{i,k,t,p}, \nabla Z_{j,k,t,p} \right \rangle_g + h Z_{i,k,t,p} Z_{j,k,t,p} dv_g  = \delta_{ij} \left \| \nabla V_{i,y_k} \right\|_{L^2(\RR^n)}^2 + O(\dk), 
 \een
uniformly in the choice of $h, t$ and $p$, and where $V_{i,y_k}$ is defined in \eqref{defVki}.

\noindent Assume first that $(M,g)$ is locally conformally flat or that $7 \le n \le 10$. Then, combining \eqref{I11}, \eqref{I12}, \eqref{I22bis}, \eqref{I23bis}, \eqref{I24}, \eqref{I31}, \eqref{I32}, \eqref{I40}, \eqref{Int4}, \eqref{I50}, \eqref{Int5i}, \eqref{I6} and \eqref{I76} in \eqref{DL2} yields, with \eqref{DLNZi} and \eqref{propbetak}, that:
\ben \label{grosseeqlambda}
\bal
& \Big(I_{n+1} + O(\dk) \Big) \left( 
\bal
\lambda_k^0&(t,p) \\
&\vdots \\
\lambda_k^n&(t,p)
\eal
\right) \\
& =  \left( 
\bal 
& \bal 
 \mk^{\pui} 
\Bigg[  
\frac{8(n-1)}{(n-2)(n-4)} K_n^{-n} f(\xi_0)^{- \frac{n}{2}} H(p) t^2 - \frac{1}{2} (n-2)^2 \left( n(n-2)\right)^{\pui} \omega_{n-1} f(\xi_0)^{1 - \frac{n}{2}} u(\xi_0) t^{\pui} \\
-  \frac{10}{9} f(\xi_0)^{-4} K_{10}^{-10} |W_g(\xi)|_g^2 t^4 \mathds{1}_{n=10} + R_k^0(t,p)
\Bigg] \eal
 \\
& \frac{\mk^{\frac{n}{2}}}{\beta_k} \Bigg[  \frac{2n(n-1)}{n-4} K_n^{-n} f(\xi_0)^{- \frac{n}{2}} \nabla_i H(p) t^3 + R_k^i(t,p) \Bigg]
\eal
\right),
\eal
\een
where, for $0 \le i \le n$, $R_k^{i}(t,p)$ denotes a function which converges to zero in $C^0 \big([1/D, D] \times \overline{B_0(1)} \big)$ as $k \to +\infty$, where $K_n^{-n}$ is as in \eqref{defKnn} and where $\beta_k$ is as in \eqref{propbetak}. The continuity of $R_k^{i}$, $0 \le i \le n$, is a direct consequence of the continuity of $\phi_k$ as stated in Proposition \ref{propsystreduit}. Let $F$ be the function defined in $[1/D, D] \times \overline{B_0(1)}$ by:
\[ F(t,p) = \left(
\bal
&\bal
 \frac{8(n-1)}{(n-2)(n-4)} K_n^{-n} f(\xi_0)^{- \frac{n}{2}} H(p) t^2 - \frac{1}{2} (n-2)^2 \left( n(n-2)\right)^{\pui} \omega_{n-1} f(\xi_0)^{1 - \frac{n}{2}} u(\xi_0) t^{\pui} \\
-  \frac{10}{9} f(\xi_0)^{- 4 } K_{10}^{-10} |W_g(\xi)|_g^2 t^4 \mathds{1}_{n=10} 
\eal
 \\
& \frac{2n(n-1)}{n-4} K_n^{-n} f(\xi_0)^{- \frac{n}{2}} \nabla_i H(p) t^3
\eal
\right) 
\]
and let $t_0 > 0$ be the unique solution of:
\[ \bal
\frac{8(n-1)}{(n-2)(n-4)} K_n^{-n} f(\xi_0)^{- \frac{n}{2}} t^2 = \frac{1}{2} (n-2)^2 \left( n(n-2)\right)^{\pui} \omega_{n-1} f(\xi_0)^{1 - \frac{n}{2}} u(\xi_0) t^{\pui} \\
+  \frac{10}{9} f(\xi_0)^{- 4} K_{10}^{-10} |W_g(\xi)|_g^2 t^4 \mathds{1}_{n=10}.
\eal
\]
Using the assumption that $0$ is a non-degenerate critical point of $H$, it is easily checked that the differential of $F$ at $(t_0,0)$ is invertible. Since there holds $F(t_0,0) = 0$ by definition of $t_0$, and since the $R_k^{i}(t,p)$ appearing in \eqref{grosseeqlambda} uniformly converge to $0$ as $k \to + \infty$, standard degree-theoretic arguments yield the existence of a sequence $(t_k,p_k) \in ]1/D, D[ \times B_0(1)$ of interior points such that
\[ F(t_k,p_k) + \left( 
\bal
& R_k^0(t_k,p_k) \\
 \Big( & R_k^i(t_k,p_k) \Big)_{1 \le i \le n} \\
\eal
\right) = 0
\]
for any $k$. Note that throughout this argument we assumed that $D$ is chosen large enough to have $t_0 \in [2/D, D/2]$. Coming back to \eqref{grosseeqlambda}, this amounts to say that $\lki(t_k,p_k) = 0$ for any $0 \le i \le n$. And with Proposition \ref{propsystreduit} and \eqref{systnoyaupres} this shows that the function $u_{k,t_k,p_k}$ is a solution of system \eqref{intro1}, and concludes the proof of Theorem \ref{thprincipal} in this case.

\medskip

\noindent If now we assume that $n \ge 11$ and $(M,g)$ is not locally conformally flat, the same arguments lead to the following expansion for the $\lki(t,p)$:
\be 
\bal
& \Big(I_{n+1} + O(\dk) \Big) \left( 
\bal
\lambda_k^0&(t,p) \\
&\vdots \\
\lambda_k^n&(t,p)
\eal
\right) \\
& = K_n^{-n} f(\xi_0)^{- \frac{n}{2}} \left( 
\bal 
& \bal 
 \mk^{4} 
\Bigg[  
\frac{8(n-1)}{(n-2)(n-4)} H(p) t^2 - \frac{1}{3} \frac{n(n-2)}{(n-4)(n-6)} f(\xi_0)  |W_g(\xi_0)|_g^2 t^4  + R_k^0(t,p)
\Bigg] \eal
 \\
& \frac{\mk^5}{\beta_k} \Bigg[  \frac{2n(n-1)}{n-4} \nabla_i H(p) t^3 + R_k^i(t,p) \Bigg]
\eal
\right).
\eal
\ee
\noindent While in the $6$-dimensional case, we end up with:
\be 
\bal
& \Big(I_{7} + O(\dk) \Big) \left( 
\bal
\lambda_k^0&(t,p) \\
&\vdots \\
\lambda_k^6&(t,p)
\eal
\right) =  \left( 
\bal 
& \bal 
 \mk^{3} 
\Bigg[  
-5 K_6^{-6} f(\xi_0)^{-3} H(p) t^2 + \kappa t^3  + R_k^0(t,p)
\Bigg] \eal
 \\
& \frac{\mk^4}{\beta_k} \Bigg[  - 30 K_6^{-6} f(\xi_0)^{-3} \nabla_i H(p) t^3 + R_k^i(t,p) \Bigg]
\eal
\right),
\eal
\ee
where the constant $\kappa$ is defined in \eqref{defkappa}. The conclusion in these cases follows then from the exact same arguments than in the previous case, thus concluding the proof of Theorem \ref{thprincipal}.

\section{Technical results} \label{technicalresults}

\subsection{Conformal laplacian of  $W_{k,t,\xi}$ and of the $Z_{i,k,t,\xi}$, $0 \le i \le n$.}

Below are given the expressions of the conformal laplacian of the functions $W_k$ and $Z_{i,k,t,\xi}$ defined in \eqref{bulle} and \eqref{defZk}. Let $D > 0$ and let $(t_k,\xi_k)_k$ be a sequence of points in $ [1/D, D] \times M$. Then, for any function $h \in C^\infty(M)$, there holds:
\ben \label{lapbulle}
\bal
\left( \triangle_g + h \right) W_{k,t_k,\xi_k} = f(\xi_k) W_{k,t_k,\xi_k}^{2^*-1} + (h - c_n S_g) W_{k,t_k,\xi_k} + c_n \Lambda_{\xi_k}^{2^*-2} S_{g_{\xi_k}} W_{k,t_k,\xi_k} \\
+ O(\dk^{\pui} \mathds{1}_{d_k \le 2 r_k}) + O(\dk^{\pui} r_k^{-n} \mathds{1}_{r_k \le d_k \le 2 r_k}), \,
\eal
\een
and, for any $1 \le i \le n$:
\ben \label{lapZik}
\bal
\left(  \triangle_g + h \right)& Z_{0,k,t_k,\xi_k} = (2^*-1) f(\xi_k) W_{k,t_k, \xi_k}^{2^*-2} Z_{0,k,t_k,\xi_k} + (h - c_n S_g) Z_{0,k,t_k,\xi_k} \\
& + c_n \Lambda_{\xi_k}^{2^*-2} S_{g_\xi} Z_{0,k,t_k,\xi_k} + O\big( \dk^\pui r_k^{-n} \mathds{1}_{r_k \le d_{g_{\xi_k}} \le 2 r_k } \big) + O(  \dk^{\pui} \mathds{1}_{nlcf, d_{g_{\xi_k}} \le 2 r_k}), \\
 \left(  \triangle_g + h \right)& Z_{i,k, t_k,\xi_k} =  (2^*-1) f(\xi_k) W_{k,t_k,\xi_k}^{2^*-2} Z_{i,k,t_k,\xi_k} + (h - c_n S_g) Z_{i,k,t_k,\xi_k} \\
 & + c_n \Lambda_{\xi_k}^{2^*-2} S_{g_{\xi_k}} Z_{i,k,t_k, \xi_k}  + O\big( \dk^{\frac{n}{2}} r_k^{-n-1} \mathds{1}_{r_k \le d_{g_{\xi_k}} \le 2 r_k } \big) + O \big( \dk^{\frac{n}{2}} \tk(\cdot)^{2-n} \mathds{1}_{nlcf,d_{g_{\xi_k}} \le 2 r_k}\big) . \\
\eal
\een
In \eqref{lapbulle} and \eqref{lapZik} $\Lambda_{\xi_k}$ is as in \eqref{propLambda} and $d_k = d_{g_{\xi_k}}(\xi_k, \cdot)$. The notation $``O(f)''$ denotes a smooth function which can be uniformly bounded in $C^0(M)$ by $C_0 | f| $, where $C_0$ is some positive constant that does not depend on $k$. Also, the notational shorthand $\mathds{1}_{nlcf}$ is used to indicate that the corresponding term vanishes if $(M,g)$ is locally conformally flat in a fixed neighbourhood of $\xi_k$. The notations $\mathds{1}_{r_k \le d_{g_{\xi_k}}} \le 2 r_k$ and $\mathds{1}_{nlcf, d_{g_{\xi_k}} \le 2 r_k}$ are defined similarly.

\medskip
\noindent These expressions are obtained from \eqref{bulle} and \eqref{defZk}, using the conformal invariance property of the conformal laplacian and the properties of the normal conformal factor $\Lambda_{\xi_k}$. 
See for instance Esposito-Pistoia-V\'etois \cite{EspositoPistoiaVetois} and Lee-Parker \cite{LeeParker} for more details.

\subsection{Pointwise estimates for solutions of the $1$-form equation.}

Let $(M,g)$ be a closed Riemannian manifold possessing no conformal Killing fields. This means that the operator $\Dg$, when acting on $H^1$ fields of $1$-forms, has zero kernel. We start by recalling that the operator $\Dg$ always possesses local Green fields:

\begin{prop} \label{faitGreen}
Let $x_0\in M$ and $\delta < \frac{i_g(M)}{2}$. For any $x\in B_{x_0}(\delta)$ there exist $n$ fields of $1$-forms $G_1(x,\cdot), \cdots, G_n(x,\cdot)$ defined in $B_{x_0}(\delta) \backslash \{x\}$ which form a fundamental solution for the operator $\Dg$ in $B_{x_0}(\delta)$ in the following sense: for any $1$-form $X \in \Gamma(T B_{x_0}(\delta))$ with $X \equiv 0$ on $\partial B_{x_0}(\delta)$, there holds:
\ben \label{GreenvectDg}
 \Big(X - \pi(X) \Big)_i(x) = \int_{B_{x_0}(\delta)} \left \langle G_i(x,y), \Dg X(y) \right \rangle_{g(y)} dv_g(y),
\een
where $\pi$ denotes the orthogonal projection for the $L^2(B_{x_0}(\delta))$-scalar product on the set
\[ \left\{ X \in H^1 \big( B_{x_0}(\delta)\big), \Lg X = 0 \textrm{ in } B_{x_0}(\delta) \right \} ,\]
and where the coordinates in \eqref{GreenvectDg} are taken in any chart in $B_{x_0}(\delta)$. The Green fields satisfy in addition: for any $1 \le i \le n$, for any $x \in B_{x_0}(\delta/2), y \in B_{x_0}(\delta)$,
\ben \label{propGreenvect}
d_g(x,y) \left| \nabla G_i (x,y) \right|_g + \left| G_i(x,y) \right|_g \le C(n,g) d_g(x,y)^{2-n},
\een
where the derivative in \eqref{propGreenvect} can be taken with respect to $x$ or to $y$.
\end{prop}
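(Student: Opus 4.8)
\textbf{Plan of proof of Proposition \ref{faitGreen}.} The statement is a standard parametrix construction for the Lamé operator $\Dg$ on a small ball, up to the finite-dimensional obstruction coming from local conformal Killing fields, so the plan is to follow the classical recipe for building Green fields of an elliptic operator with nontrivial kernel. First I would fix $x_0 \in M$ and $\delta < \frac{i_g(M)}{2}$, work in geodesic normal coordinates centered at $x_0$ on $B_{x_0}(\delta)$, and observe that in these coordinates $\Dg$ is, up to lower-order terms, the constant-coefficient Euclidean operator $X \mapsto -\Delta_\xi X - \frac{n-2}{n} \nabla \big( \divx X \big)$, whose fundamental solution is explicit (it is built from the Euclidean Green function of $\triangle_\xi$ and a correction involving second derivatives, giving the $|y|^{2-n}$ decay). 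I would then write $G_i(x,\cdot)$ as this Euclidean parametrix plus a remainder solving an elliptic system with right-hand side in $L^p$ for $p < \frac{n}{n-1}$, and use elliptic regularity together with the Green representation adapted to the boundary condition $X \equiv 0$ on $\partial B_{x_0}(\delta)$.

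The presence of the projection $\pi$ is the one genuinely nonstandard point, and it is handled as follows. The space $\mathcal{K}_\delta = \{ X \in H^1(B_{x_0}(\delta)) : \Lg X = 0 \text{ in } B_{x_0}(\delta)\}$ is the kernel of $\Dg$ with the (Dirichlet-type) boundary setup being considered, and elliptic theory for overdetermined-elliptic systems shows it is finite-dimensional and consists of smooth fields. I would then set up the problem in $L^2(B_{x_0}(\delta))$: for each $y$ and each index $i$, the source $\delta_y e_i$ must be projected off $\mathcal{K}_\delta$ for solvability, which is exactly why the left-hand side of \eqref{GreenvectDg} carries $X - \pi(X)$ rather than $X$. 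Concretely, $G_i(x,\cdot)$ is the unique solution, orthogonal to $\mathcal{K}_\delta$ in $L^2$, of $\Dg G_i(x,\cdot) = \delta_x e_i - (\text{projection of } \delta_x e_i \text{ onto } \mathcal{K}_\delta)$ with the appropriate boundary condition; pairing this identity against an arbitrary $X$ vanishing on $\partial B_{x_0}(\delta)$ and integrating by parts twice gives \eqref{GreenvectDg}, the boundary terms dropping because $X = 0$ on $\partial B_{x_0}(\delta)$ and $G_i(x,\cdot)$ satisfies the dual boundary condition.

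The pointwise bound \eqref{propGreenvect} then follows from the parametrix decomposition: the Euclidean model term satisfies $d_g(x,y)|\nabla(\cdot)|_g + |\cdot|_g \lesssim d_g(x,y)^{2-n}$ by direct inspection, the correction coming from the projection onto the finite-dimensional smooth space $\mathcal{K}_\delta$ is uniformly bounded on $B_{x_0}(\delta/2) \times B_{x_0}(\delta)$ (hence harmless), and the remainder in the parametrix solves an elliptic equation with a source that is one power less singular, so by Giraud-type iteration (convolution estimates for $d_g^{2-n}$, as in \cite{HebeyZLAM}) and interior Schauder or $L^p$ estimates it is bounded by $d_g(x,y)^{3-n}$, which is subordinate to the main term; differentiating the representation formula or using interior gradient estimates upgrades this to the stated control on $\nabla G_i$. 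The main obstacle here is not any single estimate but the bookkeeping of the projection $\pi$: one must check that $\mathcal{K}_\delta$ is finite-dimensional with smooth elements (this uses that $\Lg$ is overdetermined elliptic, so kernels are finite-dimensional and regular), and that the orthogonality normalization makes $G_i(x,\cdot)$ well-defined and symmetric enough for the representation formula to close. Once that is in place the construction is routine, and I would simply refer to Esposito-Pistoia-Vétois \cite{EspositoPistoiaVetois} and Lee-Parker \cite{LeeParker} (and to the analogous constructions in \cite{DruetPremoselli}) for the computational details, as the paper already does for the conformal Laplacian expansions.
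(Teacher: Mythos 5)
Your plan is essentially the one the paper relies on. The paper itself gives no direct proof of Proposition~\ref{faitGreen}: it delegates to the Euclidean construction of Green fields for $\Dg$ in Druet--Premoselli \cite{DruetPremoselli} and Premoselli \cite{Premoselli4}, adapted to the Riemannian setting via the standard parametrix argument for Green functions (as in Druet--Hebey--Robert \cite{DruetHebeyRobert} or Robert \cite{RobDirichlet}). Your sketch --- the Kelvin-type Euclidean parametrix consistent with \eqref{expansionGi}, elliptic regularity and Giraud iteration for the remainder, a bounded smooth correction from the finite-dimensional space $\mathcal{K}_\delta$, and the gradient bound by differentiating the representation formula --- follows precisely that route.

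One small inaccuracy in your bookkeeping of the normalization: to make the integration by parts in \eqref{GreenvectDg} close with only $X\equiv 0$ on $\partial B_{x_0}(\delta)$ assumed, the Green field must satisfy the Dirichlet condition $G_i(x,\cdot)=0$ on $\partial B_{x_0}(\delta)$. But once that condition is imposed --- and for $\delta$ small the kernel of the Dirichlet problem, namely $\mathcal{K}_\delta \cap H^1_0\big(B_{x_0}(\delta)\big)$, is trivial by rigidity of conformal Killing fields --- the Dirichlet problem for $\Dg$ is uniquely solvable, so one cannot in addition impose $G_i(x,\cdot)\perp_{L^2}\mathcal{K}_\delta$ as you state. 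What matters is that $\delta_x e_i - \mathrm{proj}_{\mathcal{K}_\delta}(\delta_x e_i)$ is automatically $L^2$-orthogonal to the kernel of the Dirichlet problem (a subspace of $\mathcal{K}_\delta$), so the modified problem is solvable; the projection $\pi$ in \eqref{GreenvectDg} then appears through
\[
\int_{B_{x_0}(\delta)} \left\langle \Dg G_i(x,\cdot), X\right\rangle_g dv_g = X_i(x) - \sum_j K_j(x)_i \langle K_j, X\rangle_{L^2},
\]
where $\{K_j\}$ is an $L^2$-orthonormal basis of $\mathcal{K}_\delta$. This is a correction to the normalization, not a gap: the parametrix decomposition, the Giraud estimates, and the gradient bound all go through exactly as you describe.
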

\noindent The construction of these Green fields is carried out in the Euclidean case in Druet-Premoselli \cite{DruetPremoselli} in dimension $3$ and in Premoselli \cite{Premoselli4}. The extension to the Riemannian case shows no additional difficulty. The proof just consists in adapting the steps that lead to the construction of the Riemannian Green function for the Laplace-Beltrami operator on a Riemannian manifold starting from the euclidean one (see for instance Appendix A in Druet-Hebey-Robert \cite{DruetHebeyRobert} or Robert \cite{RobDirichlet}), and we will not detail it here. Using the Euclidean expression computed in Premoselli \cite{Premoselli4} one finds that these Green fields $G_i$ have the following expansion:
\ben \label{expansionGi}
\bal
 G_i\big(x, \exp_x(y)\big)_j =&  - \frac{1}{4(n-1) \omega_{n-1}} |y|^{2-n} \left((3n-2) \delta_{ij} + (n-2) \frac{y_i y_j}{|y|^2} \right) \Big(1 + O\big(|y|\big) \Big),
 \eal
 \een
for any $x \in M$, where the exponential map is taken here for the background metric $g$ and $\omega_{n-1}$ is the volume of the standard $(n-1)$-sphere in $\RR^n$. If $X$ is a smooth $1$-form in $B_{x_0}(\delta)$ which vanishes on $\partial B_{x_0}(\delta)$ formula \eqref{GreenvectDg} can be differentiated to obtain: for any $1 \le i, j \le n$, for any $x \in B_{x_0}(\delta)$,
\ben \label{LgGreenvectDg}
\Lg X_{ij}(x) = \int_{B_{x_0}(\delta)} \left \langle H_{ij}(x,y), \Dg X(y) \right \rangle_{g(y)} dv_g(y),
\een
where we have let:
\ben \label{defHij}
H_{ij}(x) = \nabla_i G_j(x,y) + \nabla_j G_i(x,y) - \frac{2}{n} g^{kl}(x) \nabla_k G_l(x,y) g_{ij}(x),
\een
and the covariant derivatives are all taken here with respect to $x$.

\medskip
\noindent We now use these Green fields to derive optimal pointwise estimates on solutions of the $1$-form equation. Let $u$ be a smooth positive function in $M$. Let $D >0$ and let $(t_k,\xi_k)_k \in [\frac{1}{D} , D] \times M$ be a sequence of points, and consider the function $W_{k,t_k,\xi_k}$ given by \eqref{bulle}, where $\dk$ is given by \eqref{defdkyk} and $(\mk)_k$ denotes some sequence of positive numbers which converge to zero. Recall that the functions $W_{k,t_k,\xi_k}$ -- that shall now be abbreviated as $W_k$ -- are compactly supported on a ball centered at $\xi_k$ and of radius $2r_k$ given by \eqref{relationrkmk} (the ball is taken here with respect to the metric $g_{\xi_k}$). Let $(\ve_k)_k$ denote a sequence of positive numbers and, for any $k$, let $v_k$ be a continuous function satisfying
\ben \label{hypov}
\left \Vert \frac{v}{u + W_{k}} \right \Vert_{C^0(M)} \le \ve_k.
\een
Let $X$ and $Y$ be two $1$-forms in $M$ and, for any $k$, let $T_k, \Theta_k$ and $T$ be the unique solutions of the following equations in $M$:
\ben \label{defchampsapp}
 \bal
 &\Dg T_k  = \left( u + W_{k} + v_k \right)^{2^*} X + Y, \\
 &\Dg \Theta_k  = W_{k}^{2^*}X, \\
 &\Dg T  = u^{2^*}X + Y. 
 \eal
 \een
Then the following pointwise asymptotic estimate on $\Lg T_k$ holds:
\begin{prop} \label{controleLTkdessus}
For any sequence $(x_k)_k$ of points in $M$, there holds:
\ben \label{estLTkLT0}
 \Lg T_k(x_k) = \Big(1 + O(\dk) \Big) \Lg \Theta_k(x_k) + \Lg T(x_k)  + O\Big( \Vert X \Vert_{C^0(M)} \vek  + |X(\xi_k)|_g + \dk \Vert \nabla X \Vert_{L^\infty(2 r_k)} \Big), 
 \een
uniformly in $k$. As a consequence, there holds, for any $x \in M$:
\ben \label{estLTkLT}
|\Lg T_k - \Lg T|_g(x) \le C \Bigg( \Big[ |X(\xi_k)|_g + \dk \Vert \nabla X \Vert_{L^\infty(2 r_k)} \Big] \tk(x)^{1-n}  + \Vert X \Vert_{C^0(M)} \vek   \Bigg) ,
\een
where $C$ is a positive constant independent of $k$ and $\Vert X \Vert_{C^0(M)}$ and where $\tk(x)$ is as in \eqref{defthetak}. 
\end{prop}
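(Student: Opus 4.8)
The plan is to derive \eqref{estLTkLT0} first, and then obtain \eqref{estLTkLT} as a direct corollary by inserting the explicit decay of $\Lg \Theta_k$. By linearity of $\Dg$ and the definitions in \eqref{defchampsapp}, the $1$-form $T_k - \Theta_k - T$ satisfies
\[
\Dg \big( T_k - \Theta_k - T \big) = \Big( (u + W_k + v_k)^{2^*} - W_k^{2^*} - u^{2^*} \Big) X.
\]
First I would split the right-hand side using the elementary pointwise inequality
\[
\left| (u + W_k + v_k)^{2^*} - W_k^{2^*} - u^{2^*} \right| \lesssim u^{2^*-1} W_k + W_k^{2^*-1} u + |v_k| \big( u + W_k \big)^{2^*-1},
\]
valid since $2^* > 1$ and all quantities are nonnegative; using the control \eqref{hypov} on $v_k$, the last term is $O\big( \vek (u + W_k)^{2^*} \big)$. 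The strategy is then to write a Green representation for $\Lg (T_k - \Theta_k - T)$ at $x_k$ using the kernel $H_{ij}$ from \eqref{LgGreenvectDg}--\eqref{defHij}. Because $M$ is closed and $\Dg$ has no kernel, one has a global Green operator; alternatively one localizes on a ball $B_{\xi_k}(\delta)$ containing the support of $W_k$ and absorbs the contribution away from the support into smooth remainders — this is exactly the reduction already performed in \cite{Premoselli4}, and I would cite it rather than redo it.

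Next I would estimate each piece against the bound $|H_{ij}(x,y)|_g \lesssim d_g(x,y)^{1-n}$, which follows from differentiating \eqref{propGreenvect} once. The term $\vek (u + W_k)^{2^*} X$ contributes $O\big( \Vert X \Vert_{C^0} \vek \big)$ after integrating $d_g(x_k,y)^{1-n}$ against a bounded density (note $(u+W_k)^{2^*}$ has $L^1$ norm bounded uniformly in $k$, since $\Vert W_k \Vert_{2^*}$ is bounded). The main work is the term $W_k^{2^*-1} u X$: here I would Taylor-expand $X$ around $\xi_k$, writing $X(y) = X(\xi_k) + O\big( \Vert \nabla X \Vert_{L^\infty(2r_k)} d_{g_{\xi_k}}(\xi_k, y)\big)$ on the support of $W_k$; the constant part yields, after a change of variables $y = \exp^{g_{\xi_k}}_{\xi_k}(\dk z)$, a factor $|X(\xi_k)|_g$ times a convergent integral, while the gradient part gains an extra power of $\dk$, giving $O\big( \dk \Vert \nabla X \Vert_{L^\infty(2r_k)}\big)$. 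The term $u^{2^*-1} W_k X$ is handled the same way and is in fact smaller. Finally, the term obtained from replacing $W_k^{2^*}$ by itself but with $X$ frozen at $\xi_k$, i.e. the comparison between $\Dg \Theta_k = W_k^{2^*} X$ and $W_k^{2^*} X(\xi_k)$, produces the factor $\big(1 + O(\dk)\big)$ in front of $\Lg \Theta_k$ in \eqref{estLTkLT0} — this is why the statement isolates $\Lg\Theta_k$ rather than bounding it.

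For \eqref{estLTkLT}, I would then invoke the pointwise asymptotics of $\Lg \Theta_k$ itself. Since $\Theta_k$ solves $\Dg \Theta_k = W_k^{2^*} X$ and $W_k^{2^*}$ concentrates like a Dirac mass of size $O\big( \dk^{\frac{n-2}{2}}\big)$ near $\xi_k$ (more precisely $\int_M W_k^{2^*} dv_g = O\big( \dk^{\frac{n-2}{2}} \big)$ is wrong — rather $\int W_k^{2^*} = O(1)$, but the natural scaling gives $\int W_k^{2^*} d_{g_{\xi_k}}(\xi_k,\cdot) = O(\dk)$; what matters is the profile), the representation via $H_{ij}$ gives $|\Lg \Theta_k|_g(x) \lesssim |X(\xi_k)|_g \, \dk^{\frac{n-2}{2}} \theta_k(x)^{1-n} + \dk \Vert \nabla X\Vert_{L^\infty(2r_k)} \theta_k(x)^{1-n} + \Vert X\Vert_{C^0} \dk^{\frac{n-1}{2}}\theta_k(x)^{1-n}$, and in particular $|\Lg\Theta_k|_g(x) \lesssim \big( |X(\xi_k)|_g + \dk \Vert \nabla X\Vert_{L^\infty(2r_k)}\big)\theta_k(x)^{1-n}$ since $\dk^{\frac{n-2}{2}} \le 1$. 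Combining this with \eqref{estLTkLT0} and bounding $|\Lg T(x_k)|$, which is $O(1)$ uniformly (as $T$ is fixed and smooth), absorbing the $O(1)$ terms into $\theta_k(x)^{1-n}$ via $\theta_k \le \mathrm{diam}(M)$, yields \eqref{estLTkLT}.

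The main obstacle I anticipate is bookkeeping the precise order of the frozen-coefficient expansion of $X$ on the shrinking support $B_{\xi_k}(2r_k)$ and verifying that the error terms genuinely carry the claimed powers of $\dk$ uniformly in $(t_k,\xi_k) \in [1/D,D] \times M$ — in particular that the convergent integrals arising from the change of variables $y = \exp^{g_{\xi_k}}_{\xi_k}(\dk z)$ are bounded independently of $\xi_k$, which relies on the conformal normal coordinates normalization \eqref{confnorm}--\eqref{propLambda} and on the uniform lower bound on $i_{g_\xi}(M)$. The other delicate point is making the localization argument clean: one must check that the part of the Green representation coming from $y$ outside the support of $W_k$ contributes only $O(\dk)$-type error (this uses that $r_k \to 0$ and that the Green fields are smooth away from the diagonal), and that the projection $\pi$ onto conformal Killing fields in \eqref{GreenvectDg} vanishes since $M$ has none. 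Neither of these is conceptually hard, but both require care to keep the constants independent of $k$ and of $\Vert X\Vert_{C^0(M)}$, as asserted in the statement.
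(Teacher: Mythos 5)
Your overall strategy — a Green representation for $\Dg$, isolation of the bubble contribution $\Lg\Theta_k$, Taylor expansion of $X$ around $\xi_k$ on the support of $W_k$, and Giraud-type estimates for the convolutions — is exactly the paper's strategy. The direct subtraction of $\Theta_k$ from $T_k - T$ is a slightly cleaner organization than the paper's intermediate objects $P_k, Q_k$ in \eqref{defPkaux} and \eqref{expThetak}, but the two are equivalent. The localization issue you flag (working on $B_{\xi_k}(\delta)$ rather than with a global Green operator) is handled in the paper via the cut-off $\eta$ in \eqref{defPkaux} combined with elliptic regularity, and your sketch of that step is fine.

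However, two of your term-by-term estimates are wrong in a way that reveals a misunderstanding of where the factor $(1+O(\dk))$ in \eqref{estLTkLT0} actually comes from. First, your claim that $\vek(u+W_k)^{2^*}X$ contributes $O(\Vert X\Vert_{C^0}\vek)$ ``after integrating $d_g(x_k,y)^{1-n}$ against a bounded density'' does not follow from $(u+W_k)^{2^*}$ having bounded $L^1$ norm: the singularity of $d^{1-n}$ at $x_k$ aligned with the peak of $W_k^{2^*}$ at $\xi_k$ gives $\int_M \vek W_k^{2^*}(y)\,d(x_k,y)^{1-n}\,dv_g(y)\sim \vek\,\tk(x_k)^{1-n}$, which is unbounded as $\tk(x_k)\to\dk$. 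Second, for the $uW_k^{2^*-1}X$ term, after the rescaling $y=\exp_{\xi_k}^{g_{\xi_k}}(\dk z)$ the convergent integral you refer to carries a prefactor $\dk^{-n/2}$ when $\tk(x_k)\sim\dk$; more generally the convolution scales as $\dk^{\frac{n-2}{2}}\tk(x_k)^{1-n}$, again unbounded. Neither of these is absorbed into the error term $O(|X(\xi_k)|_g + \dk\Vert\nabla X\Vert + \Vert X\Vert\vek)$. They are instead comparable to $\dk^{\frac{n-2}{2}}\Lg\Theta_k$ and to $\vek\Lg\Theta_k$ respectively (using the asymptotic $\Lg\Theta_k\sim |X(\xi_k)|_g\,\tk(\cdot)^{1-n}$), and \emph{that} is precisely the role of the $(1+O(\dk))$ factor in \eqref{estLTkLT0}: it absorbs these unbounded cross-term contributions. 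Your explanation — that the $(1+O(\dk))$ comes from ``comparing $W_k^{2^*}X$ and $W_k^{2^*}X(\xi_k)$'' — cannot be right, because $\Theta_k$ in \eqref{defchampsapp} already has the full, unfrozen $X$ in its source; nothing in the decomposition of $\Dg(T_k-\Theta_k-T)$ produces such a comparison. Without recognizing that the cross-terms contribute at the scale of $\Lg\Theta_k$ rather than yielding bounded errors, the estimate you sketch does not close.
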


\begin{proof}
Let $G_i, 1 \le i \le n$, be the family of Green fields given by Proposition \ref{faitGreen} and defined in $B_{\xi_k}(\delta) \times B_{\xi_k}(\delta) \backslash \{x = y\}$. Let $\eta \in C_c^\infty(\RR^+)$ be a smooth cut-off function equal to $1$ in $[0, \delta/4]$ and equal to $0$ outside of $[0, \delta/2]$. We define the following $1$-forms, compactly supported in $B_{\xi_k}(\delta / 2)$:
\ben \label{defPkaux} 
P_k(x)_i = \left( \int_{B_{x_0}\left(\frac{\delta}{2} \right)} \left \langle G_i(x,y),  \left[ \left( u + W_{k} + v_k \right)^{2^*}  - u^{2^*} \right](y) X(y) \right \rangle_{g(y)} dv_g(y) \right) \cdot \eta \Big( d_{g}(\xi_k,x) \Big).
\een
By definition of $G_i$, by the choice of $\eta$, by \eqref{propGreenvect}, \eqref{hypov} and by Giraud's lemma there holds, for any $x \in B_{\xi_k}(\delta/2)$:
\ben \label{estC1Pk}
\bal
 & |P_k|(x)  \le C \Big( |X(\xi_k)|_g + \dk \Vert \nabla X \Vert_{L^\infty(2 r_k)} \Big) \tk(x)^{2-n} + C \Vert X \Vert_{C^0(M)} \vek \\
&  |\Lg P_k|_g(x) \le C \Big( |X(\xi_k)|_g + \dk \Vert \nabla X \Vert_{L^\infty(2 r_k)} \Big) \tk(x)^{1-n} + C \Vert X \Vert_{C^0(M)} \vek. \\
 \eal \een
In \eqref{estC1Pk} and until the end of this section $C$ will denote some positive constant that does not depend on $k$ or on $\Vert X \Vert_{C^0(M)}$. Since $\left| \left( u + W_{k} + v_k \right)^{2^*}  - u^{2^*} \right| \le C \vek$ in $M \backslash B_{\xi_k}(\delta/4)$, and by definition of $G_i$, one obtains with \eqref{defchampsapp}, \eqref{defPkaux} and \eqref{estC1Pk} that: 
\[ \bal
\Dg \Big( T_k - P_k - T \Big) & = 0 \textrm{ in } B_{\xi_k}(\delta/4) \\ 
\Big|\Dg \Big( T_k - P_k - T \Big) \Big| & \le C \left(   |X(\xi_k)|_g + \dk \Vert \nabla X \Vert_{L^\infty(2 r_k)}  +  \Vert X \Vert_{C^0(M)} \vek \right) \textrm{ in } B_{\xi_k}(\delta/2) \backslash B_{\xi_k}(\delta/4) \\
\Big| \Dg \Big( T_k - P_k - T \Big) \Big| & \le  C \Vert X \Vert_{C^0(M)} \vek \textrm{ in } M \backslash B_{\xi_k}(\delta/2).
\eal \]
Standard elliptic regularity theory applies for $\Dg$ (see e.g. Premoselli \cite{Premoselli4}), and with the latter estimates shows that there holds, in $C^0(M)$:
\ben \label{expansion1Tk}
\Lg T_k = \Lg P_k + \Lg T + O \Bigg(  |X(\xi_k)|_g + \dk \Vert \nabla X \Vert_{L^\infty(2 r_k)}  +  \Vert X \Vert_{C^0(M)} \vek \Bigg) .
\een
Independently, similar arguments show that there holds:
\ben \label{thetakexpression}
 \Lg \Theta_k = \Lg Q_k + O \left(  |X(\xi_k)|_g + \dk \Vert \nabla X \Vert_{L^\infty(2 r_k)} \right), 
 \een
where $\Theta_k$ is defined in \eqref{defchampsapp} and where we have let, for $1 \le i \le n$:
\ben \label{expThetak}
 Q_k(x)_i =  \left( \int_{B_{x_0}\left(\frac{\delta}{2} \right)} \left \langle G_i(x,y),  W_{k}^{2^*}(y) X(y) \right \rangle_{g(y)} dv_g(y) \right)  \cdot \eta \Big( d_{g}(\xi_k,x) \Big).
 \een 
Since there holds, in $ B_{\xi_k}(\delta/4)$, that
\[ \left| \left( u + W_{k} + v_k \right)^{2^*}  - u^{2^*} - W_k^{2^*} \right| \le C  \Big( \vek + W_k^{2^*-1} + W_k \Big),\]
we obtain with Giraud's lemma, \eqref{defPkaux}, \eqref{thetakexpression} and \eqref{expThetak} that, for any sequence $(x_k)_k$ in $M$:
\[ \Lg P_k(x_k) = \Big( 1 + O(\dk) \Big) \Lg \Theta_k(x_k) + O \Bigg(  |X(\xi_k)|_g + \dk \Vert \nabla X \Vert_{L^\infty(2 r_k)}  +  \Vert X \Vert_{C^0(M)} \vek \Bigg) .\]
Then estimate \eqref{estLTkLT0} then follows from the latter expansion and \eqref{expansion1Tk}, and \eqref{estLTkLT} follows from \eqref{estLTkLT0}. 
\end{proof}

\noindent Using \eqref{expansionGi}, \eqref{LgGreenvectDg}, \eqref{defHij}, \eqref{thetakexpression} and \eqref{expThetak} one obtains the following asymptotic estimate for $\Lg \Theta_k$: for any sequence $(x_k)_k$ of points in $M$ satisfying $\dk <<d_g(x_k, \xi_k) \le \delta/4$ there holds: 
\ben \label{asymptoThetak}
\bal
 \Lx \Theta_k (x_k)_{ij}  = &K(n)f(\xi_k)^{- \frac{n}{2}}  \Bigg[ \delta_{ij} \zeta^p \check{x}_p - \zeta_i \check{x}_j - \zeta_j \check{x}_i - (n-2) \zeta^p \check{x}_p \check{x}_i \check{x}_j \Bigg]  \\
&  \times \Big( |X(\xi_k)|_g + O(\dk  \Vert \nabla X \Vert_{L^\infty(2 r_k)}) \Big) d_g(\xi_k,x_k)^{1 - n}  \big(1 + o(1) \big), \\ 
\eal
\een
where we have let, up to a subsequence:
\[ \check{x} = \underset{k \to \infty}{\lim} \frac{1}{d_g(\xi_k,x_k)} \exp_{\xi_k}^{-1}(x_k), \quad  K(n) = \frac{n^{\frac{n+2}{2}} (n-2)^{\frac{n}{2}} \omega_n}{2^{n+1} (n-1) \omega_{n-1}} \textrm{ and } \zeta = \underset{k \to \infty }{ \lim} \frac{ X(\xi_k)}{ |X(\xi_k)|_g} .\]

\bigskip

\noindent We conclude this section with a refinement of Proposition \ref{controleLTkdessus} which takes into account the behavior of $v_k$ ``far away'' from the concentration point $\xi_k$. 
\begin{prop}
Let $T_k$, $\Theta_k$ and $T$ be as in \eqref{defchampsapp}. Let $(R_k)_k$ be a sequence of positive numbers such that $1 \le R_k = o(\dk^{-\frac12})$ as $k \to + \infty$. There holds, for any $x \in M$:
\ben \label{estLTkLTordre2}
\bal
\Big| \Lg T_k  - \Lg T \Big|_g(x) \le C \Bigg( \Big[ |X(\xi_k)|_g + \dk \Vert \nabla X \Vert_{L^\infty(2 r_k)} \Big] \tk(x)^{1-n} +   \dk^{\frac{n}{2}} \tk(x)^{1-n} \\
+  \Vert X \Vert_{C^0(M)} \vek \dk R_k^2 +  \Vert X \Vert_{C^0(M)} \left \Vert v_k \right \Vert_{L^\infty (M \backslash B_{\xi_k}(R_k \sqrt{\dk}) )}   \Bigg)  ,
\eal
\een
where $C$ is some positive constant that does not depend on $k$, on $\Vert X \Vert_{C^0(M)}$ and on the choice of $(R_k)_k$.
\end{prop}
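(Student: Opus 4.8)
The plan is to follow the same scheme as in the proof of Proposition \ref{controleLTkdessus}, but to split the source term $\left( u + W_k + v_k \right)^{2^*} - u^{2^*}$ more carefully so as to isolate the contribution of $v_k$ near $\xi_k$ from its contribution far from $\xi_k$. First I would recall the Green representation \eqref{defPkaux} for the $1$-form $P_k$ built out of $\left( u + W_k + v_k \right)^{2^*} - u^{2^*}$, which via \eqref{expansion1Tk} governs $\Lg T_k - \Lg T$ up to errors of size $|X(\xi_k)|_g + \dk \Vert \nabla X \Vert_{L^\infty(2 r_k)} + \Vert X \Vert_{C^0(M)}\vek$. The point is to re-estimate $\Lg P_k$ using the finer information that $v_k$ is only truly large on $B_{\xi_k}(R_k\sqrt{\dk})$.

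The key step is to decompose the integrand $\big[ \left( u + W_k + v_k \right)^{2^*} - u^{2^*} \big] X$ as a sum of: (i) a term supported in $B_{\xi_k}(R_k\sqrt{\dk})$, bounded pointwise by $C\big( W_k^{2^*} + W_k + \vek (u+W_k)^{2^*-1}\big)\Vert X\Vert_{C^0}$; (ii) a term supported in $M\backslash B_{\xi_k}(R_k\sqrt{\dk})$, bounded pointwise by $C\big( W_k^{2^*} + \Vert v_k\Vert_{L^\infty(M\backslash B_{\xi_k}(R_k\sqrt{\dk}))}\big)\Vert X\Vert_{C^0}$. For the $W_k^{2^*}$-contribution one recognizes exactly $\Lg\Theta_k$ as in \eqref{thetakexpression}–\eqref{expThetak}, which is $O\big(\dk^{\frac n2}\tk(x)^{1-n}\big)$ once one uses the explicit form of $W_k$ (this is where the $\dk^{\frac n2}\tk(x)^{1-n}$ term in \eqref{estLTkLTordre2} comes from; note $\Lg\Theta_k$ itself only differs from $\Lg T_k - \Lg T$ by the already-controlled $|X(\xi_k)|_g$-type errors). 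For the lower-order powers of $W_k$ on $B_{\xi_k}(R_k\sqrt{\dk})$, Giraud's lemma gives a bound of the form $C\Vert X\Vert_{C^0}\dk\, R_k^2\,\tk(x)^{1-n}$ after integrating against $\nabla G_i$ and using $|\nabla G_i(x,y)|_g \lesssim d_g(x,y)^{1-n}$; since $\tk(x)^{1-n} \le \dk^{1-n}$ is harmless when weighted appropriately, this contributes the $\Vert X\Vert_{C^0}\vek\dk R_k^2$ term. Finally, the $\vek$-term on the small ball and the $\Vert v_k\Vert_{L^\infty}$-term on the complement are integrated against $\nabla G_i$ using \eqref{propGreenvect}, yielding the last two error terms in \eqref{estLTkLTordre2}; one checks these integrals converge because $n\ge 6$.

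Concretely, the proof would run: define $P_k$ as in \eqref{defPkaux}; write $P_k = P_k^{W} + P_k^{\text{loc}} + P_k^{\text{far}}$ according to the above decomposition; identify $\Lg P_k^W$ with $\Lg\Theta_k$ modulo the usual $|X(\xi_k)|_g + \dk\Vert\nabla X\Vert$ errors and invoke \eqref{thetakexpression} together with the explicit expression of $W_k$ in \eqref{bulle} to bound it by $C\big(|X(\xi_k)|_g + \dk\Vert\nabla X\Vert_{L^\infty(2r_k)}\big)\tk(x)^{1-n} + C\dk^{\frac n2}\tk(x)^{1-n}$; bound $\Lg P_k^{\text{loc}}$ by differentiating under the integral and applying Giraud's lemma on $B_{\xi_k}(R_k\sqrt{\dk})$, using $\int_{B_{\xi_k}(R_k\sqrt{\dk})} W_k \lesssim \dk R_k^2$ type estimates, to get $C\Vert X\Vert_{C^0}\vek\dk R_k^2$; bound $\Lg P_k^{\text{far}}$ directly by $C\Vert X\Vert_{C^0}\Vert v_k\Vert_{L^\infty(M\backslash B_{\xi_k}(R_k\sqrt{\dk}))}$ plus the $W_k^{2^*}$ piece already absorbed; then conclude via \eqref{expansion1Tk} and the $C^0$ elliptic regularity for $\Dg$ exactly as in the proof of Proposition \ref{controleLTkdessus}.

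The main obstacle I expect is the bookkeeping in the $P_k^{\text{loc}}$ estimate: one must be careful that the lower powers $W_k^j$ with $1 \le j < 2^*$, after integration against the singular kernel $\nabla G_i$ over the expanding ball $B_{\xi_k}(R_k\sqrt{\dk})$, produce exactly a $\dk R_k^2$ gain (and not worse), which forces using $R_k = o(\dk^{-1/2})$ so that $R_k\sqrt{\dk} = o(1)$ and the ball stays inside the support region where the conformal normal coordinates and the estimates \eqref{propGreenvect} are valid. The hypotheses $1 \le R_k = o(\dk^{-1/2})$ and $n \ge 6$ are precisely what make all the Giraud-type integrals converge and the powers of $\dk$ come out right; everything else is a routine adaptation of the already-proven Proposition \ref{controleLTkdessus}.
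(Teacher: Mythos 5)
Your high-level plan (Green representation for $\Dg$, split the source term, Giraud-type estimates, conclude by elliptic regularity) is the right one, and the final routing through \eqref{expansion1Tk} and the $C^0$ regularity for $\Dg$ is exactly as in the paper. However, the specific decomposition you propose is structurally different from the paper's, and two of your term-by-term attributions are wrong in a way that would make the proof fail.

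First, the paper does \emph{not} split by region first. It subtracts $(u+v_k)^{2^*}$ (not $u^{2^*}$) to form $Q_k^1$, so that the entire $W_k$-dependence is isolated in a single integral supported on $B_{\xi_k}(2r_k)$ and estimated with no reference to $R_k$ at all; the $v_k$-part $Q_k^2$ is then handled by splitting $M$ into \emph{three} regions, $B_{\xi_k}(\sqrt{\dk})$, the annulus $B_{\xi_k}(R_k\sqrt{\dk})\setminus B_{\xi_k}(\sqrt{\dk})$, and the far field. You lump the two inner regions together, and this loses the key estimate. On $B_{\xi_k}(\sqrt{\dk})$, $v_k$ can be as large as $\vek W_k\sim \vek\dk^{-\frac{n-2}{2}}$, so $|(u+v_k)^{2^*}-u^{2^*}|$ can only be bounded by $\vek^{2^*}W_k^{2^*}+O(1)$, and it is the $O(1)$ piece, integrated against $|\nabla G_i|\lesssim d_g^{1-n}$ over a ball of radius $\sqrt{\dk}$, that produces the $\dk^{\frac n2}\tk^{1-n}$ term in \eqref{estLTkLTordre2}. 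On the annulus, one instead has $W_k\lesssim 1$ so $|v_k|\lesssim\vek$, which is what gives $\vek R_k^2\dk$. These are genuinely distinct phenomena at the $\sqrt{\dk}$ and $R_k\sqrt{\dk}$ scales, and your two-region split cannot separate them.

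Second, you attribute the $\dk^{\frac n2}\tk^{1-n}$ term to $\Lg\Theta_k$. This is incorrect: from \eqref{asymptoThetak}, $\Lg\Theta_k$ is of order $|X(\xi_k)|_g\,\tk^{1-n}$, which is precisely the first term in \eqref{estLTkLTordre2}, not the second. The two terms are not comparable in general (the proposition is stated for arbitrary $X$, and even for the choice \eqref{defXintro} one has $|X(\xi_k)|_g\sim\mk^{\frac{n-1}{2}}\not\sim\dk^{\frac n2}$). Likewise, your Giraud computation for the ``lower-order powers of $W_k$'' over $B_{\xi_k}(R_k\sqrt{\dk})$ is off: one has $\int_{B_{\xi_k}(R_k\sqrt{\dk})}W_k\,dv_g\lesssim \dk^{\frac n2}R_k^2$, not $\dk R_k^2$, and the result of integrating against the Green kernel carries a $\tk(x)^{1-n}$ weight and no $\vek$ factor, so it does not produce the claimed $\Vert X\Vert_{C^0}\vek\dk R_k^2$ contribution. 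In fact these $W_k$-pieces should never be restricted to $B_{\xi_k}(R_k\sqrt{\dk})$ in the first place: following the paper, they belong to $Q_k^1$, supported on $B_{\xi_k}(2r_k)$, where Giraud's lemma gives exactly the $R_k$-independent bound $\big[|X(\xi_k)|_g+\dk\Vert\nabla X\Vert_{L^\infty(2r_k)}\big]\tk^{1-n}$. To repair your argument you would need to redo the decomposition as in the paper: set up $Q_k^1$ against $(u+W_k+v_k)^{2^*}-(u+v_k)^{2^*}$ on all of $B_{\xi_k}(2r_k)$, then split $(u+v_k)^{2^*}-u^{2^*}$ over the three regions described above.
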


\begin{proof}
Let again $G_i$, $1 \le i \le n$, be the Green fields for $\Dg$ satisfying \eqref{GreenvectDg} in $B_{\xi_k}(\delta)$. Mimicking what we did in \eqref{defPkaux}, we define:
\ben \label{defQ1Q2} 
\bal
Q^1_k(x)_i &= \left( \int_{B_{x_0}\left(\frac{\delta}{2} \right)} \left \langle G_i(x,y),  \left[ \left( u + W_{k} + v_k \right)^{2^*}  - \left( u + v_k \right) ^{2^*} \right](y) X(y) \right \rangle_{g(y)} dv_g(y) \right) \times \eta \Big( d_{g}(\xi_k,x) \Big), \\
 Q^2_k(x)_i &= \left( \int_{B_{x_0}\left(\frac{\delta}{2} \right)} \left \langle G_i(x,y),  \left[ \left( u + v_k \right)^{2^*}  - u^{2^*} \right](y) X(y) \right \rangle_{g(y)} dv_g(y) \right) \times \eta \Big( d_{g}(\xi_k,x) \Big). \\
\eal
\een
The term $\left( u + W_{k} + v_k \right)^{2^*}  - \left( u + v_k \right) ^{2^*}$ is compactly supported in $B_{\xi_k}(2 r_k)$ and satisfies there:
\[ \left| \left( u + W_{k} + v_k \right)^{2^*}  - \left( u + v_k \right) ^{2^*} - W_k^{2^*} \right| \le C \left( W_k^{2^*-1} + W_k \right).\]
Therefore, \eqref{propGreenvect} and Giraud's lemma show that there holds, for any $x \in M$:
\ben \label{estQ1}
\big| Q^1_k \big|_g +  \tk(x) \big| \Lg Q^1_k \big|_g \le \Big[ |X(\xi_k)|_g + \dk \Vert \nabla X \Vert_{L^\infty(2 r_k)} \Big] \tk(x)^{2-n}. 
 \een
We thus obtain that there holds:
\ben  \label{lapQ1} 
\Dg \left( T_k - Q^1_k - T \right) = \left \{
\bal
&  \left[ \left( u + v_k \right)^{2^*}  - u^{2^*} \right]X & \textrm{ in } B_{\xi_k}(\frac{\delta}{4}), \\
&  \left[ \left( u + v_k \right)^{2^*}  - u^{2^*} \right]X + O \Big( |X(\xi_k)|_g + \dk \Vert \nabla X \Vert_{L^\infty(2 r_k)} \Big) & \textrm{ in } M\backslash B_{\xi_k}(\frac{\delta}{4}).
\eal
\right.
\een
We now estimate $Q^2_k$ in \eqref{defQ1Q2}. With \eqref{hypov} we write that there holds, for some positive $C$:
\[ 
\Big| \left( u + v_k \right)^{2^*}  - u^{2^*} \Big| \le C \times \left\{
\bal
&\vek^{2^*} W_k^{2^*} + 1 \textrm{ in } B_{\xi_k}(\sqrt{\dk}), \\
& \vek \textrm{ in } B_{\xi_k}(R_k \sqrt{\dk}) \backslash B_{\xi_k}(\sqrt{\dk}) ,\\
& \left \Vert v_k \right \Vert_{L^\infty (M \backslash B_{\xi_k}(R_k \sqrt{\dk}) )} \textrm{ in } M \backslash B_{\xi_k}(R_k \sqrt{\dk}). \\
\eal  \right. \]
Then \eqref{propGreenvect} and Giraud's lemma yield again, for any $ x \in M$:
\ben \label{estQ2}
\bal
 \big| Q^2_k \big|_g(x) + \tk(x) \big| \Lg Q^2_k \big|_g(x) \le C \Bigg( \ve_k^{2^*} \Big[ |X(\xi_k)|_g + \dk \Vert \nabla X \Vert_{L^\infty(2 r_k)} \Big] \tk(x)^{2-n} + \dk^{\frac{n}{2}} \tk(x)^{2-n} \\
+ \Vert X \Vert_{C^0(M)} \vek R_k^2 \dk +  \Vert X \Vert_{C^0(M)}\left \Vert v_k \right \Vert_{L^\infty (M \backslash B_{\xi_k}(R_k \sqrt{\dk}) )} \Bigg) ,
\eal
\een
so that there holds, with \eqref{lapQ1}:
\ben \label{lapQ2} 
\Dg \Big( T_k - Q^1_k - Q^2_k - T \Big) = \left \{
\bal 
&  0 \textrm{ in } B_{\xi_k}(\delta/4) \\
& O \Big(  |X(\xi_k)|_g + \dk \Vert \nabla X \Vert_{L^\infty(2 r_k)} +\dk^{\frac{n}{2}}  + \Vert X \Vert_{C^0(M)} \vek R_k^2 \dk \\
& +  \Vert X \Vert_{C^0(M)}\left \Vert v_k \right \Vert_{L^\infty (M \backslash B_{\xi_k}(R_k \sqrt{\dk}) )}  \Big)  
\textrm{ in } M  \backslash B_{\xi_k}(\delta/4) \\
\eal \right.
\een 
The Claim then follows from \eqref{lapQ2}, standard elliptic theory, \eqref{estQ1} and \eqref{estQ2}.

\end{proof}

\bibliographystyle{amsplain}
\bibliography{biblio}

\def \cfac#1{\ifmmode\setbox7\hbox{$\accent"5E#1$}\else
  \setbox7\hbox{\accent"5E#1}\penalty 10000\relax\fi\raise 1\ht7
  \hbox{\lower1.15ex\hbox to 1\wd7{\hss\accent"13\hss}}\penalty 10000
  \hskip-1\wd7\penalty 10000\box7}
\providecommand{\bysame}{\leavevmode\hbox to3em{\hrulefill}\thinspace}
\providecommand{\MR}{\relax\ifhmode\unskip\space\fi MR }
\providecommand{\MRhref}[2]{%
  \href{http://www.ams.org/mathscinet-getitem?mr=#1}{#2}
}
\providecommand{\href}[2]{#2}
\begin{thebibliography}{10}

\bibitem{AmbrosettiMalchiodi}
Antonio Ambrosetti and Andrea Malchiodi, \emph{Perturbation methods and
  semilinear elliptic problems on {${\bf R}^n$}}, Progress in Mathematics, vol.
  240, Birkh\"auser Verlag, Basel, 2006. \MR{2186962 (2007k:35005)}

\bibitem{BarIse}
Robert Bartnik and Jim Isenberg, \emph{The constraint equations}, The
  {E}instein equations and the large scale behavior of gravitational fields,
  Birkh\"auser, Basel, 2004, pp.~1--38. \MR{2098912 (2005j:83007)}

\bibitem{BeChSc}
Robert Beig, Piotr~T. Chru{\'s}ciel, and Richard Schoen, \emph{K{ID}s are
  non-generic}, Ann. Henri Poincar\'e \textbf{6} (2005), no.~1, 155--194.
  \MR{2121280 (2005m:83013)}

\bibitem{BertiMalchiodi}
Massimiliano Berti and Andrea Malchiodi, \emph{Non-compactness and multiplicity
  results for the yamabe problem on $\mathbb{S}^n$}, J. Funct. Anal.
  \textbf{180} (2001), 210--241.

\bibitem{BianchiEgnell}
Gabriele Bianchi and Henrik Egnell, \emph{A note on the {S}obolev inequality},
  J. Funct. Anal. \textbf{100} (1991), no.~1, 18--24. \MR{1124290 (92i:46033)}

\bibitem{BizonPletkaSimon}
Piotr Bizo{\'n}, Stefan Pletka, and Walter Simon, \emph{Initial data for
  rotating cosmologies}, Classical Quantum Gravity \textbf{32} (2015), no.~17,
  175015, 21. \MR{3388125}

\bibitem{Brendle}
Simon Brendle, \emph{Blow-up phenomena for the {Y}amabe equation}, J. Amer.
  Math. Soc. \textbf{21} (2008), no.~4, 951--979. \MR{2425176 (2009m:53084)}

\bibitem{BreMa}
Simon Brendle and Fernando~C. Marques, \emph{Blow-up phenomena for the {Y}amabe
  equation. {II}}, J. Differential Geom. \textbf{81} (2009), no.~2, 225--250.
  \MR{2472174 (2010k:53050)}

\bibitem{ChruscielGicquaud}
Piotr~T. Chru{\'s}ciel and Romain Gicquaud, \emph{Bifurcating solutions of the
  lichnerowicz equation},  (2015), Preprint.

\bibitem{DelPinoFelmerMusso}
Manuel del Pino, Patricio Felmer, and Monica Musso, \emph{Two-bubble solutions
  in the super-critical {B}ahri-{C}oron's problem}, Calc. Var. Partial
  Differential Equations \textbf{16} (2003), no.~2, 113--145. \MR{1956850}

\bibitem{DelPinoMussoPacard}
Manuel del Pino, Monica Musso, and Frank Pacard, \emph{Bubbling along boundary
  geodesics near the second critical exponent}, J. Eur. Math. Soc. (JEMS)
  \textbf{12} (2010), no.~6, 1553--1605. \MR{2734352 (2012a:35115)}

\bibitem{DelPinoMussoPacardPistoia1}
Manuel del Pino, Monica Musso, Frank Pacard, and Angela Pistoia, \emph{Large
  energy entire solutions for the {Y}amabe equation}, J. Differential Equations
  \textbf{251} (2011), no.~9, 2568--2597. \MR{2825341 (2012k:35176)}

\bibitem{DruetJDG}
Olivier Druet, \emph{From one bubble to several bubbles: the low-dimensional
  case}, J. Differential Geom. \textbf{63} (2003), no.~3, 399--473.
  \MR{2015469}

\bibitem{DruetYlowdim}
\bysame, \emph{Compactness for {Y}amabe metrics in low dimensions}, Int. Math.
  Res. Not. (2004), no.~23, 1143--1191. \MR{2041549 (2005b:53056)}

\bibitem{DruetENSAIOS}
\bysame, \emph{La notion de stabilit\'e pour des \'equations aux d\'eriv\'ees
  partielles elliptiques}, Ensaios Matem\'aticos [Mathematical Surveys],
  vol.~19, Sociedade Brasileira de Matem\'atica, Rio de Janeiro, 2010.
  \MR{2815304}

\bibitem{DruetHebeyEL}
Olivier Druet and Emmanuel Hebey, \emph{Stability and instability for
  {E}instein-scalar field {L}ichnerowicz equations on compact {R}iemannian
  manifolds}, Math. Z. \textbf{263} (2009), no.~1, 33--67. \MR{2529487
  (2010h:58028)}

\bibitem{DruetHebey2}
\bysame, \emph{Stability for strongly coupled critical elliptic systems in a
  fully inhomogeneous medium}, Anal. PDE \textbf{2} (2009), no.~3, 305--359.
  \MR{2603801}

\bibitem{DruetHebeyRobert}
Olivier Druet, Emmanuel Hebey, and Fr{\'e}d{\'e}ric Robert, \emph{Blow-up
  theory for elliptic {PDE}s in {R}iemannian geometry}, Mathematical Notes,
  vol.~45, Princeton University Press, Princeton, NJ, 2004. \MR{2063399
  (2005g:53058)}

\bibitem{DruetHebeyVetois}
Olivier Druet, Emmanuel Hebey, and J{\'e}r{\^o}me V{\'e}tois, \emph{Bounded
  stability for strongly coupled critical elliptic systems below the geometric
  threshold of the conformal {L}aplacian}, J. Funct. Anal. \textbf{258} (2010),
  no.~3, 999--1059. \MR{2558186 (2011a:53053)}

\bibitem{DruetHebeyVetois2}
\bysame, \emph{Static klein-gordon-maxwell-proca systems in $4$-dimensional
  closed manifolds. ii}, J. Reine Angew. Math. (2015), To appear.

\bibitem{DruetPremoselli}
Olivier Druet and Bruno Premoselli, \emph{Stability of the
  {E}instein-{L}ichnerowicz constraint system}, Math. Ann. \textbf{362} (2015),
  no.~3-4, 839--886. \MR{3368085}

\bibitem{Dupaigne}
Louis Dupaigne, \emph{Stable solutions of elliptic partial differential
  equations}, Chapman \& Hall/CRC Monographs and Surveys in Pure and Applied
  Mathematics, vol. 143, Chapman \& Hall/CRC, Boca Raton, FL, 2011.
  \MR{2779463}

\bibitem{EspositoPistoiaVetois}
Pierpaolo Esposito, Angela Pistoia, and J{\'e}r{\^o}me V{\'e}tois, \emph{The
  effect of linear perturbations on the {Y}amabe problem}, Math. Ann.
  \textbf{358} (2014), no.~1-2, 511--560. \MR{3158007}

\bibitem{GicquaudNguyen}
Romain Gicquaud and Cang Nguyen, \emph{Solutions to the {E}instein-scalar field
  constraint equations with a small {TT}-tensor}, Calc. Var. Partial
  Differential Equations \textbf{55} (2016), no.~2, 55:29. \MR{3466902}

\bibitem{HebeyZLAM}
Emmanuel Hebey, \emph{Compactness and stability for nonlinear elliptic
  equations}, Zurich Lectures in Advanced Mathematics, European Mathematical
  Society (EMS), Z\"urich, 2014. \MR{3235821}

\bibitem{HePaPo}
Emmanuel Hebey, Frank Pacard, and Daniel Pollack, \emph{A variational analysis
  of {E}instein-scalar field {L}ichnerowicz equations on compact {R}iemannian
  manifolds}, Comm. Math. Phys. \textbf{278} (2008), no.~1, 117--132.
  \MR{2367200 (2009c:58041)}

\bibitem{HebeyRobert2}
Emmanuel Hebey and Fr{\'e}d{\'e}ric Robert, \emph{Asymptotic analysis for
  fourth order {P}aneitz equations with critical growth}, Adv. Calc. Var.
  \textbf{4} (2011), no.~3, 229--275. \MR{2819586 (2012h:58045)}

\bibitem{HebeyVeronelli}
Emmanuel Hebey and Giona Veronelli, \emph{The {L}ichnerowicz equation in the
  closed case of the {E}instein-{M}axwell theory}, Trans. Amer. Math. Soc.
  \textbf{366} (2014), no.~3, 1179--1193. \MR{3145727}

\bibitem{HebeyWei}
Emmanuel Hebey and Juncheng Wei, \emph{Schr\"odinger-{P}oisson systems in the
  3-sphere}, Calc. Var. Partial Differential Equations \textbf{47} (2013),
  no.~1-2, 25--54. \MR{3044130}

\bibitem{HolstMeier}
M.~Holst and C.~Meier, \emph{Non uniqueness of solutions to the conformal
  formulation}, arXiv:1210.2156.

\bibitem{KhuMaSc}
M.~A. Khuri, F.~C. Marques, and R.~M. Schoen, \emph{A compactness theorem for
  the {Y}amabe problem}, J. Differential Geom. \textbf{81} (2009), no.~1,
  143--196. \MR{2477893 (2010e:53065)}

\bibitem{LeeParker}
John~M. Lee and Thomas~H. Parker, \emph{The {Y}amabe problem}, Bull. Amer.
  Math. Soc. (N.S.) \textbf{17} (1987), no.~1, 37--91. \MR{888880 (88f:53001)}

\bibitem{LiZhang}
Yan~Yan Li and Lei Zhang, \emph{Compactness of solutions to the {Y}amabe
  problem. {III}}, J. Funct. Anal. \textbf{245} (2007), no.~2, 438--474.
  \MR{2309836 (2008f:53038)}

\bibitem{LiZhu}
Yanyan Li and Meijun Zhu, \emph{Yamabe type equations on three-dimensional
  {R}iemannian manifolds}, Commun. Contemp. Math. \textbf{1} (1999), no.~1,
  1--50. \MR{1681811 (2000m:53051)}

\bibitem{MaWei}
Li~Ma and Juncheng Wei, \emph{Stability and multiple solutions to
  {E}instein-scalar field {L}ichnerowicz equation on manifolds}, J. Math. Pures
  Appl. (9) \textbf{99} (2013), no.~2, 174--186. \MR{3007843}

\bibitem{Marques}
Fernando~Coda Marques, \emph{A priori estimates for the {Y}amabe problem in the
  non-locally conformally flat case}, J. Differential Geom. \textbf{71} (2005),
  no.~2, 315--346. \MR{2197144 (2006i:53046)}

\bibitem{Premoselli1}
Bruno Premoselli, \emph{The {E}instein-{S}calar {F}ield {C}onstraint {S}ystem
  in the {P}ositive {C}ase}, Comm. Math. Phys. \textbf{326} (2014), no.~2,
  543--557. \MR{3165467}

\bibitem{Premoselli2}
\bysame, \emph{Effective multiplicity for the {E}instein-scalar field
  {L}ichnerowicz equation}, Calc. Var. Partial Differential Equations
  \textbf{53} (2015), no.~1-2, 29--64. \MR{3336312}

\bibitem{Premoselli4}
\bysame, \emph{Stability and instability for the einstein-lichnerowicz
  constraints system}, Int. Math. Res. Not. IMRN (2015), Published online.

\bibitem{PremoselliWei}
Bruno Premoselli and Juncheng Wei, \emph{Non-compactness and infinite number of
  conformal initial data sets in high dimensions}, J. Funct. Anal. \textbf{270}
  (2016), no.~2, 718--747. \MR{3425901}

\bibitem{Rey}
Olivier Rey, \emph{The role of the {G}reen's function in a nonlinear elliptic
  equation involving the critical {S}obolev exponent}, J. Funct. Anal.
  \textbf{89} (1990), no.~1, 1--52. \MR{1040954 (91b:35012)}

\bibitem{ReyWei}
Olivier Rey and Juncheng Wei, \emph{Arbitrary number of positive solutions for
  an elliptic problem with critical nonlinearity}, J. Eur. Math. Soc. (JEMS)
  \textbf{7} (2005), no.~4, 449--476. \MR{2159223}

\bibitem{RobDirichlet}
Fr{\'e}d{\'e}ric Robert, \emph{Existence et asymptotiques optimales des
  fonctions de green des op{\'e}rateurs elliptiques d'ordre deux},
  http://www.iecn.u-nancy.fr/~frobert/ConstrucGreen.pdf.

\bibitem{RobertVetois2}
Fr{\'e}d{\'e}ric Robert and J{\'e}r{\^o}me V{\'e}tois, \emph{Examples of
  non-isolated blow-up for perturbations of the scalar curvature equation on
  non-locally conformally flat manifolds}, J. Differential Geom. \textbf{98}
  (2014), no.~2, 349--356. \MR{3263521}

\bibitem{RobertVetois}
\bysame, \emph{A general theorem for the construction of blowing-up solutions
  to some elliptic nonlinear equations with lyapunov-schmidt's
  finite-dimensional reduction}, Concentration Compactness and Profile
  Decomposition (Bangalore, 2011), Trends in Mathematics, Springer, Basel
  (2014), 85--116.

\bibitem{RobertVetois4}
\bysame, \emph{Sign-changing solutions to elliptic second order equations:
  glueing a peak to a degenerate critical manifold}, Calc. Var. Partial
  Differential Equations \textbf{54} (2015), no.~1, 693--716. \MR{3385177}

\bibitem{Struwe}
Michael Struwe, \emph{A global compactness result for elliptic boundary value
  problems involving limiting nonlinearities}, Math. Z. \textbf{187} (1984),
  no.~4, 511--517. \MR{760051}

\bibitem{Wei0}
Juncheng Wei, \emph{On the construction of single-peaked solutions to a
  singularly perturbed semilinear {D}irichlet problem}, J. Differential
  Equations \textbf{129} (1996), no.~2, 315--333. \MR{1404386}

\bibitem{Wei}
\bysame, \emph{On the boundary spike layer solutions to a singularly perturbed
  {N}eumann problem}, J. Differential Equations \textbf{134} (1997), no.~1,
  104--133. \MR{1429093}

\bibitem{Wei2}
\bysame, \emph{On single interior spike solutions of the {G}ierer-{M}einhardt
  system: uniqueness and spectrum estimates}, European J. Appl. Math.
  \textbf{10} (1999), no.~4, 353--378. \MR{1713076}

\bibitem{WeiGM}
\bysame, \emph{Existence and stability of spikes for the {G}ierer-{M}einhardt
  system}, Handbook of differential equations: stationary partial differential
  equations. {V}ol. {V}, Handb. Differ. Equ., Elsevier/North-Holland,
  Amsterdam, 2008, pp.~487--585. \MR{2497911 (2011b:35214)}

\end{thebibliography}

\end{document}